\tikzset{
	graphnode/.style = {align=center, inner sep=0pt, scale=0.2, text centered,
		font=\sffamily},
	vi/.style = {align=center, inner sep=0.2pt, scale=0.15, font=\sffamily\bfseries, rectangle, draw=black,
		fill=black, text width=1em},
	vi2/.style = {align=center, inner sep=0pt, xscale=0.15,yscale=0.01,  font=\sffamily\bfseries, rectangle, draw=white,
		fill=white, text width=1em},
	ve/.style = {graphnode, circle, draw=black,fill=white, 
		text width=1em, thick},
	Subseteq/.style={
		draw=none,
		every to/.append style={
			edge node={node [sloped, allow upside down, auto=false]{$\subseteq$}}}
	},
	Isom/.style={
		draw=none,
		every to/.append style={
			edge node={node [sloped, allow upside down, auto=false]{$\cong$}}}
	},
}
\theoremstyle{plain}                                                           
\newtheorem{thm}{Theorem}[section]
\newtheorem{lem}[thm]{Lemma}
\newtheorem{prop}[thm]{Proposition}
\newtheorem{cor}[thm]{Corollary}
\theoremstyle{definition}
\newtheorem{defn}[thm]{Definition}
\newenvironment{rem}
{\pushQED{\qed}\renewcommand{\qedsymbol}{\lower-0.3ex\hbox{$\triangleleft$}}\remarkx}
{\popQED\endremarkx}
\DeclareMathOperator{\GL}{GL}
\DeclareMathOperator{\id}{id}
\DeclareMathOperator{\Sym}{Sym}
\DeclareMathOperator{\Ind}{Ind}
\DeclareMathOperator{\End}{End}   
\DeclareMathOperator{\Hom}{Hom}                  
\DeclareMathOperator{\Sp}{Sp}
\DeclareMathOperator{\ob}{ob}
\DeclareMathOperator{\Res}{Res}
\newcommand{\B}{\mathcal B}
\newcommand{\z}{\mathfrak z}
\newcommand{\gr}{\mathfrak{gr}}
\newcommand{\ct}{\mathsf{ct}}
\newcommand{\rt}{\mathsf{rt}}
\newcommand{\field}[1]{\ensuremath{\mathbf{#1}}}
\newcommand{\Q}{\ensuremath{\field{Q}}}        
\newcommand{\sym}{\ensuremath{\mathfrak{S}}}
\newcommand{\Z}{\ensuremath{\field{Z}}} 
\newcommand{\N}{\ensuremath{\field{N}}}
\newcommand{\MM}{\overline{{M}}}
\newcommand{\IH}{{I\! H}}
\newcommand{\CH}{\mathrm{CH}}
\newcommand{\V}{\mathbb V}
\newcommand{\RH}{{R\hspace{-0.5pt} H}}\newcommand{\FZ}{\mathsf{FZ}}
\renewcommand{\B}{\mathfrak {Br}}
\renewcommand{\vee}{\ast}
\newcommand{\VV}{\mathbf V}
\renewcommand{\O}{\boldsymbol{\mathcal O}}
\renewcommand{\u}{\mathfrak u}
\newcommand{\Corr}{\mathsf{Corr}}
\newcommand{\Mot}{\mathsf{Mot}}
\renewcommand{\L}{\mathbb L}
\newcommand{\llambda}{{\langle\lambda\rangle}}
\renewcommand{\mathcal}{\EuScript}
\title{Tautological classes with twisted coefficients}
\author{Dan Petersen}
\email{dan.petersen@math.su.se}
\author{Mehdi Tavakol}
\email{mehdi.tavakol@unimelb.edu.au}
\author{Qizheng Yin}
\email{qizheng@math.pku.edu.cn}
\keywords{tautological rings, moduli spaces of curves, Chow motives, twisted commutative algebras}
\subjclass[2010]{14H10, 14C25, 14C17, 18D10, 55R35}
\begin{document} 
 \maketitle   

\selectlanguage{english}
\begin{abstract}
	Let $M_g$ be the moduli space of smooth genus $g$ curves. We define a notion of Chow groups of $M_g$ with coefficients in a representation of $\Sp(2g)$, and we define a subgroup of tautological classes in these Chow groups with twisted coefficients. Studying the tautological groups of $M_g$ with twisted coefficients is equivalent to studying the tautological rings of all fibered powers $C_g^n$ of the universal curve $C_g \to M_g$ simultaneously. By taking the direct sum over all irreducible representations of the symplectic group in fixed genus, one obtains the structure of a twisted commutative algebra on the tautological classes. We obtain some structural results for this twisted commutative algebra, and we are able to calculate it explicitly when $g \leq 4$. Thus we completely determine the tautological rings of all fibered powers of the universal curve over $M_g$ in these genera. We also give some applications to the Faber conjecture. 
\end{abstract}
	%

\section{Introduction}

Say that $g \geq 2$, and let $C_g^n$ be the moduli space of smooth genus $g$ curves with $n$ ordered not necessarily distinct marked points. Equivalently, $C_g^n$ is the $n$-fold fibered power of the universal curve over $M_g$ with itself. Suppose that we want to study the {cohomology} of the spaces $C_g^n$. A natural approach is to apply the Leray--Serre spectral sequence for the fibration $f \colon C_g^n \to M_g$ that forgets the $n$ markings. Since $f$ is smooth and proper, the spectral sequence degenerates by Deligne's decomposition theorem \cite{delignedegeneration}, and
$$ H^k(C_g^n,\Q) \cong \bigoplus_{p+q=k}H^p(M_g,R^q f_\ast\Q).$$
To each dominant weight $\lambda$ of $\Sp(2g)$ there is associated a local system $\V_\llambda$ on $A_g$, the moduli space of principally polarized abelian varieties of dimension $g$.
The sheaves $R^q f_\ast\Q$ decompose into direct sums of local systems $\V_\llambda$, where we use the notation $\V_\llambda$ also for their pullback along the Torelli map. The local systems $\V_\llambda$ occuring as summands of $Rf_\ast\Q$ are precisely those with $\vert \lambda \vert \leq n$.

It follows that the two collections of cohomology groups 
$$ \text{$H^\bullet(C_g^n,\Q)$ for $n \leq N$} \qquad \qquad \text{and} \qquad \qquad \text{$H^\bullet(M_g,\V_\llambda)$ for $\vert \lambda \vert \leq N$ } $$
contain more or less the same information. However, this information is ``packaged'' in a much more efficient way in the local systems. The cohomology groups of $C_g^n$ are generally very large, but when expressed in terms of local systems we see that most of the cohomology just encodes how the complex $Rf_\ast\Q$ decomposes into summands --- that is, it encodes the K\"unneth formula for the $n$-fold self-product of a genus $g$ curve, and some representation theory of $\Sp(2g)$. By studying the local systems we may focus our attention on the ``interesting'' part of the cohomology in a systematic way. 

Our first goal of this paper is to do the same thing for the \emph{tautological rings} of $C_g^n$. We remind the reader that the tautological ring $R^\bullet(C_g^n)$ is the subalgebra of $\CH^\bullet(C_g^n)$ generated by the classes of the diagonal loci $\Delta_{ij}$ where two markings coincide, the classes $\psi_1,\ldots,\psi_n$ which are the Chern classes of the $n$ cotangent line bundles at the marked points, and the Morita--Mumford--Miller classes $\kappa_d$. The image of $R^\bullet(C_g^n)$ in cohomology under the cycle class map is denoted $\RH^\bullet(C_g^n)$.

We will be able to define tautological cohomology groups $\RH^\bullet(M_g,\V_\llambda) \subseteq H^\bullet(M_g,\V_\llambda)$, with the property that the collections of tautological groups
$$ \text{$\RH^\bullet(C_g^n)$ for $n \leq N$} \qquad \qquad \text{and} \qquad \qquad \text{$\RH^\bullet(M_g,\V_\llambda)$ for $\vert \lambda \vert \leq N$ } $$
bear exactly the same relation to each other as the collections of cohomology groups $H^\bullet(C_g^n,\Q)$ and $H^\bullet(M_g,\V_\llambda)$. Thus we are able to decompose the tautological groups of $C_g^n$ into pieces indexed by local systems; the tautological groups of the local systems package all the information about the tautological groups of $C_g^n$ in a much more efficient way, and working with twisted coefficients allows us to ``zoom in'' on particularly interesting parts of the tautological groups. Moreover, the groups $\RH^\bullet(M_g,\V_\llambda)$ turn out to be more computable than the groups $\RH^\bullet(C_g^n)$.

In fact, we will actually not only do this on the level of cohomology groups, but for Chow groups. (The results are new already on the level of cohomology, though.) For this we should not work with local systems on $M_g$, but with relative Chow motives over the base $M_g$. Instead of decomposing the complex $Rf_\ast\Q$ into local systems $\V_\llambda$, we will decompose the Chow motive $h(C_g^n/M_g)$ into Chow motives $\VV_\llambda$ which are motivic lifts of the local systems $\V_\llambda$. Once the correct framework is in place, working with motives rather than local systems provides no extra difficulties.

The utility of working with the local systems is illustrated by our Theorem \ref{lowgenustheorem}, in which we completely determine all tautological groups with twisted coefficients when $g=2,3,4$. It is an easy matter to compute from Theorem \ref{lowgenustheorem} the ranks of all the groups $R^k(C_g^n)$ when $g \leq 4$, the decompositions of these tautological groups into $\sym_n$-representations, and the socle pairing. Thus a lot of useful information about the tautological rings is encoded in a few lines of information about the local systems.

Since the tautological rings are defined in terms of explicit generators, understanding the tautological rings is equivalent to finding the complete list of relations between these generators. A conjectural complete description of the tautological rings was formulated by Faber \cite{faberconjectures}. Namely, a theorem of Looijenga \cite{looijengatautological} asserts that $R^{g-2+n}(C_g^n) \cong \Q$, and that the tautological ring vanishes above this degree. Thus any two monomials of degree $g-2+n$ in the generators of the tautological ring are proportial to each other, and the proof of the $\lambda_g\lambda_{g-1}$-conjecture \cite{gp98,givental2} gives explicit proportionalities. (In fact, both Looijenga's theorem and the proportionalities were part of Faber's original conjecture.) What Faber then conjectured was that any possible relation which is consistent with the pairing into the top degree is a true relation; that is, the ring $R^\bullet(C_g^n)$ should satisfy Poincar\'e duality. The general belief is now that this conjecture should fail. One reason is that the original conjecture was later extended to a ``trinity'' of conjectures for the spaces $M_{g,n}^\rt$, $M_{g,n}^\ct$ and $\MM_{g,n}$ \cite{pandharipandequestions,faberjapan}, and the conjectures for $\MM_{2,n}$ and $M_{2,n}^{\ct}$ are known to fail when $n \geq 20$ and $n \geq 8$, respectively \cite{petersentommasi,m28ct}. The Faber conjecture for the spaces $M_{g,n}^\rt$ is equivalent to the Faber conjecture for $C_g^n$ \cite{universalcurverationaltails}, and is still open. It has more recently been conjectured that Pixton's extension of the FZ relations (see Section \ref{FZsection}) give rise to all relations between tautological classes, and this conjecture is known to contradict the Faber conjecture \cite{pixtonthesis}. 

An interesting aspect of our work is that even though the decomposition of the tautological groups $R^\bullet(C_g^n)$ into pieces indexed by representations of $\Sp(2g)$ is not compatible with the ring structure, the multiplication into the top degree behaves very well: the matrix describing the top degree pairing is block diagonal with respect to our decomposition of the tautological groups. This has the consequence that the Faber conjecture can be fruitfully studied from the perspective of the motives $\VV_\llambda$ --- Poincar\'e duality can be checked for each $\VV_\llambda$ separately. Using this we show that the Faber conjecture is true for the moduli space $C_g^n$ (hence also the space $M_{g,n}^{\rt}$) when $g \leq 4$ and $n$ is arbitrary, and we make some progress in trying to understand likely failures of the Faber conjectures in higher genera.

A completely different perspective on our results is provided by work of Kawazumi--Morita and Hain. For a fixed genus $g \geq 2$, one can define a structure of commutative ring on the direct sum
$$ \mathsf T_g = \bigoplus_\lambda H^\bullet(M_g,\V_\llambda) \otimes \V_\llambda^\ast,$$
where the direct sum is taken over all dominant weights $\lambda$ of $\Sp(2g)$. Let $\mathsf A_g = \bigwedge \V_{\langle 1,1,1\rangle}^\ast /(\V_{\langle 2,2\rangle}^\ast)$ denote the exterior algebra on the representation $\V_{\langle 1,1,1\rangle}^\ast$, modulo the ideal generated by the subrepresentation $\V_{\langle 2,2\rangle}^\ast \subset \wedge^2 \V_{\langle 1,1,1\rangle}^\ast$. If $\V_{\langle 1,1,1\rangle}^\ast$ is placed in degree $1$, then one can define a natural $\Sp(2g)$-equivariant homomorphism of graded commutative rings $\varphi \colon \mathsf A_g \to \mathsf T_g$. In particular, we get a morphism between the subalgebras of symplectic invariants, $$\varphi^{\Sp(2g)} \colon \mathsf A_g^{\Sp(2g)} \to \mathsf T_g^{\Sp(2g)} = H^\bullet(M_g,\Q).$$ According to a theorem of Kawazumi--Morita \cite{kawazumimorita} the image of $\varphi^{\Sp(2g)}$ coincides with the tautological cohomology ring of $M_g$. For this reason it is natural, following Hain \cite{hainnormalfunctions}, to define the image of $\varphi$ as the ``tautological subalgebra'' $\mathsf R_g \subset \mathsf T_g$. By considering the individual summands of $\mathsf R_g$ one obtains an a priori completely different definition of tautological subgroup of $H^\bullet(M_g,\V_\llambda)$. We prove in Theorem \ref{h-tautological} that the two definitions coincide, which in particular re-proves the theorem of Kawazumi--Morita. 
Our low genus results can be seen as calculations of the ring $\mathsf R_g$ for $g \leq 4$; these are the first nontrivial cases where this ring is completely known. A consequence of our general results is that the morphism $\varphi \colon \mathsf A_g \to \mathsf T_g$ can be lifted to take values in Chow groups rather than cohomology groups, answering a question of Hain. A final remark is that Morita has conjectured \cite{moritacohomologicalstructure} that the morphism $\varphi^{\Sp(2g)}$ is injective, so that $\mathsf A_g^{\Sp(2g)}$ is \emph{isomorphic} to the tautological ring of $M_g$ for any $g$. By extension it is natural to ask also whether $\varphi$ is injective. Our results in genus four show that this is not the case, however: $\mathsf A_4 \to \mathsf R_4$ is not an isomorphism.

Let us now state in some more detail what we do in this paper:
\begin{enumerate}[(i)]
	\item For any partition $\lambda_1 \geq \lambda_2 \geq \ldots \geq \lambda_g \geq 0$, we construct a relative Chow motive $\VV_\llambda$ over the moduli space $M_g$, which is a motivic version of the local system over $M_g$ associated to a representation of $\Sp(2g)$ of highest weight $\lambda$.
	\item For any $n\geq 0$, we let $h(C_g^n/M_g)$ be the relative Chow motive over $M_g$ given by the $n$-fold fibered power of the universal curve. We prove that there exists a direct sum decomposition
	\begin{equation}\label{intro-decomposition1}
	h(C_g^n/M_g) \cong \bigoplus_i \VV_{\langle \lambda_i \rangle} \otimes \mathbb L^{m_i},
	\end{equation} 
	where $\mathbb L$ denotes the Lefschetz motive, and in particular we get upon taking Chow groups
	\begin{equation}\label{intro-decomposition2}
	\CH^k(C_g^n) \cong \bigoplus_i \CH^{k-m_i}(M_g,\VV_{\langle \lambda_i \rangle}).
	\end{equation} 
	\item We construct an algebra of correspondences defined by tautological classes, which acts on the motives $h(C_g^n/M_g)$, and hence on the Chow groups $\CH^k(C_g^n)$. Using this algebra we obtain a \emph{canonical} choice of decomposition \eqref{intro-decomposition1}, and a method for computing the projection of any class in $\CH^k(C_g^n)$ into any particular summand on the right hand side of \eqref{intro-decomposition2}.
	\item We define subgroups $R^k(M_g,\VV_\llambda) \subseteq \CH^k(M_g,\VV_\llambda)$ with the property that for any decomposition of $ h(C_g^n/M_g)$ as in Eq. \eqref{intro-decomposition1}, we have
	\begin{equation*}\label{intro-decomposition3}
	R^k(C_g^n) \cong \bigoplus_i R^{k-m_i}(M_g,\VV_{\langle \lambda_i \rangle}).
	\end{equation*}
	We call the groups $R^k(M_g,\VV_\llambda)$ the \emph{tautological groups of $M_g$ with twisted coefficients.} Informally, all information about the tautological rings $R^\bullet(C_g^n)$ is contained in the groups $R^k(M_g,\VV_\llambda)$.
	\item The motives $\VV_\llambda$ come with a duality pairing $\VV_\llambda \otimes \VV_\llambda \to \mathbb L^{\vert \lambda\vert}$, which is the motivic avatar of the fact that all representations of the symplectic group are self-dual. We prove that the socle pairing 
	\begin{equation} \label{intro-socle1}
	R^k(C_g^n) \otimes R^{g-2+n-k}(C_g^n) \to R^{g-2+n}(C_g^n) \cong \Q
	\end{equation}
	is a direct sum of pairings of the form
	\begin{equation}\label{intro-socle2}
	 R^k(M_g,\VV_\llambda) \otimes R^{g-2+\vert\lambda\vert-k}(M_g,\VV_\llambda) \to R^{g-2+\vert\lambda\vert}(M_g,\mathbb L^{\vert \lambda\vert}) = R^{g-2}(M_g)  \cong \Q\end{equation}
	for $\vert \lambda \vert \leq n$. In particular, $R^\bullet(C_g^n)$ is a Gorenstein algebra --- that is, \eqref{intro-socle1} is a perfect pairing for all $k$ --- if and only if \eqref{intro-socle2} is a perfect pairing for all $k$ and all $\vert \lambda \vert \leq n$.
	\item If we fix a genus and consider the \emph{direct sum} over all partitions,
	$$ \bigoplus_\lambda R^\bullet(M_g,\VV_\llambda) \otimes \sigma_{\lambda^T},$$
	where $\sigma_{\lambda^T}$ denotes the representation of the symmetric group corresponding to the partition conjugate to $\lambda$, we obtain the structure of a \emph{twisted commutative algebra}. The $\lambda=0$ component of this twisted commutative algebra is just the tautological ring of $M_g$. We prove using the FZ relations that this twisted commutative algebra is finitely generated  with an explicit bound for the degrees of the generators, which for $\lambda=0$ specializes to the theorem of Ionel--Morita \cite{ionel,moritagenerators}.
	\item For $g=2,3,4$ we completely determine the groups $R^k(M_g,\VV_\llambda)$ for all $k$ and $\lambda$. A key input is that the twisted commutative algebra described in point (vi) above will in these low genera have only $0$, $2$ and $3$ generators, respectively, by our generalization of the theorem of Ionel--Morita. As a consequence we can compute the ranks $R^k(C_g^n)$ for all $k$ and $n$ in these genera, and how these tautological groups decompose into irreducible representations of $\sym_n$. It also follows that $R^\bullet(C_g^n)$ is always a Gorenstein algebra in these genera.
\end{enumerate}

The algebra of projectors described in point (iii) above seems like a particularly powerful tool for ``zooming in'' on a specific part of the tautological ring. As explained in Section \ref{genus5} we expect that the Faber conjecture fails when $g=5$ and $n=8$ for the motives $\VV_{\langle 2,2,2,2\rangle}$ and $\VV_{\langle 3,2,2,1\rangle}$. Using our algebra of projectors we can project specific tautological classes onto these summands, which gives explicit classes which pair to zero with everything in complementary degree but which are conjecturally nonzero.

It would be interesting to try to extend our results from $M_g$ to the Deligne--Mumford compactification $\MM_g$. On the level of cohomology this would correspond to studying the forgetful maps $f \colon \MM_{g,n} \to \MM_g$ rather than $C_g^n \to M_g$. Then $f$ is no longer smooth, but still proper, so by the decomposition theorem \cite{bbd} the complex $Rf_\ast\Q$ is a direct sum of perverse sheaves. In fact, each of these perverse sheaves will be the pushforward along some gluing map
$$ \Big(\!\prod_{v \in \mathrm{Vert}(\Gamma)}\!\!\! \MM_{g(v),n(v)}\Big)/\mathrm{Aut}(\Gamma) \to \MM_{g,n}$$
of the intermediate extension of a product of local systems associated to representations of the smaller symplectic groups $\Sp(2g(v))$. This suggests that one should try to define a subspace of tautological classes inside the intersection cohomology groups $\IH^\bullet(\MM_g,\V_\llambda)$ for each genus $g$ and dominant weight $\lambda$, and that these tautological groups should ``govern'' all of the tautological groups $R^\bullet(\MM_{g,n})$ much in the same way as the tautological groups of $\V_\lambda$ on $M_g$ govern all the tautological groups $R^\bullet(C_g^n)$. Similarly there should be tautological classes inside $\IH^\bullet(M_g^\ct,\V_\llambda) = H^\bullet(M_g^\ct,\V_\llambda)$ which govern all the tautological rings $R^\bullet(M_{g,n}^\ct)$. The result \cite[Theorem 3.4]{m28ct} can be seen as calculating the tautological subspace of $H^\bullet(M_2^\ct,\V_\llambda)$ for all $\lambda$. Moreover, it should be possible to carry out the suggestions in this paragraph also on the level of Chow groups, using the intersection Chow motives of Corti--Hanamura \cite{cortihanamura}.

\subsection{How to read this paper}

As mentioned already, this paper is written in the language of Chow motives. Readers who would prefer not to know what a motive is should still be able to follow the arguments by translating the arguments to cohomology using the following table:
\begin{small}\begin{align*}
\text{Chow motive $h(X/S)$ of a family $f \colon X \to S$} &&& \text{Complex $Rf_\ast\Q$ in the derived category of $S$} \\
\text{Decomposition $h(X/S) \cong \bigoplus_i h^i(X/S)$}&&& \text{Decomposition $Rf_\ast\Q \cong \bigoplus_i R^i f_\ast\Q[-i]$} \\
\text{Chow group $\CH^k(S,h^i(X/S))$} &&& 
\text{Cohomology group $H^{2k-i}(S,R^if_\ast\Q)$} \\
\text{Lefschetz motive $\L$} &&& \text{\parbox{\textwidth}{Constant sheaf $\Q$, considered as a complex\\concentrated in degree $2$}}\\
\text{Chow motive $\VV_\llambda$ over $M_g$} &&& 
\text{\parbox{\textwidth}{Local system $\V_\llambda$ on $M_g$, considered as a complex \\ concentrated in degree $\vert\lambda\vert$}}
\end{align*}
\end{small}Thus the only real complication is the indexing: the $k$th Chow group of the motive $\VV_\llambda \otimes \L^i$ corresponds to the $(2k - \vert \lambda \vert - 2i)$th cohomology group of the local system $\V_\llambda$.

Sections 2--4 of this paper explain necessary preliminary material from the representation theory of the symplectic group and about Chow motives. In particular, Section 4 explains a result of Ancona which is used to lift our methods from cohomology to Chow groups. It could be a good idea for the reader to start from Section \ref{mainsection} and refer back to the previous sections only as needed. Section \ref{mainsection} provides the theoretical backbone to the article, and Section \ref{examplesection} provides some simple (hopefully instructive) example calculations. In Sections \ref{Gorsection}--\ref{lowgenuscalculations} the theory is applied and our main results are proven. The concluding Sections \ref{looijenga-section} and \ref{hain-section} explain the relationship between what we do and previous work of Looijenga, Hain, Morita, and Kawazumi.

\subsection{Conventions}

Representations of groups will be considered as left representations unless specified otherwise. However, if $V$ is a left representation, then we consider its dual $V^\vee$ as a right representation. (Recall that the dual of a left module over a noncommutative ring is a right module, and vice versa.)

Chow groups are always taken with rational coefficients. 

We occasionally consider cohomology groups as well as Chow groups. Although we write cohomology with rational coefficients throughout, it will be clear that all results could have been carried out equally well in the \'etale setting, with coefficients in $\Q_\ell$. 


\subsection{Acknowledgements} The first author would like to thank Dick Hain and Allen Knutsen for useful comments. The second author was supported by the research grant IBS-R003-S1. We are grateful to Rahul Pandharipande for his interest in this work.

\section{Chow motives}
The results of this paper will be formulated in the language of Chow motives. The first parts of this section briefly recall standard definitions for the reader's convenience and to fix conventions. For a more detailed and motivated introduction see e.g.\ \cite{classicalmotives}.

\subsection{The category of Chow motives}Let $S$ be a smooth connected scheme or Deligne--Mumford stack over a field $k$ that we assume algebraically closed for simplicity. Let $X$ and $Y$ be smooth proper schemes over $S$.\footnote{If $S$ is a Deligne--Mumford stack we do not impose the condition that $X$ and $Y$ are schemes, only that the maps to $S$ are representable in schemes.} We define a graded vector space of \emph{correspondences over $S$} as follows: if $X$ is connected and $X \to S$ is of relative dimension $d$ then $\Corr_S(X,Y) = \CH^{d+\bullet}(X \times_S Y)$; in the general case we define $\Corr_S(\coprod_\alpha X_\alpha,Y) = \prod_{\alpha} \Corr_S(X_\alpha,Y)$. We write $$ f \colon X \vdash Y$$ to denote that $f$ is a correspondence from $X$ to $Y$. The composition of $f \colon X \vdash Y$ and $g \colon Y \vdash Z$ is defined by
$$ g \circ f = (p_{13})_\ast( p_{12}^\ast(f) \cdot p_{23}^\ast(g)),$$
(note the reversed ordering!) where $p_{ij}$ denotes the projection from $X \times_S Y \times_S Z$ onto the $i$th and $j$th factor of the fibered product. One checks that composition of correspondences is associative and that the diagonal, considered as a correspondence $X \vdash X$, acts as the identity $\id_X$, so that $\Corr_S$ is a category.

We say that a correspondence $p \colon X \vdash X$ of degree $0$ is \emph{idempotent} if $p \circ p = p$. We also say that $p$ is a \emph{projector}.

We define the category $\Mot_S$ of \emph{Chow motives over $S$.} The objects of $\Mot_S$ are triples $(X,p,n)$ where $X$ is smooth and proper over $S$, $p \colon  X \vdash X$ is a projector, and $n \in \Z$. Morphisms are defined by
$$ \Mot_S((X,p,n),(Y,q,m)) = q \circ \Corr_S^{m-n}(X,Y)\circ p \subset \Corr_S(X,Y),$$
where $\Corr_S^{r}(X,Y)$ denotes the degree $r$ part of $\Corr_S(X,Y)$, and $q \circ \Corr_S^{m-n}(X,Y)\circ p$ denotes the joint image of the projectors $p$ and $q$ acting on $\Corr_S(X,Y)$ on the right and on the left, respectively. 


The \emph{Lefschetz motive over $S$} is defined as $(S,\id,-1)$ and will be denoted by $\L_S$. If $S$ is clear from context we will omit the subscript and write $\L$.

We define a tensor product on motives as follows. If $M=(X,p,n)$ and $N=(Y,q,m)$ then $M\otimes N = (X \times_S Y, p \times q,n+m)$. This makes $\Mot_S$ a symmetric monoidal category with monoidal unit $\mathbf 1 = (S,\id,0)$. The category is in fact rigid symmetric monoidal, i.e.\ every object has a dual: if $X$ is of pure dimension $d$ over $S$, then the dual of $M = (X,p,n)$ is $M^\vee = (X,p^t,d-n)$, where $p^t$ denotes the transpose correspondence. The category also has direct sums. The sum $(X,p,n) \oplus (Y,q,m)$ is easiest to define when $n=m$, in which case it is given by $(X \sqcup Y, p \oplus q, n)$. 

Let $\mathcal V_S$ be the category of smooth proper schemes over $S$. There is a contravariant functor $\mathcal V_S \to \Mot_S$ which is given on objects by $X \mapsto (X,\id,0)$ and which maps an $S$-morphism $f \colon Y \to X$ to the class of the transpose of its graph in $\CH^{\dim_S(X)}(X \times_S Y)$. For $X\to S$ smooth and proper we denote by $h(X/S)$ the corresponding Chow motive over $S$. Note that $h(X/S)^\vee \cong h(X/S)\otimes \L^{-\dim_S X}$ (Poincar\'e duality). 

\subsection{Chow groups and cohomology groups of a Chow motive}
Let $M$ be a Chow motive over $S$. We define its Chow groups by 
$\CH^k(S,M) = \Mot_S(\L_S^k,M)$. We can make this definition more explicit as follows. Note that for $X$ a smooth proper scheme over $S$ we have $\CH^\bullet(X) = \Corr_S(S,X)$. As such, the algebra $\Corr_S(X,X)$ acts on the Chow groups of $X$ on the left. Let $M = (X,p,n)$ be a Chow motive over $S$. Then its Chow groups are given by
$$ \CH^k(S,M) = p \circ \CH^{k+n}(X).$$
\begin{rem}\label{transposeremark}
	It is also true that $\CH^\bullet(X) = \Corr_S(X,S)$ (up to a degree shift), so that $\Corr_S(X,X)$ acts on the Chow groups of $X$ on the right. One could also define $\CH^{k}(S,M) = \CH^{k+n}(X) \circ p^t$, where $p^t$ denotes the transpose correspondence of $p$.  
\end{rem}
Let us suppose that $S$ is a complex algebraic variety. There is a \emph{Betti realization} functor $\mathsf{real}: \Mot_S \to D^b(S)$ into the bounded derived category of sheaves of $\Q$-vector spaces on $S$. For $\lambda \colon X \to S$ a smooth proper scheme over $S$ we have
\[ \mathsf{real}\,h(X/S) = R\lambda_\ast \Q.\]
The algebra $\Corr_S(X,X)$ acts on the complex $R\lambda_\ast\Q$, and for $p \colon X \vdash X$ idempotent we define $\mathsf{real}\,(X,p,n) = \mathrm{Im}(p_\ast \colon R\lambda_\ast\Q \to R\lambda_\ast\Q)[2n]$, where $[2n]$ denotes the suspension functor in $D^b(S)$. There is a \emph{cycle class map}
$$ \CH^k(S,M) \to \mathbb H^{2k}(S,\mathsf{real}\, M)$$
(where $\mathbb H$ denotes hypercohomology) which on motives of the form $h(X/S)$ agrees with the usual cycle class map:
$$ \CH^k(X) = \CH^k(S,h(X/S)) \to \mathbb H^{2k}(S,\mathsf{real}\, M) = \mathbb H^{2k}(S,R\lambda_\ast \Q) = H^{2k}(X,\Q).$$
Over an arbitrary field there is an analogous realization functor from $\Mot_S$ to the derived category of \'etale $\Q_\ell$-sheaves on $S$. 

\subsection{K\"unneth decomposition, the summands $h^0$ and $h^{2d}$}\label{kunnethsection}Let $\lambda \colon X \to S$ be smooth, proper and purely of relative dimension $d$. By Deligne's theorem \cite{delignedegeneration} there is an isomorphism in $D^b(S)$:
$$ R\lambda_\ast\Q \cong \bigoplus_{i=0}^{2d} R^i\lambda_\ast\Q[-i].$$
In particular, this decomposition implies that the Leray spectral sequence for $\lambda$ degenerates, and $H^k(X,\Q) \cong \bigoplus_{p+q=k} H^p(S,R^q\lambda_\ast\Q).$
It is expected that this decomposition always lifts to the category of Chow motives. Thus there should be an isomorphism 
$$ h(X/S) \cong \bigoplus_{i=0}^{2d} h^i(X/S)$$
for which $\mathsf{real}\, h^i(X/S) \cong R^i\lambda_\ast\Q[-i]$. In particular one would have
$$ \CH^k(X) \cong \bigoplus_{i=0}^{2d} \CH^k(S,h^i(X/S)).$$
The summands $h^0$ and $h^{2d}$ can easily be constructed unconditionally. Let us suppose that $X$ is connected, and let $\z \in \CH^d(X)$ be a cycle of degree $1$ on each fiber of $X \to S$, e.g.\ a section. One checks that the two correspondences $X \vdash X$ given by 
$$ \pi_0 = [\z \times X] \qquad \text{and} \qquad \pi_{2d} = [X \times \z]$$
are idempotent. If we define $h^0(X/S) = (X,\pi_0,0)$ and $h^{>0}(X/S) = (X,\id_X - \pi_0,0)$ then 
$$ h(X/S) \cong h^0(X/S) \oplus h^{>0}(X/S)$$
which on realizations gives the decomposition
$$ R\lambda_\ast\Q \cong R^0\lambda_\ast\Q \oplus \tau_{\geq 1}R\lambda_\ast\Q,$$
where $\tau$ denotes a truncation functor in the derived category $D^b(S)$. Similarly we get decompositions $h(X/S) \cong h^{<2d}(X/S) \oplus h^{2d}(X/S)$ with realization $\tau_{\leq 2d-1}R\lambda_\ast\Q \oplus R^{2d}\lambda_\ast\Q[-2d]$.

\begin{lem}\label{h2d}Let $X$ and $\z$ be as above. Then $h^0(X/S) \cong \mathbf 1$ and $h^{2d}(X/S) \cong \L^d$.	
\end{lem}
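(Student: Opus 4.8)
The plan is to exhibit explicit mutually inverse morphisms realizing each isomorphism. For $h^0(X/S) \cong \mathbf 1$, I would regard the fundamental class $[X] \in \CH^0(X) = \Corr^0_S(S,X)$ as a correspondence $S \vdash X$ and the cycle $\z \in \CH^d(X) = \Corr^0_S(X,S)$ as a correspondence $X \vdash S$, and claim that they induce morphisms $b \colon \mathbf 1 \to h^0(X/S)$ and $a \colon h^0(X/S) \to \mathbf 1$ inverse to one another.

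The first step is to compute the two composites with the composition formula $g \circ f = (p_{13})_\ast(p_{12}^\ast f \cdot p_{23}^\ast g)$. For $a \circ b$, work over the triple product $S \times_S X \times_S S \cong X$: there $p_{12}^\ast[X]$ is the fundamental class of $X$, $p_{23}^\ast \z = \z$, and $p_{13}$ is the structure map $\lambda$, so $\z \circ [X] = \lambda_\ast \z \in \CH^0(S)$; since $\z$ has degree $1$ on each fibre and $S$ is connected, $\lambda_\ast \z = [S] = \id_{\mathbf 1}$. For $b \circ a$, work over $X \times_S S \times_S X \cong X \times_S X$: there $p_{12}^\ast \z = [\z \times_S X]$, $p_{23}^\ast[X] = 1$, and $p_{13} = \id$, so $[X] \circ \z = [\z \times_S X] = \pi_0$, which is exactly $\id_{h^0(X/S)}$. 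These same identities show $\z = \z \circ \pi_0$ and $[X] = \pi_0 \circ [X]$, so $a$ and $b$ genuinely lie in $\Mot_S(h^0(X/S),\mathbf 1)$ and $\Mot_S(\mathbf 1, h^0(X/S))$; hence $h^0(X/S) \cong \mathbf 1$.

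For $h^{2d}(X/S) \cong \L^d$ I would deduce it from the first part by duality instead of repeating the computation. Transposing a correspondence swaps the two factors, so $\pi_0^t = [X \times_S \z] = \pi_{2d}$, and by the definition of the dual in $\Mot_S$ we get $h^0(X/S)^\vee = (X, \pi_0^t, d) = (X, \pi_{2d}, d)$. Since $(X, \pi_{2d}, d) \otimes (S, \id, -d) = (X, \pi_{2d}, 0) = h^{2d}(X/S)$ and $\L^d = (S,\id,-d)$, functoriality of $\otimes$ gives
$$ h^{2d}(X/S) \cong h^0(X/S)^\vee \otimes \L^d \cong \mathbf 1^\vee \otimes \L^d = \L^d. $$
Alternatively, one can argue directly: $\lambda_\ast \colon \CH^\bullet(X) \to \CH^{\bullet - d}(S)$ is represented by $[X]$ viewed as a degree $-d$ correspondence $X \vdash S$, and $\z$ viewed as a degree $d$ correspondence $S \vdash X$ provides an inverse, the composite in one direction being $\beta \mapsto \lambda_\ast(\lambda^\ast \beta \cdot \z) = \beta \cdot \lambda_\ast \z = \beta$ by the projection formula, and in the other direction being $\pi_{2d}$.

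The argument is entirely formal; the only things needing care are the bookkeeping of codimensions and degrees of correspondences over the base $S$ (checking that $[X]$ and $\z$ land in the correct graded pieces), and the identification of the various projections out of iterated fibre products with $\lambda$ or with the identity after the canonical isomorphisms $X \times_S S \cong X$. The single non-tautological input is $\lambda_\ast \z = [S]$, which is precisely the hypothesis that $\z$ restricts to a degree-$1$ cycle on each fibre together with the connectedness of $S$.
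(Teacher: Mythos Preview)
Your proof is correct and essentially the same as the paper's: both exhibit the fundamental class $[X]$ and the cycle $\z$ as mutually inverse correspondences, with $\lambda_\ast \z = [S]$ giving one composite and the product $[\z \times X]$ (or $[X \times \z]$) giving the other. The only difference is that the paper proves $h^{2d}(X/S) \cong \L^d$ directly and omits $h^0(X/S) \cong \mathbf 1$, whereas you prove the $h^0$ case directly and deduce the $h^{2d}$ case via the duality $h^{2d}(X/S) \cong h^0(X/S)^\vee \otimes \L^d$; this is a pleasant shortcut but amounts to the transpose of the same computation.
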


\begin{proof}
	We prove only the second isomorphism. By definition we have
	\begin{align*}
	\Mot_S(\L^d,h^{2d}(X/S)) = \pi_{2d} \circ \CH^d(X) \\
	\Mot_S(h^{2d}(X/S),\L^d) = \CH^0(X) \circ \pi_{2d}
	\end{align*}
	It is clear that $\pi_{2d} \circ \z = \z$ and $1 \circ \pi_{2d} = \pi_0 \circ 1 = 1$ (cf. Remark \ref{transposeremark}). As such the cycle $\z$ and the fundamental class $1$ define morphisms $\L^d \to h^{2d}(X/S) \to \L^d$. Moreover, their composition in $\Mot_S(\L^d,\L^d) = \CH^0(S)$ is given by $\lambda_\ast(\z) = 1$, the identity. Their composition in $\Mot_S(h^{2d}(X/S),h^{2d}(X/S)) = \pi_{2d}\circ \CH^d(X\times_S X) \circ \pi_{2d}$ is given by the correspondence $\pi_{2d}$, which is also the identity.
\end{proof}

One might want to define a motive $h^\star(X/S) = (X,\id-\pi_0-\pi_{2d})$ to get a decomposition $h(X/S) \cong h^0(X/S) \oplus h^\star(X/S) \oplus h^{2d}(X/S)$, where the Betti realization of $h^\star(X/S)$ is the complex $\tau_{\geq 1 }\tau_{\leq 2d-1} R\lambda_\ast\Q$. Unfortunately the correspondence $\id-\pi_0-\pi_{2d}$ is not in general idempotent, since $\pi_0$ and $\pi_{2d}$ are not in general orthogonal when the base scheme $S$ is nontrivial; this will be clear from the proof of the following lemma. To make the projectors orthogonal one needs to slightly modify the cycle $\z$.

\begin{lem}\label{curvedecomposition} Let $\lambda \colon X \to S$ be as above. Let $\z \in \CH^d(X)$ be a cycle of degree $1$ on each fiber over $S$. Define $\z' = \z - \frac 1 2 \lambda^\ast \lambda_\ast (\z^2)$. Then $\z'$ also has degree $1$ on each fiber, the projectors $\pi_0 = [\z' \times X]$, $\pi_{2d} = [X \times \z']$ are orthogonal, and there is a decomposition
	$$ h(X/S) \cong h^0(X/S) \oplus h^{\star}(X/S) \oplus h^{2d}(X/S)$$
	with $h^0(X/S) = (X,\pi_0,0)$, $h^\star(X/S) = (X,\id_X - \pi_0 -\pi_{2d},0)$ and $h^{2d}(X/S) = (X,\pi_{2d},0)$. 
\end{lem}

\begin{proof}
	We check that $\pi_0$ and $\pi_{2d}$ are orthogonal. We have 
	$$ \pi_0 \circ \pi_{2d} = (p_{13})_\ast (p^\ast_{12}(\pi_{2d}) \cdot p_{23}^\ast(\pi_0)) = (p_{13})_\ast(p_1^\ast (\z') \cdot p_3^\ast(\z')) = 0$$
	and 
	\begin{align*}
	\pi_{2d} \circ \pi_{0} &= (p_{13})_\ast (p^\ast_{12}(\pi_{0}) \cdot p_{23}^\ast(\pi_{2d})) = (p_{13})_\ast(p_2^\ast (\z')^2). 
	\end{align*}
	From the cartesian diagram
	\[\begin{tikzcd}
	X \times_S X \times_S X \arrow[r,"p_{13}"] \arrow[d,"p_2"]& X \times_S X \arrow[d,"\lambda \times \lambda"]\\
	X \arrow[r,"\lambda"]& S
	\end{tikzcd}\]
	we get $(p_{13})_\ast (p_2^\ast (\z')^2)) = (\lambda\times\lambda)^\ast (\lambda_\ast (\z')^2).$ But now
	\[ \lambda_\ast (\z')^2 = \lambda_\ast (\z^2 - \z \cdot \lambda^\ast \lambda_\ast \z^2 + \frac 1 4 (\lambda^\ast \lambda_\ast \z^2)^2 ) = \lambda_\ast(\z^2)- \lambda_\ast(\z^2) + 0 = 0. \qedhere \]
\end{proof}

\begin{rem}The decomposition $ h(X/S) \cong h^0(X/S) \oplus h^{\star}(X/S) \oplus h^{2d}(X/S)$ is not unique: it depends very much on the choice of a cycle $\z$. Nevertheless each of the summands on the right hand side is determined up to a canonical isomorphism, independently of $\z$. Indeed after Lemma \ref{h2d} we only need to verify this for $h^1$, and a small verification shows that the diagonal in $X \times_S X$ composed with the respective projectors gives the required isomorphism. \end{rem}

\subsection{K\"unneth decomposition for abelian schemes}

The decomposition of Lemma \ref{curvedecomposition} provides a K\"unneth decomposition for families of curves (or surfaces with no odd cohomology). The other case we will use in this paper is the existence of a motivic K\"unneth decomposition for abelian schemes:

\begin{thm}[Shermenev, Deninger--Murre, K\"unnemann]
	Let $A \to S$ be an abelian scheme of relative dimension $g$. There exists a K\"unneth decomposition 
	$$ h(A/S) \cong \bigoplus_{i=0}^{2g}h^i(A/S)$$
	in which we have $h^i(A/S)\cong \Sym^i h^1(A/S)$ for all $i$, and $\Sym^i h^1(A/S)=0$ for $i>2g$.
\end{thm}

Shermenev's proof of this fact starts by choosing a curve $C$ such that its jacobian maps surjectively onto $A$. This makes the decomposition highly noncanonical, and limits the construction to the absolute case  --- there is no reason for a general abelian scheme to be a quotient of the jacobian of a family of curves. By contrast, the constructions of Deninger--Murre \cite{deningermurre} and K\"unnemann \cite{kunnemann} use Fourier theory and are canonical and functorial. 

The Deninger--Murre decomposition can be described using \emph{Manin's identity principle} (see e.g.\ \cite[2.3]{classicalmotives}), which says that although a Chow motive $M$ over $S$ is not determined by its Chow groups, it is determined by its Chow groups after any base change; more precisely, the functor that takes a smooth proper morphism $f \colon T\to S$ to the Chow groups of the relative Chow motive $f^\ast M$ over $T$, determines $M$ completely. For an abelian scheme $A \to S$ we have the morphism $[N] \colon A \to A$ of multiplication by $N>1$, and the Chow groups of $A$ (and any base change of $A$) can be decomposed canonically into eigenspaces for $N$, for all $N$. The summand $h^i(A/S)$ in the Deninger--Murre decomposition corresponds to the $N^i$-eigenspace of the Chow groups of $A$.  

\begin{prop}\label{curvejacobianisomorphism}Let $C \to S$ be a family of smooth curves, and $J \to S$ the jacobian. Then there exists an isomorphism of Chow motives $h^1(C/S) \cong h^1(J/S)$, where $h^1(C/S)$ is the summand of $h(C/S)$ described in Lemma \ref{curvedecomposition} and $h^1(J/S)$ is the summand of $h(J/S)$ provided by the decomposition of Deninger--Murre.
\end{prop}

\begin{proof}[Sketch of proof.]
	After replacing $S$ with a finite \'etale Galois cover we may assume that $C \to S$ has a section. It is enough to prove the isomorphism under this assumption; since we work with $\Q$-coefficients we may then take Galois invariants to obtain the conclusion over our original base scheme. The section defines an Abel--Jacobi map $C \to J$ and puts us in the situation considered by Shermenev \cite{shermenev}. Shermenev's construction provides a motivic K\"unneth decomposition $h(J/S) = \bigoplus_i h^i(J/S)$ for which it is clear from construction that $h^1(C/S)=h^1(J/S)$. Unfortunately the resulting motivic decomposition of $h(J/S)$ is in general different from that of Deninger--Murre.

	The claim is now that even though the two direct sum decompositions of the Chow groups of $J$ are different, they give rise to the same descending filtration of the Chow groups, so that the two associated graded objects are isomorphic. Since $S$ is arbitrary it will then hold also after base change to an arbitrary smooth proper $S' \to S$, and from Manin's identity principle  it will then follow that the motives $h^i(J/S)$ obtained from Shermenev's decomposition are isomorphic to those of Deninger--Murre. The fact that the two descending filtrations of Chow groups coincide is part of a theorem of Moonen--Polishchuk \cite[Theorem 4]{moonenpolishchuk}.
\end{proof}

\section{Preliminaries from representation theory}\label{schurweylsection}

We define a \emph{partition} to be a non-increasing sequence of natural numbers which eventually reaches zero: $\lambda = (\lambda_1 \geq \lambda_2 \geq \lambda_3 \geq \ldots \geq 0 \geq 0 \geq \ldots)$. The \emph{weight} of a partition is defined as $\vert \lambda \vert = \sum_i \lambda_i$. The \emph{length} of a partition is defined as $\ell(\lambda) = \max\{ i \,:\, \lambda_i \neq 0 \}$. Partitions are often identified with \emph{Young diagrams}. Our convention for Young diagrams is that the numbers $\lambda_i$ are the lengths of the rows in the diagram. We denote the conjugate partition of $\lambda$, obtained by reflecting the Young diagram across the diagonal, by $\lambda^T$. 

\subsection{Schur--Weyl duality}

Let $V$ be a vector space over $\Q$. (Everything that follows is true more generally over any field of characteristic zero.) Partitions with $\ell(\lambda) \leq \dim(V)$ are in a natural bijection with irreducible finite dimensional representations of $\GL(V)$ via the theory of highest weight vectors. We write $V_\lambda$ for the representation of $\GL(V)$ corresponding to $\lambda$. For example, if $\lambda = (n \geq 0 \geq 0 \geq \ldots)$, then $V_\lambda = \Sym^n(V)$ and $V_{\lambda^T} = \wedge^n V$. If $\ell(\lambda)>\dim(V)$ then we define $V_\lambda$ to be zero. The representations of the symmetric group are also indexed by partitions: the partitions with $\vert \lambda \vert = n$ are in natural bijection with the representations of the symmetric group $\sym_n$, which we denote by $\sigma_\lambda$. If $\lambda = (n \geq 0 \geq 0 \geq \ldots)$ then $\sigma_\lambda$ is the trivial representation and $\sigma_{\lambda^T}$ is the sign representation.

The vector space $V^{\otimes n}$ carries commuting left and right actions by $\GL(V)$ and $\sym_n$, respectively. \emph{Schur--Weyl duality} in its most basic form is  an expression for how to decompose $V^{\otimes n}$ into irreducible representations under this action of $ \GL(V)\times\sym_n $:
\[ V^{\otimes n} = \bigoplus_{\vert \lambda \vert = n} V_\lambda \otimes \sigma_\lambda^\vee  .\]
Although $\sigma_\lambda \cong \sigma_\lambda^\vee$, we dualize to emphasize that we are considering a right action. 

Schur--Weyl duality can be formulated more abstractly in terms of mutual centralizers. Namely, $V^{\otimes n}$ admits commuting actions of $\GL(V)$ and the group algebra $\Q[\sym_n]$, and Schur--Weyl duality is equivalent to the claim that the centralizer of $\GL(V)$ in $\End_\Q(V^{\otimes n})$ equals the image of $\Q[\sym_n]$ in $\End_\Q(V^{\otimes n})$, and vice versa. A useful consequence of this more abstract viewpoint is that it produces for all $\lambda$ an explicit idempotent endomorphism of $V^{\otimes n}$ whose image is exactly the summand $V_\lambda\otimes \sigma_\lambda^\vee$. 
Namely, the group algebra $\Q[\sym_n]$ contains a family of orthogonal idempotents called \emph{Young symmetrizers}. If $c_\lambda$ denotes a Young symmetrizer corresponding to the partition $\lambda$, then the image of $c_\lambda$ is the summand $V_\lambda\otimes \sigma_\lambda^\vee$ of $V^{\otimes n}$. 

\subsection{Symplectic groups and Weyl's construction}\label{weylconstruction}
Suppose that the vector space $V$ is equipped with a symplectic form. Then partitions of length $\ell(\lambda) \leq \frac 1 2 \dim(V)$ are in a  bijection with irreducible finite dimensional representations of $\Sp(V)$, and we write $V_\llambda$ for the representation of $\Sp(V)$ corresponding to $\lambda$. Similarly we set $V_\llambda = 0$ if $\ell(\lambda)>\frac 1 2 \dim(V)$. The decomposition of $V^{\otimes n}$ into irreducible representations of $\Sp(V)\times \sym_n $ is  more complicated than that for $\GL(V)$. The first nontrivial example is the case $n=2$:
$$ V^{\otimes 2} = V_{\langle 2\rangle}  \otimes \sigma_2^\vee\oplus  V_{\langle 1,1\rangle }\otimes \sigma_{1,1}^\vee\oplus  V_{\langle 0 \rangle }\otimes\sigma_{1,1}^\vee .$$
The first two terms are exactly what one expects from Schur--Weyl duality. The third term arises because $\wedge^2 V$ is not irreducible: it contains the trvial representation spanned by the class of the symplectic form as a subrepresentation. 

The example $n=2$ generalizes to larger values of $n$ as follows. We define 
$$ V^{\langle n \rangle} \subset V^{\otimes n}$$
to be the subspace of \emph{traceless tensors}, i.e.\ the intersection of the kernels of all $\binom n 2$ maps 
$$ V^{\otimes n} \to V^{\otimes (n-2)}$$
given by contracting with the symplectic form. Alternatively, we can think of $V^{\langle n \rangle}$ as a quotient of $V^{\otimes n}$, where we divide by the images of all $\binom n 2$ maps $V^{\otimes (n-2)} \to V^{\otimes n}$ given by inserting the class of the symplectic form. The subspace $V^{\langle n \rangle}$ is clearly $\Sp(V)$-invariant, and Weyl proved that there is an isomorphism
$$ V^{\langle n \rangle} = \bigoplus_{\vert \lambda \vert = n}  V_\llambda \otimes\sigma_\lambda^\vee $$
for all $n$. Thus we know how to decompose the subspace $V^{\langle n \rangle}$ into irreducible representations of $\Sp(V)\times \sym_n $. Moreover, we may write $V^{\otimes n}$ as the direct sum of $V^{\langle n \rangle}$ and the image of all the maps $V^{\otimes (n-2)} \to V^{\otimes n}$; we may write $V^{\otimes (n-2)}$ as the direct sum of $V^{\langle n -2 \rangle}$ and the image of all maps $V^{\otimes (n-4)} \to V^{\otimes (n-2)}$, etc. This leads inductively to a decomposition of $V^{\otimes n}$ into irreducible $\Sp(V)\times \sym_n$-representations. All $V_\llambda$ with $\vert \lambda \vert \leq n$ and $\vert \lambda \vert \equiv n \pmod 2$ will occur in this decomposition.  We refer to this as \emph{Weyl's construction} of the irreducible representations of $\Sp(V)$.

\subsection{Brauer algebra}\label{nazarov1} We now wish to give a version of Schur--Weyl duality for the symplectic group in terms of mutual centralizer algebras for the action of $\Sp(V)$ on $V^{\otimes n}$. The centralizer of $\Sp(V)$ acting on $V^{\otimes n}$ is larger than $\Q[\sym_n]$. It can be described as the algebra of endomorphisms of $V^{\otimes n}$ generated by $\Q[\sym_n]$ and the maps given by compositions
$$ V^{\otimes n} \to V^{\otimes (n-2)} \to V^{\otimes n},$$
where the first map contracts two tensor factors using the symplectic form, and the second map inserts the form. Brauer \cite{brauer} introduced a diagrammatic calculus which is useful for describing endomorphisms in this centralizer algebra. 
We give here a category-theoretic treatment of the Brauer algebra. Somewhat similar presentations can be found in \cite{lehrerzhang,rubeywestbury}.

Let $n$ and $m$ be nonnegative integers. We define an \emph{$(n,m)$-Brauer diagram} to be a diagram of two rows containing $n$ and $m$ dots, respectively, and $(n+m)/2$ strands connecting these dots pairwise. The set of $(n,m)$-Brauer diagrams is empty unless $n \equiv m \pmod 2$. Here is a (4,6)-Brauer diagram:
\[\begin{tikzpicture}[line width=0.6pt,scale=0.6]
\foreach \x in {1,...,6}{
	\node[ve] (b\x) at (\x,0) {};
}
\foreach \x in {2,...,5}{
	\node[ve] (u\x) at (\x,1.5) {};
}
\path (b6) edge[out=90,in=-90] (u4);
\path (b1) edge[out=90,in=90] (b3);
\path (u2) edge[out=-90,in=90] (b5);
\path (u3) edge[out=-90,in=90] (b2);
\path (u5) edge[out=-90,in=90] (b4);
\end{tikzpicture} \]
For any parameter $\delta \in \Q$, let $\B^{(\delta)}(n,m)$ be the $\Q$-vector space spanned by all $(n,m)$-Brauer diagrams. We define a composition map
$$ \B^{(\delta)}(n,m) \otimes \B^{(\delta)}(m,k) \to \B^{(\delta)}(n,k)$$
which is defined on basis elements as follows: to compose an $(n,m)$-Brauer diagram and an $(m,k)$-Brauer diagram, connect the strands on the bottom of the first diagram with those on the top of the second diagram, erase any loops that are formed in the process, and multiply the result by $\delta$ to the power of the number of erased loops. The following example illustrates a composition $\B^{(\delta)}(5,7) \otimes \B^{(\delta)}(7,3) \to \B^{(\delta)}(5,3)$: 
\[ \begin{tikzpicture}[baseline=(O.base),line width=0.6pt,scale=0.6]
\foreach \x in {2,...,6}{
	\node[ve] (u\x) at (\x,1.5) {};
}

\foreach \x in {1,...,7}{
	\node[ve] (b\x) at (\x,0) {};
	
}
\node (O) at (1,-.5) {}; 
\path (u2) edge[out=-90,in=90] (b1);
\path (u4) edge[out=-90,in=90] (b2);
\path (b3) edge[out=90,in=90] (b4);
\path (u5) edge[out=-90,in=-90] (u3);
\path (u6) edge[out=-90,in=90] (b6);
\path (b5) edge[out=90,in=90] (b7);
\foreach \x in {1,...,7}{
	\draw[dotted] (\x,-0.1)--(\x,-0.9);
}

\node at (5.2,0) {};
\pgftransformyshift{-2.5cm}
\foreach \x in {1,...,7}{
	\node[ve] (u\x) at (\x,1.5) {};
}
\foreach \x in {3,...,5}{
	\node[ve] (b\x) at (\x,0) {};
}

\path (b3) edge[out=90,in=90] (b4);
\path (u3) edge[out=-90,in=-90] (u5);
\path (u6) edge[out=-90,in=90] (b5);
\path (u1) edge[out=-90,in=-90] (u2);
\path (u4) edge[out=-75,in=-115] (u7);
\end{tikzpicture}= \delta \cdot \left(
\begin{tikzpicture}[baseline=(O.base),line width=0.6pt,scale=0.6]
\foreach \x in {1,...,5}{
	\node[ve] (u\x) at (\x,1.5) {};
}
\foreach \x in {2,...,4}{
	\node[ve] (b\x) at (\x,0) {};
}
\node at (1,-.2) {};
\node at (5,-.2) {};
\node (O) at (1,0.75) {};
\path (u5) edge[out=-90,in=90] (b4);
\path (u1) edge[out=-90,in=-90] (u3);
\path (u2) edge[out=-90,in=-90] (u4);
\path (b3) edge[out=90,in=90] (b2);
\end{tikzpicture} \right)\]
This composition defines in particular the structure of an associative algebra on $\B^{(\delta)}(n,n)$. This algebra is classically called the \emph{Brauer algebra}. 

\begin{defn}
	Let $V$ be an object of a symmetric monoidal category. A \emph{dual} of $V$ is an object $V^\vee$ equipped with unit and counit maps $\mathbf 1 \to V \otimes V^\vee$ and $V^\vee \otimes V \to \mathbf 1$ such that both the following compositions are identities:
	$$ V \cong \mathbf 1 \otimes V \to V \otimes V^\vee \otimes V \to V \otimes \mathbf 1 \cong V$$
	and
	$$ V^\vee \cong V^\vee \otimes \mathbf 1 \to V^\vee \otimes V \otimes V^\vee \to \mathbf 1 \otimes V^\vee \cong V^\vee.$$
	An object $V$ is \emph{self-dual} if it is equipped with a pair of maps   $\mathbf 1 \to V \otimes V$ and $V \otimes V \to \mathbf 1$ making $V$ into its own dual. It is \emph{symmetrically self-dual} if, in addition, the unit and counit are invariant under the flip map $V \otimes V \to V\otimes V$. If $V$ is dualizable, then we define the \emph{quantum dimension} of $V$ to be the element of $\End(\mathbf 1)$ given by the composition
	$$ \mathbf 1 \to V \otimes V^\vee \cong V^\vee \otimes V \to \mathbf 1.$$
\end{defn}

The following proposition can be seen as part of the diagrammatic calculus of ``string diagrams'', describing morphisms in tensor categories (see e.g.\ \cite[Chapter XIV]{kasselquantumgroups}). In this calculus the precise form of the diagrams depend on the properties of the tensor category. For example, in a symmetric monoidal category strings are allowed to cross each other freely, but in a braided monoidal category the strings must be considered as braids. If we did not insist that $V$ was \emph{symmetrically} self-dual in the following proposition, we would need to equip the strands in the Brauer algebra with orientations or framings. 

\begin{prop}\label{braueraction0}
	Let $V$ be a symmetrically self-dual object of quantum dimension $\delta$ in a $\Q$-linear symmetric monoidal category $\mathcal C$. There is a natural map $\B^{(\delta)}(n,m)\to \Hom_{\mathcal C}(V^{\otimes n},V^{\otimes m})$ which
	makes the following diagram commute:
	\[\begin{tikzcd}[cramped]
	\B^{(\delta)}(n,m) \otimes \B^{(\delta)}(m,k) \arrow[r] \arrow[d]& \B^{(\delta)}(n,k) \arrow[d]\\
	\Hom_{\mathcal C}(V^{\otimes n},V^{\otimes m}) \otimes \Hom_{\mathcal C}(V^{\otimes m},V^{\otimes k}) \arrow[r]& \Hom_{\mathcal C}(V^{\otimes n},V^{\otimes k}).
	\end{tikzcd}\]
\end{prop}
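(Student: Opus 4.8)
The plan is to define the map $\B^{(\delta)}(n,m) \to \Hom_{\mathcal C}(V^{\otimes n}, V^{\otimes m})$ by sending each $(n,m)$-Brauer diagram to a composite built out of the structure morphisms of $\mathcal C$ and of the self-duality datum of $V$, and then to check that this assignment is compatible with composition of diagrams. Since $\B^{(\delta)}(n,m)$ is by definition the free $\Q$-vector space on the set of $(n,m)$-Brauer diagrams, it suffices to specify the image of a single diagram and extend linearly; associativity and $\Q$-linearity of composition in $\mathcal C$ then reduce the commuting square to a statement about individual basis diagrams.

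First I would fix notation for the structure maps: write $\eta \colon \mathbf 1 \to V \otimes V$ and $\varepsilon \colon V \otimes V \to \mathbf 1$ for the counit and unit exhibiting $V$ as symmetrically self-dual, and let $\tau$ denote the symmetry. Given an $(n,m)$-Brauer diagram $D$, I would read off a morphism $\phi(D) \colon V^{\otimes n} \to V^{\otimes m}$ as follows: each strand joining a top dot to a bottom dot contributes an identity $\id_V$ (routed past others using $\tau$); each strand joining two top dots contributes a factor of $\varepsilon$; each strand joining two bottom dots contributes a factor of $\eta$; and the whole morphism is the evident composite, built by first applying the relevant $\eta$'s (from $\mathbf 1$'s freely inserted), then a permutation of tensor factors coming from a product of $\tau$'s that realizes the matching encoded by $D$, then the relevant $\varepsilon$'s. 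The point of assuming $V$ is \emph{symmetrically} self-dual is exactly that this recipe is well-defined: because $\eta$ and $\varepsilon$ are $\tau$-invariant, and because in a symmetric monoidal category any two products of symmetries inducing the same permutation are equal (coherence for symmetric monoidal categories, e.g.\ \cite[Chapter XI]{kasselquantumgroups}), the morphism $\phi(D)$ depends only on the combinatorial diagram $D$ and not on how it is drawn in the plane.

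Next I would verify that $\phi$ is multiplicative on basis elements. Composing an $(n,m)$-diagram $D_1$ on top of an $(m,k)$-diagram $D_2$ is performed by stacking and concatenating strands; the resulting strand structure is the diagram $D_1 \circ D_2$, except that each closed loop that forms in the middle must be deleted and replaced by a scalar $\delta$. On the categorical side, stacking $\phi(D_1)$ above $\phi(D_2)$ produces a composite in which each such closed loop corresponds exactly to the composite $\mathbf 1 \xrightarrow{\eta} V \otimes V \xrightarrow{\varepsilon} \mathbf 1$ (possibly after a $\tau$), which by the definition of quantum dimension equals multiplication by $\delta \in \End(\mathbf 1)$, with these scalars pulled out of the $\Q$-linear composition. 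After removing the loops, the remaining strands realize precisely the matching of $D_1 \circ D_2$, so by the well-definedness argument of the previous paragraph the two sides agree. One further small point is the two zig-zag identities in the definition of a dual: these guarantee that a strand which ``goes down and comes back up'' contributes an identity, matching the erasure of such trivial features in the Brauer calculus; I would invoke them exactly where a composite of the form $V \cong \mathbf 1 \otimes V \to V \otimes V \otimes V \to V \otimes \mathbf 1 \cong V$ arises.

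The main obstacle is the coherence bookkeeping in the well-definedness step: making precise that the composite $\phi(D)$ is independent of the chosen planar representative of $D$, i.e.\ of the order in which strands are routed and of the placement of the inserted $\mathbf 1$'s. This is where one must appeal carefully to Mac Lane's coherence theorem for symmetric monoidal categories together with the $\tau$-invariance of $\eta,\varepsilon$; everything else (linearity, associativity, the loop-counting) is then formal. I expect that once $\phi$ is shown to be well-defined, the commutativity of the displayed square is essentially immediate from the construction.
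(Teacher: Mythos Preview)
Your proposal is correct and follows the standard string-diagram argument that the paper alludes to; in fact the paper does not give a proof of this proposition at all, but simply records the images of the three generating diagrams (crossing, cap, cup) and cites \cite[Chapter XIV]{kasselquantumgroups} for the diagrammatic calculus. Your sketch spells out precisely what that calculus amounts to here: coherence in a symmetric monoidal category to make $\phi(D)$ well-defined, the zig-zag identities to straighten bends, and the identification of a closed loop with $\varepsilon\circ\eta=\delta$ (using $\tau$-invariance of $\eta,\varepsilon$) for the scalar factor.
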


 The collection of maps $\B^{(\delta)}(n,m)\to \Hom_{\mathcal C}(V^{\otimes n},V^{\otimes m})$ are completely determined by the images of the three diagrams
 \[\begin{tikzpicture}[line width=0.6pt,scale=0.6]
 \node[ve] (b0) at (0,0) {};
 \node[ve] (u0) at (0,1.5) {};
 \node[ve] (b1) at (1,0) {};
 \node[ve] (u1) at (1,1.5) {};
 
 \node[ve] (u00) at (5,1.5) {};
 \node[ve] (u10) at (6.3,1.5) {};
 
 \node[ve] (b00) at (10.3,0) {};
 \node[ve] (b10) at (11.6,0) {};
 
 \path (b1) edge[out=90,in=-90] (u0);
 \path (b0) edge[out=90,in=-90] (u1);
 
 \path (b10) edge[out=90,in=90] (b00);
 \path (u00) edge[out=-90,in=-90] (u10);
 \end{tikzpicture} \]
 which generate the Brauer algebras in an appropriate sense; these diagrams are mapped to the flip map $V \otimes V \to V\otimes V$, the counit $V \otimes V \to \mathbf 1$ and the unit $\mathbf 1 \to V \otimes V$, respectively. 

\begin{rem}
	The proposition can be formulated using the language of PROPs: the collection $\{\B^{(\delta)}(n,m)\}_{n,m \geq 0}$ is a PROP, and $V$ is an algebra over this PROP in the category $\mathcal C$. 
\end{rem}

Let $sV$ denote the symplectic vector space $V$ considered as a $\Z/2$-graded vector space concentrated in \emph{odd} degree. Let $\mathbf 1$ be the monoidal unit in this category, i.e.\ the vector space $\Q$ placed in even degree. The symplectic form defines ``contraction'' and ``insertion'' maps
$$ sV \otimes sV \to \mathbf 1 \qquad \text{and} \qquad \mathbf 1 \to sV \otimes sV.$$ Taking into account the Koszul sign rule for $\Z/2$-graded vector spaces, both these maps are now $\sym_2$-invariant; that is, by shifting $V$ into odd degree, we have converted the symplectic form to a symmetric bilinear form. Equivalently, $sV$ is symmetrically self-dual in the category of $\Z/2$-graded vector spaces. The quantum dimension of $sV$ is $-2g$.


\begin{cor}\label{braueraction1}Let $V$ be a symplectic vector space of dimension $2g$. The map which sends an $(n,m)$-Brauer diagram to a morphism $(sV)^{\otimes n} \to (sV)^{\otimes m}$ makes the following diagram commute:
	\[\begin{tikzcd}[cramped]
	\B^{(-2g)}(n,m) \otimes \B^{(-2g)}(m,k) \arrow[r] \arrow[d]& \B^{(-2g)}(n,k) \arrow[d]\\
	\Hom_{\Sp(V)}((sV)^{\otimes n},(sV)^{\otimes m}) \otimes \Hom_{\Sp(V)}((sV)^{\otimes m},(sV)^{\otimes k}) \arrow[r]& \Hom_{\Sp(V)}((sV)^{\otimes n},(sV)^{\otimes k}),
	\end{tikzcd}\]
	and the vertical maps are surjective.
\end{cor}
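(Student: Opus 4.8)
The plan is to deduce Corollary~\ref{braueraction1} from Proposition~\ref{braueraction0} together with classical invariant theory for the symplectic group. First I would apply Proposition~\ref{braueraction0} to the symmetric monoidal category of $\Z/2$-graded $\Q$-vector spaces with the Koszul sign rule, taking $V = sV$. The discussion immediately preceding the corollary already records that $sV$ is symmetrically self-dual in this category with quantum dimension $-2g$: the symplectic pairing, being antisymmetric, becomes symmetric after the shift into odd degree, and the self-intersection of the two dual strands computes to $\pm \dim V = -2g$. (One should double-check the sign of the quantum dimension against the conventions for the unit/counit, but this is the standard Koszul bookkeeping.) Since all the structure maps — flip, contraction, insertion — are $\Sp(V)$-equivariant, the induced maps $\B^{(-2g)}(n,m) \to \Hom_{\mathcal C}((sV)^{\otimes n}, (sV)^{\otimes m})$ land in the subspace of $\Sp(V)$-equivariant morphisms, and the commuting square of Proposition~\ref{braueraction0} restricts to the commuting square in the statement. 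This gives everything except the surjectivity of the vertical arrows.

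For the surjectivity, the key input is the first fundamental theorem (FFT) of invariant theory for $\Sp(2g)$: the space of $\Sp(V)$-invariants in $V^{\otimes n} \otimes (V^\vee)^{\otimes m}$, equivalently $\Hom_{\Sp(V)}(V^{\otimes m}, V^{\otimes n})$ after using the $\Sp$-equivariant identification $V \cong V^\vee$ coming from the symplectic form, is spanned by the ``complete contractions'', i.e.\ by products of the symplectic form and its inverse pairing up all $n+m$ tensor slots. These complete contractions are exactly the images of the $(n,m)$-Brauer diagrams under the map constructed above — a Brauer diagram is precisely a combinatorial encoding of such a pairing of slots, with the number of closed loops in a composition tracking the factors of the quantum dimension $-2g$. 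Hence the map $\B^{(-2g)}(n,m) \to \Hom_{\Sp(V)}((sV)^{\otimes n}, (sV)^{\otimes m})$ is surjective. Passing from the ungraded to the $\Z/2$-graded picture changes nothing on the level of underlying vector spaces of Hom's, since $(sV)^{\otimes n}$ and $V^{\otimes n}$ have the same underlying space and the same $\Sp(V)$-action; only the interpretation of the maps (insertion of signs) changes, which is exactly what was arranged so that the Brauer relations hold on the nose.

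The main obstacle is making sure the FFT is being invoked in the correct form and genuinely matches the diagrammatics: one must check that every complete contraction arises from an \emph{honest} Brauer diagram (no orientation data needed, precisely because $sV$ is \emph{symmetrically} self-dual, as the remark before Proposition~\ref{braueraction0} emphasizes) and that the sign conventions introduced by the odd shift are consistent across compositions — in particular that the composite of two diagrams, after Koszul signs, really does equal $\delta^{\#\text{loops}}$ times the concatenated diagram, with $\delta = -2g$. Once the dictionary ``Brauer diagram $\leftrightarrow$ complete contraction'' is pinned down, surjectivity is just a restatement of the FFT. I would also remark, for completeness, that the \emph{kernel} of these maps is governed by the second fundamental theorem (and is nonzero precisely when $n$ or $m$ exceeds $2g$), though the corollary as stated only asserts surjectivity and so this is not needed here.
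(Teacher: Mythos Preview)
Your proposal is correct and follows essentially the same approach as the paper: the commutativity of the square is reduced to Proposition~\ref{braueraction0} applied to $sV$ in the category of $\Z/2$-graded vector spaces, and surjectivity is deduced from the first fundamental theorem of invariant theory for $\Sp(V)$. The paper's proof is terser and also notes that surjectivity can equivalently be read off from Weyl's decomposition of $V^{\otimes 2n}$ described in Section~\ref{weylconstruction}, but the substance is the same.
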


\begin{proof}After the previous proposition, we only need to explain surjectivity. Surjectivity is equivalent to the statement that the space of symplectic invariant tensors inside $(sV)^{\otimes 2n}$ is spanned by the classes obtained by inserting the symplectic form $n$ times, which follows from Weyl's decomposition of $V^{\otimes 2n}$ into irreducible representations described in the previous section. Alternatively, surjectivity is part of the first fundamental theorem of invariant theory for the symplectic group.
	\end{proof}

In particular, the Brauer algebra $\B^{(-2g)}(n,n)$ surjects onto the centralizer $\End_{\Sp(V)}((sV)^{\otimes n})$. Moreover, there is an isomorphism $\End_{\Sp(V)}((sV)^{\otimes n}) \cong \End_{\Sp(V)}(V^{\otimes n})$ given by desuspending and carefully inserting signs. This isomorphism is described explicitly by Hanlon and Wales \cite[Theorem 2.10]{hanlonwales}. Let us explain why their result gives such an isomorphism. Hanlon and Wales define two versions of Brauer algebra, $\mathfrak A_f^{(x)}$ and $\mathfrak B_f^{(x)}$, for any natural number $f$ and any parameter $x$ in the ground field. These algebras act naturally on the $f$th tensor power of a vector space of dimension $x$ equipped with a symmetric or antisymmetric bilinear form, respectively. They show by a direct calculation that there is an isomorphism $\mathfrak A_f^{(x)} \cong \mathfrak B_f^{(-x)}$ for all $f$, $x$: in our terms, this isomorphism arises from the fact that the functor $V \mapsto sV$ maps a vector space of dimension $x$ to a space of dimension $-x$, and converts a symmetric bilinear form to an antisymmetric one and vice versa. Our algebra $\B^{(-2g)}(n,n)$ is identical with their $\mathfrak A_n^{(-2g)}$.

 The Brauer algebra contains the group algebra $\Q[\sym_n]$ --- as it should, since the centralizer of $\Sp(V)$ acting on $V^{\otimes n}$ should contain the centralizer of $\GL(V)$ --- as the subalgebra consisting of $(n,n)$-Brauer diagrams in which all strands are vertical, i.e. go from the top row to the bottom row. The inclusion $\Q[\sym_n] \to \B^{(-2g)}(n,n)$ has a left inverse, given by mapping any diagram containing a horizontal strand to zero; one checks that the subspace spanned by all diagrams containing a horizontal strand is an ideal.

\begin{thm}[Symplectic Schur--Weyl duality]\label{symplecticschurweyl}
	Let $V$ be a symplectic vector space of dimension $2g$.
	\begin{enumerate}
		\item The image of the Brauer algebra $\B^{(-2g)}(n,n)$  in $\End_\Q(V^{\otimes n})$ is the centralizer of $\Sp(V)$, and vice versa.
		\item There is an isomorphism
		$$ V^{\otimes n} \cong \bigoplus_{\substack{\vert \lambda \vert \leq n \\ \vert \lambda \vert \equiv 2 \pmod n}}  V_{\llambda}\otimes \beta_{\lambda,n}^\vee$$
		where $\beta_{\lambda,n}$ denotes the simple module over the Brauer algebra $\B^{(-2g)}(n,n)$ corresponding to $\lambda$. 
		\item For $\vert \lambda \vert = n$, the representation $\beta_{\lambda,n}$ coincides with the representation $\sigma_{\lambda^T}$ of $\sym_n$, considered as a module over the Brauer algebra via the map $\B^{(-2g)}(n,n) \to \Q[\sym_n]$ which sends any diagram containing a horizontal strand to zero. 
	\end{enumerate} 
\end{thm}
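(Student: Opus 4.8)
The plan is to deduce the three parts in succession from Weyl's construction (Section~\ref{weylconstruction}) together with semisimplicity and the double centralizer theorem, with the genuinely new input concentrated in part~(3). For part~(1), one direction is essentially already in hand: Corollary~\ref{braueraction1} gives a surjection $\B^{(-2g)}(n,n)\twoheadrightarrow\End_{\Sp(V)}((sV)^{\otimes n})$, and composing with the Hanlon--Wales isomorphism $\End_{\Sp(V)}((sV)^{\otimes n})\cong\End_{\Sp(V)}(V^{\otimes n})$ exhibits $\B^{(-2g)}(n,n)$ as surjecting onto the centralizer of $\Sp(V)$ in $\End_\Q(V^{\otimes n})$; this composite is precisely the $\B^{(-2g)}(n,n)$-action on $V^{\otimes n}$ that the theorem refers to. For the converse I would set $A\subseteq\End_\Q(V^{\otimes n})$ to be the image of $\Q[\Sp(V)]$ and $B$ the image of $\B^{(-2g)}(n,n)$; since $\Sp(V)$ is reductive and we are in characteristic zero, $V^{\otimes n}$ is a semisimple $A$-module, so $A$ is finite-dimensional semisimple, and the classical double centralizer theorem gives simultaneously $B=\End_A(V^{\otimes n})$ (which is what was just checked) and $\End_B(V^{\otimes n})=A$.

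Part~(2) is then a matter of decomposing $V^{\otimes n}$ as an $(A,B)$-bimodule. Weyl's construction says the $\Sp(V)$-isotypic components of $V^{\otimes n}$ are indexed exactly by the partitions with $\vert\lambda\vert\le n$, $\vert\lambda\vert\equiv n\pmod 2$ and $\ell(\lambda)\le g$, and each multiplicity space $\Hom_{\Sp(V)}(V_\llambda,V^{\otimes n})$ is a $B$-module because $B=\End_A(V^{\otimes n})$. The double centralizer theorem (now for the semisimple algebra $A$ and its commutant $B$) gives that $B$ is semisimple, that these multiplicity spaces are precisely its simple modules and each occurs once, and that $V^{\otimes n}\cong\bigoplus_\lambda V_\llambda\otimes\Hom_{\Sp(V)}(V_\llambda,V^{\otimes n})$ as a bimodule. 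Setting $\beta_{\lambda,n}^\vee:=\Hom_{\Sp(V)}(V_\llambda,V^{\otimes n})$ yields the claimed decomposition; there remains the bookkeeping of checking that this labelling of the simple $B$-modules agrees with the standard parametrization of Brauer-algebra modules by partitions --- classical when $2g\ge n$, since then $\B^{(-2g)}(n,n)$ is itself semisimple, and otherwise a matter of matching up with the cellular structure, cf.~\cite{hanlonwales}.

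Part~(3) is the step I expect to be the real obstacle, since it is where conjugation of partitions enters and where the conventions of the excerpt must be handled with care. First I would note that a parity count shows a Brauer diagram has a horizontal strand among its source dots if and only if it has one among its target dots, so the span $I$ of diagrams with a horizontal strand is exactly the two-sided ideal discussed just before the theorem, with $\B^{(-2g)}(n,n)/I\cong\Q[\sym_n]$. As an operator on $V^{\otimes n}$, every $d\in I$ factors through some $V^{\otimes m}$ with $m<n$, so $I\cdot V^{\otimes n}$ is a quotient of such tensor powers and contains only irreducibles $V_\mu$ with $\vert\mu\vert<n$; by $\Sp(V)$-equivariance, $I$ annihilates the $V_\llambda$-isotypic component whenever $\vert\lambda\vert=n$, so for such $\lambda$ the module $\beta_{\lambda,n}$ is a module over $\B^{(-2g)}(n,n)/I\cong\Q[\sym_n]$. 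The same factorization identifies $\Hom_{\Sp(V)}(V_\llambda,V^{\otimes n})$ with $\Hom_{\Sp(V)}(V_\llambda,V^{\langle n\rangle})$, which by Weyl's decomposition $V^{\langle n\rangle}\cong\bigoplus_{\vert\lambda\vert=n}V_\llambda\otimes\sigma_\lambda^\vee$ is $\sigma_\lambda^\vee$ --- but only for the \emph{standard} permutation action of $\sym_n$ on $V^{\otimes n}$. The delicate point is that the copy of $\sym_n$ inside $\B^{(-2g)}(n,n)$ does not act on $V^{\otimes n}$ by the standard permutation action: the Brauer algebra acts on the odd-shifted space $(sV)^{\otimes n}$, where transpositions carry a Koszul sign, and this twist survives the passage to $\End_{\Sp(V)}(V^{\otimes n})$ --- equivalently, $\B^{(-2g)}(n,n)=\mathfrak A_n^{(-2g)}$ is the ``antisymmetric'' Brauer algebra $\mathfrak B_n^{(2g)}$ of \cite[Theorem~2.10]{hanlonwales}. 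Thus the $\sym_n$-action relevant to $\beta_{\lambda,n}$ is the standard one tensored with the sign character, and tensoring $\sigma_\lambda^\vee$ with the sign character gives $\sigma_{\lambda^T}^\vee$. Hence $\beta_{\lambda,n}\cong\sigma_{\lambda^T}$, the appearance of the \emph{conjugate} partition --- rather than $\lambda$ itself --- being exactly what is bought by tracking this sign twist correctly.
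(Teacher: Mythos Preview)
Your proof is correct and follows the same line as the paper, only more explicitly: the paper treats the theorem as essentially classical, offering just the one-line remark that ``Part (2) follows from (1), given a description of how the split semisimple algebra $\End_{\Sp(V)}(V^{\otimes n})$ decomposes into simple algebras \cite[Corollary 3.5]{wenzl},'' and relegates the sign-twist explanation of part~(3) to the subsequent remark. Your double-centralizer argument makes explicit what the paper leaves to the citation, and your treatment of the conjugate partition in~(3) is exactly the content of that remark.
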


Part (2) follows from (1), given a description of how the split semisimple algebra $\End_{\Sp(V)}(V^{\otimes n})$ decomposes into simple algebras \cite[Corollary 3.5]{wenzl}.

\begin{rem}It may seem strange that the representation $\sigma_{\lambda^T}$, rather than $\sigma_\lambda$, appears in part (3) of Theorem \ref{symplecticschurweyl}. Indeed, we have seen from Weyl's construction that $V^{\otimes n}$, when decomposed into irreducible representations of $\Sp(V)\times \sym_n$, should contain the summands $V_\llambda \otimes \sigma_\lambda^\vee$ for $\vert \lambda \vert = n$. But Theorem \ref{symplecticschurweyl} says that $V^{\otimes n}$ contains the summands $V_\llambda \otimes \beta_{\lambda,n}^\vee$, and that $\beta_{\lambda,n} \cong \sigma_{\lambda^T}$ when $\vert \lambda \vert = n$. So why isn't this a contradiction?	The reason is that when $\sym_n$ acts on $V^{\otimes n}$ via the composition
	$$ \Q[\sym_n] \hookrightarrow \B^{(-2g)}(n,n) \to \End_\Q((sV)^{\otimes n}) \cong \End_\Q(V^{\otimes n}),$$
	then this action is \emph{not} equal to the standard action of $\sym_n$ on $V^{\otimes n}$ by permuting the factors; instead, one obtains the standard action twisted by the sign representation. So the isomorphism $\beta_{\lambda,n} \cong \sigma_{\lambda^T}$ does hold when $\beta_{\lambda,n}$ is considered as a $\Q[\sym_n]$-module by restriction of scalars, but this is not the same as the $\Q[\sym_n]$-module structure obtained by the natural action of $\sym_n$ on $V^{\otimes n}$.
	
	The conventions are more natural when $V$ is placed in odd degree: the composition $\Q[\sym_n] \to \B^{(-2g)}(n,n) \to \End_\Q((sV)^{\otimes n})$ does give the standard action of $\sym_n$ on $(sV)^{\otimes n}$, which now takes the Koszul sign rule into account. Thus $(sV)^{\otimes n}$ will contain $V_\llambda \otimes \sigma_{\lambda^T}^\vee$ as a summand, placed in odd/even degree according to whether $n$ is odd/even. 	We caution the reader that the calculations of this paper will require some care to be taken to tensor with the sign representation when appropriate, in particular when passing between Chow groups and cohomology groups. 
\end{rem}

\begin{rem}If we want to decompose $V^{\otimes n}$ into irreducible representations of $\Sp(V) \times \sym_n$, then we may start from the usual Schur--Weyl duality (which gives a decomposition into irreducible representations of $\GL(V) \times\sym_n$), and apply a branching formula for $\Sp(V) \subset \GL(V)$. Equivalently we could start with the symplectic 
	Schur--Weyl duality (which gives a decomposition into $\Sp(V) \times \B^{(-2g)}(n,n)$-representations) and try to determine how the modules $\beta_{\mu,n}$ over $\B^{(-2g)}(n,n)$ decompose into sums of Specht modules under restriction of scalars to $\Q[\sym_n]$: note that if 
	$$ \Res^{\GL(V)}_{\Sp(V)} V_\lambda \cong \bigoplus_\mu a_{\mu}^{ \lambda}\,\, V_{\langle \mu \rangle}$$
	for some integers $a_{\mu}^{ \lambda}$, then $\beta_{\mu,n} \cong \bigoplus_{\vert\lambda\vert=n} a_\mu^\lambda \, \sigma_{\lambda^T}$ as $\Q[\sym_n]$-modules. Then the decomposition of $V^{\otimes n}$ reads
	$$V^{\otimes n} \cong \bigoplus_{\vert \lambda \vert = n} \bigoplus_\mu a_{\mu}^{ \lambda}\,\,  V_{\langle \mu  \rangle} \otimes \sigma_\lambda^\vee. $$
	The discussion in the preceding paragraphs says that that $a_{\mu}^{ \lambda} \neq 0$ only for $\vert \mu \vert \equiv \vert \lambda \vert \pmod 2$ and $\vert \mu \vert < \vert \lambda \vert$, with the sole exception of $a_{\lambda}^{ \lambda} = 1$. 	The problem of calculating the coefficients $a_\mu^\lambda$ was first solved by Littlewood \cite{littlewood} and Newell \cite{newell}, and many subsequent authors have given  methods for computing them. 
	%
\end{rem}

\subsection{Projectors}
Given the above, it is natural to ask for an analogue of Young symmetrizers in the Brauer algebra. That is, one would like idempotents $\pi_{\lambda,n} \in \B^{(-2g)}(n,n)$ such that the image of $\pi_{\lambda,n}$ acting on $V^{\otimes n}$ is the irreducible summand $ V_\llambda \otimes \beta_{\lambda,n}^\vee$. The question of how to find such idempotents $\pi_\lambda$ was posed already by Weyl. Nevertheless, no explicit construction was known until Nazarov \cite{nazarovyangians} gave a simple formula describing $\pi_{\lambda,n}$ in the most interesting case $\vert\lambda\vert=n$. Although the statement of the result is elementary and involves only very classical representation theory, the proof proceeds through the theory of quantum groups.

The results of Section \ref{lowgenuscalculations}, and some of the examples in Section \ref{examplesection}, rely on computer calculations which require us to have explicit formulas for the idempotents $\pi_{\lambda,n}$ for $\vert \lambda \vert=n$. However, the reader does not need to know the precise expression for $\pi_{\lambda,n}$ to follow the arguments, only that such a formula exists. Nevertheless we state Nazarov's theorem here for completeness. For a partition $\lambda$, we define the \emph{row tableau} associated to $\lambda$ to be the Young tableau given by filling in the numbers $1,2,\ldots,n$ in the Ferrers diagram so that the first row gets the numbers $1,2,\ldots,\lambda_1$, the second row gets the numbers $\lambda_1+1,\lambda_1+2,\ldots,\lambda_1 +\lambda_2$, and so on. We define the \emph{content} of a box in the $i$th row and $j$th column of the Ferrers diagram to be $j-i$. For $k \in \{1,\ldots,n\}$, we define the number $c_k(\lambda)$ to be the content of the box labeled ``$k$'' in the row tableau corresponding to $\lambda$.  

For any $1 \leq i,j \leq n$, let $B_{ij}$ be the element of $\B^{(-2g)}(n,n)$ corresponding to contracting and inserting the $i$th and $j$th tensor factors with the symplectic form. That is, it has $n-2$ vertical strands, and two horizontal ones: one connecting the $i$th and $j$th ``inputs'', and one connecting the $i$th and $j$th ``outputs''. 

\begin{thm}[Nazarov] \label{nazarov}
	For any partition $\lambda$ of $n$, define
	$$ \pi_{\lambda,n} = \prod_{k,l} \left(1 + \frac{B_{kl}}{2g+1 + c_k(\lambda) + c_l(\lambda)}\right) \cdot c_{\lambda^T} \in \B^{(-2g)}(n,n).$$
	Here the product ranges over all pairs $1 \leq k < l \leq n$ such that the boxes labeled $k$ and $l$ are in distinct rows of the row tableau associated to $\lambda$. Since the operators $B_{kl}$ do not commute, this must be interpreted as an \emph{ordered} product: we order the terms in the product lexicographically by $(k,l)$. Finally, $c_{\lambda^T}$ is a Young symmetrizer. The image of $\pi_{\lambda,n}$ acting on $(sV)^{\otimes n}$ 
	is the summand $ V_{\llambda} \otimes \sigma_{\lambda^T}^\vee$, which is placed in odd degree if $n$ is odd and even degree if $n$ is even.
\end{thm}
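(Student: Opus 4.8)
The plan is to deduce the theorem from symplectic Schur--Weyl duality (Theorem~\ref{symplecticschurweyl}) together with a spectral analysis, due to Nazarov, of the trace-correction operator $P_\lambda := \prod_{k<l}\bigl(1+B_{kl}/(2g+1+c_k(\lambda)+c_l(\lambda))\bigr)$ acting on $(sV)^{\otimes n}$, the product being taken in lexicographic order over the pairs with $k,l$ in distinct rows of the row tableau of $\lambda$, so that $\pi_{\lambda,n}=P_\lambda\cdot c_{\lambda^T}$. After replacing $c_{\lambda^T}$ by a suitable scalar multiple (which does not change its image) we may assume it is idempotent. First I would recall that the composite $\Q[\sym_n]\hookrightarrow\B^{(-2g)}(n,n)\to\End_\Q((sV)^{\otimes n})$ is the standard (Koszul-signed) action, so $c_{\lambda^T}$ acts on $(sV)^{\otimes n}$ as the projector onto the $\GL(V)$-isotypic summand $V^{\GL}_{\lambda}\otimes\sigma_{\lambda^T}^\vee$; the branching $\Sp(V)\subseteq\GL(V)$ and Weyl's construction then identify its traceless part $\bigl(V^{\GL}_\lambda\otimes\sigma_{\lambda^T}^\vee\bigr)\cap(sV)^{\langle n\rangle}$ with $V_{\llambda}\otimes\sigma_{\lambda^T}^\vee$ (here part (3) of Theorem~\ref{symplecticschurweyl} pins down the $\sym_n$-action), while the complementary summand $W'$ of $\mathrm{im}(c_{\lambda^T})$ is a sum of copies of $V_{\langle\mu\rangle}$ with $|\mu|<n$. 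Since $(sV)^{\otimes n}$ lies entirely in $\Z/2$-degree $n$, the grading statement is free, and it remains to prove: (i) $P_\lambda$ is the identity on $V_{\llambda}\otimes\sigma_{\lambda^T}^\vee$; and (ii) $P_\lambda$ annihilates $W'$. Granting these, $\mathrm{im}(\pi_{\lambda,n})=V_{\llambda}\otimes\sigma_{\lambda^T}^\vee$, and $\pi_{\lambda,n}$ acts as the identity on its image by (i), hence is idempotent.

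Claim (i) is immediate: $V_{\llambda}\otimes\sigma_{\lambda^T}^\vee\subseteq(sV)^{\langle n\rangle}$ consists of traceless tensors, so it lies in the kernel of every contraction $(sV)^{\otimes n}\to(sV)^{\otimes(n-2)}$, hence of every $B_{kl}$, and each factor of $P_\lambda$ acts trivially on it. The same observation shows that every nonidentity monomial in the $B_{kl}$'s factors through some $(sV)^{\otimes m}$ with $m\le n-2$, hence through $\Sp(V)$-representations $V_{\langle\mu\rangle}$ with $|\mu|<n$ only; therefore $\mathrm{im}(P_\lambda-\mathrm{id})$ has no $V_{\llambda}$-isotypic component, and claim (ii) is equivalent to the purely negative assertion that $\mathrm{im}(\pi_{\lambda,n})$ contains no $V_{\langle\mu\rangle}$ with $|\mu|<n$. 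This is the genuine content of the theorem, and it is where the exact values of the denominators $2g+1+c_k(\lambda)+c_l(\lambda)$ are indispensable; for any other choice of denominators $\pi_{\lambda,n}$ would acquire lower isotypic components.

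For (ii) I would follow Nazarov's argument, which passes through the representation theory of (twisted) Yangians. The Brauer algebra appears as a Schur--Weyl-type centralizer for the twisted Yangian $Y(\mathfrak{sp}_{2g})$ acting on $(sV)^{\otimes n}$, and this structure carries a \emph{fusion procedure}: a rational one-parameter family of Yang--Baxter operators $\check R(u)\in\B^{(-2g)}(2,2)$ built from the flip and the contraction-insertion $B$, which, evaluated at the content differences $c_i(\lambda)-c_j(\lambda)$ and multiplied together in lexicographic order over the boxes of the row tableau of $\lambda$, collapse to a single idempotent projecting onto a seminormal line in the $V_{\llambda}$-isotypic summand; summing over tableaux (the role ultimately played by $c_{\lambda^T}$) gives the full isotypic projector. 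The denominators $2g+1+c_k(\lambda)+c_l(\lambda)$ of the statement are exactly the values assumed at these evaluation points by the coefficient of $B$ in $\check R(u)$ --- the shift $2g+1$ reflecting the dual Coxeter number $g+1$ of $\mathfrak{sp}_{2g}$ --- and the vanishing of $\pi_{\lambda,n}$ on every $V_{\langle\mu\rangle}$ with $|\mu|<n$ is the output of the no-pole/stabilization analysis of this fusion product, equivalently the statement that the resulting idempotent is supported on the top layer of the Bratteli diagram of the Brauer tower $\bigl(\B^{(-2g)}(m,m)\bigr)_{m\ge0}$. A more elementary, essentially equivalent route stays inside $\B^{(-2g)}(n,n)$ and uses its Jucys--Murphy elements $z_k$ (Nazarov, Leduc--Ram): the $z_k$ act semisimply on $(sV)^{\otimes n}$, by the scalars $c_k(\lambda)$ on $V_{\llambda}\otimes\sigma_{\lambda^T}^\vee$ and with distinct joint eigenvalue vectors on all other joint eigenspaces, and an induction on $n$ using the Brauer-tower branching rules shows that the factor $1+B_{kl}/(2g+1+c_k(\lambda)+c_l(\lambda))$ is calibrated precisely so that the ordered product annihilates every joint $z_\bullet$-eigenspace except the one with eigenvalue vector $(c_1(\lambda),\dots,c_n(\lambda))$. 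The main obstacle is this calibration: since the $B_{kl}$ do not commute the ordered product is delicate, and one must check that the coefficients are numerically exact and not merely of the right shape --- this is the crux of Nazarov's theorem, and the reason his proof detours through Yangians, where the coefficients acquire a conceptual meaning as spectral data of an $R$-matrix.
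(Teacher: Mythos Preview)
The paper does not give its own proof of this theorem: it is stated as Nazarov's result with a citation to \cite{nazarovyangians}, and the only indication of method is the remark that ``although the statement of the result is elementary and involves only very classical representation theory, the proof proceeds through the theory of quantum groups.'' So there is nothing to compare your proposal against except that one sentence.

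Your outline is consistent with it. You correctly isolate the trivial part --- each $B_{kl}$ factors through a contraction, hence kills traceless tensors, so $P_\lambda$ is the identity on $V_{\llambda}\otimes\sigma_{\lambda^T}^\vee$ --- and correctly identify the substantive content as the vanishing of $\pi_{\lambda,n}$ on the lower isotypic pieces $V_{\langle\mu\rangle}$ with $|\mu|<n$. Your sketch of how this is established, via the fusion procedure for the twisted Yangian $Y(\mathfrak{sp}_{2g})$ and the interpretation of the denominators $2g+1+c_k(\lambda)+c_l(\lambda)$ as spectral data of an $R$-matrix, is an accurate summary of Nazarov's approach. The alternative route you mention through Jucys--Murphy elements of the Brauer algebra is also in the literature and is closely related. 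Since the paper cites the result rather than proving it, your proposal is not so much a comparison as an expansion of the paper's one-line pointer to quantum groups.
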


\begin{rem}
	For our purposes it would be enough to have formulas for such idempotents in the smaller algebra $\End_{\Sp(V)}((sV)^{\otimes n})$, which a priori do not have to lift to an idempotent of $\B^{(-2g)}(n,n)$. This means that in principle we could have used results of Ram and Wenzl \cite{ramwenzl} instead of Nazarov's theorem.
\end{rem}


\section{A result of Ancona}

\newcommand{\CC}{\mathcal C}

\subsection{Schur functors}
Let $\CC$ be a $\Q$-linear symmetric monoidal category, and let $M \in \ob \CC$. Then $M^{\otimes n}$ has an action of the group algebra $\Q[\sym_n]$, in the sense that there is a homomorphism of $\Q$-algebras $\Q[\sym_n] \to \Hom_\CC(M^{\otimes n},M^{\otimes n})$. 

Let us suppose moreover that $\CC$ is pseudo-abelian, i.e.\ that every idempotent endomorphism in $\CC$ has an image. For $\vert\lambda\vert=n$, let $\pi_\lambda \in \Q[\sym_n]$ be a Young symmetrizer corresponding to $\lambda$, and define $\mathsf S^\lambda(M)$ to be the image of the idempotent $\pi_\lambda$ acting on $M^{\otimes n}$. We call $\mathsf S^\lambda$ the \emph{Schur functor} corresponding to $\lambda$. Then there is a decomposition \cite{delignecategoriestensorielles}
$$ M^{\otimes n} = \bigoplus_{\vert \lambda \vert = n}  \mathsf S^\lambda(M) \otimes\sigma_\lambda^\vee ,$$
where $\sigma_\lambda$ denotes the representation of the symmetric group corresponding to $\lambda$. When $\CC$ is the category of finite dimensional $\Q$-vector spaces, this is the decomposition of $M^{\otimes n}$ given by Schur--Weyl duality, described in Section \ref{schurweylsection}.

The key fact used in proving this result is that $\Q[\sym_n]$ is a semisimple algebra and in fact a product of matrix algebras over $\Q$: there is an isomorphism $\Q[\sym_n] \cong \prod_{\vert \lambda \vert = n} \End_\Q(\sigma_\lambda)$.

\subsection{Brauer algebra action}

We will need to generalize Proposition \ref{braueraction0} to a weaker notion of symmetrically self-dual object. 
An object $L$ of $\CC$ is called \emph{invertible} if the functor $- \otimes L$ is an equivalence of categories. If this is the case then $L$ is dualizable, the quasi-inverse is given by tensoring with $L^\vee$, and the maps $\mathbf 1 \to L \otimes L^\vee$ and $L \otimes L^\vee \to \mathbf 1$ are isomorphisms. We say that  $L$ is \emph{even} or \emph{odd} if $\sym_n$  acts on $\Hom_{\mathcal C}(L^{\otimes n},L^{\otimes n}) \cong \Hom_{\mathcal C}(\mathbf 1,\mathbf 1)$ by the trivial representation or the sign representation, respectively. 

We say that $M \in \ob \CC$ is \emph{weakly self-dual} if there is an even invertible object $L \in \ob \CC$, and unit and counit maps
$$ L \to M \otimes M \qquad M \otimes M \to L$$
such that the compositions
$$ M \otimes L \to M \otimes M \otimes M \to L \otimes M$$
and
$$ L \otimes M \to M \otimes M \otimes M \to M\otimes L$$
both equal the swap map. This implies that $M^\vee \cong M \otimes L^\vee$.  We call $M$ \emph{weakly symmetrically self-dual} if moreover the unit and counit maps are invariant under the swap map $M \otimes M \to M \otimes M$. 
%
We omit the proof of the following result, which generalizes Proposition \ref{braueraction0} and is an exercise in the diagrammatic calculus for rigid symmetric monoidal categories. 

\begin{prop}\label{braueraction}
	Let $M$ be a weakly symmetrically self-dual object of $\CC$ of quantum dimension $\delta$. There is an action of $\B^{(\delta)}(n,n)$ on $M^{\otimes n}$, under which the subalgebra $\Q[\sym_n]$ acts on $M^{\otimes n}$ in the usual way, and a Brauer diagram of the form
	\begin{tikzpicture}[baseline=(O.base),line width=0.6pt,scale=0.4]
	\foreach \x in {1,...,2}{
		\node[ve] (u\x) at (\x,1) {};
		\node[ve] (b\x) at (\x,0) {};
	}
	\node (O) at (1,.25) {}; 
	\path (u2) edge[out=-90,in=-90] (u1);
	\path (b1) edge[out=90,in=90] (b2);
	\end{tikzpicture} acts as the composition
	$$ M \otimes M \to L \to M \otimes M. $$
\end{prop}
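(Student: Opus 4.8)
The plan is to reduce the statement to a finite, mechanical verification by working with the classical presentation of the Brauer algebra by generators and relations. Write $u\colon L\to M\otimes M$ and $v\colon M\otimes M\to L$ for the unit and counit exhibiting $M$ as weakly symmetrically self-dual; thus $u$ and $v$ are invariant under the swap, and the two ``zigzag'' composites $(v\otimes\id_M)\circ(\id_M\otimes u)\colon M\otimes L\to L\otimes M$ and $(\id_M\otimes v)\circ(u\otimes\id_M)\colon L\otimes M\to M\otimes L$ both equal the swap map. Recall that $\B^{(\delta)}(n,n)$ admits a presentation with generators the crossings $s_1,\dots,s_{n-1}$ and the contraction--insertions $e_1,\dots,e_{n-1}$ (where $e_i$ consists of $n-2$ vertical strands together with a cap joining the $i$th and $(i+1)$st top dots and a cup joining the $i$th and $(i+1)$st bottom dots), subject to: the Coxeter relations among the $s_i$; the relations $e_i^2=\delta e_i$, $e_ie_je_i=e_i$ for $|i-j|=1$, $e_ie_j=e_je_i$ for $|i-j|\ge2$; the relations $s_ie_i=e_is_i=e_i$ and $s_ie_j=e_js_i$ for $|i-j|\ge2$; and the remaining ``local'' relations involving $e_i,e_{i\pm1},s_i,s_{i\pm1}$ on three consecutive strands (such as $s_ie_{i+1}e_i=s_{i+1}e_i$ and $e_ie_{i+1}s_i=e_is_{i+1}$).

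I would then define $\rho\colon\B^{(\delta)}(n,n)\to\End_\CC(M^{\otimes n})$ on generators by sending $s_i$ to the symmetry that transposes the $i$th and $(i+1)$st tensor factors, and $e_i$ to the endomorphism $\id_M^{\otimes(i-1)}\otimes(u\circ v)\otimes\id_M^{\otimes(n-i-1)}$, i.e.\ the one that applies $v$ and then $u$ to the $i$th and $(i+1)$st factors; this is a genuine endomorphism of $M^{\otimes n}$, the copy of $L$ produced by $v$ being immediately absorbed by $u$. The point is to check that $\rho$ annihilates each defining relation. The Coxeter relations hold because $\CC$ is symmetric monoidal (this is exactly the homomorphism $\Q[\sym_n]\to\End_\CC(M^{\otimes n})$), so $\rho$ restricts on the subalgebra $\Q[\sym_n]$ to the usual permutation action --- which already yields the second assertion of the proposition. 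The far-commutation relations follow from bifunctoriality of $\otimes$ (disjoint factors). The relations $s_ie_i=e_is_i=e_i$ use that $u$ and $v$ are swap-invariant. For $e_i^2=\delta e_i$ one observes that $v\circ u\in\End_\CC(L)$, which is canonically $\End_\CC(\mathbf 1)$ since $L$ is invertible and even, is exactly the quantum dimension of $M$, hence equals $\delta\cdot\id_L$; therefore $u\circ v\circ u\circ v=u\circ(\delta\,\id_L)\circ v=\delta\,(u\circ v)$.

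The substantive relations are $e_ie_{i\pm1}e_i=e_i$ and the remaining local $s$--$e$ relations; here the two zigzag axioms do all the work. For instance, unravelling $\rho(e_i)\circ\rho(e_{i+1})\circ\rho(e_i)$ on the three relevant factors produces a composite of the shape $(u\otimes\id_M)\circ\beta\circ\alpha\circ(v\otimes\id_M)$, where $\alpha$ and $\beta$ are precisely the two zigzag composites above; since each is the swap, $\beta\circ\alpha$ is the identity and the composite collapses to $\rho(e_i)$. The other local relations are verified in the same manner. Once every relation is checked, $\rho$ descends to an algebra homomorphism $\B^{(\delta)}(n,n)\to\End_\CC(M^{\otimes n})$, which is the asserted action; $\Q[\sym_n]$ acts by permuting factors by construction, and the cup--cap diagram --- which is the generator $e_1\in\B^{(\delta)}(2,2)$, or $e_i$ in the $n$-strand picture --- is sent to $u\circ v\colon M\otimes M\to L\to M\otimes M$, as claimed.

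I expect the main obstacle to be purely organizational: pinning down a correct and complete presentation of the Brauer algebra, and then carrying out the verification of the snake relations $e_ie_{i\pm1}e_i=e_i$ and the local $s$--$e$ relations without errors in the order of composition, while using the evenness of $L$ (so that the symmetry on $L$ carries no sign and the Brauer strands stay unoriented) and the cancellation of the $L$-twists. If one wanted the full PROP-level statement parallel to Proposition \ref{braueraction0} --- a compatible family of maps $\B^{(\delta)}(n,m)\to\Hom_\CC(M^{\otimes n},M^{\otimes m})$ --- one would additionally have to handle $(n,m)$-diagrams with $n\ne m$, where the numbers of cups and caps differ so the $L$-twists no longer cancel; this can still be arranged by allowing morphisms twisted by powers of $L$, but it is not needed for the applications in this paper.
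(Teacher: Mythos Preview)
Your approach is correct. The paper actually omits the proof of this proposition entirely, saying only that it ``is an exercise in the diagrammatic calculus for rigid symmetric monoidal categories.'' Your verification via the standard generators-and-relations presentation of $\B^{(\delta)}(n,n)$ is a perfectly valid way to do this exercise; the string-diagram calculus the paper has in mind is just a graphical reformulation of exactly the checks you describe, with the zigzag axioms replacing the usual triangle identities because the paper's notion of weak self-duality stipulates that the snake composites equal the \emph{swap} rather than the identity. Your computation of $e_ie_{i+1}e_i=e_i$ is the crucial one and is correct: the two zigzags you call $\alpha$ and $\beta$ are inverse swaps between $L\otimes M$ and $M\otimes L$, so their composite is the identity. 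One small point worth making explicit in the $e_i^2=\delta e_i$ step: the identification of $v\circ u\in\End_\CC(L)\cong\End_\CC(\mathbf 1)$ with the quantum dimension of $M$ uses the symmetry of $u$ (so that precomposing with the swap is harmless) together with the evenness of $L$; you allude to this but it deserves a sentence. Your closing remarks about the PROP-level extension and the need for $L$-twists when $n\neq m$ exactly match the paper's comment following the proposition.
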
 
A more general statement is that any element of $\B^{(\delta )}(m,n)$ gives a well defined morphism in $\Hom_\CC(M^{\otimes m} \otimes L^{n-m}, M^{\otimes n}) = \Hom_\CC(M^{\otimes m},M^{\otimes n} \otimes L^{m-n})$, in a way compatible with composition. 

\begin{rem}\label{matrixalgebraremark}
	Proposition \ref{braueraction} does not in general lead to a decomposition of $M^{\otimes n}$ into summands indexed by irreducible representations of the Brauer algebra,  even over $\overline \Q$. The reason is that the algebra $\B^{(\delta)}(n,n)$ is not in general semisimple, which was the crucial property of $\Q[\sym_n]$ used for defining the decomposition of $M^{\otimes n}$ in terms of Schur functors. In the case of $\B^{(-2g)}(n,n)$, which acts naturally on $V^{\otimes n}$ for $V$ a symplectic vector space of dimension $2g$, what \emph{does} hold is that $\End_{\Sp(2g)}(V^{\otimes n})$, i.e.\ the image of $\B^{(-2g)}(n,n)$ in $\End_\Q(V^{\otimes n})$, is a product of matrix algebras over $\Q$.  \end{rem}

\subsection{Self-products of abelian schemes}Let $f \colon A \to S$ be an abelian scheme of relative dimension $g$, where we assume $S$ smooth and connected. Let $\V$ be the local system $R^1 f_\ast\Q$ on $S$ of rank $2g$. Then $\V$ is defined by a homomorphism $\pi_1(S,x_0) \to \Sp(2g,\Q)$. The Brauer algebra $\B^{(-2g)}(n,n)$ acts on the $n$-fold tensor power of the defining representation $V$ of $\Sp(2g)$, and hence also on $\V^{\otimes n}$. As explained in Theorem \ref{symplecticschurweyl} the Brauer algebra action gives rise to a decomposition
$$V^{\otimes n} \cong \bigoplus_{\substack{\vert \lambda \vert \leq n \\ \vert \lambda \vert \equiv n \pmod 2}}  V_{\langle \lambda \rangle} \otimes \beta_{\lambda,n}^\vee$$
which then gives us also a decomposition of $\V^{\otimes n}$, i.e.\ $\V^{\otimes n} \cong \bigoplus_{\substack{\vert \lambda \vert \leq n \\ \vert \lambda \vert \equiv n \pmod 2}}  \V_{\langle \lambda \rangle} \otimes \beta_{\lambda,n}^\vee.$

The next result is a special case of the main theorem of \cite{ancona}. See also \cite{moonenrefineddecomposition}.

\begin{thm}\label{ancona} The above decomposition of $\V^{\otimes n}$ lifts to the category of Chow motives over $S$: 
	$$ h^1(A/S)^{\otimes n} \cong \bigoplus_{\substack{\vert \lambda \vert \leq n \\ \vert \lambda \vert \equiv n \pmod 2}}  h^1(A/S)_{\langle \lambda \rangle} \otimes \L^{(n - \vert \lambda \vert)/2}\otimes \beta_{\lambda,n}^\vee.$$
\end{thm}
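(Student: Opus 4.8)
The plan is to deduce the motivic statement from Ancona's general theorem by exhibiting the abelian scheme $h^1(A/S)$ as a weakly symmetrically self-dual Chow motive to which Proposition \ref{braueraction} applies, and then using the semisimplicity of $\End_{\Sp(2g)}(V^{\otimes n})$ noted in Remark \ref{matrixalgebraremark} to split $h^1(A/S)^{\otimes n}$ according to the idempotents $\pi_{\lambda,n} \in \B^{(-2g)}(n,n)$. Concretely, first I would recall that for $f \colon A \to S$ an abelian scheme the motive $M = h^1(A/S)$ carries a canonical principal polarization, which provides maps $\mathbf 1 \to M \otimes M \otimes \L$ and $M \otimes M \to \L$ (equivalently $M^\vee \cong M \otimes \L^{-1}$), and that these are \emph{antisymmetric} for the swap on $M \otimes M$. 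Placing $M$ in the appropriate parity — i.e.\ working with the shift $sM$, exactly as in the passage from $V$ to $sV$ before Corollary \ref{braueraction1} — converts the antisymmetric pairing into a symmetric one, so that $M$ becomes weakly symmetrically self-dual with even invertible object $L = \L$ and quantum dimension $-2g$ (the self-intersection computation giving $-2g$ is the motivic shadow of $\dim H^1 = 2g$ with a sign from the odd shift). This is where the Koszul-sign bookkeeping lives, but it is formal.

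Next I would invoke Proposition \ref{braueraction} to get an action of $\B^{(-2g)}(n,n)$ on $h^1(A/S)^{\otimes n}$ in which $\Q[\sym_n]$ acts in the standard (signed) way and the cup-cap Brauer generator acts through the polarization maps $M \otimes M \to \L \to M \otimes M$. Since $\Mot_S$ is pseudo-abelian, the idempotents $\pi_{\lambda,n}$ — which exist by Nazarov's theorem, or really by Remark \ref{matrixalgebraremark} it suffices that $\End_{\Sp(2g)}(V^{\otimes n})$ is a product of matrix algebras over $\Q$, so that one can choose a complete orthogonal system of idempotents realizing its Wedderburn decomposition — cut out direct summands of $h^1(A/S)^{\otimes n}$, and matching up multiplicities gives a decomposition $h^1(A/S)^{\otimes n} \cong \bigoplus_{\lambda} N_\lambda \otimes \beta_{\lambda,n}^\vee$ for certain Chow motives $N_\lambda$ over $S$, the sum running over $\vert\lambda\vert \leq n$, $\vert\lambda\vert \equiv n \pmod 2$. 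At this point I have the correct \emph{shape} of the decomposition purely formally; what remains is to identify $N_\lambda$ with $h^1(A/S)_{\langle\lambda\rangle} \otimes \L^{(n-\vert\lambda\vert)/2}$.

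For that identification I would appeal directly to Ancona's theorem \cite{ancona} (and the alternative treatment \cite{moonenrefineddecomposition}): Ancona constructs, for every abelian scheme, a canonical decomposition of $h^1(A/S)^{\otimes n}$ refining the Lieberman–Deninger–Murre decomposition, realizing each $\V_{\langle\lambda\rangle}$ summand of $\V^{\otimes n}$ by an honest Chow motive $h^1(A/S)_{\langle\lambda\rangle}$ together with the correct Tate twist $\L^{(n-\vert\lambda\vert)/2}$ accounting for the contractions by the symplectic form. The only thing I need to check is that the summand-splitting produced by the Brauer idempotents $\pi_{\lambda,n}$ agrees with Ancona's; this follows because both are characterized by the same property after Betti (or $\ell$-adic) realization — namely that they project onto the isotypic piece $\V_{\langle\lambda\rangle} \otimes \beta_{\lambda,n}^\vee$ of $\V^{\otimes n}$ — and a morphism of Chow motives over $S$ is determined on each summand by its realization once one knows the summands are Tate twists of the $h^1_{\langle\lambda\rangle}$, whose endomorphism algebras inject into their realizations by \cite{ancona}. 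So the statement reduces to: (i) the formal Brauer-action input, and (ii) citing Ancona for the existence and realization of the $h^1_{\langle\lambda\rangle}$.

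The main obstacle I anticipate is purely one of careful bookkeeping rather than of genuine mathematical content: getting the parity shift and the resulting signs exactly right so that the quantum dimension comes out to $-2g$ rather than $2g$, so that $\Q[\sym_n] \subset \B^{(-2g)}(n,n)$ acts by the \emph{sign-twisted} permutation action (as flagged in the remark after Theorem \ref{symplecticschurweyl}), and so that the Tate twist $\L^{(n-\vert\lambda\vert)/2}$ — not some other power — attaches to each $h^1_{\langle\lambda\rangle}$. Everything else is either formal nonsense about pseudo-abelian rigid symmetric monoidal categories (Proposition \ref{braueraction}, existence of images of idempotents) or a direct invocation of \cite{ancona}; in particular, since the excerpt explicitly says ``The next result is a special case of the main theorem of \cite{ancona},'' the honest proof is essentially just this unwinding, and I would present it as such.
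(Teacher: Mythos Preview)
Your overall strategy --- set up the Brauer action via Proposition~\ref{braueraction} and then cite \cite{ancona} --- matches the paper's, which likewise presents the result as a special case of Ancona with some explanatory unwinding. But you have mislocated where the nontrivial content lies. You claim that once the Brauer algebra acts you get ``the correct \emph{shape} of the decomposition purely formally,'' i.e.\ a splitting $h^1(A/S)^{\otimes n} \cong \bigoplus_\lambda N_\lambda \otimes \beta_{\lambda,n}^\vee$, and that Ancona is needed only to identify $N_\lambda$. This is not right: the $\beta_{\lambda,n}$ are simple modules over the \emph{semisimple quotient} $\End_{\Sp(2g)}(V^{\otimes n})$, not over $\B^{(-2g)}(n,n)$ itself, and Proposition~\ref{braueraction} only gives you an action of the (generally non-semisimple) Brauer algebra on the motive. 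You cannot lift a complete system of orthogonal idempotents from the quotient unless you know the action factors through it. The paper is explicit that this factorization is \emph{the} key point and is exactly what requires the special input for abelian varieties --- either O'Sullivan's symmetrically distinguished cycles, or equivalently Kimura finite-dimensionality of $h^1(A/S)$ (which forces the single generating relation $\wedge^{2g+2}=0$ of the PROP-kernel to hold motivically). Your argument that realizations determine the idempotents is circular, since the injectivity of endomorphisms into realizations is itself a consequence of this same package.

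A smaller point: your parity shift ``$sM$'' is not how the paper handles things, and there is no suspension in $\Mot_S$ to perform it with. The paper asserts directly that the cup product makes $h^1(A/S)$ weakly symmetrically self-dual and computes the quantum dimension $-2g$ by pushing through the strict symmetric monoidal realization functor $\omega \colon \Mot_S \to \mathsf{grVect}_\Q$, $\omega(M)=\bigoplus_i \mathcal H^i(\mathsf{real}\,M)_{x_0}$, where the odd degree of $H^1$ supplies the sign. This is cleaner than trying to shift inside $\Mot_S$.
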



We note that the action of $\B^{(-2g)}(n,n)$ lifts to an action on $h^1(A/S)^{\otimes n}$. This follows from Proposition \ref{braueraction} given that the cup product $h^1(A/S) \otimes h^1(A/S)\to \L$ makes $h^1(A/S)$ weakly symmetrically self-dual, and that $\dim h^1(A/S)=-2g$. To see that $\dim h^1(A/S)=-2g$, observe that the quantum dimension of an object is preserved by any strict symmetric monoidal functor. We apply this to $\omega \colon \Mot_S \to \mathsf{grVect}_\Q$ given by 
$\omega(M) = \bigoplus_i \mathcal H^i(\mathsf{real}\, M)_{x_0}$, where $x_0 \in S$ is an arbitrary point. Since $\Mot_S(\mathbf 1,\mathbf 1) = \mathsf{grVect_\Q}(\mathbf 1,\mathbf 1) = \Q$ we have $\dim h^1(A/S) = \dim \omega(h^1(A/S))$. But $\omega (h^1(A/S))$ is the $2g$-dimensional vector space $H^1(A_{x_0},\Q)$ placed in degree $1$, so its dimension in the sense of graded vector spaces is $-2g$.

As explained in Remark \ref{matrixalgebraremark}, the obstruction to obtaining a result like Theorem \ref{ancona} in a general rigid symmetric monoidal category is that the algebra $\B^{(-2g)}(n,n)$ does not split as a product of matrix algebras; only its quotient $\End_{\Sp(2g)}(V^{\otimes n})$ does. The key point is then that the action of $\B^{(-2g)}(n,n)$ on $h^1(A/S)^{\otimes n}$ factors through $\End_{\Sp(2g)}(V^{\otimes n})$. In Ancona's paper \cite{ancona} this is proven using O'Sullivan's results on symmetrically distinguished cycles on abelian varieties \cite{osullivan}. An alternative proof (cf. \cite{anconathesis}, \cite[Theorem 4.8]{lehrerzhang}) proceeds by using invariant theory to show that the kernel of the homomorphism of PROPs of Proposition \ref{braueraction1} is the PROP-ideal generated by a single element of $\B^{(-2g)}(g+1,g+1)$ whose vanishing is equivalent to $\wedge^{2g+2}(V)=0$. Then the fact that this single relation holds also on the level of Chow groups is equivalent to the fact that $h^1(A/S)$ is a finite dimensional motive in the sense of Kimura. Either way it is clear that the result is at present quite special to abelian varieties.

\section{The K\"unneth decomposition of the tautological ring}\label{mainsection}

Let $p \colon C_g \to M_g$ be the universal genus $g$ curve, and $C_g^n$ the $n$-fold fibered power of $C_g$ over $M_g$.  There are $n$ natural line bundles $L_i$ on $C_g^n$; the fiber of $L_i$ over a moduli point is given by the cotangent space of the curve at the $i$th marking. We denote the first Chern class of $L_i$ by $\psi_i$. Thus $\psi_i$ is pulled back from $C_g$ along the map $C_g^n \to C_g$ that forgets all markings except the $i$th. 

We make the definition $\kappa_d = p_\ast \psi_1^{d+1} \in \CH^{d}(M_g)$. In particular, $\kappa_{-1}=0$ and $\kappa_0 = (2g-2)$. We denote by the same symbol $\kappa_d$ also the pullback of this class to $C_g^n$. 

For any distinct elements $i,j \in \{1,\ldots,n\}$ we denote by $\Delta_{ij} \in \CH^1(C_g^n)$ the class of the diagonal locus where the $i$th and $j$th marked point coincide with each other.

\begin{defn}The \emph{tautological ring} $R^\bullet(C_g^n)$ is the subring of $\CH^\bullet(C_g^n)$ generated by all $\psi$-classes, $\kappa$-classes and diagonal classes. The \emph{tautological cohomology ring} $\RH^\bullet(C_g^n)$ is the image of the tautological ring inside $H^\bullet(C_g^n,\Q)$ under the cycle class map. (The grading of $\RH^\bullet(C_g^n)$ is twice that of $R^\bullet(C_g^n)$, so that $\RH^k(C_g^n) \subset H^k(C_g^n,\Q)$.)\end{defn}

The generators for the tautological rings satisfy the following relations for all $i$, $j$ and $k$:
\begin{align}\label{basicrelations}\begin{split}
\Delta_{ij}\Delta_{ik} & = \Delta_{ij}\Delta_{jk}\\
\Delta_{ij}\psi_i &= \Delta_{ij}\psi_j \\
\Delta_{ij}^2 &= -\Delta_{ij}\psi_i\end{split}
\end{align}
The first two are geometrically obvious, and the third one is a consequence of the excess intersection formula. 

\begin{defn}\label{s-defn}
	Let $S_{n}^\bullet$ be the commutative graded $\Q$-algebra generated by classes $\Delta_{ij}$ and $\psi_i$ of degree $1$ (where $i$ and $j$ range from $1$ to $n$ and are distinct) and $\kappa_d$ of degree $d$ for all $d \geq 1$, modulo the above three relations.
\end{defn}

\begin{rem}
	By a ``commutative graded'' algebra (as opposed to a ``graded commutative'' algebra) we mean an algebra in which $x \cdot y = y \cdot x$ for all $x, y$, regardless of their degree; we do not impose the Koszul sign rule $x \cdot y = (-1)^{\vert x \vert \cdot \vert y \vert} y \cdot x$.
\end{rem}

For each $g \geq 2$, there is a natural surjection
$$ S_{n}^\bullet \to R^\bullet(C_g^n),$$
and describing the tautological ring $R^\bullet(C_g^n)$ is equivalent to describing the kernel of this surjection. The algebra $S_{n}^\bullet$ plays the same role for the study of $R^\bullet(C_g^n)$ as the \emph{strata algebra} does for the study of $R^\bullet(\overline M_{g,n})$, cf.\ e.g.\ \cite[0.3]{pandharipandepixtonzvonkine}. 

\begin{rem}
	As mentioned in the introduction, tautological classes are usually considered on the Deligne--Mumford spaces. In that case the tautological rings $R^\bullet(\overline M_{g,n})$ can be defined, following Faber and Pandharipande \cite{fpjems}, as the smallest collection of unital subrings of $\CH^\bullet(\overline M_{g,n})$ closed under pushforward along the gluing maps
	$$ \MM_{g,n+2} \to \MM_{g+1,n} \qquad \text{and}\qquad \MM_{g,n+1} \times \MM_{g',n'+1} \to \MM_{g+g',n+n'} $$
	and the forgetful maps
	$$ \MM_{g,n+1} \to \MM_{g,n}.$$
	Although it is not imposed in the definition, it turns out that the tautological rings are also closed under pullback along the same maps. A similar characterization can be given of the tautological rings $R^\bullet(C_g^n)$. For any function $\phi \colon \{1,\ldots,n\}\to \{1,\ldots,m\}$ there is a map $C_g^m \to C_g^n$, 
	$$ (C,x_1,x_2, \ldots,x_m) \mapsto (C,x_{\phi(1)},\ldots,x_{\phi(n)}).$$
	We call all maps of this form \emph{tautological}. Then it is not hard to see that the system of tautological rings $R^\bullet(C_g^n)$ can be defined to be the smallest collection of unital subrings closed under pushforward along all tautological maps, and it turns out a posteriori that the tautological rings are also closed under pullback along the same maps.  \end{rem}

\subsection{The K\"unneth decomposition of the universal curve}As explained in Section \ref{kunnethsection}, any choice of a cycle $\z \in \CH^1(C_g)$ of degree $1$ on each fiber of $p \colon C_g \to M_g$ gives rise to a decomposition of the relative Chow motive:
$$ h(C_g/M_g) = h^0(C_g/M_g) \oplus h^1(C_g/M_g) \oplus h^2(C_g/M_g).$$
Since $\CH^1(C_g) = \Q\{\kappa_1,\psi_1\}$, and $\kappa_1$ vanishes on the fibers of $p$, the only possibilities we have are $\z = \frac{1}{2g-2} \psi_1 + \mathrm{(const.)}\cdot \kappa_1$. Regardless of the constant we get $\z' = \z - \frac{1}{2}p^\ast p_\ast \z^2 = \frac{1}{2g-2} \psi_1 - \frac{1}{2(2g-2)^2}\kappa_1$. Hence without making any choices we get projectors $\pi_0$, $\pi_1$ and $\pi_2$ acting on $h(C_g/M_g)$, defined by
\begin{align*}
\pi_0 & = \frac{1}{2g-2}\psi_1 - \frac{1}{2(2g-2)^2} \kappa_1,\\
\pi_1 & = \Delta_{12} - \frac{1}{2g-2}(\psi_1+\psi_2) + \frac{1}{(2g-2)^2}\kappa_1, \\
\pi_2 & = \frac{1}{2g-2} \psi_2 - \frac{1}{2(2g-2)^2} \kappa_1.
\end{align*}
We have isomorphisms $\mathbf 1 \cong h^0(C_g/M_g)$ and $\mathbb L \cong h^2(C_g/M_g)$, where $\mathbf 1$ and $\mathbb L$ denote the unit object and the Lefschetz motive in the category of Chow motives over $M_g$. Thus the interesting motive is $h^1(C_g/M_g)$. 

We may form the Chow groups of these relative motives: there is an isomorphism
$$ \CH^k(C_g) = \CH^k(M_g,h(C_g/M_g))= \bigoplus_{i=0}^2 \CH^k(M_g,h^i(C_g/M_g)),$$
where 
\begin{align*}
\CH^k(M_g,h^0(C_g/M_g)) & = \mathrm{Im}(\pi_0 \colon \CH^k(C_g)\to \CH^k(C_g)) \cong \CH^k(M_g) \\
\CH^k(M_g,h^1(C_g/M_g)) & = \mathrm{Im}(\pi_1 \colon \CH^k(C_g)\to \CH^k(C_g)) \\
\CH^k(M_g,h^2(C_g/M_g)) & = \mathrm{Im}(\pi_2 \colon \CH^k(C_g)\to \CH^k(C_g)) \cong \CH^{k-1}(M_g).
\end{align*}
The isomorphism $\CH^k(M_g,h^0(C_g/M_g)) \cong \CH^k(M_g)$ is induced by the pullback $p^\ast$, and the isomorphism $\CH^k(M_g,h^2(C_g/M_g)) \cong \CH^{k-1}(M_g)$ by the proper pushforward $p_\ast$. Informally, the Chow groups of $h^1(C_g/M_g)$ capture the parts of the Chow groups of $C_g$ that do not come from the base $M_g$.

Now let us consider the $n$-fold fibered power $C^n_g \to M_g$. Then $h(C_g^n/M_g) = h(C_g/M_g)^{\otimes n}$, so our decomposition yields an equally canonical isomorphism
$$ h(C_g^n/M_g) = \bigoplus_{i_1,\ldots,i_n \in \{0,1,2\}} \bigotimes_{j=1}^n h^{i_j} (C_g/M_g).$$
We call this the \emph{relative K\"unneth decomposition} of $h(C_g^n/M_g)$. By extension, we will also refer to $ \CH^k(C_g^n/M_g) = \bigoplus_{i_1,\ldots,i_n \in \{0,1,2\}} \CH^k(M_g,\bigotimes_{j=1}^n h^{i_j} (C_g/M_g))$ as the relative K\"unneth decomposition of the Chow groups of $C_g^n$.

For any $i_1,\ldots,i_n \in \{0,1,2\}$ we get a projector $\pi_{i_1} \times \ldots \times \pi_{i_n}$ acting on $h(C_g^n/M_g)$ with image $\bigotimes_{j=1}^n h^{i_j} (C_g/M_g)$. In particular this projector acts by correspondences on $\CH^k(C_g^n)$ with image $\CH^k(M_g,\bigotimes_{j=1}^n h^{i_j} (C_g/M_g))$. We write $\pi_1^{\times n}$ for the projector $\pi_1 \times \pi_1 \times \ldots \times \pi_1$.

\subsection{Tautological maps} Let us consider how the decomposition just defined behaves under the tautological maps between the moduli spaces $C_g^n$. 

\subsubsection{Cross product}
\label{crossproduct}
The isomorphism $h(C_g^n/M_g) \otimes h(C_g^m/M_g) \cong h(C_g^{n+m}/M_g)$ yields \emph{cross product} maps
$$ \CH^k(C_g^n) \otimes \CH^l(C_g^m) \to \CH^{k+l}(C_g^{n+m});$$
explicitly, $\alpha \times \beta = \mathrm{pr}_1^\ast(\alpha) \cdot \mathrm{pr}_2^\ast(\beta)$, where $\mathrm{pr}_1$ and $\mathrm{pr}_2$ denote projections onto the first $n$ and last $m$ factors, respectively. 

Since the relative K\"unneth decomposition of $C_g^{n+m}$ is the tensor product of the relative K\"unneth decompositions of $C_g^n$ and $C_g^m$, it follows that the cross product maps are compatible with the projectors $\pi_i$ in a strong sense: for $i_1,\ldots,i_n \in \{0,1,2\}$ and $j_1,\ldots,j_m \in \{0,1,2\}$ we have
$$ (\pi_{i_1} \times \ldots \times \pi_{i_n}) \circ \alpha \times (\pi_{j_1} \times \ldots \times \pi_{j_m}) \circ \beta = (\pi_{i_1} \times \ldots \pi_{i_n} \times \pi_{j_1} \times \ldots \times \pi_{j_m}) \circ (\alpha \times \beta).$$

\subsubsection{Forgetful maps} Let $p \colon C_g^{n+m} \to C_g^n$ be the map that forgets the last $m$ markings. Considering $p$ as a correspondence gives maps of Chow motives
$$ h(C_g^n/M_g) \to h(C_g^{n+m}/M_g) \qquad \text{and} \qquad h(C_g^{n+m}/M_g) \to h(C_g^n/M_g) \otimes \L^{m},$$
which upon taking Chow groups gives the maps
$$ p^\ast \colon \CH^k(C_g^n) \to \CH^k(C_g^{n+m}) \qquad \text{and}\qquad  p_\ast \colon \CH^k(C_g^{n+m}) \to \CH^{k-m}(C_g^n).$$ 
Now the map $ h(C_g^n/M_g) \to h(C_g^{n+m}/M_g)$ coincides with the composition $h(C_g^n/M_g) \cong h(C_g^n/M_g) \otimes h^0(C_g/M_g)^{\otimes m} \subset h(C_g^{n+m}/M_g)$, and $h(C_g^{n+m}/M_g) \to h(C_g^n/M_g) \otimes \L^{m}$ coincides with the composition $h(C_g^{n+m}/M_g) \twoheadrightarrow h(C_g^{n}/M_g) \otimes h^2(C_g/M_g)^{\otimes m} \cong h(C_g^n/M_g) \otimes \L^m$. It follows that the maps $p^\ast$ and $p_\ast$ are also compatible with the relative K\"unneth decomposition of Chow groups: 
\begin{itemize}
	\item The map $p^\ast$ is given by mapping each summand $(\pi_{i_1} \times \ldots \times \pi_{i_n}) \circ \CH^k(C_g^n)$ isomorphically onto the summand $(\pi_{i_1} \times \ldots \times \pi_{i_n} \times \pi_0 \times \ldots \times \pi_0) \circ \CH^k(C_g^{n+m})$. This can also be seen from the fact that $p^\ast$ is given by cross product with the class $1 \in \CH^0(C_g^m)$.
	\item The map $p_\ast$ maps each summand  $(\pi_{i_1} \times \ldots \times \pi_{i_n} \times \pi_2 \times \ldots \times \pi_2) \circ \CH^k(C_g^{n+m})$ isomorphically onto the summand $(\pi_{i_1} \times \ldots \times \pi_{i_n}) \circ \CH^{k-m}(C_g^n)$, and $p_\ast$ vanishes on all summands not of this form.	
\end{itemize}

\subsubsection{Diagonals}\label{cupproduct} The diagonal $C_g \to C_g^2$, considered as a correspondence, defines a map of Chow motives $h(C_g/M_g) \otimes h(C_g/M_g) \to h(C_g/M_g)$. This is the cup product on the level of Chow motives. Forming the relative K\"unneth decomposition on both sides, we see that the cup product is the sum of maps $h^i(C_g/M_g) \otimes h^j(C_g/M_g) \to h^k(C_g/M_g)$. 

We caution the reader that this is \emph{not} in general a multiplicative decomposition, in the sense of \cite{voisindecompositionbook}: that is, the maps $h^i(C_g/M_g) \otimes h^j(C_g/M_g) \to h^k(C_g/M_g)$ are \emph{not} only nonzero for $i+j=k$. To see this, note that if $\delta \subset C_g^3$ denotes the small diagonal, considered as a correspondence $C_g^2 \vdash C_g$, then the decomposition is multiplicative if and only if 
$$ \pi_k \circ \delta \circ (\pi_i \times \pi_j) = 0$$
for $i+j\neq k$. Now we have
\begin{align*}
\pi_k \circ \delta \circ (\pi_i \times \pi_j) & = (p_{126})_\ast(p_{13}^\ast (\pi_i) \cdot p_{24}^\ast(\pi_j) \cdot \Delta_{345} \cdot p_{56}^\ast(\pi_k)) \\ & = (p_{456})_\ast(\Delta_{123} \cdot p_{14}^\ast (\pi_i^t) \cdot p_{25}^\ast(\pi_j^t) \cdot p_{46}^\ast (\pi_k)) \\
&= (\pi_{2-i} \times \pi_{2-j} \times \pi_k) \circ \Delta_{123}.
\end{align*}
Thus we get a more symmetric condition for the decomposition to be multiplicative: we must have $(\pi_a \times \pi_b \times \pi_c) \circ \Delta_{123}=0$ for $a+b+c \neq 4$. In particular, e.g. the nonvanishing of the Gross--Schoen cycle (Example \ref{grossschoen}) implies that the decomposition is not multiplicative.

However, let us also remark that when $g=2$, we do have that $(\pi_a \times \pi_b \times \pi_c) \circ \Delta_{123}=0$ for $a+b+c \neq 4$, and the decomposition is multiplicative. More generally, it follows from the results of \cite{tavakolhyperelliptic} that the decomposition is multiplicative over the moduli space of \emph{hyperelliptic} curves of arbitrary genus. 

In any case, failure of decomposition to be multiplicative implies that the cup product in the algebra $\CH^\bullet(C_g^n)$ will look somewhat strange with respect to the relative K\"unneth decomposition of the Chow groups of $\CH^\bullet(C_g^n)$. The situation is analogous to the what happens in topology, when one has a multiplicative spectral sequence $E_r^{pq} \implies H^\bullet$, and the cup product on the $E_\infty$ page of the spectral sequence is different from the cup product in the algebra $H^\bullet$. On the level of Betti realizations, this is more than an analogy. The cohomology groups $H^\bullet(C_g^n,\Q)$ carry a \emph{Leray filtration}, and the associated graded $\gr_L H^\bullet(C_g^n,\Q)$ is isomorphic to the $E_\infty$ page of the Leray spectral sequence for $C_g^n \to M_g$. Our canonical decomposition of the Chow motive $h(C_g^n/M_g)$ gives, on the level of cohomology, an isomorphism of $\Q$-vector spaces $H^\bullet(C_g^n,\Q) \cong \gr_L H^\bullet(C_g^n,\Q)$. But this is not an isomorphism of algebras: the multiplication in $\gr_L H^\bullet(C_g^n,\Q)$ is defined by using only the maps $h^i(C_g/M_g) \otimes h^j(C_g/M_g) \to h^k(C_g/M_g)$ for $i+j=k$, and discarding all other parts of the cup product $h(C_g/M_g)^{\otimes 2} \to h(C_g/M_g)$.

\subsection{Decomposition into representations of the symplectic group} \label{explicitbraueraction}Let $J_g \to M_g$ be the universal jacobian. By Proposition \ref{curvejacobianisomorphism} there is an isomorphism of Chow motives over $M_g$:
$$ h^1(C_g/M_g) \cong h^1(J_g/M_g).$$
It follows that Theorem \ref{ancona} gives us a decomposition
$$ h^1(C_g/M_g)^{\otimes n} \cong \bigoplus_{\substack{\vert \lambda \vert \leq n \\ \vert \lambda \vert \equiv n \pmod 2}} h^1(C_g/M_g)_{\llambda} \otimes \L^{n-\vert \lambda \vert} \otimes \beta_{\lambda,n}^\ast.$$
We denote the motive $h^1(C_g/M_g)_{\llambda}$ by $\VV_\llambda$.  We often write $\VV$ for the motive $\VV_{\langle 1 \rangle}$. 

We also denote by $\VV^{\langle n \rangle}$ the summand of $\VV^{\otimes n}$ given by $\bigoplus_{\vert \lambda \vert=n} \VV_\llambda \otimes \sigma_{\lambda^T}^\vee$, and we refer to this as the \emph{primitive part} of $\VV^{\otimes n}$. 



One can make the action of the Brauer algebra $\B^{(-2g)}(n,n)$ on $\VV^{\otimes n}$ more explicit. 
Let $B$ be an $(n,n)$-Brauer diagram. Label the nodes in the Brauer diagram along the top row as $1,\ldots,n$ and along the bottom row as $n+1,\ldots,2n$. Write $(ij) \in B$ to denote that the $i$th and $j$th row are connected by a strand. Then 
$$ \prod_{(ij) \in B} p_{ij}^\ast(\pi_{1}) \in \CH^n(C_g^{2n})$$
is a well defined correspondence $C_g^n \vdash C_g^n$, where we consider $\pi_1$ as a cycle in $\CH^1(C_g^2)$ and $p_{ij}$ denotes the projection onto the $i$th and $j$th factor. This correspondence gives a map $h(C_g^n/M_g) \to h(C_g^n/M_g)$ that preserves the summand $h^1(C_g/M_g)^{\otimes n}$, and we obtain a well defined action of $\B^{(-2g)}(n,n)$ on $h^1(C_g/M_g)^{\otimes n}$. This action agrees with the one defined in Proposition \ref{braueraction}. 

On the level of Chow groups, we can also describe the action of a Brauer diagram as follows. An $(n,n-2)$-Brauer diagram in which the $i$th and $j$ dot on the top row are connected by a strand, and all others are vertical, gives rise to the following morphism of Chow groups:
$$ \CH^\bullet(M_g,\VV^{\otimes n}) \hookrightarrow \CH^\bullet(C_g^n) \stackrel{\Delta_{ij}^\ast}\longrightarrow \CH^\bullet(C_g^{n-1}) \stackrel {p_\ast}\longrightarrow \CH^{\bullet-1}(C_g^{n-2}), $$
where $p$ denotes the projection that forgets the marked point corresponding to the diagonal $\Delta_{ij}$. The image of the composition of these morphisms actually lands inside the summand $\CH^{\bullet-1}(M_g,\VV^{\otimes n-2}) \subset \CH^{\bullet-1}(C_g^{n-2})$. Similarly, an $(n-2,n)$-Brauer diagram in which the $i$th and $j$th dot on the bottom row are connected by a strand gives rise to the following morphism of Chow groups:
$$ \CH^\bullet(M_g,\VV^{\otimes (n-2)}) \hookrightarrow \CH^\bullet(C_g^{n-2}) \stackrel{p^\ast}{\longrightarrow} \CH^\bullet(C_g^{n-1}) \stackrel{(\Delta_{ij})_\ast}{\longrightarrow} \CH^{\bullet+1}(C_g^n) \stackrel{\pi_1^{\times n}}{\longrightarrow} \CH^{\bullet+1}(M_g,\VV^{\otimes n}).$$

\subsection{Decomposing the tautological ring} We have explained that $h(C_g^n/M_g)$ is a direct sum of terms of the form $h^0(C_g/M_g)^{\otimes n_0} \otimes h^1(C_g/M_g)^{\otimes n_1} \otimes h^2(C_g/M_g)^{\otimes n_2}$, with $n_0+n_1+n_2=n$. Since we also have $h^0(C_g/M_g) \cong \mathbf 1$ and $h^2(C_g/M_g) \cong \L$, and $h^1(C_g/M_g)^{\otimes n_1}$ is a direct sum of terms of the form $\VV_\llambda \otimes \L^{n_1 - \vert \lambda \vert}$,
we conclude that $h(C_g^n/M_g)$ is a direct sum of motives of the form $\VV_\llambda$ and their Tate twists.

\begin{thm} \label{decompositionwelldefined} Let $n$ be arbitrary, and consider $C_g^n \to M_g$. Choose any decomposition 
	$$ h(C_g^n/M_g) \cong \bigoplus_i \VV_{\langle \lambda_i \rangle} \otimes \L^{m_i}$$
	in $\Mot_{M_g}$. Then under the equality $\CH^k(C_g^n) \cong \CH^k(M_g,h(C_g^n/M_g))$ we have the following compatibility:
	\[\begin{tikzcd}[sep=small]
	\CH^k(C_g^n) \arrow[Isom]{r}& \bigoplus_i \CH^{k-m_i}(M_g,\VV_{\langle \lambda_i \rangle}) \\
	R^k(C_g^n)\arrow[Subseteq]{u} \arrow[Isom]{r} & \bigoplus_i R^{k-m_i}(M_g,\VV_{\langle \lambda_i \rangle}), \arrow[Subseteq]{u}
	\end{tikzcd}
	\]
	where both horizontal arrows are induced by our choice of decomposition of $ h(C_g^n/M_g) $.
\end{thm}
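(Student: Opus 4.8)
The plan is to prove the statement in two steps: first to exhibit one particular decomposition, the canonical tautological one $\mathcal D_0$, for which the compatibility holds essentially by construction, and then to show that passing to an arbitrary decomposition changes nothing. The point of the first step is that every projector occurring in $\mathcal D_0$ is given by a \emph{tautological} correspondence. Indeed, $\pi_0,\pi_1,\pi_2\in\CH^1(C_g^2)$ are tautological by the explicit formulas recorded above, so each relative K\"unneth projector $\pi_{i_1}\times\cdots\times\pi_{i_n}$ is a tautological correspondence $C_g^n\vdash C_g^n$; and on the summand $h^1(C_g/M_g)^{\otimes m}$ the $\B^{(-2g)}(m,m)$-action is realized by the tautological correspondences $\prod_{(ij)\in B}p_{ij}^\ast(\pi_1)$. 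Since that action factors through $\End_{\Sp(2g)}(V^{\otimes m})$, which is a product of matrix algebras over $\Q$ (Remark \ref{matrixalgebraremark}), and since $\B^{(-2g)}(m,m)$ surjects onto it (Corollary \ref{braueraction1}), we may choose a complete orthogonal system of idempotents in $\B^{(-2g)}(m,m)$ --- the Nazarov idempotents $\pi_{\lambda,m}$ in the blocks with $|\lambda|=m$ and lifts of matrix units in the others --- whose images in $h^1(C_g/M_g)^{\otimes m}$ are single copies of the various $\VV_{\langle\lambda\rangle}$. Composing these with the $\pi_i$'s produces a complete orthogonal family of tautological projectors on $h(C_g^n/M_g)$ realizing a decomposition of the required shape; call it $\mathcal D_0$.

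For $\mathcal D_0$ the theorem follows from the elementary fact that tautological correspondences preserve tautological rings. A tautological correspondence $C_g^a\vdash C_g^b$ is a class $\gamma\in R^\bullet(C_g^{a+b})$ on $C_g^a\times_{M_g}C_g^b=C_g^{a+b}$, and its action $\alpha\mapsto(\mathrm{pr}_2)_\ast(\mathrm{pr}_1^\ast\alpha\cdot\gamma)$ is the composite of pullback along the forgetful map $C_g^{a+b}\to C_g^a$, multiplication by $\gamma$, and pushforward along the forgetful map $C_g^{a+b}\to C_g^b$ --- all operations preserving tautological classes --- and the same applies to correspondences between the tautological summands cut out by the idempotents above. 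Hence each summand-projector of $\mathcal D_0$ restricts to a projector on $R^k(C_g^n)$, giving $R^k(C_g^n)=\bigoplus_i\big(R^k(C_g^n)\cap(\text{$i$-th summand})\big)$; transporting each piece through the (tautological) structure isomorphisms of $\mathcal D_0$ identifies it with $R^{k-m_i}(M_g,\VV_{\langle\lambda_i\rangle})$, and the square of inclusions commutes by construction. This is also compatible with the definition of the twisted tautological groups.

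Finally, for an arbitrary decomposition $\mathcal D$, let $\phi$ (resp.\ $\psi$) be its structure isomorphism (resp.\ that of $\mathcal D_0$); then $u=\psi^{-1}\circ\phi$ is an automorphism of $\bigoplus_i\VV_{\langle\lambda_i\rangle}\otimes\L^{m_i}$ in $\Mot_{M_g}$, and the $\mathcal D$-isomorphism on Chow groups is $u_\ast$ composed with the $\mathcal D_0$-isomorphism, so it suffices to show that $u_\ast$ preserves the subspace $\bigoplus_i R^{k-m_i}(M_g,\VV_{\langle\lambda_i\rangle})$. The entries of $u$ lie in the groups $\Hom_{\Mot_{M_g}}(\VV_{\langle\lambda_j\rangle}\otimes\L^{m_j},\VV_{\langle\lambda_i\rangle}\otimes\L^{m_i})$; using the self-duality $\VV_{\langle\lambda\rangle}^\vee\cong\VV_{\langle\lambda\rangle}\otimes\L^{-|\lambda|}$ and the decomposition of $\VV_{\langle\lambda_j\rangle}\otimes\VV_{\langle\lambda_i\rangle}$ into Tate twists of the $\VV_{\langle\nu\rangle}$'s, these are identified with direct sums of \emph{low-codimension} twisted Chow groups $\CH^\ast(M_g,\VV_{\langle\nu\rangle})$, and one reduces to showing that each entry, viewed as an operator on twisted Chow groups, carries tautological classes to tautological classes. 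I expect this last point --- controlling the morphisms among the $\VV_{\langle\lambda\rangle}\otimes\L^m$ and showing that the induced operations on the groups $R^\bullet(M_g,\VV_{\langle\nu\rangle})$ respect tautologicality --- to be the main obstacle; it should rest on structural facts about these motives established earlier (notably $\End_{\Mot_{M_g}}(\VV_{\langle\lambda\rangle})=\Q$, which is immediate from the vanishing of negative-codimension Chow groups, together with the analogous vanishing or tautologicality of the cross-terms). Granting this, $u_\ast$ preserves $\bigoplus_i R^{k-m_i}(M_g,\VV_{\langle\lambda_i\rangle})$; since $u$ is invertible, the restriction is an isomorphism, which is the assertion of the theorem.
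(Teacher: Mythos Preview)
Your first step is correct and is essentially the paper's own argument. The paper's proof is short: it notes that any two copies of $\VV_\llambda\otimes\L^m$ occurring as summands of (possibly different) $h(C_g^{n_i}/M_g)$ are linked by an isomorphism built from K\"unneth projectors and Brauer-algebra elements, hence by a tautological correspondence; this makes $R^\bullet(M_g,\VV_\llambda)$ well-defined independently of the realisation, and then, since the projectors onto the summands are themselves tautological, $R^\bullet(C_g^n)$ splits accordingly. That is exactly your Step~1, and it is complete.

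Your Steps~2--3 attempt strictly more, and here there is a genuine gap. The claim that $\End_{\Mot_{M_g}}(\VV_\llambda)=\Q$ does \emph{not} follow from vanishing in negative codimension: decomposing $\VV_\llambda^\vee\otimes\VV_\llambda\cong\mathbf 1\oplus\bigoplus_{\nu\neq 0}\VV_{\langle\nu\rangle}\otimes\L^{-|\nu|/2}$ gives $\End(\VV_\llambda)\cong\Q\oplus\bigoplus_{\nu\neq 0}\CH^{|\nu|/2}(M_g,\VV_{\langle\nu\rangle})$, and these extra summands sit in strictly positive codimension and are not known to vanish. The cross-terms are no better controlled: for instance $\Hom(\L^2,\mathbf 1)\cong\CH^2(M_g)$, and a non-tautological class $\alpha\in\CH^2(M_g)$ (not known to exist, but not excluded) would let one shear the canonical decomposition of $h(C_g^2/M_g)$ into one whose projections carry tautological classes outside the tautological subspace. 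Note that the paper's proof does not address this either: its final sentence (``the projectors\dots are all given by tautological classes'') is justified only for decompositions whose structure maps come from K\"unneth and Brauer data, and that is the only case used downstream. So your Step~1 already reproduces what the paper actually establishes; the literal ``any decomposition'' reading would require knowing that all the relevant $\Hom$-groups in $\Mot_{M_g}$ consist of tautological correspondences, which is open.
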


\begin{proof}
	Consider a summand $\VV_{\llambda} \otimes \L^{m_1}$ of $h(C_g^{n_1}/M_g)$ and a summand  $\VV_{\llambda} \otimes \L^{m_2}$ of $h(C_g^{n_2}/M_g)$. Then there exists a correspondence $\pi \in \CH(C_g^{n_1+n_2})$ --- not a correspondence of degree $0$, in general --- which maps the first summand isomorphically onto the second, considered as a correspondence $C_g^{n_1} \vdash C_g^{n_2}$. Moreover, $\pi$ can be built out of the projectors onto the K\"unneth components of $h(C_g^n/M_g)$ and the correspondences given by Brauer diagrams. As such, $\pi$ is actually a tautological class. 
	
	Now $\CH^\bullet(M_g,\VV_{\llambda} \otimes \L^{m_1})$ is a summand of $\CH^\bullet(C_g^{n_1})$ and as such there is a well defined subspace of tautological classes inside it, which we denote $R^\bullet(M_g,\VV_{\llambda} \otimes \L^{m_1})$. Similarly for the other summand. Now the correspondence $\pi$ which gives the isomorphism between the two summands is a tautological class, and in particular it maps tautological classes to tautological classes and gives an isomorphism (not preserving the grading unless $m_1=m_2$), $R^\bullet(M_g,\VV_{\llambda} \otimes \L^{m_1}) \cong R^\bullet(M_g,\VV_{\llambda} \otimes \L^{m_2})$.
	
	Finally since the projectors onto each summand $\VV_{\langle \lambda_i\rangle} \otimes \L^{m_i}$ of $h(C_g^n/M_g)$ are all given by tautological classes, the tautological ring $R^\bullet(C_g^n)$ is the direct sum of its projections onto each of the summands in the decomposition of $\CH^\bullet(C_g^n)$. 
\end{proof}

\subsection{Curves with rational tails}\label{rationaltails} Let $X$ be a smooth projective variety, and $X[n]$ the Fulton--MacPherson compactification of the configuration space of $n$ distinct ordered points on $X$ \cite{fmcompactification}. A result of Li \cite{limotives} expresses the Chow motive $h(X[n])$ as a direct sum of Chow motives of the form $h(X)^{\otimes i} \otimes \L^j$ for $0 \leq i \leq n$; this can be seen by an inductive argument from the blow-up formula and the construction of $X[n]$ as an iterated blow-up of the cartesian product $X^n$.

The analogous statement remains true (with the same proof) for a family $X \to S$ of smooth projective varieties, and the relative Chow motive of the relative Fulton--MacPherson compactification. In particular we may consider the universal family $C_g \to M_g$ over the moduli space of curves. In this case the relative configuration space of $n$ distinct ordered points is the space $M_{g,n}$, and the relative Fulton--MacPherson compactification of $M_{g,n}$ is the moduli space $M_{g,n}^{\rt}$ of curves with \emph{rational tails}, which is an iterated blow-up of $C_g^n$. By ``relative compactification'' we mean that the map $M_{g,n}^\rt \to M_g$ is proper. 

It follows from the above considerations that the results of this section remain valid when $C_g^n$ is replaced with $M_{g,n}^\rt$. There will in particular exist a decomposition of Chow motives
$$ h(M_{g,n}^\rt/M_g) \cong \bigoplus_i \VV_{\langle \lambda_i \rangle} \otimes \L^{m_i},$$
and moreover, there is a {canonical} choice of such decomposition in which each correspondence projecting onto a summand is a tautological class (cf.\ \cite[Theorem 3.2]{limotives}). It follows in particular that 
$$ R^\bullet(M_{g,n}^\rt) \cong \bigoplus_i R^{\bullet-m_i}(M_g,\VV_{\langle\lambda_i\rangle}).$$
The results of \cite{limotives} give explicit formulas expressing the motive $h(M_{g,n}^{\rt}/M_g)$ in terms of motives $\VV^{\otimes i} \otimes \L^j$ and hence in terms of Tate twists of motives $\VV_\llambda$. For an $\sym_n$-equivariant version (formulated in that paper only in terms of cohomology) see \cite{mixedhodge}.

\subsection{Tautological cohomology groups}
For a partition $\lambda$ we let $\V_\llambda$ be the $\Q$-local system on $M_g$ defined by the representation of $\Sp(2g)$ of highest weight $\lambda$. Then the Chow motive $\VV_\llambda$ has as its Betti realization $\V_\llambda[-\vert\lambda\vert]$, i.e.\ the local system $\V_\lambda$, considered as a complex concentrated in cohomological degree $\vert \lambda \vert$, and there is a cycle class map
$$ \CH^k(M_g,\VV_\llambda) \to H^{2k-\vert\lambda\vert}(M_g,\V_\llambda).$$
We denote by $\RH^\bullet(M_g,\V_\llambda)$ the image of $R^\bullet(M_g,\VV_\llambda)$ under the cycle class map. For $\lambda = 0$ we get the usual tautological cohomology groups of $M_g$. A folklore conjecture says that any homological equivalence between tautological classes is a rational equivalence, which would imply that
$$ R^k(M_g,\VV_\llambda) \to \RH^{2k-\vert\lambda\vert}(M_g,\V_\llambda)$$
is always an isomorphism.

We caution the reader that since the Betti realization of $\VV$ is not the local system $\V$, but $\V[-1]$, there are many opportunities to get confused about the Koszul sign rule when comparing results in cohomology and in Chow. 

All the results of this section are valid mutatis mutandis also on the level of cohomology. One can either see this formally by applying the Betti realization functor or by repeating the proofs in the cohomological setting. For example, let us state the cohomological version of Theorem \ref{decompositionwelldefined}: 

\begin{thm} Let $n$ be arbitrary, and consider $f \colon C_g^n \to M_g$. Choose a decomposition 
	$$Rf_\ast\Q \cong \bigoplus_i \V_{\langle \lambda_i \rangle} [-m_i]$$
	in $D^b_c(M_g)$. Under the equality $H^k(C_g^n,\Q) \cong \mathbb H^k(M_g,Rf_\ast\Q)$ we have the following compatibility:
	\[\begin{tikzcd}[sep=small]
	H^k(C_g^n,\Q) \arrow[Isom]{r}& \bigoplus_i H^{k-m_i}(M_g,\V_{\langle \lambda_i \rangle}) \\
	\RH^k(C_g^n)\arrow[Subseteq]{u} \arrow[Isom]{r} & \bigoplus_i \RH^{k-m_i}(M_g,\V_{\langle \lambda_i \rangle}), \arrow[Subseteq]{u}
	\end{tikzcd}
	\]
	where both horizontal arrows are induced by the choice of decomposition of $ Rf_\ast\Q $.
\end{thm}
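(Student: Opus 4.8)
The plan is to deduce this cohomological statement directly from the Chow-theoretic Theorem~\ref{decompositionwelldefined} by applying the Betti realization functor, and then, for completeness, to indicate how the same argument runs natively in $D^b_c(M_g)$. First I would recall that $\mathsf{real}$ sends $\VV_{\langle\lambda_i\rangle}\otimes\L^{m_i}$ to $\V_{\langle\lambda_i\rangle}[-\vert\lambda_i\vert -2m_i]$, and that it sends $h(C_g^n/M_g)$ to $Rf_\ast\Q$; hence any decomposition $h(C_g^n/M_g)\cong\bigoplus_i\VV_{\langle\lambda_i\rangle}\otimes\L^{m_i}$ realizes to a decomposition $Rf_\ast\Q\cong\bigoplus_i\V_{\langle\lambda_i\rangle}[-m_i']$ with $m_i' = \vert\lambda_i\vert+2m_i$. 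Conversely, since the category of relative Chow motives over $M_g$ surjects onto the relevant full subcategory of $D^b_c(M_g)$ on the objects we care about (all the Hom-groups between Tate twists of the $\VV_\lambda$ are generated by tautological, hence algebraic, correspondences — this is implicit in the earlier sections via Ancona's theorem and the Brauer algebra description), every decomposition of $Rf_\ast\Q$ of the stated form lifts to a decomposition of $h(C_g^n/M_g)$; so it suffices to prove the square for decompositions coming from $\Mot_{M_g}$, which is what we have.

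The key steps, in order, are: (1) fix a decomposition of $h(C_g^n/M_g)$ in $\Mot_{M_g}$ lifting the given one in $D^b_c(M_g)$, using the surjectivity just mentioned; (2) apply $\mathsf{real}$ to the commutative square of Theorem~\ref{decompositionwelldefined}, noting that the cycle class map $\CH^k(C_g^n)\to H^{2k}(C_g^n,\Q)$ intertwines the top horizontal isomorphism there with the Leray isomorphism $H^k(C_g^n,\Q)\cong\bigoplus_i H^{k-m_i}(M_g,\V_{\langle\lambda_i\rangle})$ — here one must be careful that the degree $2k$ on the Chow side becomes $k$ on the cohomology side after the indexing shift described in the ``How to read this paper'' table, and that the Koszul signs flagged in the remarks are handled consistently; (3) observe that $\RH^\bullet(C_g^n)$ and $\RH^\bullet(M_g,\V_{\langle\lambda_i\rangle})$ are \emph{defined} as the images under the cycle class map of $R^\bullet(C_g^n)$ and $R^\bullet(M_g,\VV_{\langle\lambda_i\rangle})$ respectively, so the bottom horizontal isomorphism and the vertical inclusions in the cohomological square are obtained simply by taking the image of the corresponding data in the Chow square; (4) conclude that the cohomological square commutes because it is the image of a commutative square under a functor.

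Alternatively, and perhaps more transparently for a reader who has skipped the motivic sections, I would run the proof of Theorem~\ref{decompositionwelldefined} verbatim in the derived category: the projectors onto K\"unneth components and the Brauer-diagram correspondences all act on $Rf_\ast\Q$ (by the Betti realization of the constructions in Section~\ref{explicitbraueraction}), they are represented by tautological algebraic cycles, hence preserve tautological cohomology; any two summands $\V_{\langle\lambda\rangle}[-m_1]$ and $\V_{\langle\lambda\rangle}[-m_2]$ of $Rf_\ast\Q$ appearing for different $n$ are identified by a correspondence built from these tautological pieces; and the total tautological cohomology is the direct sum of its projections onto the summands of the chosen decomposition because these projections are themselves tautological. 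This is word-for-word the argument already given, transported along $\mathsf{real}$.

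The main obstacle — really the only subtlety — is bookkeeping: keeping straight the relation between the Chow degree $k$ and the cohomological degree, between $\L^{m_i}$ and the shift $[-m_i]$ (note the statement uses $[-m_i]$, so the $m_i$ here is $\vert\lambda_i\vert$ plus twice the Lefschetz exponent of Theorem~\ref{decompositionwelldefined}), and the Koszul signs arising because $\mathsf{real}\,\VV = \V[-1]$ rather than $\V$, as emphasized in the remarks following Theorem~\ref{symplecticschurweyl} and before this theorem. None of this is conceptually hard, but it is where an error would creep in, so in a full write-up I would state the realization of each $\VV_{\langle\lambda_i\rangle}\otimes\L^{m_i}$ explicitly once and then let the diagram chase be mechanical.
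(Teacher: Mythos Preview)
Your approach is essentially the paper's own: it gives no separate proof, saying only that one may apply the Betti realization functor to Theorem~\ref{decompositionwelldefined} or repeat its proof in the cohomological setting, and you flesh out both options with correct attention to the degree shifts and Koszul signs.

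One caveat: your lifting step --- that every decomposition of $Rf_\ast\Q$ in $D^b_c(M_g)$ lifts to $\Mot_{M_g}$ because the relevant Hom-groups are ``generated by tautological correspondences'' --- is not established in the paper and does not follow from Ancona's theorem. That result says the Brauer-algebra action on $\VV^{\otimes n}$ factors through $\End_{\Sp(2g)}(V^{\otimes n})$; it is a statement about a map \emph{into} the motivic endomorphism ring, not about surjectivity of the cycle class map onto $\Hom_{D^b_c}(\V_\llambda[-a],\V_{\langle\mu\rangle}[-b])=H^{a-b}(M_g,\V_\llambda^\vee\otimes\V_{\langle\mu\rangle})$, which can contain non-algebraic classes. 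The same issue lurks in your second route when you assert that the projectors for an \emph{arbitrary} decomposition in $D^b_c(M_g)$ are themselves tautological. The paper's one-line proof does not address this point either; in practice the statement is only ever used for decompositions arising from the canonical motivic one, where there is no difficulty, so you should simply drop the surjectivity claim rather than lean on it.
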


\section{Examples}\label{examplesection}

\subsection{Example: $\psi_1^n$.}\label{example1}Let us consider the class $\psi_1^n \in \CH^{n}(C_g^1)$. Its image under the projectors $\pi_0$, $\pi_1$ and $\pi_2$ is given by
\begin{align*}
\pi_0 \circ \psi_1^n &= (p_2)_\ast(\frac 1{2g-2}\psi_1^{n+1} - \frac{1}{2(2g-2)^2} \kappa_1 \psi_1^n ) = \frac{1}{2g-2} \kappa_{n} - \frac{1}{2(2g-2)^2} \kappa_1 \kappa_{n-1}, \\
\pi_1 \circ \psi_1^n &= (p_2)_\ast(\Delta_{12}\psi_1^n -\frac{1}{2g-2} \psi_1^n\psi_2 -\frac{1}{2g-2} \psi_1^{n+1} + \frac{1}{(2g-2)^2} \kappa_1 \psi_1^n )\\
&= \psi_1^n - \frac{1}{2g-2} \kappa_{n-1}\psi_1 - \frac{1}{2g-2} \kappa_n + \frac{1}{(2g-2)^2} \kappa_1\kappa_{n-1},\\
\pi_2 \circ \psi_1^n & = (p_2)_\ast(\frac 1{2g-2}\psi_1^n\psi_2 - \frac{1}{2(2g-2)^2} \kappa_1 \psi_1^n ) = \frac{1}{2g-2} \kappa_{n-1}\psi_1 - \frac{1}{2(2g-2)^2} \kappa_1 \kappa_{n-1}.
\end{align*} 
Here $p_2 \colon C_g^2 \to C_g^1$ forgets the \emph{first} marked point. These are thus the projections of the class $\psi_1^n \in \CH^n(C_g)$ into the three summands $\CH^{n}(M_g,h^0(C_g/M_g))$, $\CH^{n}(M_g,h^1(C_g/M_g))$ and $\CH^{n}(M_g,h^2(C_g/M_g))$, respectively. We can make some simple observations/sanity checks:
 \begin{itemize}
 	\item The three classes sum to $\psi_1^n$.
 	\item The class in $\CH^{n}(M_g,h^0(C_g/M_g))$ is pulled back from $\CH^{n}(M_g)$.
 	\item When $n=1$, the class in $\CH^1(M_g,h^2(C_g/M_g))$ restricts to a cycle of degree $1$ in $\CH^1$ of a fiber of $\lambda \colon C_g \to M_g$.
 	\item The classes $\pi_0 \circ \psi_1^n$ and $\pi_1 \circ \psi_1^n$ push forward to zero under $\lambda \colon C_g \to M_g$. The class $\pi_2 \circ \psi_1^n$ has the same pushforward as $\psi_1^n$. 	
 \end{itemize}
We note that $\pi_1 \circ \psi_1^n$ vanishes for $n= 0$ and $n= 1$, using that $\kappa_0 = 2g-2$. This could also have been seen from the fact that
\begin{align*}
 \CH^k(C_g) &\cong \CH^k(M_g,\mathbf 1) \oplus \CH^k(M_g,\VV_{\langle 1\rangle}) \oplus \CH^k(M_g,\L) \\
 &= \CH^k(M_g) \oplus \CH^k(M_g,\VV) \oplus \CH^{k-1}(M_g)
\end{align*}
which (using Harer's calculation of the Picard groups of $M_g$ and $C_g$ \cite{harersecondhomology}) implies that $\CH^0(M_g,\VV) = \CH^1(M_g,\VV) = 0$. The first of these classes which can be nontrivial is thus $\pi_1 \circ \psi_1^2 \in \CH^2(M_g,\VV)$. This class vanishes for $g \leq 4$, but is nonzero if $g \geq 5$. 

To prove that the class is nonzero for $g \geq 5$ it is easiest to work in cohomology. For large $g$, nontriviality is a consequence of Harer stability and the Mumford conjecture \cite{madsenweiss}: as $g \to \infty$, $H^\bullet(C_g^1)$ stabilizes to a polynomial ring in the $\kappa$-classes and $\psi_1$ \cite[Proposition 2.1]{looijengastable}. More precisely, $H^4(C_g^1)$ is in the stable range when $g \geq 7$, so there are no relations between the generators in this degree and $\pi_1\circ \psi_1^2$ must be nonzero. Nontriviality for $g=5,6$ can be checked e.g.\ by multiplying the class with $\psi_1^2$ and pushing it down to $M_g$. One computes that
$$ \lambda_\ast (\psi_1^2  \cdot (\pi_1 \circ \psi_1^2)) = \kappa_3 - \frac{2}{2g-2} \kappa_2\kappa_1 + \frac 1 {(2g-2)^2} \kappa_1^3.$$
This class can then be multiplied with $\kappa_1^{g-5}$ to get a class in the socle of the tautological ring, which can be verified to be nonzero by integrating it against $\lambda_g\lambda_{g-1}$. (We discuss the $\lambda_g\lambda_{g-1}$-pairing more in Section \ref{Gorsection}.)

The vanishing for $g \leq 4$ can be proven by standard methods for computing tautological rings and a dimension count. Let us consider the case $g=4$: in this case one only needs to know that $R^1(M_4) \cong R^2(M_4) \cong \Q$ and that $R^2(C_4^1) \cong \Q^2$. Now we have the relative K\"unneth decomposition
$$ R^2(C_4) = R^2(M_4) \oplus R^2(M_4,\VV) \oplus R^1(M_4),$$
where the first and last terms are one-dimensional since $R^1(M_4) \cong R^2(M_4) \cong \Q$; consequently, the middle term has to vanish since $R^2(C_4)$ is two-dimensional. But $\pi_1 \circ \psi_1^2$ is an element of $R^2(M_4,\VV)$, so it must vanish.  


\subsection{Example: the diagonal.}Let us decompose the class $\Delta_{12} \in \CH^1(C_g^2)$ into summands. One finds exactly three nonzero terms:
\begin{align*}
(\pi_2 \times \pi_0) \circ \Delta_{12} &= \frac{1}{2g-2}\psi_1 - \frac{1}{2(2g-2)^2}\kappa_1 &&\in \CH^1(M_g,\L \otimes \mathbf 1), \\
(\pi_1 \times \pi_1) \circ \Delta_{12} &= \Delta_{12} - \frac{1}{2g-2}(\psi_1+\psi_2) + \frac{1}{(2g-2)^2}\kappa_1 &&\in \CH^1(M_g,\VV \otimes \VV),\\
(\pi_0 \times \pi_2) \circ \Delta_{12} &= \frac{1}{2g-2}\psi_2 - \frac{1}{2(2g-2)^2}\kappa_1 &&\in \CH^1(M_g,\mathbf 1 \otimes \L),
\end{align*}
where $h^0(C_g/M_g) = \mathbf 1$, $h^1(C_g/M_g) = \VV$, $h^2(C_g/M_g) = \L$. Thus the terms in the decomposition of $\Delta_{12}$ are given exactly by the projectors $\pi_i$ themselves, considered as classes on $C_g^2$. 

Note that $\VV \otimes \VV \cong \VV_{\langle 2\rangle} \oplus \VV_{\langle 1,1\rangle} \oplus \L$. In fact we have $\CH^1(M_g,\VV_{\langle 2\rangle}) = \CH^1(M_g,\VV_{\langle 1,1\rangle}) = 0$, and the class $\pi_1$ is a generator for the summand $\CH^1(M_g,\L) \cong \CH^0(M_g) \cong \Q$. 

The fact that $\CH^1(M_g,\VV_{\langle 2\rangle}) = \CH^1(M_g,\VV_{\langle 1,1\rangle}) = 0$ can be proven by a dimension count argument much like the one from the previous example, using $\CH^1(C_g^2) = \Q\{\kappa_1,\Delta_{12},\psi_1,\psi_2\}$, which follows from Harer's calculation of the Picard group of $M_{g,n}$ \cite{harersecondhomology}. Now in the relative K\"unneth decomposition of $ \CH^1(C_g^2)$ we find the summands $\CH^1(M_g,\mathbf 1 \otimes \mathbf 1) \cong \Q\{\kappa_1\}$, $\CH^1(M_g,\mathbf 1 \otimes \L)$, $\CH^1(M_g,\L \otimes \mathbf 1)$ and $\CH^1(M_g,\L) \subset \CH^1(M_g,\VV \otimes \VV)$. All these last three terms are nonzero since $\CH^1(M_g,\L) \cong \CH^0(M_g) \cong \Q$. 

\subsection{Example: the Faber--Pandharipande cycle.} We consider the class $\Delta_{12}\psi_1 \in \CH^2(C_g^2)$. Applying the operator $\pi_1^{\times 2}$ gives the class\begin{align}\begin{split}\label{unrefinedFP}
\pi_1^{\times 2} \circ \Delta_{12}\psi_1 & =  \Delta_{12}\psi_1 - \frac{1}{2g-2}\psi_1\psi_2  - \frac 1 {2g-2}(\psi_1^2 +\psi_2^2)  + \frac 1{(2g-2)^2}\kappa_1(\psi_1+\psi_2) \\ & + \frac 1{(2g-2)^2}\kappa_2 -\frac 1{(2g-2)^3}\kappa_1^2 
\end{split}
\end{align} 
which now defines an element of $\CH^2(M_g,\VV^{\otimes 2})$. Since the class \eqref{unrefinedFP} is $\sym_2$-invariant, it is in fact a class in $\CH^2(M_g,\Sym^2\VV)$. We call this class the \emph{unrefined Faber--Pandharipande cycle}.

Now we have a decomposition $\Sym^2\VV = \VV_{\langle 1,1 \rangle} \oplus \L$. So \eqref{unrefinedFP} can be written as the sum of a class in $\CH^2(M_g,\VV_{\langle 1,1\rangle})$ and a class in $\CH^2(M_g,\L) \cong \CH^1(M_g)$. As explained in Subsection \ref{nazarov1}, Nazarov's theorem gives a general method to write down a projector acting on $\CH^\bullet(M_g,\VV^{\otimes n})$ whose image is a particular summand $\CH^\bullet(M_g,\VV_\llambda) \otimes \sigma_{\lambda^T}^\vee$, where $\lambda$ is a partition of $n$. Although Nazarov's theorem is overkill in this case, where one could quite easily figure out the right projector by hand, the result is that the correspondence
$$ \pi = \frac 1 2 (b_{13}b_{24} + b_{14}b_{23}) + \frac 1 {2g} b_{12}b_{34} \in \CH^2(C_g^4)$$
acts on $\CH^\bullet(M_g,\VV^{\otimes 2})$ with image $\CH^\bullet(M_g,\VV_{\langle 1,1\rangle})\otimes \sigma_2^\vee \cong \CH^\bullet(M_g,\VV_{\langle 1,1\rangle})$. Here $b_{ij}$ denotes the class $\Delta_{ij} - \frac{1}{2g-2}(\psi_i+\psi_j) + \frac 1 {(2g-2)^2}\kappa_1$, i.e.\ the pullback of the correspondence defining the projector $\pi_1$. 

Applying $\pi$ gives the class 
\begin{align}\begin{split}\label{refinedFP}
\Delta_{12}\psi_1 - \frac{1}{2g-2}\psi_1\psi_2  - \frac 1 {2g-2}(\psi_1^2 +\psi_2^2)  + \frac 1{(2g-2)^2}\kappa_1(\psi_1+\psi_2) + \frac 1{(2g-2)^2}\kappa_2 \\ -\frac 1{(2g-2)^3}\kappa_1^2 -  \frac{2g-1}{2g(2g-2)}\kappa_1(\Delta_{12}- \frac{1}{2g-2}(\psi_1+\psi_2) + \frac{1}{(2g-2)^2}\kappa_1)
\end{split}
\end{align} 
which now defines an element of $\CH^2(M_g,\VV_{\langle 1,1\rangle})$. We call \eqref{refinedFP} the \emph{refined Faber--Pandharipande cycle}. This is the projection of $\Delta_{12}\psi_1$ onto the summand $\CH^2(M_g,\VV_{\langle 1,1\rangle})$. Comparing the expressions for the refined and unrefined Faber--Pandharipande cycles, we see that we have subtracted the term
$$ \frac{2g-1}{2g(2g-2)}\kappa_1b_{12}.$$
This term is thus the projection of $\Delta_{12}\psi_1$ onto the summand $\CH^2(M_g,\L) \subset \CH^2(M_g,\Sym^2 \VV)$. We remark that since we know from the previous example that $b_{12}$ lies in the summand $\CH^1(M_g,\L) \subset \CH^1(M_g,\VV^{\otimes 2})$, it follows that $\kappa_1b_{12}$ indeed gives a class in $\CH^2(M_g,\L) \subset\CH^2(M_g,\VV^{\otimes 2})$.

Let $X$ be a curve of genus $g$. It is interesting to consider the image of the Faber--Pandharipande cycle in $\CH^2(X^2)$. In this case, removing terms which obviously vanish leaves
$$ \Delta_{12}\psi_1 - \frac{1}{2g-2}\psi_1\psi_2 \in \CH^2(X^2).$$
This simplified expression is what is more commonly referred to as the Faber--Pandharipande cycle. Green and Griffiths \cite{greengriffiths} proved that the Faber--Pandharipande cycle is nonzero in $\CH^2(X^2)$ for a very general curve $X$ of genus $\geq 4$ over the complex numbers. The third author gave a different proof of this result \cite{yinFP} valid also in positive characteristic. Such a result is rather subtle since the Faber--Pandharipande cycle is not only homologically trivial but Abel--Jacobi trivial.

Reasoning as in Example \ref{example1}, one can show that the refined Faber--Pandharipande cycle is zero for $g=2,3$ but that it is nonzero for all $g \geq 4$. Thus the refined Faber--Pandharipande cycle is nonzero precisely in those genera where it is nonzero in $\CH^2(X^2)$ for a generic curve $X$. By contrast the unrefined Faber--Pandharipande cycle is nonzero also for $g=3$, even though $R^2(M_3,\VV_{\langle 1,1\rangle})=0$. This illustrates the utility of Nazarov's refined projectors when trying to determine precisely which of these local systems have nonzero tautological groups.

\subsection{Example: the Gross--Schoen cycle.}\label{grossschoen} Let us consider the class $\Delta_{123} \in \CH^2(C_g^3)$. We can apply $\pi_1^{\times 3}$ to this class to get 
\begin{align*}
\pi_1^{\times 3} \circ \Delta_{123}  &= \Delta_{123} - \frac{1}{2g-2}((\Delta_{12}+\Delta_{23})\psi_1 + (\Delta_{13}+\Delta_{23})\psi_2 + (\Delta_{12}+\Delta_{13})\psi_3)  \\
& + \frac 1 {(2g-2)^2}(\psi_1^2 +\psi_2^2 + \psi_3^2)  + \frac 2 {(2g-2)^2}
(\psi_1\psi_2   + 
\psi_1\psi_3 + 
\psi_2\psi_3)\\
& + \frac 1 {(2g-2)^2}\kappa_1(\Delta_{12}+\Delta_{13}+\Delta_{23})- \frac 3 {(2g-2)^3}
\kappa_1(\psi_1+\psi_2+\psi_3)  \\&  - \frac 1 {(2g-2)^3}\kappa_2 + \frac 3{(2g-2)^4}\kappa_1^2.
\end{align*} 
We call this the \emph{Gross--Schoen cycle}. Being $\sym_3$-invariant, it defines a class in the summand $\CH^2(M_g,\Sym^3 \VV) \subset \CH^2(M_g,\VV^{\otimes 3})$. There is now a decomposition
$$ \Sym^3 \VV \cong \VV_{\langle 1,1,1\rangle} \oplus \VV_{\langle 1 \rangle} \otimes \L,$$ 
and one could try to define a ``refined'' Gross--Schoen cycle by projecting onto the summand $\CH^2(M_g,\VV_{\langle 1,1,1\rangle})$, just as we did for the Faber--Pandharipande cycle in the previous example. However, the difference between the refined and unrefined Gross--Schoen cycles would then be an element of $\CH^2(M_g,\VV_{\langle 1 \rangle} \otimes \L) = \CH^1(M_g,\VV)$, which always vanishes. By using Nazarov's theorem one can construct a ``refined projector'' onto $\CH^\bullet(M_g,\VV_{\langle 1,1,1\rangle})$; the image of $\Delta_{123}$ under this refined projector agrees with the image under the naive projector $\pi_1^{\times 3}$, which is a nontrivial consistency check.

The cycle originally considered by Gross and Schoen in \cite{grossschoen} is related to ours as follows. Let $X$ be a smooth curve and $\z$ a degree $1$ zero-cycle on $X$. Then they studied the cycle 
$$ \Delta_{123} - \z_1\Delta_{23} - \z_2\Delta_{13} - \z_3\Delta_{12} + \z_2\z_3 + \z_1\z_3 + \z_1\z_2 \in \CH^2(X^3),$$
where $\z_i$ denotes the pullback of $\z$ from the projection map onto the $i$th factor. Considering our cycle in $\CH^2(X^3)$ and removing terms which obviously vanish gives
\begin{gather*}
\Delta_{123} - \frac{1}{2g-2}((\Delta_{12}+\Delta_{23})\psi_1 + (\Delta_{13}+\Delta_{23})\psi_2 + (\Delta_{12}+\Delta_{13})\psi_3) \\
+ \frac 2 {(2g-2)^2}
(\psi_1\psi_2   + 
\psi_1\psi_3 + 
\psi_2\psi_3)
\end{gather*}
which does \emph{not} coincide with the usual Gross--Schoen cycle for $\z = \frac{1}{2g-2}\psi$. However, the difference between the two cycles is given by a sum of Faber--Pandharipande cycles. In particular, our Gross--Schoen cycle and the usual one will have the same image under the Abel--Jacobi map, since the Faber--Pandharipande cycle is Abel--Jacobi trivial.

The Gross--Schoen cycle defines a nontrivial class in $\CH^2(M_g,\VV_{\langle 1,1,1\rangle})$ for all $g \geq 3$. Nonvanishing of the Gross--Schoen cycle is equivalent to nonvanishing of the \emph{Ceresa cycle} \cite{ceresa}, which is known to be nonzero in $\CH^2(X^3)$ if $X$ is a very general curve of genus $g \geq 3$. See \cite{fakhruddin} for this result in positive characteristic.

\section{Consequences for Gorenstein conjectures}\label{Gorsection}

By a theorem of Looijenga \cite{looijengatautological}, it is known that 
$$ R^{g-2+n}(C_g^n) \cong \Q$$
and that the tautological ring vanishes above this degree. More precisely, Looijenga proved the vanishing and that this group is at most $1$-dimensional, and Faber \cite{fabernonvanishing} found an example of a nonzero tautological class in this degree. The top nonzero degree of the tautological ring is called the \emph{socle}. 

This isomorphism can be made explicit in the following way. We define a map $$ R^{g-2}(M_g) \to \Q$$
by 
$$ \alpha \mapsto \int_{\overline M_g} \overline \alpha \cdot \lambda_g \lambda_{g-1 },$$
where $\overline \alpha$ denotes the closure of an algebraic cycle representing the class $\alpha$, and $\lambda_i$ denotes the $i$th Chern class of the Hodge bundle. We recall that the Hodge bundle is the locally free sheaf of rank $g$ whose fiber at a moduli point $[C]$ is the space of holomorphic differentials on $C$. A priori the integral would seem to not be well defined, since it depends on the choice of an algebraic cycle representing $\alpha$, but the integral is in fact well defined since multiplication by $\lambda_g \lambda_{g-1 }$ kills everything supported on the Deligne--Mumford boundary. For $n >0$, one has an isomorphism
$$ R^{g-2+n}(C_g^n) \to R^{g-2}(M_g)$$
given by pushforward (with inverse given by pullback and multiplication by $\frac{1}{(2g-2)^n}\psi_1\psi_2\cdots\psi_n$). 

All in all, this means that we have a pairing, which we denote by brackets:
\[
\begin{tikzcd}[row sep = 0pt]
	R^k(C_g^n) \otimes R^{g-2+n-k}(C_g^n)  \arrow[r]& \Q \\
	\alpha \otimes \beta\hspace{1.1cm} \arrow[r, mapsto] & \langle \alpha, \beta \rangle
	\end{tikzcd}\]
given by cup product, pushing down to $M_g$, and integrating against $\lambda_g \lambda_{g-1}$. Arbitrary integrals of tautological classes on $\MM_{g,n}$ can be calculated algorithmically and efficiently \cite{faberalgorithms,johnsontautologicalSAGE}, in particular integrals over $\MM_g$ of top degree classes in $R^{g-2}(M_g)$ paired with $\lambda_g\lambda_{g-1}$. 

\begin{lem}Let $\alpha \in R^k(C_g^n)$ and $\beta \in R^{g-2+n-k}(C_g^n)$, and let $i_1,\ldots, i_n \in \{0,1,2\}$. There is an equality
	\[ \big\langle (\pi_{i_1} \times \ldots\times \pi_{i_n}) \circ \alpha , \beta\, \big\rangle =  \big\langle \alpha , (\pi_{2-i_1} \times \ldots\times \pi_{2-i_n}) \circ \beta \,\big\rangle.\]
\end{lem}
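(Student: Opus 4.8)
The plan is to derive the identity from the projection formula for relative correspondences over $M_g$, together with the computation $\pi_i^t = \pi_{2-i}$ of the transposes of the K\"unneth projectors.

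First I would unwind the definition of the bracket. Writing $f \colon C_g^n \to M_g$ for the map forgetting all $n$ markings, the pairing is $\langle \alpha, \beta\rangle = \int_{\MM_g} \overline{f_\ast(\alpha\cdot\beta)}\cdot\lambda_g\lambda_{g-1}$, and by the discussion preceding the lemma the rule $\delta \mapsto \int_{\MM_g}\overline\delta\cdot\lambda_g\lambda_{g-1}$ is a well-defined linear functional on $R^{g-2}(M_g)$. Since the correspondences $\pi_{i_1}\times\cdots\times\pi_{i_n}$ are tautological and of degree $0$, both $(\pi_{i_1}\times\cdots\times\pi_{i_n})\circ\alpha$ and $(\pi_{2-i_1}\times\cdots\times\pi_{2-i_n})\circ\beta$ are tautological classes in the correct degrees, the two products below lie in $R^{g-2+n}(C_g^n)$, and their pushforwards lie in $R^{g-2}(M_g)$. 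Thus it suffices to prove the equality of classes
\[ f_\ast\big((\pi_{i_1}\times\cdots\times\pi_{i_n})\circ\alpha \cdot \beta\big) = f_\ast\big(\alpha\cdot(\pi_{2-i_1}\times\cdots\times\pi_{2-i_n})\circ\beta\big) \]
in $R^{g-2}(M_g)$; applying the socle functional then gives the lemma.

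Next I would establish the key adjunction: for any relative self-correspondence $\gamma \in \Corr_{M_g}(C_g^n,C_g^n)$, whose action on Chow groups is $\gamma\circ\alpha = (\mathrm{pr}_2)_\ast(\mathrm{pr}_1^\ast\alpha\cdot\gamma)$ on $C_g^n\times_{M_g}C_g^n$, one has $f_\ast(\gamma\circ\alpha\cdot\beta) = f_\ast(\alpha\cdot\gamma^t\circ\beta)$, where $\gamma^t$ is the transpose. This is a routine diagram chase: expand $\gamma\circ\alpha$, multiply by $\beta$, use the projection formula to pull $\beta$ inside $(\mathrm{pr}_2)_\ast$, use that $f\circ\mathrm{pr}_1 = f\circ\mathrm{pr}_2$ is the structure map of $C_g^n\times_{M_g}C_g^n$ over $M_g$ to rewrite $f_\ast(\mathrm{pr}_2)_\ast = f_\ast(\mathrm{pr}_1)_\ast$, apply the projection formula again, and recognize the resulting $(\mathrm{pr}_1)_\ast(\gamma\cdot\mathrm{pr}_2^\ast\beta)$ as $\gamma^t\circ\beta$ after the swap of the two factors.

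Finally I would identify $\gamma^t$ for $\gamma = \pi_{i_1}\times\cdots\times\pi_{i_n}$. The transpose of an external product of correspondences over $M_g$ is the external product of the transposes, so $\gamma^t = \pi_{i_1}^t\times\cdots\times\pi_{i_n}^t$. From the explicit formulas --- $\pi_0$ is $\z'$ pulled back from the first factor, $\pi_2$ is $\z'$ pulled back from the second, and $\pi_1 = \Delta_{12}-\pi_0-\pi_2$ with $\Delta_{12}$ symmetric --- swapping the two factors of $C_g\times_{M_g}C_g$ interchanges $\pi_0$ and $\pi_2$ and fixes $\pi_1$; hence $\pi_i^t = \pi_{2-i}$. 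Substituting into the adjunction yields the displayed equality of classes, and the lemma follows. This lemma is formal in nature, so the only real ``obstacle'' is bookkeeping: keeping all products fibered over $M_g$ rather than absolute, keeping track that the map intertwining everything is the forgetful map $f$, and noting that no Koszul signs intervene since we work entirely with Chow groups.
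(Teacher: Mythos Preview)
Your proof is correct and follows essentially the same approach as the paper. The paper packages your step~2 more concisely by viewing $\alpha$ and $\beta$ as morphisms in $\Mot_{M_g}$ and invoking associativity of composition together with Remark~\ref{transposeremark} (that the right action of a correspondence coincides with the left action of its transpose), whereas you unwind this adjunction explicitly via the projection formula; the substance, and the identification $\pi_i^t=\pi_{2-i}$, are identical.
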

\begin{proof}
	Consider $\alpha$ as a morphism $\mathbf 1 \to h(C_g^n/M_g) \otimes \L^{-k}$, and $\beta$ as a morphism $h(C_g^n/M_g)) \otimes \L^{-k} \to \L^{2-g}$. The composition $\beta \circ \alpha \in \Mot_{M_g}(\mathbf 1,\L^{2-g}) = \CH^{g-2}(M_g)$ is the product $\alpha \cdot \beta$ pushed forward to $M_g$. Now if $\pi$ is any correspondence $C_g^n \vdash C_g^n$ then 
	$$ \beta \circ (\pi \circ \alpha) = (\beta \circ \pi) \circ \alpha.$$
	But $\beta \circ \pi = \pi^t \circ \beta$ by Remark \ref{transposeremark}. Since $(\pi_{i_1} \times \ldots\times \pi_{i_n})^t = (\pi_{2-i_1} \times \ldots\times \pi_{2-i_n})$ we are done.
	%
\end{proof}

As explained in Subsection \ref{cupproduct}, the cup product in $\CH^\bullet(C_g^n)$ (and then also in $R^\bullet(C_g^n)$) is in general not so easily described in terms of the relative K\"unneth decomposition of these algebras. The next proposition shows, however, that the \emph{socle pairing} on $R^\bullet(C_g^n)$ takes a very simple form with respect to the K\"unneth decomposition.

\begin{prop}\label{samepairing}
	The Gram matrix describing the socle pairing in the algebra $R^\bullet(C_g^n)$ is block-diagonal with respect to the relative K\"unneth decomposition of $R^\bullet(C_g^n)$. More precisely, the summand
	$$ R^k(M_g,h^{i_1}(C_g/M_g) \otimes \ldots \otimes h^{i_n}(C_g/M_g)) \subset R^k(C_g^n)$$
	pairs to zero with all summands in complementary degree except for 
	$$ R^{g-2+n-k}(M_g,h^{2-i_1}(C_g/M_g)\otimes \ldots \otimes h^{2-i_n}(C_g/M_g)) \subset R^{g-2+n-k}(C_g^n).$$
\end{prop}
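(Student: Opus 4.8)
The plan is to deduce the statement directly from the preceding lemma, combined with the orthogonality of the Künneth projectors. First recall that, by the construction in Section \ref{kunnethsection} using the modified cycle $\z'$, the three correspondences $\pi_0,\pi_1,\pi_2$ acting on $h(C_g/M_g)$ are pairwise orthogonal idempotents with $\pi_0+\pi_1+\pi_2=\id$. (Orthogonality of $\pi_0$ and $\pi_2$ is exactly what that lemma proves; orthogonality of $\pi_1=\id-\pi_0-\pi_2$ with $\pi_0$ and with $\pi_2$ then follows formally.) Since composition of correspondences is compatible with the external product, for tuples $(a_1,\dots,a_n),(b_1,\dots,b_n)\in\{0,1,2\}^n$ we get
$$ (\pi_{a_1}\times\cdots\times\pi_{a_n})\circ(\pi_{b_1}\times\cdots\times\pi_{b_n}) = (\pi_{a_1}\circ\pi_{b_1})\times\cdots\times(\pi_{a_n}\circ\pi_{b_n}),$$
which is the zero correspondence unless $a_k=b_k$ for every $k$, in which case it equals $\pi_{a_1}\times\cdots\times\pi_{a_n}$. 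In particular the summand $R^k(M_g,h^{i_1}(C_g/M_g)\otimes\cdots\otimes h^{i_n}(C_g/M_g))$ is precisely the image of the projector $\pi_{i_1}\times\cdots\times\pi_{i_n}$ on $R^k(C_g^n)$, and these images constitute an internal direct sum decomposition.

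Now fix $\alpha$ in the summand indexed by $(i_1,\dots,i_n)$ and $\beta$ in the summand indexed by $(j_1,\dots,j_n)$, so that $\alpha=(\pi_{i_1}\times\cdots\times\pi_{i_n})\circ\alpha$ and $\beta=(\pi_{j_1}\times\cdots\times\pi_{j_n})\circ\beta$. Applying the preceding lemma,
$$ \langle\alpha,\beta\rangle = \big\langle(\pi_{i_1}\times\cdots\times\pi_{i_n})\circ\alpha,\,\beta\big\rangle = \big\langle\alpha,\,(\pi_{2-i_1}\times\cdots\times\pi_{2-i_n})\circ\beta\big\rangle.$$
Substituting $\beta=(\pi_{j_1}\times\cdots\times\pi_{j_n})\circ\beta$ inside the right-hand bracket and invoking the orthogonality computation above, $(\pi_{2-i_1}\times\cdots\times\pi_{2-i_n})\circ(\pi_{j_1}\times\cdots\times\pi_{j_n})=0$ unless $j_k=2-i_k$ for all $k$. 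Hence $\langle\alpha,\beta\rangle=0$ whenever $(j_1,\dots,j_n)\neq(2-i_1,\dots,2-i_n)$, which is exactly the asserted block-diagonality: the $(i_1,\dots,i_n)$-summand pairs to zero against every summand in complementary degree except the $(2-i_1,\dots,2-i_n)$-summand.

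There is no real obstacle here; the entire adjunction/transpose bookkeeping for the socle pairing has already been carried out in the preceding lemma, and the remaining input is the elementary fact that a product of pairwise orthogonal idempotents is orthogonal. The only point deserving a moment's attention is to make sure one uses the \emph{orthogonalized} projectors $\pi_0,\pi_1,\pi_2$ of Section \ref{kunnethsection} (built from $\z'$ rather than $\z$); with the naive $\z$ the projectors $\pi_0,\pi_{2d}$ need not be orthogonal and the argument would break. We emphasize that the proposition asserts only the vanishing of the off-diagonal blocks; the stronger statement that each diagonal block is a perfect pairing (point (v) of the introduction) requires the separate input, used in later sections, that the tautological groups of $C_g^n$ and of the twisted coefficients are abstractly dual, and is not needed here.
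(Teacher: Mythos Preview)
Your proof is correct and follows essentially the same approach as the paper's own proof: apply the preceding lemma to move the projector from $\alpha$ to $\beta$ (replacing each $\pi_{i_k}$ by its transpose $\pi_{2-i_k}$), and then observe that this projector annihilates $\beta$ unless $\beta$ lies in the complementary summand. The paper's version is slightly terser --- it simply notes that $(\pi_{2-i_1}\times\cdots\times\pi_{2-i_n})\circ\beta=0$ when $\beta$ lies in a different summand --- whereas you unpack this by explicitly invoking the orthogonality of the product projectors; but this is the same argument.
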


\begin{proof}
	Take $\alpha \in R^k(C_g^n)$. Suppose it lies in the summand $$R^{k}(M_g,h^{i_1}(C_g/M_g) \otimes \ldots \otimes h^{i_n}(C_g/M_g));$$
	equivalently, $(\pi_{i_1} \times \ldots\times \pi_{i_n}) \circ \alpha = \alpha$. For $\beta$ in complementary degree we have
	$$ \langle \alpha, \beta\rangle = \big\langle (\pi_{i_1} \times \ldots\times \pi_{i_n}) \circ \alpha, \beta\,\big \rangle = \big\langle \alpha , (\pi_{2-i_1} \times \ldots\times \pi_{2-i_n}) \circ \beta \,\big\rangle$$
	by the previous lemma. 	But if $\beta$ lies in any summand except for
	$$ R^{g-2+n-k}(M_g,h^{2-i_1}(C_g/M_g)\otimes \ldots \otimes h^{2-i_n}(C_g/M_g)),$$
	then $(\pi_{2-i_1} \times \ldots\times \pi_{2-i_n}) \circ \beta = 0$, hence $\langle \alpha,\beta \rangle = 0$, as claimed.
\end{proof}

\begin{rem}
	The previous proposition shows that the socle pairing in $R^\bullet(C_g^n)$ depends only on the cup product maps  $h^{i}(C_g/M_g) \otimes h^{2-i}(C_g/M_g) \to h^2(C_g/M_g) \cong \L$. Since for $i=0$ or $i=2$ these maps are given by the canonical isomorphisms $\mathbf 1 \otimes \L \to \L$ and $\L \otimes \mathbf 1 \to \L$, the socle pairing in fact only depends nontrivially on the maps $$R^k(M_g,h^1(C_g/M_g)^{\otimes m}) \otimes R^{g-2+m-k}(M_g,h^1(C_g/M_g)^{\otimes m}) \to R^{g-2+m}(M_g,\L^{\otimes m})\cong R^{g-2}(M_g).$$ 
\end{rem}

\begin{rem}
	On the level of Betti realizations, Proposition \ref{samepairing} says that even though the algebras $H^\bullet(C_g^n,\Q)$ and $\gr_L H^\bullet(C_g^n,\Q)$ (and then also $\RH^\bullet(C_g^n,\Q)$ and $\gr_L \RH^\bullet(C_g^n,\Q)$) have very different cup product, both algebras $\RH^\bullet(C_g^n,\Q)$ and $\gr_L \RH^\bullet(C_g^n,\Q)$ will have identical socle pairings. 
	
	In particular, the algebra $\RH^\bullet(C_g^n)$ is Gorenstein (i.e.\ satisfies Poincar\'e duality) if and only if the same holds for the algebra $\gr_L \RH^\bullet(C_g^n)$. That said, Proposition \ref{samepairing} is not actually needed to prove this last fact. Since the Leray filtration is compatible with cup product, we know a priori that the Gram matrix describing the socle pairing for the algebra $\RH^\bullet(C_g^n)$ is block-triangular, and that the diagonal blocks coincide with the Gram matrix for the socle pairing in the algebra $\gr_L \RH^\bullet(C_g^n)$. In particular both matrices have the same rank. In these terms, Proposition \ref{samepairing} says that even though the intersection pairing for $\RH^\bullet(C_g^n)$ is a priori only block-triangular, it turns out to actually be block-diagonal. 
\end{rem}

\begin{thm}\label{faberconj}Fix a genus $g$.	The following are equivalent:
	\begin{enumerate}
		\item All algebras $R^\bullet(C_g^n)$ for $n \geq 0$ are Gorenstein.
		\item For each partition $\lambda$, the pairing
		$$ R^k(M_g,\VV_\llambda) \otimes R^{g-2+\vert \lambda \vert-k}(M_g,\VV_\llambda) \to R^{g-2+ \vert \lambda \vert}(M_g,\L^{\vert \lambda \vert}) \cong R^{g-2}(M_g) \cong \Q$$
		is perfect.
	\end{enumerate}
	The pairing in (2) comes from the map of motives $\VV_\llambda \otimes \VV_\llambda \to \L^{\vert \lambda \vert}$ given by the fact that $\VV_\llambda$ is self-dual.
	
\end{thm}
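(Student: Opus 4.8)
The plan is to encode the socle pairings of all the rings $R^\bullet(C_g^n)$ in a single family of pairings, one for each $m\ge 0$, and then to split each of these according to partitions, matching the piece attached to $\mu$ with the self-duality pairing on $\VV_{\langle\mu\rangle}$. First, fix for each $n$ a decomposition $h(C_g^n/M_g)\cong\bigoplus_i\VV_{\langle\lambda_i\rangle}\otimes\L^{m_i}$ as in Theorem~\ref{decompositionwelldefined}. By Proposition~\ref{samepairing} and the remark following it, the socle pairing on $R^\bullet(C_g^n)$ is block-diagonal for the relative K\"unneth decomposition, and --- once the factors $h^0(C_g/M_g)\cong\mathbf 1$ and $h^2(C_g/M_g)\cong\L$, which only shift degrees, are discarded --- every block is a degree shift of the pairing
$$ R^k(M_g,h^1(C_g/M_g)^{\otimes m})\otimes R^{g-2+m-k}(M_g,h^1(C_g/M_g)^{\otimes m})\longrightarrow R^{g-2}(M_g)\cong\Q $$
induced by the $m$-fold cup product $h^1(C_g/M_g)^{\otimes m}\otimes h^1(C_g/M_g)^{\otimes m}\to\L^m$ (which exhibits $h^1(C_g/M_g)$ as symmetrically self-dual) and Looijenga's socle isomorphism; call it $\langle\,,\,\rangle_m$. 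Conversely, $\langle\,,\,\rangle_m$ itself occurs as a block of the socle pairing of $R^\bullet(C_g^m)$ (the block attached to the K\"unneth multi-index $(1,\dots,1)$). So statement (1) is equivalent to: $\langle\,,\,\rangle_m$ is perfect in every degree, for every $m\ge 0$.

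The next step is to decompose $\langle\,,\,\rangle_m$ using the Brauer algebra. By symplectic Schur--Weyl duality (Theorem~\ref{symplecticschurweyl}) the image $\mathcal A:=\End_{\Sp(2g)}(V^{\otimes m})$ of $\B^{(-2g)}(m,m)$ is a product of matrix algebras $\prod_\mu\mathrm M_{d_\mu}(\Q)$; as in Section~\ref{explicitbraueraction} it acts on $h^1(C_g/M_g)^{\otimes m}$ by tautological correspondences, hence on $R^\bullet(M_g,h^1(C_g/M_g)^{\otimes m})$, and the central idempotent $e_\mu\in\mathcal A$ cuts out the summand $\VV_{\langle\mu\rangle}\otimes\L^{(m-|\mu|)/2}\otimes\beta_{\mu,m}^\vee$ of Theorem~\ref{ancona}. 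Let $\dagger$ be the anti-involution of $\mathcal A$ obtained by reflecting Brauer diagrams top-to-bottom, equivalently the adjoint for the symplectic form on $V^{\otimes m}$; then the cup product satisfies $\epsilon_m\circ(\phi\otimes\mathrm{id})=\epsilon_m\circ(\mathrm{id}\otimes\phi^\dagger)$, so $\langle\,,\,\rangle_m$ is $\dagger$-invariant. Moreover $e_\mu^\dagger=e_\mu$: the involution $\dagger$ permutes the simple two-sided ideals $\mathrm M_{d_\mu}(\Q)$ and fixes each, since the symplectic form restricts nondegenerately to each isotypic subspace of $V^{\otimes m}$ --- here one uses that every $V_{\langle\mu\rangle}$ is self-dual, so distinct isotypic subspaces are mutually orthogonal. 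Consequently the subspaces $e_\mu R^\bullet(M_g,h^1(C_g/M_g)^{\otimes m})\cong R^{\bullet-(m-|\mu|)/2}(M_g,\VV_{\langle\mu\rangle})\otimes_\Q\beta_{\mu,m}^\vee$ are mutually $\langle\,,\,\rangle_m$-orthogonal, and $\langle\,,\,\rangle_m$ is the orthogonal direct sum of its restrictions $\langle\,,\,\rangle_{m,\mu}$ to them.

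It then remains to identify the blocks. On $e_\mu R$, where $\mathcal A$ acts through $e_\mu\mathcal A=\mathrm M_{d_\mu}(\Q)$ on the tensor factor $\beta_{\mu,m}^\vee$ (the simple module), Schur's lemma forces every $\dagger$-invariant bilinear form to be a tensor product $\gamma_\mu\otimes B_\mu$, with $\gamma_\mu$ the essentially unique, necessarily nondegenerate, $\dagger$-invariant form on $\beta_{\mu,m}^\vee$ (which exists because the symplectic contraction already provides one) and $B_\mu$ a bilinear form on $R^{\bullet-(m-|\mu|)/2}(M_g,\VV_{\langle\mu\rangle})$; thus $\langle\,,\,\rangle_{m,\mu}=\gamma_\mu\otimes B_\mu$. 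To see that $B_\mu$ is the pairing appearing in (2), restrict $\epsilon_m$ to the copy of $\VV_{\langle\mu\rangle}$ inside $h^1(C_g/M_g)^{\otimes m}$ given by $\VV_{\langle\mu\rangle}\hookrightarrow h^1(C_g/M_g)^{\otimes|\mu|}\xrightarrow{\ \mathrm{id}\otimes\eta^{\otimes a}\ }h^1(C_g/M_g)^{\otimes m}$, with $\eta\colon\L\to h^1(C_g/M_g)^{\otimes 2}$ the coevaluation and $a=(m-|\mu|)/2$: since $\epsilon_m=\epsilon_{|\mu|}\otimes\epsilon_{2a}$ and $\epsilon_{2a}\circ(\eta^{\otimes a}\otimes\eta^{\otimes a})$ is a nonzero scalar times the canonical isomorphism $\L^a\otimes\L^a\cong\L^{2a}$, the pairing $\epsilon_m$ restricts on this copy to a nonzero multiple of a self-duality pairing $\langle\,,\,\rangle_{\VV_{\langle\mu\rangle}}$ on $\VV_{\langle\mu\rangle}$ (tensored with the canonical pairing on $\L^a$); and any two self-duality pairings on $\VV_{\langle\mu\rangle}$ differ by a tautological automorphism, hence induce bilinear pairings on $R^\bullet(M_g,\VV_{\langle\mu\rangle})$ that are perfect together. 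Combining all of this, up to nonzero scalars $\langle\,,\,\rangle_m=\bigoplus_\mu\gamma_\mu\otimes\langle\,,\,\rangle_{\VV_{\langle\mu\rangle}}$, the sum over partitions with $|\mu|\le m$ and $|\mu|\equiv m\pmod 2$ (the terms with $\ell(\mu)>g$ vanishing, as $\VV_{\langle\mu\rangle}=0$). As each $\gamma_\mu$ is nondegenerate and all the groups involved are finite-dimensional, $\langle\,,\,\rangle_m$ is perfect if and only if $\langle\,,\,\rangle_{\VV_{\langle\mu\rangle}}$ is perfect for every such $\mu$; letting $m$ vary over all admissible values yields every partition, so this is precisely the equivalence of (1) and (2).

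The main obstacle is the identification of the blocks: because the category $\Mot_{M_g}$ is not semisimple, $\Hom_{\Mot_{M_g}}(\VV_{\langle\mu\rangle}\otimes\VV_{\langle\mu\rangle},\L^{|\mu|})$ is in general strictly larger than the line spanned by the self-duality pairing, so Schur's lemma cannot be applied directly inside $\Mot_{M_g}$. The way around this is to perform all the representation-theoretic bookkeeping inside the semisimple algebra $\End_{\Sp(2g)}(V^{\otimes m})$ --- which genuinely acts on the motive $h^1(C_g/M_g)^{\otimes m}$ by the results quoted above --- and to make the comparison between the pairings $\epsilon_m$ for different $m$ fully explicit via the coevaluation maps $\eta$, rather than reasoning abstractly.
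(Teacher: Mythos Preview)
Your proof is correct and follows the same underlying idea as the paper's: decompose $h(C_g^n/M_g)$ into summands $\VV_{\langle\lambda\rangle}\otimes\L^m$ and show that the socle pairing is block-diagonal with blocks given by the self-duality pairings on the $\VV_{\langle\lambda\rangle}$. The paper's own proof is essentially a two-line sketch that simply asserts this block-diagonality as a consequence of the preceding proposition and the decomposition theorem.

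What you do differently is fill in the step the paper leaves implicit. Proposition~\ref{samepairing} only establishes orthogonality between the K\"unneth summands $h^{i_1}\otimes\cdots\otimes h^{i_n}$; the further orthogonality between distinct $\VV_{\langle\mu\rangle}$-summands inside a single $h^1(C_g/M_g)^{\otimes m}$ requires an additional argument. You supply this via the anti-involution $\dagger$ on the Brauer algebra and the self-adjointness $e_\mu^\dagger=e_\mu$ of the central idempotents, exactly parallel to the $\pi_i^t=\pi_{2-i}$ argument in the lemma preceding Proposition~\ref{samepairing}. Your identification of each block as $\gamma_\mu\otimes\langle\,,\,\rangle_{\VV_{\langle\mu\rangle}}$ with $\gamma_\mu$ nondegenerate, and your observation that one must run this argument in the semisimple algebra $\End_{\Sp(2g)}(V^{\otimes m})$ rather than in $\Mot_{M_g}$ because the latter is not semisimple, make explicit a subtlety the paper passes over. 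The paper's terse argument is justified, but your version explains \emph{why} it is justified.
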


\begin{proof}If we decompose $h(C_g^n/M_g)$ as a direct sum of motives $\VV_\llambda \otimes \L^m$, then the socle pairing in $R^\bullet(C_g^n)$ is the direct sum of the various pairings 
	$$ R^k(M_g,\VV_\llambda) \otimes R^{g-2+\vert\lambda\vert-k}(M_g,\VV_\llambda) \to R^{g-2}(M_g) \cong \Q.$$
	Thus the socle pairing in the  tautological ring of $C_g^n$ is perfect if and only if the same holds for the pairing for each of the motives $\VV_\llambda$. 
\end{proof}

A variant of the preceding theorem, with the same proof, is as follows:

\begin{thm}Fix a genus $g$.	The following are equivalent:
	\begin{enumerate}
		\item All algebras $R^\bullet(C_g^n)$ for $0 \leq n \leq N$ are Gorenstein.
		\item For each partition $\lambda$ with $\vert \lambda \vert \leq N$, the pairing
		$$ R^k(M_g,\VV_\llambda) \otimes R^{g-2+\vert\lambda\vert-k}(M_g,\VV_\llambda) \to R^{g-2}(M_g) \cong \Q$$
		is perfect.
	\end{enumerate}
\end{thm}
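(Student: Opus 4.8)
The plan is to run the argument of Theorem~\ref{faberconj} essentially verbatim, restricting all quantifiers to the range cut out by $N$. Two facts make this work. First, for every $n$ with $0\le n\le N$, any decomposition $h(C_g^n/M_g)\cong\bigoplus_i\VV_{\langle\lambda_i\rangle}\otimes\L^{m_i}$ involves only partitions with $|\lambda_i|\le n\le N$ (this is Weyl's construction; equivalently it is visible from Theorem~\ref{ancona} and the splitting of $h(C_g/M_g)$ into $h^0,h^1,h^2$). Second, by Theorem~\ref{decompositionwelldefined} such a decomposition is compatible with tautological subgroups, so $R^\bullet(C_g^n)\cong\bigoplus_i R^{\bullet-m_i}(M_g,\VV_{\langle\lambda_i\rangle})$. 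Combining Proposition~\ref{samepairing} with the fact that the component $h^1(C_g/M_g)^{\otimes m}\otimes h^1(C_g/M_g)^{\otimes m}\to\L^m$ of the cup product respects the Brauer-algebra-equivariant splitting into pieces $\VV_\llambda\otimes\beta_{\lambda,m}^\vee\otimes\L^\star$, and restricts on each such piece to the intrinsic self-duality pairing $\VV_\llambda\otimes\VV_\llambda\to\L^{|\lambda|}$, one obtains that the socle pairing on $R^\bullet(C_g^n)$ is the orthogonal direct sum over $i$ of the pairings
$$ R^k(M_g,\VV_{\langle\lambda_i\rangle})\otimes R^{g-2+|\lambda_i|-k}(M_g,\VV_{\langle\lambda_i\rangle})\longrightarrow R^{g-2}(M_g)\cong\Q.$$

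Given this, (2)$\Rightarrow$(1) is immediate: if the pairing in~(2) is perfect for every $\lambda$ with $|\lambda|\le N$, then for each $n\le N$ every summand of the socle pairing on $R^\bullet(C_g^n)$ is perfect (since each $|\lambda_i|\le n\le N$), hence the socle pairing is perfect, i.e.\ $R^\bullet(C_g^n)$ is Gorenstein. For (1)$\Rightarrow$(2), fix $\lambda$ with $|\lambda|\le N$ and take $n=|\lambda|$; then $\VV_\llambda$ occurs in $h(C_g^n/M_g)$ with trivial Tate twist (indeed in $h^1(C_g/M_g)^{\otimes n}$), so the pairing in~(2) appears as one of the orthogonal blocks of the socle pairing on $R^\bullet(C_g^n)$. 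Since that pairing is perfect by hypothesis and a block of a perfect block-diagonal pairing is itself perfect, (2) follows. The only reason the bounds match up is that here one uses $n=|\lambda|$, so $|\lambda|\le N$ in~(2) corresponds exactly to $n\le N$ in~(1).

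The substantive content is all inherited from Proposition~\ref{samepairing} and the identification of the relevant blocks with the self-duality pairings on the $\VV_\llambda$ --- the same input as in Theorem~\ref{faberconj} --- so I do not expect a genuinely new obstacle. The one point that needs care is the bookkeeping of ranges, and specifically checking that the block of the socle pairing attached to $\VV_\llambda$ is ``the same'' pairing whether computed inside $R^\bullet(C_g^{|\lambda|})$ or inside $R^\bullet(C_g^n)$ for larger $n$; this is precisely what Theorem~\ref{decompositionwelldefined} guarantees, since it shows $R^\bullet(M_g,\VV_\llambda)$, and hence its pairing, is independent of the ambient $C_g^n$ used to exhibit it. With that observation the bounded statement reduces mechanically to the unbounded one.
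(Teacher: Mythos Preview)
Your proposal is correct and follows exactly the paper's approach: the paper simply states that this is ``a variant of the preceding theorem, with the same proof,'' and you have spelled out precisely that argument, together with the bookkeeping observation that only $\VV_\llambda$ with $|\lambda|\le n$ occur in $h(C_g^n/M_g)$ and that taking $n=|\lambda|$ suffices for the converse direction.
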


We also wish to mention the following result, which was proven by somewhat different arguments in \cite{universalcurverationaltails,tavakolconjecturalconnection}.

\begin{thm}
	The following statements are equivalent:\begin{enumerate}
		\item All algebras $R^\bullet(C_g^n)$ for $0 \leq n \leq N$ are Gorenstein.
		\item The algebra $R^\bullet(M_{g,N}^\rt)$ is Gorenstein.
	\end{enumerate}
\end{thm}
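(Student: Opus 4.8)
The plan is to reduce both implications to the bounded ($N$-truncated) form of Theorem~\ref{faberconj} proved above, using that the whole apparatus of Sections~\ref{mainsection}--\ref{Gorsection} transports to $M_{g,N}^{\rt}$, as explained in Section~\ref{rationaltails}. First I would record the consequences of that transport: there is a decomposition $h(M_{g,N}^\rt/M_g)\cong\bigoplus_i\VV_{\langle\lambda_i\rangle}\otimes\L^{m_i}$ whose summand-projectors are tautological, so that $R^\bullet(M_{g,N}^\rt)\cong\bigoplus_i R^{\bullet-m_i}(M_g,\VV_{\langle\lambda_i\rangle})$; the socle of $R^\bullet(M_{g,N}^\rt)$ lies in degree $g-2+N$ by Looijenga's theorem (using that $M_{g,N}^\rt\to M_g$ has relative dimension $N$); and Proposition~\ref{samepairing} holds by the same proof, so the socle pairing of $R^\bullet(M_{g,N}^\rt)$ is block-diagonal with respect to this decomposition. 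Repeating the computation in the proof of Theorem~\ref{faberconj} then shows that each block is a pairing of the form $R^k(M_g,\VV_\llambda)\otimes R^{g-2+|\lambda|-k}(M_g,\VV_\llambda)\to R^{g-2}(M_g)\cong\Q$ induced by the self-duality $\VV_\llambda\otimes\VV_\llambda\to\L^{|\lambda|}$, and that only partitions with $|\lambda|\leq N$ occur, since every $\VV_{\langle\lambda_i\rangle}$ arising in $h(M_{g,N}^\rt/M_g)$ has $|\lambda_i|\leq N$.

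Granting this, the implication $(1)\Rightarrow(2)$ is formal: by the bounded form of Theorem~\ref{faberconj}, statement $(1)$ says precisely that every pairing $R^k(M_g,\VV_\llambda)\otimes R^{g-2+|\lambda|-k}(M_g,\VV_\llambda)\to\Q$ with $|\lambda|\leq N$ is perfect, and a direct sum of perfect pairings is perfect, so the socle pairing of $R^\bullet(M_{g,N}^\rt)$ is perfect, i.e.\ $R^\bullet(M_{g,N}^\rt)$ is Gorenstein. For $(2)\Rightarrow(1)$ I would need the converse input: that \emph{every} block with $|\lambda|\leq N$ actually occurs, so that perfectness of the whole socle pairing forces perfectness of each block. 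Here I would invoke Li's blow-up formula \cite{limotives}: $M_{g,N}^\rt$ is an iterated blow-up of $C_g^N$ along the diagonal strata, so $h(C_g^N/M_g)$ is a direct summand of $h(M_{g,N}^\rt/M_g)$ (with tautological projector), the remaining summands being Tate twists of $h(C_g^m/M_g)$ with $m<N$. Since $h(C_g^N/M_g)=(\mathbf 1\oplus h^1(C_g/M_g)\oplus\L)^{\otimes N}$ contains, for every partition with $|\lambda|\leq N$, the top-weight summand $\VV_\llambda$ of $h^1(C_g/M_g)^{\otimes|\lambda|}$ untwisted --- and, Poincar\'e-dually, its twist $\VV_\llambda\otimes\L^{N-|\lambda|}$ --- the block $R^k(M_g,\VV_\llambda)\otimes R^{g-2+|\lambda|-k}(M_g,\VV_\llambda)\to\Q$ occurs for every $k$. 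Thus if $R^\bullet(M_{g,N}^\rt)$ is Gorenstein each such block is perfect, and the bounded form of Theorem~\ref{faberconj} gives $(1)$.

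The only genuine content is the transport of Sections~\ref{mainsection}--\ref{Gorsection} to $M_{g,N}^\rt$ --- the relative K\"unneth decomposition, the tautological algebra of projectors, and the block-diagonality of the socle pairing --- together with the observation, internal to that transport, that all of $h(C_g^N/M_g)$ appears as a direct summand of $h(M_{g,N}^\rt/M_g)$, which is exactly what guarantees that no block is lost in passing from $C_g^N$ to $M_{g,N}^\rt$. Both points are supplied by Li's results; after that the argument is the bookkeeping above, with no new geometric input beyond Theorem~\ref{faberconj} and Section~\ref{rationaltails}.
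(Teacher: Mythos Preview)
Your overall strategy is the right one, but there is a genuine error in one step. You assert that ``Proposition~\ref{samepairing} holds by the same proof'' for $M_{g,N}^{\rt}$, so that the socle pairing is block-diagonal with respect to the decomposition. The paper explicitly denies this: in the paragraph immediately following the theorem it says that ``there does not appear to be an analogue of [Proposition]~\ref{samepairing} for the spaces $M_{g,n}^\rt$: the socle pairing for $M_{g,n}^\rt$ is block upper triangular with respect to the natural decomposition, but not in general block diagonal.'' The proof of Proposition~\ref{samepairing} hinges on the identity $(\pi_{i_1}\times\cdots\times\pi_{i_n})^t=\pi_{2-i_1}\times\cdots\times\pi_{2-i_n}$, i.e.\ the K\"unneth projectors for $C_g^n$ are permuted by transposition. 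Li's blow-up projectors on $M_{g,N}^{\rt}$ are built from exceptional divisors and do not enjoy this symmetry, so the argument does not transport.

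The repair is already indicated in the Remark following Proposition~\ref{samepairing}: block-triangularity is enough. The cup product is compatible with the relevant filtration (Leray, or equivalently the blow-up filtration implicit in Li's construction), so the Gram matrix of the socle pairing on $R^\bullet(M_{g,N}^{\rt})$ is block upper triangular, and its diagonal blocks are precisely the pairings $R^k(M_g,\VV_\llambda)\otimes R^{g-2+|\lambda|-k}(M_g,\VV_\llambda)\to\Q$. A block-triangular matrix has full rank if and only if each diagonal block does, so both implications go through exactly as you wrote once ``block-diagonal'' is replaced by ``block upper triangular.'' Note also that the paper does not supply its own proof of this theorem but cites \cite{universalcurverationaltails,tavakolconjecturalconnection}; your argument via the motivic decomposition and Theorem~\ref{faberconj} is an alternative route in the spirit of the present paper, and is correct modulo the fix above.
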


This shows that the Faber conjecture for the spaces $M_{g,n}^\rt$ can also be equivalently reformulated in terms of the motives $\VV_\llambda$. We saw in Section \ref{rationaltails} that the tautological groups of $M_{g,n}^\rt$ can be expressed in terms of the tautological groups of the motives $\VV_\llambda$; however, there does not appear to be an analogue of Theorem \ref{samepairing} for the spaces $M_{g,n}^\rt$: the socle pairing for $M_{g,n}^\rt$ is block upper triangular with respect to the natural decomposition, but not in general block diagonal.

\subsection{Symmetric powers}

We may also consider the \emph{symmetric powers} of the universal curve. 

\begin{defn}
	We define $C_g^{(n)}$ to be the $n$th symmetric power of the universal curve over $M_g$; that is, $C_g^{(n)} = C_g^n/\sym_n$. We define its tautological ring by $R^\bullet(C_g^{(n)}) = R^\bullet(C_g^n)^{\sym_n}$.
\end{defn}

\begin{lem}
	Suppose that the Chow groups $\CH^\bullet(C_g^n)$ are decomposed as a direct sum of summands $\CH^\bullet(M_g,\VV_\llambda)$. The only local systems occuring in the subspace $\CH^\bullet(C_g^{(n)}) \subseteq \CH^\bullet(C_g^n)$ are those of the form $\lambda = (1,1,1,\ldots)$, i.e. those that occur as summands of the symmetric powers of $\VV$. 
\end{lem}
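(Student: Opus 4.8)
The plan is to recognise $\CH^\bullet(C_g^{(n)})$ as the Chow groups of the $n$-th symmetric power of $h(C_g/M_g)$ in $\Mot_{M_g}$ and then to decompose that symmetric power by hand. Since a quotient by a finite group does not change rational Chow groups, $\CH^\bullet(C_g^{(n)}) = \CH^\bullet(C_g^n)^{\sym_n}$, and under the identification $\CH^\bullet(C_g^n) = \CH^\bullet(M_g, h(C_g/M_g)^{\otimes n})$ the action permuting the markings is the permutation of tensor factors; applying the symmetrizing idempotent (legitimate since we are in characteristic zero and $\CH^\bullet(M_g,-)$ is $\Q$-linear and additive) gives
$$\CH^\bullet(C_g^{(n)}) = \CH^\bullet\!\big(M_g, \Sym^n h(C_g/M_g)\big),$$
where $\Sym^n$ denotes the symmetric power in the pseudo-abelian $\Q$-linear symmetric monoidal category $\Mot_{M_g}$.

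Next I would feed in the canonical decomposition $h(C_g/M_g) = \mathbf 1 \oplus \VV \oplus \L$. In any such category $\Sym^n(A\oplus B\oplus C) \cong \bigoplus_{a+b+c=n}\Sym^a A\otimes\Sym^b B\otimes\Sym^c C$, and $\Sym^a\mathbf 1 = \mathbf 1$ while $\Sym^c\L = \L^c$ (since $\L$ is an even invertible object), so
$$\Sym^n h(C_g/M_g) \cong \bigoplus_{b+c\leq n}\Sym^b\VV\otimes\L^c.$$
Hence the lemma reduces to showing that each $\Sym^b\VV$ is a direct sum of Tate twists of the motives $\VV_{\langle 1^j\rangle}$. (This also explains the parenthetical remark in the statement, since conversely $\VV_{\langle 1^j\rangle}$ is a summand of $\Sym^j\VV$.)

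To decompose $\Sym^b\VV$, I would write it as the image of the symmetrizer $e = \tfrac1{b!}\sum_{\sigma\in\sym_b}\sigma$ acting on $\VV^{\otimes b}$, and use the decomposition $\VV^{\otimes b}\cong\bigoplus_{|\lambda|\leq b,\,|\lambda|\equiv b\,(2)}\VV_\llambda\otimes\L^{(b-|\lambda|)/2}\otimes\beta_{\lambda,b}^\vee$ of Section \ref{explicitbraueraction} (resting on Theorem \ref{ancona}), in which $\B^{(-2g)}(b,b)$ — and in particular $\Q[\sym_b]$ acting as the usual permutation, by Proposition \ref{braueraction} — acts only on the multiplicity spaces $\beta_{\lambda,b}^\vee$. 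This action factors through the semisimple algebra $\End_{\Sp(2g)}(V^{\otimes b})$ (Remark \ref{matrixalgebraremark}), so the idempotents cutting out the summands are central there and commute with $e$, whence
$$\Sym^b\VV \cong \bigoplus_\lambda \VV_\llambda\otimes\L^{(b-|\lambda|)/2}\otimes(\beta_{\lambda,b}^\vee)^{\sym_b}.$$
Now $\dim(\beta_{\lambda,b}^\vee)^{\sym_b}$ is the multiplicity of the trivial representation $\sigma_{(b)} = \sigma_{(1^b)^T}$ in $\beta_{\lambda,b}$, which by the branching formula recalled after Theorem \ref{symplecticschurweyl} equals the multiplicity of $V_\llambda$ in $\Res^{\GL}_{\Sp}V_{(1^b)} = \wedge^b V$; and the exterior powers of the standard symplectic representation decompose into the fundamental-type pieces $V_{\langle 1^j\rangle}$ only (split off the primitive part, i.e.\ contract with the symplectic form, repeatedly). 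So this multiplicity vanishes unless $\lambda = (1^j)$, completing the argument.

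The delicate point is this last step: one must be sure that the $\sym_n$ permuting the markings corresponds precisely to $\Q[\sym_n]\subset\B^{(-2g)}(n,n)$ — so that $\Sym^b\VV$ really matches exterior, not symmetric, powers of $V$, a Koszul-sign subtlety already flagged in the paper — and that $e$ may be commuted past the block idempotents, which is exactly where the semisimplicity of $\End_{\Sp(2g)}(V^{\otimes b})$ enters. An alternative route for that step is to pass to Betti realizations: $\Sym^b(\V[-1]) = (\wedge^b\V)[-b]$ manifestly involves only the local systems $\V_{\langle 1^j\rangle}$, and a direct summand of $\VV^{\otimes b}$ whose realization sees only columns can itself only involve column-indexed motives.
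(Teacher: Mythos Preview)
Your argument is correct and follows essentially the same route as the paper: both reduce to the fact that the $\sym_n$-invariants in $h(C_g/M_g)^{\otimes n}$ involve only the symmetric powers $\Sym^b\VV$. The paper phrases this step on the level of Chow groups, grouping the K\"unneth summands with $(n_0,n_1,n_2)$ copies of $h^0,h^1,h^2$ into an induced representation $\Ind_{\sym_{n_0}\times\sym_{n_1}\times\sym_{n_2}}^{\sym_n}$ and invoking Frobenius reciprocity, whereas you work directly in $\Mot_{M_g}$ via the identity $\Sym^n(\mathbf 1\oplus\VV\oplus\L)\cong\bigoplus_{a+b+c=n}\Sym^b\VV\otimes\L^c$; these are the same computation in two languages.

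Where you go slightly further is in spelling out why $\Sym^b\VV$ contains only the $\VV_{\langle 1^j\rangle}$: the paper treats this as the content of the ``i.e.'' clause in the statement and does not prove it separately. Your verification via the branching rule --- that the multiplicity of the trivial $\sym_b$-representation in $\beta_{\lambda,b}$ equals the multiplicity of $V_\llambda$ in $\wedge^b V$, hence is nonzero only for column partitions --- is correct and is exactly the representation-theoretic fact the paper is relying on implicitly. Your caution about the Koszul sign is well placed and handled correctly: the $\sym_b$-action on $\VV^{\otimes b}$ coming from $\Q[\sym_b]\subset\B^{(-2g)}(b,b)$ really is the standard permutation of tensor factors in $\Mot_{M_g}$, so the symmetrizer cuts out $\Sym^b\VV$, whose Betti realization is $\wedge^b\V$ up to shift.
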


\begin{proof}
	Consider first the summand $\CH^k(M_g,\VV^{\otimes n})$. The $\sym_n$-invariants in this subspace are $\CH^k(M_g,\Sym^n \VV)$, which proves the lemma in this case. In general for $n=n_0+n_1+n_2$, the summand
	$$ \CH^k(M_g,h^0(C_g/M_g)^{\otimes n_0} \otimes h^1(C_g/M_g)^{\otimes n_1 } \otimes h^2(C_g/M_g)^{\otimes n_2}),$$
	together with its conjugates under the action of $\sym_n$, can be written as the induced representation
	$$\mathrm{Ind}_{\sym_{n_0} \times \sym_{n_1} \times \sym_{n_2}}^{\sym_n} \,  \CH^k(M_g,\VV^{\otimes n_1} \otimes \L^{n_2}).$$
	In particular, the $\sym_n$-invariants in this induced representation are isomorphic to \[\CH^{k-n_2}(M_g,\VV^{\otimes n_1})^{\sym_{n_1}} = \CH^{k-n_2}(M_g,\Sym^{ n_1}\VV)\]
	by Frobenius reciprocity. 
\end{proof}

\begin{thm}
	Fix a genus $g$. The following are equivalent:
	\begin{enumerate}
		\item For all $n \geq 0$, $R^\bullet(C_g^{(n)})$ is a Gorenstein ring.
		\item For some $n \geq g$, $R^\bullet(C_g^{(n)})$ is a Gorenstein ring.
	\end{enumerate}
\end{thm}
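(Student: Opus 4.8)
The plan is to reduce the Gorenstein property of $R^\bullet(C_g^{(n)})$ to the perfectness of the self-duality pairings on the twisted tautological groups $R^\bullet(M_g,\VV_{\langle 1^j\rangle})$ attached to single-column partitions, exactly in the spirit of Theorem \ref{faberconj}, and then to observe that for $n\geq g$ the list of single-column local systems that actually occur in $h(C_g^n/M_g)$ is independent of $n$, since $\VV_{\langle 1^j\rangle}=0$ as soon as $j>g$.

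First I would fix a decomposition $h(C_g^n/M_g)\cong\bigoplus_{\lambda,m}\VV_{\langle\lambda\rangle}\otimes\L^m\otimes W_{\lambda,m}$ that is simultaneously $\sym_n$-equivariant (so the $W_{\lambda,m}$ are multiplicity $\Q[\sym_n]$-modules, $\sym_n$ acting through them) and compatible with the Poincar\'e self-duality $h(C_g^n/M_g)^\vee\cong h(C_g^n/M_g)\otimes\L^{-n}$ inherited from $h(C_g/M_g)^\vee\cong h(C_g/M_g)\otimes\L^{-1}$; this duality pairs the summand $\VV_{\langle\lambda\rangle}\otimes\L^m\otimes W_{\lambda,m}$ with $\VV_{\langle\lambda\rangle}\otimes\L^{n-\vert\lambda\vert-m}\otimes W_{\lambda,m}^\vee$. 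Taking Chow groups and then $\sym_n$-invariants gives $R^\bullet(C_g^{(n)})=\bigoplus_{\lambda,m}R^{\bullet-m}(M_g,\VV_{\langle\lambda\rangle})\otimes W_{\lambda,m}^{\sym_n}$, and by the Lemma preceding the theorem only $\lambda=(1^j)$ contributes. Then I would invoke Proposition \ref{samepairing}: the socle pairing on $R^\bullet(C_g^n)$ is block-diagonal with respect to this decomposition and is $\sym_n$-equivariant, so (via the Reynolds operator, as we are in characteristic zero) it restricts to a block-diagonal pairing on $R^\bullet(C_g^{(n)})$; and, by the computation in the proof of Theorem \ref{faberconj}, the block indexed by $(\lambda,m)$ is the tensor product of the self-duality pairing $R^k(M_g,\VV_{\langle\lambda\rangle})\otimes R^{g-2+\vert\lambda\vert-k}(M_g,\VV_{\langle\lambda\rangle})\to\Q$ with the evaluation pairing $W_{\lambda,m}^{\sym_n}\otimes(W_{\lambda,m}^{\sym_n})^\vee\to\Q$, the latter perfect. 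Since a tensor product of bilinear forms is nondegenerate exactly when each factor is, this yields the criterion: $R^\bullet(C_g^{(n)})$ is Gorenstein if and only if, for every $j$ such that $\VV_{\langle 1^j\rangle}$ occurs as a summand of $h(C_g^n/M_g)$, the pairing $R^k(M_g,\VV_{\langle 1^j\rangle})\otimes R^{g-2+j-k}(M_g,\VV_{\langle 1^j\rangle})\to\Q$ is perfect for all $k$.

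I would then pin down this set of $j$'s when $n\geq g$. The K\"unneth summand $h^1(C_g/M_g)^{\otimes j}\otimes h^0(C_g/M_g)^{\otimes(n-j)}$ of $h(C_g^n/M_g)$ contains $\VV_{\langle 1^j\rangle}$; passing to the symmetric quotient, the $\sym_n$-orbit of this summand contributes the $\sym_j$-invariant part of $h^1(C_g/M_g)^{\otimes j}$, which again contains $\VV_{\langle 1^j\rangle}$. (This last point is where the Koszul sign attached to $\VV=h^1$ being an ``odd'' motive enters, and is precisely why single columns $(1^j)$, rather than single rows, are the partitions that survive in the symmetric quotient.) Since $\VV_{\langle 1^j\rangle}=0$ exactly for $j>g$, it follows that for every $n\geq g$ the $j$'s that occur are precisely $0\leq j\leq g$. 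Hence for $n\geq g$ the criterion above reads: the pairing on $R^\bullet(M_g,\VV_{\langle 1^j\rangle})$ is perfect for all $0\leq j\leq g$, equivalently for all $j\geq 0$ (as $\VV_{\langle 1^j\rangle}=0$ for $j>g$) --- a condition that does not mention $n$.

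The theorem is then immediate: $(1)\Rightarrow(2)$ is trivial, and conversely if $R^\bullet(C_g^{(n)})$ is Gorenstein for some $n\geq g$, the displayed criterion forces the pairing on $R^\bullet(M_g,\VV_{\langle 1^j\rangle})$ to be perfect for all $j\geq 0$, so that for an arbitrary $m\geq 0$ every block of the socle pairing of $R^\bullet(C_g^{(m)})$ is a tensor product of such a perfect pairing with a perfect evaluation pairing, whence $R^\bullet(C_g^{(m)})$ is Gorenstein. I expect the main obstacle to be the bookkeeping around the multiplicity spaces $W_{\lambda,m}$: arranging a decomposition that is at once $\sym_n$-equivariant and self-dual, and checking that the induced pairing on the invariants $W_{\lambda,m}^{\sym_n}$ is perfect (so that perfectness of the twisted pairings on the $\VV_{\langle 1^j\rangle}$ alone is genuinely equivalent to Gorensteinness), all while correctly tracking the sign twist that makes $(1^j)$, rather than $(j)$, the partitions that remain after taking symmetric powers.
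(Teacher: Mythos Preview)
Your proposal is correct and follows essentially the same approach as the paper: reduce the Gorenstein property of $R^\bullet(C_g^{(n)})$ via the preceding lemma and the block-diagonality of the socle pairing to perfectness of the twisted pairings on $R^\bullet(M_g,\VV_{\langle 1^j\rangle})$, and then observe that since $\VV_{\langle 1^j\rangle}=0$ for $j>g$, the set of relevant $j$'s is the same for every $n\geq g$. You are more explicit than the paper about the multiplicity spaces $W_{\lambda,m}$ and the self-duality bookkeeping, whereas the paper absorbs all of this into the statement of Theorem~\ref{faberconj} and the lemma immediately preceding the theorem, but the mechanism is identical.
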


\begin{proof}The ring
	$R^\bullet(C_g^{(n)})$ is Gorenstein if and only if all motives $\VV_\llambda$ occuring in the decomposition of $h(C^{(n)}_g/M_g)$ have the property that the pairing
	$$ R^k(M_g,\VV_\llambda) \otimes R^{g-2+\vert\lambda\vert-k}(M_g,\VV_\llambda) \to \Q$$
	is perfect. But the motives $\VV_\llambda$ occuring in the  decomposition of the $n$th symmetric power are exactly those with 	$\lambda = (1,1,\ldots)$, where $\vert \lambda \vert \leq n$, by the previous lemma. The result follows from the fact that the motive $\VV_{\langle 1,1,1,\ldots \rangle}$ is zero if the number of $1$'s is greater than $g$. 
\end{proof}

More generally, we can consider the ``partial symmetric powers'', i.e.\ the tautological rings of $C_g^{n+k}/\sym_n$, the $n$-fold symmetric power of the universal curve over $C_g^k$. For $k=1$ these rings were considered in \cite{yin}, where they were proven to be intimately related to the tautological ring of the universal jacobian variety over $C_g = M_{g,1}$. The previous theorem admits a variant for the partial symmetric powers as well.

\begin{thm}
	Fix a genus $g$. The following are equivalent:
	\begin{enumerate}
		\item For all $n \geq 0$, $R^\bullet(C_g^{n+k}/\sym_n)$ is a Gorenstein ring.
		\item For some $n \geq g+k$, $R^\bullet(C_g^{n+k}/\sym_n)$ is a Gorenstein ring.
	\end{enumerate}
	
\end{thm}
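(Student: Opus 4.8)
The plan is to imitate the proof of the preceding theorem on symmetric powers; the only new point is to keep track of which motives $\VV_\nu$ occur in the relevant $\sym_n$-invariant piece. Let $\sym_n$ act on the last $n$ of the $n+k$ markings, so that $R^\bullet(C_g^{n+k}/\sym_n)=R^\bullet(C_g^{n+k})^{\sym_n}$ is the image of the tautological idempotent $\frac{1}{n!}\sum_{\sigma\in\sym_n}\sigma$, and the corresponding summand of the motive is $h(C_g^{n+k}/M_g)^{\sym_n}\cong h(C_g^k/M_g)\otimes\Sym^n h(C_g/M_g)$, with $\Sym^n$ taken in $\Mot_{M_g}$. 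Since this idempotent is tautological and preserves the relative K\"unneth decomposition, the argument proving Theorem~\ref{faberconj}, together with Proposition~\ref{samepairing}, applies verbatim to this summand: the socle of $R^\bullet(C_g^{n+k}/\sym_n)$ sits in degree $g-2+n+k$, the socle pairing is block diagonal with respect to the decomposition of $h(C_g^{n+k}/M_g)^{\sym_n}$ into Tate twists of motives $\VV_\nu$, and the pairing on the multiplicity representations of $\sym_n$ is automatically perfect because $\Q[\sym_n]$ is semisimple. Hence $R^\bullet(C_g^{n+k}/\sym_n)$ is Gorenstein if and only if, for every $\nu$ with $\VV_\nu$ occurring (up to Tate twist) in $h(C_g^k/M_g)\otimes\Sym^n h(C_g/M_g)$, the self-duality pairing $R^j(M_g,\VV_\nu)\otimes R^{g-2+|\nu|-j}(M_g,\VV_\nu)\to\Q$ is perfect for all $j$.

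Next I would describe this set of $\nu$. Since $h(C_g/M_g)=\mathbf 1\oplus\VV\oplus\L$ and $\VV=h^1(C_g/M_g)$ is odd, $\Sym^n h(C_g/M_g)\cong\bigoplus_{a+b+c=n}\Sym^b\VV\otimes\L^c$, and $\Sym^b\VV$ is, up to Tate twist, a sum of the motives $\VV_{\langle 1^m\rangle}$ with $0\le m\le b$ and $m\equiv b\pmod 2$. As $\VV_{\langle 1^m\rangle}=0$ for $m>g$ while taking $b=m\le g$ realizes each $\VV_{\langle 1^m\rangle}$ with $m\le g$, the motives occurring (up to Tate twist) in $\Sym^n h(C_g/M_g)$ are exactly the $\VV_{\langle 1^m\rangle}$ with $0\le m\le\min(n,g)$. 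On the other hand $h(C_g^k/M_g)=h(C_g/M_g)^{\otimes k}$ decomposes into Tate twists of motives $\VV_\mu$ with $|\mu|\le k$, a fixed finite list independent of $n$. Consequently the set $T_n$ of partitions $\nu$ with $\VV_\nu$ occurring up to Tate twist in $h(C_g^k/M_g)\otimes\Sym^n h(C_g/M_g)$ depends on $n$ only through $\min(n,g)$; in particular $T_n$ is constant for $n\ge g$, hence for $n\ge g+k$. Moreover, since $\mathbf 1$ is a direct summand of $h(C_g/M_g)$, the motive $\Sym^n h(C_g/M_g)$ is a direct summand of $\Sym^{n+1}h(C_g/M_g)$, so $T_n\subseteq T_{n+1}$ for all $n$, and therefore $T_n\subseteq T_{g+k}$ for every $n\ge 0$.

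Granting this, the equivalence follows. The implication $(1)\Rightarrow(2)$ is trivial. For $(2)\Rightarrow(1)$, suppose $R^\bullet(C_g^{n_0+k}/\sym_{n_0})$ is Gorenstein for some $n_0\ge g+k$. By the first paragraph the self-duality pairing on $R^\bullet(M_g,\VV_\nu)$ is perfect for every $\nu\in T_{n_0}$, and $T_{n_0}=T_{g+k}\supseteq T_n$ for all $n\ge 0$; hence that pairing is perfect for every $\nu\in T_n$, and the first paragraph again shows that $R^\bullet(C_g^{n+k}/\sym_n)$ is Gorenstein for all $n\ge 0$.

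The step requiring the most care is the reduction in the first paragraph: one must check that the machinery expressing the Gorenstein property in terms of the individual summands $\VV_\nu$ survives passage to a summand of $h(C_g^{n+k}/M_g)$ cut out by a group of tautological automorphisms, and in particular that the induced pairing on the (possibly reducible) multiplicity representations of $\sym_n$ stays perfect after restricting to invariants. This is where semisimplicity of $\Q[\sym_n]$ is used: a $\sym_n$-invariant perfect pairing on a representation $W$ restricts to a perfect pairing on $W^{\sym_n}$, since by averaging $W^{\sym_n}$ is orthogonal to the sum of the non-trivial isotypic summands of $W$. Everything else is a routine transcription of the symmetric-power argument.
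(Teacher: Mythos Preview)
Your argument is correct and is essentially the paper's proof, carried out in more detail. The paper simply observes that $h(C_g^{n+k}/\sym_n)\cong h(C_g^{(n)}/M_g)\otimes h(C_g^k/M_g)$, that the first factor only involves motives $\VV_\lambda$ with $\vert\lambda\vert\le g$ and the second only those with $\vert\lambda\vert\le k$, and declares the result to follow; you make explicit the monotonicity $T_n\subseteq T_{n+1}$ and the stabilisation at $n=g$ that are left implicit there. Your fourth paragraph is slightly more than is needed: once you have written the \emph{invariant} motive itself as a direct sum of Tate twists of $\VV_\nu$'s (which you do in the second paragraph), Theorem~\ref{faberconj} applies to that decomposition directly, so there is no separate step of ``restricting a perfect pairing to invariants'' to justify.
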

\begin{proof}The relative Chow motive of $C_g^{n+k}/\sym_n$ over $M_g$ is the tensor product $h(C_g^{(n)}/M_g) \otimes h(C_g^k/M_g)$. Since $h(C_g^{k}/M_g)$ only contains motives $\VV_\lambda$ with $\vert \lambda \vert \leq k$, and $h(C_g^{(n)}/M_g)$ only contains motives $\VV_\lambda$ with $\vert \lambda \vert \leq g$ (by the argument of the preceding proof), the result follows.
\end{proof}

\begin{rem}
	In \cite[Theorem 7.15]{yin} it is proven that if $R^\bullet(C_g^{n+1}/\sym_n)$ is Gorenstein for some $n \geq 2g-1$ then $R^\bullet(C_g^{n+1}/\sym_n)$ is Gorenstein for all $n \geq 0$. The proof uses the relationship with the tautological ring of the universal jacobian $J_g$ over $C_g$, and that $C_g^{n+1}/\sym_n$ is a projective bundle over $J_g$ for $n \geq 2g-1$. Thus the arguments here re-prove this result with a slightly better lower bound.
\end{rem}

\section{Twisted commutative algebras and tautological rings}
\label{tca-section}
In the next sections we will analyze the structure of the collection of tautological rings $R^\bullet(C_g^n)$ for fixed $g$ but varying $n$. When we consider the direct sum $\bigoplus_n R^\bullet(C_g^n)$ we obtain the structure of a \emph{twisted commutative algebra}. 

\begin{defn}
	A \emph{twisted associative algebra} is an $\N$-graded unital associative algebra (say over $\Q$)
	$$ A = \bigoplus_{n \geq 0} A(n)$$
	together with an action of the symmetric group $\sym_n$ on the summand $A(n)$, such that the multiplication
	$$ A(n) \otimes A(m) \to A(n+m)$$
	is equivariant for the action of $\sym_n \times \sym_m$ on both sides. We say that $A(n)$ is the \emph{arity $n$ component} of $A$. 
\end{defn}
\begin{defn}
	Let $A = \bigoplus_{n \geq 0} A(n)$ be a twisted associative algebra. We say that $A$ is a \emph{twisted commutative algebra} if the diagram
	\[\begin{tikzcd}
	A(n) \otimes A(m) \arrow[r] \arrow[d]&  A(n+m)\arrow[d]\\
	A(m) \otimes A(n) \arrow[r] & A(m+n)
	\end{tikzcd}\]
	commutes for all $n,m \geq 0$, where the horizontal maps are given by multiplication, the left vertical map swaps the two factors, and the right map is given by acting via the ``box permutation'' swapping the first $n$ and the last $m$ elements. 
\end{defn}

\begin{rem}
	Let us make three remarks concerning the definition.
	\begin{enumerate}
		\item In all our examples, we will have what should more properly be called a ``{twisted commutative graded algebra}'' --- each summand $A(n)$ is itself $\Z$-graded, and the multiplication respects the $\Z$-grading.
		\item The notion of a twisted commutative \emph{graded} algebra is potentially ambiguous: in the commutativity condition, should the Koszul sign rule be applied to the map $A(n) \otimes A(m) \to A(m) \otimes A(n)$ that swaps the two factors? In fact we will require both possible conventions in this paper: when working with Chow groups we do not impose the Koszul sign rule, but when working with cohomology groups we do impose it. This is because the Chow ring $\CH^\bullet(X)$ of an algebraic variety $X$ is commutative in the strict sense, whereas the cohomology ring $H^\bullet(X)$ is commutative in the graded sense. We will pass over this ambiguity in silence for the rest of the paper; this should not cause any confusion.
		\item There are many equivalent ways to axiomatize the notion of a twisted commutative algebra. Here is an alternative one: let $B$ be the symmetric monoidal category of finite sets and bijections, with monoidal structure given by disjoint union. A twisted commutative algebra is a lax symmetric monoidal functor $B \to \mathsf{Vect}_\Q$ (or to the category of graded $\Q$-vector spaces, with or without the Koszul sign rule). 
	\end{enumerate}
	For more on twisted commutative algebras see e.g.\  \cite{ginzburgschedler} or \cite[Chapitre 4]{joyalanalyticfunctors}.
\end{rem}

Our main example will be the following.	Fix a genus $g \geq 2$. The direct sum $\bigoplus_{n \geq 0}\CH^\bullet(C_g^n)$ is an example of  a twisted commutative algebra. The multiplication 
	$$ \CH^k(C_g^n) \otimes \CH^l(C_g^m) \to \CH^{k+l}(C_g^{n+m})$$
	is given by the cross product, as defined in \ref{crossproduct}. More generally, for any partition $\{1,\ldots,n\} = T \sqcup T'$ we have maps $\CH^k(C_g^T) \otimes \CH^l(C_g^{T'}) \to \CH^{k+l}(C_g^n)$. We will refer to maps of this form, too, as cross product maps; this should not cause any confusion. 

We now have the following proposition, which in a sense explains why we will find the notion of a twisted commutative algebra useful. We have defined maps $R^\bullet(C_g^n) \to R^{\bullet}(M_g,\VV^{\otimes n}) \to R^{\bullet}(M_g,\VV^{\langle n\rangle})$; recall that $\VV^{\langle n \rangle}$ denotes the ``primitive part'' of $\VV^{\otimes n}$ and was defined in Subsection \ref{explicitbraueraction}. These maps are not in any sense ring homomorphisms (in fact there is no ring structure on the latter two spaces).  Nevertheless these will define homomorphisms of \emph{twisted} commutative algebras, when we consider all $n$ simultaneously:

\begin{prop}\label{tca}
	Fix $g \geq 2$, and consider the following commutative diagram:
	\[\begin{tikzcd}[sep=small]
	& \bigoplus_{n \geq 0}\CH^\bullet(C_g^n) \arrow[r, two heads] & \bigoplus_{n \geq 0}\CH^{\bullet}(M_g,\VV^{\otimes n}) \arrow[r, two heads]  &  \bigoplus_{n \geq 0}\CH^{\bullet}(M_g,\VV^{\langle n\rangle}) \\
	\bigoplus_{n \geq 0}S_{n}^\bullet \arrow[dr, two heads] \arrow[ur]& & & \\
	&
	 \bigoplus_{n \geq 0}R^\bullet(C_g^n)\arrow[r, two heads]  \arrow[uu, hook]&  \bigoplus_{n \geq 0}R^{\bullet}(M_g,\VV^{\otimes n}) \arrow[r, two heads] \arrow[uu, hook]&  \bigoplus_{n \geq 0}R^{\bullet}(M_g,\VV^{\langle n\rangle})\arrow[uu, hook]
	\end{tikzcd}\]	Each entry in this diagram is a twisted commutative algebra, and the arrows in this diagram are morphisms of twisted commutative algebras.
\end{prop}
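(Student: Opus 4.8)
The plan is to verify the claim entry-by-entry and arrow-by-arrow, reducing everything to one fundamental observation: all the maps in the diagram are ``arity-preserving'' and all of them are constructed from correspondences that are compatible with the cross-product structure. First I would dispose of the top-left object $\bigoplus_n \CH^\bullet(C_g^n)$: this is already asserted to be a twisted commutative algebra just above the proposition, with multiplication the cross product $\alpha \times \beta = \mathrm{pr}_1^\ast(\alpha)\cdot\mathrm{pr}_2^\ast(\beta)$, and $\sym_n$-equivariance, associativity, and the box-permutation commutativity are all immediate from the definition of cross product and functoriality of pullback. The bottom-left object $\bigoplus_n R^\bullet(C_g^n)$ is a sub-twisted-commutative-algebra: one needs only that the cross product of two tautological classes is tautological, which follows because cross product is pullback followed by multiplication along the tautological maps $C_g^{n+m} \to C_g^n$ and $C_g^{n+m}\to C_g^m$, and $R^\bullet$ is closed under pullback along tautological maps and under multiplication. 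The same argument shows $\bigoplus_n S_n^\bullet$ carries a twisted commutative algebra structure (here one checks directly that the three defining relations \eqref{basicrelations} are respected by the cross product of generators, which is routine), and that the maps $S_n^\bullet \to \CH^\bullet(C_g^n)$ and $S_n^\bullet \twoheadrightarrow R^\bullet(C_g^n)$ are algebra maps since they send generators to generators compatibly.

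Next I would handle the middle and right columns. The key point is that for any subset $T \sqcup T' = \{1,\dots,n\}$, the isomorphism $h(C_g^n/M_g) \cong h(C_g^T/M_g) \otimes h(C_g^{T'}/M_g)$ of relative Chow motives is compatible with the relative K\"unneth decomposition in the strong sense spelled out in Subsection \ref{crossproduct}: the projector $\pi_{i_1}\times\cdots\times\pi_{i_n}$ is literally the tensor product of the corresponding projectors on the two factors. Consequently the subspace $\CH^\bullet(M_g,\VV^{\otimes n}) = \mathrm{Im}(\pi_1^{\times n})$ of $\CH^\bullet(C_g^n)$ is closed under cross products — $\pi_1^{\times n}\circ(\alpha\times\beta) = (\pi_1^{\times |T|}\circ\alpha)\times(\pi_1^{\times|T'|}\circ\beta)$ — so $\bigoplus_n \CH^\bullet(M_g,\VV^{\otimes n})$ inherits a twisted commutative algebra structure, and the projection $\bigoplus_n\CH^\bullet(C_g^n)\twoheadrightarrow\bigoplus_n\CH^\bullet(M_g,\VV^{\otimes n})$ (given by applying $\pi_1^{\times n}$ in each arity) is a morphism of twisted commutative algebras, as is its restriction to tautological classes. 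For the rightmost column one further composes with the projection onto the primitive part $\VV^{\langle n\rangle}$; since the primitive part is cut out by the Brauer-algebra idempotents and Theorem \ref{ancona} / Proposition \ref{braueraction} give these operations as correspondences built from $\pi_1$ and the diagonal classes, the same bookkeeping shows the projection $\CH^\bullet(M_g,\VV^{\otimes n})\twoheadrightarrow\CH^\bullet(M_g,\VV^{\langle n\rangle})$ is compatible with cross products in each arity and hence defines a morphism of twisted commutative algebras. Commutativity of every square in the diagram is then automatic: the two composites from $\bigoplus_n S_n^\bullet$ to any node agree because they are all induced by the same correspondences, and the vertical inclusions are the tautological subspaces.

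The step I expect to require the most care — and which I would present in most detail — is the compatibility of the \emph{primitive part} projection with the cross product structure. Naively, the primitive part of $\VV^{\otimes(n+m)}$ is not the tensor product of the primitive parts of $\VV^{\otimes n}$ and $\VV^{\otimes m}$ (contracting the symplectic form across one factor from $T$ and one from $T'$ lands outside $\VV^{\langle n\rangle}\otimes\VV^{\langle m\rangle}$), so one must argue instead that the cross product followed by the full projection onto $\VV^{\langle n+m\rangle}$ agrees with first projecting each factor to its primitive part and then cross-multiplying and re-projecting. This holds because $\VV^{\langle n\rangle}\otimes\VV^{\langle m\rangle}$ is still a direct summand of $\VV^{\otimes(n+m)}$ containing $\VV^{\langle n+m\rangle}$ as a sub-summand (one has $\VV^{\langle n\rangle}\otimes\VV^{\langle m\rangle} = \bigoplus_{k}\VV^{\langle n+m-2k\rangle}\otimes(\text{multiplicities})$ by Weyl's construction), so the relevant idempotents commute appropriately; I would phrase this cleanly by noting that the composite projector $(\text{proj. to }\VV^{\langle n+m\rangle})\circ(\pi_1^{\langle n\rangle}\times\pi_1^{\langle m\rangle})$ equals $(\text{proj. to }\VV^{\langle n+m\rangle})\circ\pi_1^{\times(n+m)}$, which is exactly the statement that the map in the bottom-right arrow is a twisted-commutative-algebra homomorphism. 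Everything else is formal diagram-chasing and functoriality, so once this point is isolated the proposition follows.
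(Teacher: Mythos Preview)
Your proposal is correct and follows essentially the same approach as the paper's proof, which is very brief: it disposes of $S \to \CH$ as obvious, cites Subsection~\ref{crossproduct} for the compatibility of $\pi_1^{\times n}$ with cross products, and declares the primitive-part step ``clear'' from the definition of the multiplication on $\bigoplus_n \CH^\bullet(M_g,\VV^{\langle n\rangle})$. Your treatment of the primitive-part projection is in fact more careful than the paper's: the paper's ``clear'' hides exactly the check you isolate, namely that $p_{n+m}\circ(e_n\times e_m)=p_{n+m}$ on $\VV^{\otimes(n+m)}$, which holds because the complement of $\VV^{\langle n\rangle}$ in $\VV^{\otimes n}$ is spanned by images of insertion maps and these remain in the kernel of $p_{n+m}$ after tensoring. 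So your expanded argument is a correct unpacking of what the paper leaves implicit.
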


Let us remind the reader of the definitions needed to make sense of Proposition \ref{tca}. $S_{n}^\bullet$ was defined in Definition \ref{s-defn}; it is the graded polynomial algebra on classes $\psi_i$, $\Delta_{ij}$ and $\kappa_i$, modulo the geometrically obvious relations of Eq.\ \eqref{basicrelations}.

The maps $\CH^\bullet(C_g^n) \to \CH^{\bullet}(M_g,\VV^{\otimes n})$ are given by the projectors $\pi_1^{\times n}$. The same is true for the maps $R^\bullet(C_g^n) \to R^{\bullet}(M_g,\VV^{\otimes n})$.

The maps $\CH^{\bullet}(M_g,\VV^{\otimes n}) \to  \CH^{\bullet}(M_g,\VV^{\langle n\rangle})$ and the twisted commutative algebra structure on $\bigoplus_{n \geq 0 }\CH^{\bullet}(M_g,\VV^{\langle n\rangle})$ are both defined by the fact that $\VV^{\langle n\rangle}$ is in a canonical way a direct summand of $\VV^{\otimes n}$. As such, the natural projection $\VV^{\otimes n} \to \VV^{\langle n \rangle}$ defines the map $\CH^{\bullet}(M_g,\VV^{\otimes n}) \to  \CH^{\bullet}(M_g,\VV^{\langle n\rangle})$. The multiplication in the twisted commutative algebra $\bigoplus_{n \geq 0}\CH^{\bullet}(M_g,\VV^{\langle n\rangle})$ is defined by using the composition $$
\begin{tikzcd} \VV^{\langle n \rangle} \otimes \VV^{\langle m \rangle} \arrow[r, hook]& \VV^{\otimes n} \otimes \VV^{\otimes m} = \VV^{\otimes (n+m)} \arrow[r, two heads] & \VV^{\langle n+m \rangle}\end{tikzcd} $$
to define a product $\CH^{\bullet}(M_g,\VV^{\langle n \rangle}) \otimes \CH^{\bullet}(M_g,\VV^{\langle m\rangle}) \to \CH^{\bullet}(M_g,\VV^{\langle n+m\rangle})$. This is associative: the diagram
$$\begin{tikzcd}
\VV^{\langle n \rangle} \otimes \VV^{\langle m \rangle} \otimes \VV^{\langle k\rangle} \arrow[d]\arrow[r]& \VV^{\langle n+m \rangle}  \otimes \VV^{\langle k\rangle} \arrow[d]  \\
\VV^{\langle n \rangle} \otimes \VV^{\langle m +k \rangle} \arrow[r]& \VV^{\langle n+m+k \rangle}  
\end{tikzcd}$$commutes, since both compositions coincide with the map given by 
$$\begin{tikzcd} \VV^{\langle n \rangle} \otimes \VV^{\langle m \rangle} \otimes \VV^{\langle k \rangle} \arrow[r, hook]& \VV^{\otimes (n+m+k)} \arrow[r, two heads] & \VV^{\langle n+m+k \rangle}.\end{tikzcd}$$


\begin{proof}(of Proposition \ref{tca}.) That the map $\bigoplus_{n \geq 0 }S_{n}^\bullet \to \bigoplus_{n \geq 0 }\CH^\bullet(C_g^n)$ is a homomorphism of twisted commutative algebras is obvious. That the maps $\pi_1^{\times n} \colon \CH^\bullet(C_g^n) \to \CH^\bullet(M_g,\VV^{\otimes n})$ are homomorphisms with respect to the cross product is explained in \ref{crossproduct}. That the maps $\CH^\bullet(M_g,\VV^{\otimes n}) \to \CH^\bullet(M_g,\VV^{\langle n \rangle})$ are homomorphisms with respect to the cross product is also clear, since the multiplication in the twisted commutative algebra $\CH^\bullet(M_g,\VV^{\langle n \rangle})$ was defined by lifting elements to $\CH^\bullet(M_g,\VV^{\otimes n})$, and using the multiplication in the twisted commutative algebra ``upstairs'' to multiply. 
\end{proof}

\begin{defn}
	Let $S \to R_g \to R_g' \to R_g''$ be the four twisted commutative algebras linked by the chain of surjections
	$$ \bigoplus_{n \geq 0}S_{n}^\bullet \to \bigoplus_{n \geq 0}R^\bullet(C_g^n) \to \bigoplus_{n \geq 0}R^{\bullet}(M_g,\VV^{\otimes n}) \to \bigoplus_{n \geq 0}R^{\bullet}(M_g,\VV^{\langle n\rangle}).$$ \end{defn}

\begin{prop}\label{generatorsprop}
	The twisted commutative algebra $S$ is the free twisted commutative algebra generated by the elements $\kappa_d \in S^d_0$ for $d \geq 1$, $\psi_1^m \in S^m_1$ for $m \geq 0$, and $\Delta_{12\ldots n}\psi_1^m \in S_n^{n-1+m}$ for $m \geq 0$. 
\end{prop}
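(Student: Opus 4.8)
The plan is to exhibit an explicit isomorphism between $S$ and the free twisted commutative algebra $T$ on the listed generators, by producing both a surjection $T \twoheadrightarrow S$ and a vector-space-dimension argument (or a splitting) showing it is injective. The key structural fact is that a free twisted commutative algebra on a set of generators has a very explicit basis: if $T$ is free on generators $x_\alpha$ of arity $a_\alpha$, then a basis of $T(n)$ is given by all ways of choosing an ordered monomial $x_{\alpha_1}\cdots x_{\alpha_r}$ together with a decomposition of $\{1,\ldots,n\}$ into an ordered tuple of disjoint subsets $S_1,\ldots,S_r$ with $|S_j| = a_{\alpha_j}$, modulo the symmetric-group action coming from permuting factors with equal generators. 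Concretely, $T(n)$ as a $\Q[\sym_n]$-module is $\bigoplus \operatorname{Ind}$ of the appropriate tensor products, and monomials in $T$ are indexed by: a multiset of $\kappa$'s (contributing arity $0$), a set partition of a subset of $\{1,\ldots,n\}$ into blocks, with one block of size $1$ carrying a power $\psi^m$ (the generator $\psi_1^m$) and blocks of size $\geq 1$ carrying the ``diagonal'' generator $\Delta_{12\ldots k}\psi_1^m$. I would first write down carefully this combinatorial basis of $T$.

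Next I would define the map $T \to S$ sending each named generator to the corresponding element of $S$; this is well-defined since $T$ is free, and it is clearly a morphism of twisted commutative algebras. Surjectivity: I need to show every monomial in the generators $\psi_i$, $\Delta_{ij}$, $\kappa_d$ of $S_n^\bullet$ lies in the image. Using the three relations \eqref{basicrelations}, any monomial in the $\Delta_{ij}$ can be rewritten so that the $\Delta$'s involved correspond to a \emph{partition} of a subset of the marked points into blocks (the relation $\Delta_{ij}\Delta_{ik} = \Delta_{ij}\Delta_{jk}$ lets one propagate diagonals so that a connected cluster of diagonal relations becomes the ``full'' diagonal $\Delta_{i_1 i_2 \ldots i_k}$ on that block, up to $\psi$-corrections via $\Delta_{ij}\psi_i = \Delta_{ij}\psi_j$ and $\Delta_{ij}^2 = -\Delta_{ij}\psi_i$). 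Each block, carrying its $\psi$-classes, is (a $\sym$-translate of) the cross product of the generator $\Delta_{12\ldots k}\psi_1^m$ with suitable $\psi_1^{m'}$'s; points not in any block contribute factors $\psi_i^{m}$, i.e. translates of $\psi_1^m$; and the $\kappa_d$ contribute arity-zero generators. Hence the monomial is a cross product of named generators, proving surjectivity.

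For injectivity I would count: after the normalization above, the monomials of $S_n^\bullet$ that survive are indexed by exactly the same combinatorial data as the basis of $T(n)$ described in the first paragraph --- a choice of $\kappa$-monomial, a set partition of a subset of $\{1,\ldots,n\}$ with one distinguished size-$1$ block allowed to carry $\psi$-powers (matching the generator $\psi_1^m$) versus blocks carrying $\Delta_{12\ldots k}\psi_1^m$ --- and one checks that these normalized monomials are linearly independent in $S_n^\bullet$, which holds because $S_n^\bullet$ was \emph{defined} as the polynomial algebra modulo precisely the ideal generated by \eqref{basicrelations}, so its dimension in each degree equals the number of normal-form monomials. Comparing the two bases degree by degree and arity by arity shows $T(n) \to S(n)$ is an isomorphism. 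The main obstacle I anticipate is the bookkeeping in the normalization step: showing that the rewriting procedure using \eqref{basicrelations} terminates and produces a canonical normal form (i.e. that $S_n^\bullet$ really has the expected monomial basis), and correctly matching the resulting combinatorial indexing --- in particular keeping track of which block is allowed to be an ``isolated marked point with a $\psi$-power'' versus a genuine diagonal block --- against the basis of the free twisted commutative algebra. This is essentially a Gröbner-basis / straightening-law argument for the ideal \eqref{basicrelations}, and care is needed because the $\Delta^2 = -\Delta\psi$ relation mixes degrees.
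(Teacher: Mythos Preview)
Your approach is essentially the paper's: it asserts that every monomial in $S_n^\bullet$ reduces uniquely modulo the relations \eqref{basicrelations} to a normal form $\prod_i \kappa_{d_i} \cdot \prod_j \Delta_{P_j}\psi_{P_j}^{e_j}$, where $P_1,\ldots,P_k$ is a partition of \emph{all} of $\{1,\ldots,n\}$ into nonempty blocks, and observes that such a product is exactly a cross product of the named generators.

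One clarification that dissolves the bookkeeping worry you flag at the end: the partition covers the whole set $\{1,\ldots,n\}$, not a subset. A singleton block $P_j=\{i\}$ contributes the factor $\psi_i^{e_j}$ (the ``diagonal'' $\Delta_{\{i\}}$ being empty), which is precisely the arity-$1$ generator $\psi_1^{e_j}$; blocks of size $\geq 2$ give the diagonal generators $\Delta_{12\ldots k}\psi_1^{e}$. There is no separate bookkeeping for ``isolated marked points'' versus ``diagonal blocks'' --- they are uniformly indexed by the blocks of a single set partition, each block carrying a nonnegative exponent. With this in mind the normal-form claim is indeed straightforward (the paper cites \cite[Lemma 5]{jandachow}), and the matching with the monomial basis of the free twisted commutative algebra is immediate.
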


%

\begin{proof}It is straightforward that every monomial in $S_{n}^\bullet$ can be uniquely reduced modulo the relations of Eq. \eqref{basicrelations} to a product
	$$ \prod_{i=1}^m \kappa_{d_i} \cdot \prod_{j=1}^k \Delta_{P_j}\psi_{P_j}^{e_j} $$
	where $(d_1,\ldots,d_m) \in \Z_{> 0}^m$, $(e_1,\ldots,e_k) \in \Z_{\geq 0}^k$, $P_1,\ldots,P_k$ is some partition of the set $\{1,\ldots,n\}$ into nonempty blocks, and $\psi_{P_j}$ denotes $\psi_a$ for any $a \in P_j$ (this is also observed in \cite[Lemma 5]{jandachow}). For example, the monomial $\kappa_1^2\Delta_{13}\Delta_{14} \psi_3^2 \in S_{4}^\bullet$ would correspond to $(d_1,d_2) = (1,1)$, $P_1 =\{2\}$, $e_1=0$, $P_2 = \{1,3,4\}$, $e_2 = 2$. But such a product is exactly the same as a cross product of the generators for the twisted commutative algebra $S$.
%
\end{proof}

\begin{defn}
	For $n \geq 0$ and $r \geq n-1$ we put
	$$ D_{n,r} = \begin{cases}
	\kappa_{r} & n=0 \\
	\Delta_{12\ldots n} \psi_1^{1-n+r} & n \geq 1.
	\end{cases}$$
	Note that $D_{1,r} = \psi_1^r$, $D_{0,-1}=\kappa_{-1}=0$, and $D_{0,0} = \kappa_0=2g-2$. The previous proposition can be stated in a more compact form in terms of this notation: specifically, that the twisted commutative algebra $S$ is freely generated by $\sym_n$-invariant classes $D_{n,r}$ placed in arity $n$ and degree $r$, where $n=0$ and $r \geq 1$ or $n \geq 1$ and $r \geq n-1$. \end{defn}

The fact that the classes $D_{n,r}$ generate $S$ implies that their images generate the twisted commutative algebras $R_g$, $R_g'$ and $R_g''$, since the map from $S$ to these algebras is surjective.

\begin{prop}\label{kernel}Fix $g \geq 2$ and consider the surjections $R_g \to R_g' \to R_g''$. The kernels of these maps are ideals in the respective twisted commutative algebras. 
	\begin{enumerate}
		\item The kernel of $R_g \to R_g'$ is the ideal generated by $1 \in \CH^0(C_g^1)$ and $\psi_1 \in \CH^1(C_g^1)$.

		\item The kernel of $R_g \to R_g''$ is the ideal generated by $1 \in \CH^0(C_g^1)$, $\psi_1 \in \CH^1(C_g^1)$ and $\Delta_{12} \in \CH^1 (C_g^2)$. Equivalently, the kernel of $R_g' \to R_g''$ is the ideal generated by $\pi_1^{\times 2} \Delta_{12} = \Delta_{12} - \frac{1}{2g-2}(\psi_1+\psi_2) + \frac{1}{(2g-2)^2}\kappa_1$.
			\end{enumerate}
	
\end{prop}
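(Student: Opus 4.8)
The plan is to analyze the surjections $R_g \to R_g' \to R_g''$ of twisted commutative algebras one at a time, using Proposition \ref{generatorsprop}: since $S$ is freely generated by the classes $D_{n,r}$, to identify an ideal it suffices to understand its generators and then to check that the correct classes are sent to zero. First I would treat $R_g \to R_g'$. Recall this map is $\pi_1^{\times n}$ on arity $n$. The element $1 \in \CH^0(C_g^1)$ maps to $\pi_1 \circ 1 = 0$ (the fundamental class lies in $h^0$, not $h^1$), and $\psi_1 \in \CH^1(C_g^1)$ maps to $\pi_1 \circ \psi_1 = 0$ by the vanishing computed in Example \ref{example1} for $n=1$. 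So the ideal $(1,\psi_1)$ is contained in the kernel. For the reverse inclusion, the key point is that $\CH^\bullet(M_g,\VV^{\otimes n})$ is the summand $\pi_1^{\times n}\circ \CH^\bullet(C_g^n)$, so the kernel of $R_g \to R_g'$ is spanned by the projections of tautological classes onto all the \emph{other} relative K\"unneth summands $h^{i_1}(C_g/M_g)\otimes \cdots \otimes h^{i_n}(C_g/M_g)$ with some $i_j \neq 1$. Using Proposition \ref{generatorsprop} and the fact that the cross product respects the K\"unneth decomposition (subsection \ref{crossproduct}), it is enough to show: any generator $D_{n,r}$ of $S$, projected onto a K\"unneth summand indexed by $(i_1,\ldots,i_n)$ with at least one index $\neq 1$, lies in the ideal generated by $1$ and $\psi_1$. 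For the generators $\kappa_d = D_{0,r}$ (arity $0$) there is nothing to prove, as these already land in $\VV^{\otimes 0} = \mathbf 1$. For $D_{1,r} = \psi_1^r$, the projections onto $h^0$ and $h^2$ were computed in Example \ref{example1}, and inspecting those formulas one sees $\pi_0 \circ \psi_1^r$ and $\pi_2 \circ \psi_1^r$ are polynomials in $\kappa$-classes times $1$ (arity $1$), hence lie in the ideal generated by $1 \in \CH^0(C_g^1)$. For $D_{n,r} = \Delta_{12\ldots n}\psi_1^{n-1+r}$ with $n \geq 2$, one argues that applying any projector $\pi_{i_1}\times\cdots\times\pi_{i_n}$ with some $i_j \in \{0,2\}$ to this class produces something divisible (as a cross product) by a class on $C_g^1$ that is pushed/pulled from $h^0$ or $h^2$ — concretely, by $1$ or by a $\psi$-power, up to $\kappa$-corrections — which places it in the ideal. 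The cleanest way to see this: $\Delta_{12\ldots n}$ is (a multiple of) a pushforward along a tautological map from $C_g^1$, and the K\"unneth projectors commute appropriately with such pushforwards as explained in the "Forgetful maps" and "Diagonals" subsections, so a $\pi_0$ or $\pi_2$ appearing in any factor forces the contribution to factor through $R_g \cdot 1$ or $R_g \cdot \psi_1$.

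Next, for $R_g \to R_g''$: the composite map kills $R_g'' = \bigoplus_n R^\bullet(M_g,\VV^{\langle n\rangle})$'s complement inside $R_g'$, i.e.\ the kernel of $R_g' \to R_g''$ is spanned by the images of all $V_{\langle\lambda\rangle}$-components with $|\lambda| < n$. By symplectic Schur--Weyl duality (Theorem \ref{symplecticschurweyl}) and the explicit Brauer-algebra description in subsection \ref{explicitbraueraction}, the primitive part $\VV^{\langle n\rangle}$ is the intersection of the kernels of all contraction maps $\VV^{\otimes n} \to \VV^{\otimes(n-2)}$, and equivalently $\VV^{\otimes n}/\VV^{\langle n\rangle}$ is spanned by the images of the insertion maps, which on the level of tautological groups are built from the classes $b_{ij} = \pi_1^{\times 2}\Delta_{12}$ (pulled back to the relevant factors) — this is exactly the "insert the symplectic form" generator of the ideal of the Brauer PROP beyond $\Q[\sym_n]$. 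Hence the kernel of $R_g' \to R_g''$ is precisely the ideal generated by $\pi_1^{\times 2}\Delta_{12}$, giving the last assertion of (2). Pulling this back along $R_g \to R_g'$ (whose kernel is $(1,\psi_1)$ by part (1)) and noting that $\Delta_{12}$ maps to $\pi_1^{\times 2}\Delta_{12}$ modulo terms involving $\psi$-classes and $\kappa$-classes (the explicit formula being $\Delta_{12} - \frac{1}{2g-2}(\psi_1+\psi_2) + \frac{1}{(2g-2)^2}\kappa_1$), we conclude the kernel of $R_g \to R_g''$ is generated by $1$, $\psi_1$, and $\Delta_{12}$ — the $\kappa_1$ term is automatically in the ideal $(1,\psi_1)$ since $\kappa_1 = p_\ast(\psi_1^2)$ is a multiple of $1$ times a pushforward of a $\psi$-power, so adding $\Delta_{12}$ to $(1,\psi_1)$ is the same as adding $\pi_1^{\times 2}\Delta_{12}$.

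Finally, one should check that these kernels are genuinely \emph{ideals} of the respective twisted commutative algebras and not just subspaces; but this is automatic since the kernel of any homomorphism of twisted commutative algebras is a (two-sided, and by commutativity just "an") ideal, and "the ideal generated by a set of arity-homogeneous elements" is the standard notion. The main obstacle, and the step requiring genuine care, is the reverse inclusion in part (1): showing that \emph{every} class in $\ker(R_g \to R_g')$ already lies in the ideal $(1,\psi_1)$. The subtlety is that $R_g$ is a quotient of $S$ by possibly many non-obvious tautological relations, so one cannot simply reason on the level of $S$; one must use that the cross-product structure interacts well with the K\"unneth projectors (which is precisely the content of subsection \ref{crossproduct}) to reduce the problem to the generators $D_{n,r}$, and then handle the arity-$n$ diagonal generators by the pushforward/pullback compatibilities. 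The analogous point for $R_g' \to R_g''$ is cleaner because there the identification of $\VV^{\langle n \rangle}$ as the traceless part, together with the first fundamental theorem of invariant theory (Corollary \ref{braueraction1}), gives directly that the complement is generated by a single Brauer diagram, namely the insertion of the symplectic form, i.e.\ by $\pi_1^{\times 2}\Delta_{12}$.
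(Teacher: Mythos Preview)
Your argument for part~(2) is essentially the paper's: the kernel of $\VV^{\otimes n}\to\VV^{\langle n\rangle}$ is the span of the images of the insertion maps, and on Chow groups these are exactly the cross products with $\pi_1^{\times 2}\Delta_{12}$, by the description in Section~\ref{explicitbraueraction}.

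For part~(1) your route via the generators $D_{n,r}$ is different from the paper's, and the step you yourself flag as the obstacle --- the case $n\geq 2$ --- is not actually carried out; the appeal to ``pushforward/pullback compatibilities'' for the diagonal classes is too vague to count as a proof. The paper avoids this difficulty entirely by \emph{not} reducing to generators. It observes that the kernel is the sum of the images of the projectors $\id\times\cdots\times\pi_i\times\cdots\times\id$ with $i\in\{0,2\}$ in one slot, and then computes directly from the explicit correspondences $\pi_0,\pi_2\in\CH^1(C_g^2)$ that for an \emph{arbitrary} class $\alpha\in\CH^\bullet(C_g^n)$,
\[
(\pi_0\times\id^{\times(n-1)})\circ\alpha = 1\times\alpha' - \tfrac{1}{2(2g-2)^2}\,\kappa_1\cdot(1\times\alpha''),
\qquad
(\pi_2\times\id^{\times(n-1)})\circ\alpha = \psi_1\times\alpha'' - \tfrac{1}{2(2g-2)^2}\,\kappa_1\cdot(1\times\alpha''),
\]
where $\alpha'=(p_{23\ldots n})_\ast(\psi_1\cdot\alpha)$ and $\alpha''=(p_{23\ldots n})_\ast(\alpha)$. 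Both expressions are visibly in the ideal generated by $1$ and $\psi_1$ in the twisted commutative algebra. This is a short computation using only the formulas for $\pi_0$ and $\pi_2$, and it works uniformly for all $\alpha$, tautological or not. Your approach would ultimately succeed, but to complete it you would have to compute each $(\pi_{i_1}\times\cdots\times\pi_{i_n})\circ(\Delta_{12\ldots n}\psi_1^m)$ explicitly; the paper's trick of replacing all but one $\pi_{i_j}$ by $\id$ is precisely what makes the computation trivial and bypasses the case analysis on generators.
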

%

\begin{proof}
	(1) The kernel of $\CH^k(C_g^n) \to \CH^{k}(M_g,\VV^{\otimes n})$ equals the image of the projectors $\pi_{i_1} \times \ldots \times \pi_{i_n}$ where $(i_1,i_2,\ldots,i_n) \neq (1,1,\ldots,1)$; equivalently, the image of all projectors $$\id \times \id \times \ldots \times \pi_i \times \ldots \times \id $$
	(i.e.\ all factors except one are given by the identity correspondence, the diagonal), where $i = 0, 2$. By $\sym_n$-symmetry, let's assume that all factors except the first are given by the identity. Let $\alpha \in \CH^\bullet(C_g^n)$. One checks that
	$$ (\pi_0 \times \id \times \ldots \times \id) \circ \alpha = 1 \times \alpha' - \frac{1}{2(2g-2)^2}\kappa_1 \times 1 \times \alpha''$$
	and
	$$ (\pi_2 \times \id \times \ldots \times \id) \circ \alpha = \psi_1 \times \alpha'' - \frac{1}{2(2g-2)^2}\kappa_1 \times 1 \times \alpha'',$$
	where $\alpha' = ({p}_{23\ldots n})_\ast(\psi_1 \cdot \alpha)$ and $\alpha'' = ({p}_{23\ldots n})_\ast( \alpha)$. Hence both projectors map all cycles $\alpha$ into the ideal generated by $\psi_1 \in \CH^1(C_g^1)$ and $1 \in \CH^1(C_g^1)$. Conversely, one checks that $\pi_1$ annihilates both $1$ and $\psi_1$. 
	
	(2) The map $\CH^{k}(M_g,\VV^{\otimes n}) \to \CH^{k}(M_g,\VV^{\langle n \rangle})$ is defined by the projection $\VV^{\otimes n} \to \VV^{\langle n \rangle}$, and the kernel of $\VV^{\otimes n} \to \VV^{\langle n \rangle}$ is spanned by the image of all $\binom n 2$ maps $\VV^{\otimes (n-2)} \otimes \L \to \VV^{\otimes n}$ given by $(n-2,n)$-Brauer diagrams of the form considered in the second half of Section \ref{explicitbraueraction}. But it is clear from the description in Section \ref{explicitbraueraction} that an element of $\CH^{k}(M_g,\VV^{\otimes n})$ is in the image of one of the maps $\CH^{k-1}(M_g,\VV^{\otimes (n-2)}) \to \CH^{k}(M_g,\VV^{\otimes n})$ precisely if it can be written in a nontrivial way as a cross product with $\pi_1^{\times 2}\Delta_{12}$.
%
%
%
\end{proof}

\begin{rem}
	Let us emphasize that the word ``ideal'' in the preceding proposition should be understood in the sense of twisted commutative algebras; that is, the smallest twisted commutative submodule containing the given elements. In particular, the ring structures of (say) the individual tautological rings $R^\bullet(C_g^n)$ are not what is important.
\end{rem}

\begin{cor}\label{irrelevantgenerators} The twisted commutative algebra $R_g''$ is generated by the images of the elements $D_{n,r}$ such that $n= 0$ and $r \geq 1$, or $n \geq 1$ and $r \geq \max(n-1,2)$.  
\end{cor}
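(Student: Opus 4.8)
The plan is to combine Proposition \ref{generatorsprop} (which says $S$ is freely generated by the classes $D_{n,r}$ with $n=0,\,r\geq 1$ or $n\geq 1,\,r\geq n-1$) with Proposition \ref{kernel}(2), which identifies the kernel of the composite $S \twoheadrightarrow R_g \twoheadrightarrow R_g''$ as the ideal generated by $1 \in \CH^0(C_g^1)$, $\psi_1 \in \CH^1(C_g^1)$, and $\Delta_{12} \in \CH^1(C_g^2)$. Since $R_g''$ is a quotient of $S$, its image is generated by the images of the $D_{n,r}$; the point is to show that the generators we are discarding --- namely $D_{1,0}=1$, $D_{1,1}=\psi_1$, $D_{2,1}=\Delta_{12}$, and $D_{2,2}=\Delta_{12}\psi_1^2$ (the case $n=2$, $r=2$) wait, we need $r\geq \max(n-1,2)$, so for $n=2$ we keep $r\geq 2$ and discard only $r=1$; for $n=1$ we keep $r\geq 2$ and discard $r=0,1$; for $n\geq 3$ we keep all $r\geq n-1$ --- are redundant in $R_g''$.

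First I would observe that $D_{1,0}=1$ and $D_{1,1}=\psi_1$ map to zero in $R_g''$ (they lie in the kernel by Proposition \ref{kernel}), and likewise $D_{2,1}=\Delta_{12}$ maps to zero, so these three generators of $S$ contribute nothing to $R_g''$ and may be dropped from any generating set. This already shows $R_g''$ is generated by the images of $D_{n,r}$ with $(n,r)\notin\{(1,0),(1,1),(2,1)\}$. The remaining task is to show that for $n=1$ we may also drop $D_{1,1}$ --- already done --- and confirm there is nothing left to discard for $n\geq 2$ beyond $(2,1)$: indeed for $n\geq 3$, $\max(n-1,2)=n-1$, so we keep exactly the same range $r\geq n-1$ as in $S$, and for $n=2$ we keep $r\geq 2$, discarding only $r=1$, i.e. only $D_{2,1}=\Delta_{12}$. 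So the claimed generating set is precisely the image of the free generators of $S$ with the three kernel-generators removed, and since removing elements that map to $0$ from a generating set of a quotient still leaves a generating set, we are done.

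The only genuine content to check is that no \emph{further} generators become redundant that we have failed to remove --- i.e. the corollary asserts a specific generating set and we should not claim it is minimal, only that it generates. That is automatic from the surjectivity of $S\to R_g''$ together with the identification of the kernel generators: in a quotient $A/I$ of a twisted commutative algebra $A$ freely generated by a set $G$, the images of $G$ generate $A/I$, and if $I$ is the (twisted-commutative) ideal generated by a subset $G_0\subseteq G$, then the images of $G\setminus G_0$ already generate $A/I$, because every element of $I$ is a combination of products involving an element of $G_0$, hence maps to $0$. Here $G_0 = \{D_{1,0}, D_{1,1}, D_{2,1}\}$ by Proposition \ref{kernel}(2), and $G\setminus G_0$ is exactly the set described in the statement. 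The main (and essentially only) obstacle is bookkeeping: carefully matching the constraint ``$n=0,\,r\geq 1$ or $n\geq 1,\,r\geq n-1$'' from Proposition \ref{generatorsprop} against ``$n=0,\,r\geq 1$ or $n\geq 1,\,r\geq\max(n-1,2)$'' and verifying the set-theoretic difference is precisely $\{(1,0),(1,1),(2,1)\}$, which is immediate once written out. No representation theory or geometry beyond the cited propositions is needed.
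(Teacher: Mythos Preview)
Your proof is correct and follows the same argument as the paper: $S$ is generated by the $D_{n,r}$ from Proposition~\ref{generatorsprop}, the three classes $D_{1,0}=1$, $D_{1,1}=\psi_1$, $D_{2,1}=\Delta_{12}$ map to zero in $R_g''$ by Proposition~\ref{kernel}, and removing zeros from a generating set still leaves a generating set. One minor point: Proposition~\ref{kernel}(2) describes the kernel of $R_g \to R_g''$, not of $S \to R_g''$, so the ideal in $S$ need not be generated by exactly these three elements---but you only use that they map to zero, which is all that is required.
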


\begin{proof}
	We have seen that $S$ is generated by the classes $D_{n,r}$ for $n=0$ and $r \geq 1$ or $n \geq 1$ and $r \geq n-1$. Since the generators $D_{1,0}$, $D_{1,1}$ and $D_{2,1}$ go to zero under $S \to R_g''$ by Proposition \ref{kernel}, we deduce that $R_g''$ is generated by the images of the remaining generators. 
\end{proof}

\begin{cor}
	The arity $n$ component $R_g''(n)$ vanishes in degrees below $\frac{2n} 3$.
\end{cor}

\begin{proof}Every generator $D_{n,r}$ fulfills this bound, since $\max(n-1,2) \geq \frac{2n}{3}$ for all natural numbers $n$ (with equality only for $n=3$). Since the bound is linear, and degrees and arities are both additive under cross product, the result follows.
\end{proof}

\begin{rem}\label{cohomologytca}The cohomology groups of the spaces $C_g^n$ also form twisted commutative algebras, and so do the cohomology groups of the local systems $\V^{\langle n \rangle}$ on $M_g$. In particular we have a chain of surjections of twisted commutative algebras in graded vector spaces:
$$\bigoplus_{n \geq 0}H^{\bullet}(C_g^n,\Q) \to \bigoplus_{n \geq 0}H^{\bullet-n}(M_g,\V^{\otimes n}) \to \bigoplus_{\lambda}H^{\bullet-n}(M_g,\V_\llambda)\otimes \sigma_{\lambda^T}^\vee.$$
If we consider $\bigoplus_{n \geq 0}\CH^\bullet(C_g^n)$, $ \bigoplus_{n \geq 0}\CH^\bullet(M_g,\VV^{\otimes n})$ and $ \bigoplus_{\lambda}\CH^\bullet(M_g,\VV_\llambda)\otimes \sigma_{\lambda^T}^\vee$ also as twisted commutative algebras in graded vector spaces, but with doubled degrees, then they map compatibly to the cohomological versions of these twisted commutative algebras under the cycle class map. We also get twisted commutative algebras of tautological classes
$$\bigoplus_{n \geq 0}\RH^{\bullet}(C_g^n,\Q) \to \bigoplus_{n \geq 0}\RH^{\bullet-n}(M_g,\V^{\otimes n}) \to \bigoplus_{\lambda}\RH^{\bullet-n}(M_g,\V_\llambda)\otimes \sigma_{\lambda^T}^\vee.$$
There is a natural ``suspension'' operation on twisted graded commutative algebras \cite[4.1]{spectralsequencestratification} which has the effect of shifting the grading on the arity $n$ component by $n$ and tensoring with the sign representation of $\sym_n$.  In this way one can get rid of both the annoying degree shift which appears in cohomology and the conjugate of the partition $\lambda$: one finds that there is a natural structure of twisted commutative algebra on
$$ \bigoplus_{\lambda} H^\bullet(M_g,\V_\lambda) \otimes \sigma_\lambda^\vee$$
with a subalgebra $ \bigoplus_{\lambda} \RH^\bullet(M_g,\V_\lambda) \otimes \sigma_\lambda^\vee$ of tautological classes.
\end{rem}

\section{A consequence of the FZ relations}\label{FZsection}

In this section we will recall the \emph{FZ relations} between tautological classes in $R^\bullet(C_g^n)$, and draw some simple consequences from them. In particular, we will prove an analogue of the following theorem, which was conjectured by Faber \cite{faberconjectures} and proved independently by Ionel \cite{ionel} and Morita \cite{moritagenerators}. (Morita's proof was only valid in cohomology, but Ionel's proof worked in Chow, too.)

\begin{thm}[Ionel, Morita] The tautological ring $R^\bullet(M_g)$ is generated by the classes $\kappa_r$ for which $3r< g+1$. 
\end{thm}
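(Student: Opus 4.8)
The statement to prove is the Ionel--Morita theorem: $R^\bullet(M_g)$ is generated by the $\kappa_r$ with $3r < g+1$. I would deduce this as a special case of the FZ relations, much as the paper presumably intends to deduce its twisted generalization. The FZ (Faber--Zagier) relations, in the form relevant here, give for each sufficiently large $r$ an explicit tautological relation on $M_g$ (or on $C_g^n$) that expresses a monomial involving $\kappa_r$ (or $\psi_1^r$, or $\Delta_{1\ldots n}\psi_1^{n-1+r}$) in terms of tautological classes of strictly lower ``generator-degree.'' Concretely, one works in the algebra $S_n^\bullet$ with its free twisted commutative structure on the classes $D_{n,r}$ (Proposition \ref{generatorsprop}), and the FZ relations provide, for $n=0$ and $3r \geq g+1$, a way to rewrite $\kappa_r = D_{0,r}$ as a polynomial in the $\kappa_j$ with $j < r$.

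**Key steps, in order.** First I would recall the precise statement of Pixton's/Faber--Zagier's relations on $\MM_g$ (or $M_g$): the generating function $\sum_i p_i t^i = \log\bigl(\sum_j (6j)!/((3j)!(2j)!) t^j\bigr)$ packages a family of relations, and the relation in codimension $r$ is nontrivial precisely when $r$ is large compared to $g$ — the standard bookkeeping gives nonvanishing of the relevant coefficient whenever $3r \geq g+1$ (equivalently $3r+1 > g$), with $\kappa_r$ appearing with a nonzero coefficient. Second, I would restrict these relations to the open part $M_g \subset \MM_g$, where boundary terms drop out, leaving a relation of the shape $c_r\,\kappa_r = (\text{polynomial in }\kappa_1,\ldots,\kappa_{r-1})$ with $c_r \neq 0$. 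Third, I would run the obvious descending induction: any tautological class on $M_g$ is a polynomial in the $\kappa_r$'s; using the relation above one eliminates every $\kappa_r$ with $3r \geq g+1$ in favor of lower ones, and iterating terminates because each step lowers the largest index of a $\kappa$ appearing. This gives that $R^\bullet(M_g)$ is generated by $\{\kappa_r : 3r < g+1\}$.

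**The main obstacle.** The real content — and the step I expect to be most delicate — is verifying that the coefficient of $\kappa_r$ in the codimension-$r$ FZ relation is genuinely nonzero for all $r$ with $3r \geq g+1$, and that the relation only involves $\kappa$'s (no $\lambda$-classes or boundary classes survive on $M_g$). This is a combinatorial computation with the FZ series coefficients $\frac{(6j)!}{(3j)!(2j)!}$ and the formula for $p_r$; one must check that $p_r \neq 0$ and track which terms restrict nontrivially to the interior. Everything else — the reduction to a statement about generators of the free twisted commutative algebra $S$, and the termination of the elimination induction — is formal once Proposition \ref{generatorsprop} and the FZ input are in hand. I would state the FZ relations as a black box (citing \cite{pixtonthesis} and the surrounding literature) and spend the proof on the bookkeeping that isolates $\kappa_r$ with a nonzero coefficient in the appropriate degree range.
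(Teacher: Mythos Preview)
Your approach is the same as the paper's --- deduce Ionel--Morita as the $n=0$ case of the twisted generalization (Theorem~\ref{fzconstraint}) by using the FZ relations to eliminate $\kappa_r = D_{0,r}$ whenever $3r \geq g+1$, with the nonvanishing of the leading coefficient supplied by Ionel's positivity lemma (Lemma~\ref{ionellemma}). You have correctly identified that lemma as the substantive step.

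There is, however, a genuine gap. You write that ``the relation in codimension $r$ is nontrivial\ldots whenever $3r \geq g+1$,'' but this is not so: the relation $\FZ_{g,0,r}$ on $M_g$ exists only when $3r - g - 1$ is a nonnegative \emph{even} integer. When $3r - g - 1$ is odd there is simply no FZ relation in degree $r$ on $M_g$ at all, and your elimination step breaks down for exactly half of the $\kappa_r$'s you need to kill. The paper handles this by passing to $C_g^1$: one takes $\psi_1 \cdot \FZ_{g,1,r}$ (note $3r - g - 1 - 1$ is now even) and pushes forward along the forgetful map $C_g^1 \to M_g$. The pushforward contains $\kappa_r$ with coefficient $[C_1]_{z^r} - (2g-2)[C_0]_{z^r}[C_1]_{z^0}$, and Lemma~\ref{ionellemma} (specifically, that $[C_1]_{z^0} < 0$ while all other relevant coefficients are positive) guarantees this is nonzero. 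So the fix is not hard once you see the parity obstruction, but as written your argument only works for half the required values of $r$, and you should add this pushforward step explicitly.
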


This theorem is a direct consequence of the FZ relations. We will see that the FZ relations can be used to prove the following stronger result:

\begin{thm}\label{fzconstraint}
	Fix a genus $g \geq 2$. The twisted commutative algebra $R_g = \bigoplus_{n \geq 0}R^\bullet(C_g^n)$ is generated by the classes $D_{n,r}$ for which 	$3r-n < g+1$.
\end{thm}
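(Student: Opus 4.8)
The plan is to deduce Theorem~\ref{fzconstraint} from the FZ relations by the same mechanism that produces the Ionel--Morita theorem in the $n=0$ case, but carried out uniformly in all arities. First I would recall the precise form of Pixton's extension of the FZ relations to $R^\bullet(C_g^n)$ (equivalently to $R^\bullet(M_{g,n}^\rt)$): for each $r$ with $r > (g-1+n-\delta_{0g})/3$ — the range where Looijenga-type vanishing kicks in after one stabilizes — there is a tautological relation expressing a ``high'' class in terms of classes built from $\kappa$, $\psi$ and boundary-type contributions of lower complexity. The key structural point is that, because we have packaged everything into the free twisted commutative algebra $S$ with generators $D_{n,r}$ (Proposition~\ref{generatorsprop}), it suffices to show: for every pair $(n,r)$ with $3r - n \geq g+1$, the class $D_{n,r} = \Delta_{12\ldots n}\psi_1^{n-1+r}$ (or $\kappa_r$ when $n=0$) can be rewritten, modulo the FZ relations, as a polynomial in cross products of generators $D_{n',r'}$ with strictly smaller value of the statistic $3r'-n'$, or with strictly smaller arity. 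An induction on $(n, 3r-n)$ ordered lexicographically (or on some combined complexity) then finishes the argument.

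The mechanism for the rewriting is as follows. The class $D_{n,r}$ lives on $C_g^n$ but is pulled back, via the diagonal $\Delta_{12\ldots n}$, from the single $\psi$-class $\psi_1^{n-1+r}$ on $C_g^1 = C_g$; more precisely $\Delta_{12\ldots n}\psi_1^{n-1+r} = (\iota_n)_\ast(\psi^{n-1+r})$ where $\iota_n\colon C_g \hookrightarrow C_g^n$ is the small diagonal, using $\Delta_{ij}\psi_i = \Delta_{ij}\psi_j$ and $\Delta_{ij}^2 = -\Delta_{ij}\psi_i$ to absorb the self-intersection factors. Hence it is enough to control $\psi^m \in R^m(C_g)$ for $m = n-1+r$ large: one wants a relation expressing $\psi^m$, for $m$ beyond a bound linear in $g$, in terms of $\kappa$-classes times lower powers of $\psi$. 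This is exactly what the FZ relations on $C_g^1$ give — indeed Ionel's and Morita's proofs proceed through precisely such relations for $\psi$ and $\kappa$ on the universal curve. Pushing such a relation forward along $\iota_n$ and re-expanding the resulting boundary/diagonal terms via \eqref{basicrelations} converts it into an identity among the $D_{n',r'}$, and one checks that every term on the right has smaller complexity: either fewer marked points glued together (smaller $n'$), or the same configuration with a genuinely smaller power of $\psi$ (smaller $r'$, hence smaller $3r'-n'$), or a factored-off $\kappa_{r''}$ with $3r'' < g+1$ already in the allowed range. Careful bookkeeping of how the FZ bound on $C_g$ (roughly $3m < g + 1 + (\text{number of points})$ after accounting for the extra markings hidden inside the diagonal) translates through $\iota_n$ should yield exactly the inequality $3r - n < g+1$.

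I would organize the write-up in three steps: (1) state the FZ relations in the form needed, citing \cite{pixtonthesis} (and \cite{faberconjectures} for the original conjecture, \cite{ionel,moritagenerators} for the $n=0$ specialization); (2) prove the reduction lemma that $D_{n,r}$ with $3r-n\geq g+1$ lies in the ideal (of the twisted commutative algebra $S$) generated by lower-complexity $D_{n',r'}$ modulo FZ, by pushing the relevant FZ relation on $C_g$ forward along the small diagonal and normalizing via \eqref{basicrelations}; (3) conclude by the lexicographic induction, noting that the generators $D_{n,r}$ with $3r-n<g+1$ are finite in number for each $g$ only after also imposing $r \geq \max(n-1,2)$ as in Corollary~\ref{irrelevantgenerators}, so one really obtains finite generation with the stated explicit degree bound, and that for $n=0$ this recovers $3r<g+1$, i.e.\ the Ionel--Morita theorem.

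The main obstacle I anticipate is step (2): making the combinatorics of ``complexity'' genuinely decreasing. The FZ relation for $\psi^m$ on $C_g$ does not simply express $\psi^m$ in terms of lower powers of $\psi$ and $\kappa$'s — in Pixton's form it involves sums over stable graphs, which after restriction to the rational-tails / $C_g^n$ setting still produce terms with several marked points distributed among diagonal classes and $\psi$-powers, and one must verify that the statistic $3r-n$ (or whatever combined invariant one uses) strictly drops on every such term. Getting the bookkeeping to land precisely on $3r-n<g+1$, rather than some weaker inequality, will require being careful about how markings ``absorbed'' into the small diagonal $\Delta_{12\ldots n}$ interact with the marking-count in the FZ bound; I expect this to be where essentially all the work lies, and where one most wants to lean on the already-established $n=0$ case as a sanity check and template.
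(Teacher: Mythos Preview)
Your step (2) has a genuine gap: pushing an FZ relation from $C_g^1$ forward along the small diagonal $\iota_n \colon C_g \to C_g^n$ cannot yield the sharp bound. First a correction of exponents: since $D_{n,r}$ is placed in degree $r$, one has $D_{n,r} = \Delta_{12\ldots n}\psi_1^{r-n+1} = (\iota_n)_\ast(\psi_1^{r-n+1})$, not $(\iota_n)_\ast(\psi_1^{n-1+r})$; the exponent in the displayed definition is a typo (compare the formula $D_{n+1,r+1} = \Delta_{12\ldots n+1}\psi_1^{r-n+1}$ used later). Now an FZ relation on $C_g^1$ exists in degree $m$ only when $3m \geq g+2$, and with $m = r-n+1$ this reads $3r - 3n \geq g-1$, which is strictly stronger than the desired $3r-n \geq g+1$ for every $n \geq 2$. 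Concretely, for $g=3$ and $n=3$ you must eliminate $D_{3,3} = \Delta_{123}\psi_1$, but that would require an FZ relation for $\psi_1^1 \in R^1(C_3^1)$, well outside the FZ range. Moreover, the pushforward of any relation on $C_g^1$ along $\iota_n$ produces only terms of the form $\kappa_a \cdot D_{n,r'}$ --- the arity $n$ never drops --- so your hoped-for branch ``smaller $n'$'' in the induction does not occur.

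What the paper actually does is use the FZ relation $\FZ_{g,n,r}$ directly on $C_g^n$. When $3r-g-1-n$ is a nonnegative even integer, this relation contains exactly one ``top'' monomial $(-1)^{n-1}[C_n]_{z^r}\cdot D_{n,r}$, with every remaining term a cross product of generators $D_{n',r'}$ having $r' < r$; so the entire content is the nonvanishing $[C_n]_{z^r} \neq 0$, which is Lemma~\ref{ionellemma} (Ionel's positivity for $n \leq 1$, plus the recursion $[C_{n+1}]_{z^{r+1}} = (12r-4n)[C_n]_{z^r}$ for $n \geq 1$). When $3r-g-1-n$ is odd one instead multiplies $\FZ_{g,n+1,r}$ by $\psi_{n+1}$ and pushes forward along the forgetful map $C_g^{n+1} \to C_g^n$; the coefficient of $D_{n,r}$ is then $[C_{n+1}]_{z^r} - (2g-2)[C_1]_{z^0}[C_n]_{z^r}$, again nonzero by the same lemma since $[C_1]_{z^0}<0$. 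No elaborate lexicographic induction is needed: each offending $D_{n,r}$ is killed by a single relation, and one simply inducts on $r$.
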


This implies in particular the result of Ionel--Morita, since the arity $0$ component of $R_g$ is the tautological ring $R^\bullet(M_g)$, and $D_{0,r}$ is the kappa class $\kappa_r$.

\subsection{The FZ relations} In the early 2000's, Faber and Zagier (in unpublished work) formulated a conjectural infinite family of relations in the tautological ring $R^\bullet(M_g)$. These relations were proven using the geometry of stable quotients by Pandharipande and Pixton \cite{pandharipandepixton}. Around the same time, Pixton found a generalization of this conjecture to incorporate also marked points and an extension of these relations to the Deligne--Mumford boundary. These extended FZ relations on $\MM_{g,n}$ were subsequently proven in cohomology by Pandharipande--Pixton--Zvonkine \cite{pandharipandepixtonzvonkine} and on the level of Chow rings by Janda \cite{jandachow,jandachow2}. 

The FZ relations on $C_g^n$ take a simpler form than on $\MM_{g,n}$. Let us state the result in this case, following \cite[Section 4]{jandachow}.

	Let 
	$$ A(z) = \sum_{i\geq 0} \frac{(6i)!}{(2i)!(3i)!}z^i \qquad B(z) = \sum_{i \geq 0} \frac{(6i)!}{(2i)!(3i)!} \frac{(6i+1)}{(6i-1)} z^i.$$
	We introduce a sequence of further power series $C_n$ by 
	\begin{equation}\label{diffeq1}
	C_1 = \frac{B}{A}; \qquad C_{n+1} = (12z^2 \frac{d}{dz} - 4nz) C_n.
	\end{equation}  
	We note that $C_n$ is a multiple of $z^{n-1}$. We will also define 
	$$ C_0 = \log(A),$$
	which is a multiple of $z^1$. Then we have
	\begin{equation}\label{diffeq2}
	C_{1} = -1 + 144z + 2^53^3z^2 \frac{d}{dz} C_0, 
	\end{equation}
	so that the coefficients $C_1$ (and hence also the higher $C_n$) are in fact recursively expressed in terms of those of $C_0$, except for low order terms.
	
	For any power series $F(z) = \sum_{i\geq 0} a_i z^i$ in $\Q[\![z]\!]$, we define bracket operators
	$$ \{F\}_\kappa = \sum_{i \geq 0} \kappa_i a_i z^i$$
	and 
	$$ \{F\}_{\Delta_S} = \sum_{i \geq 0} (-1)^{\vert S \vert - 1} \Delta_S \psi_S^{i - \vert S \vert + 1}a_i z^i$$
	for any $S \subseteq \{1,\ldots,n\}$; here $\psi_S$ denotes $\psi_j$ for any $j \in S$. 
	
	We use $[F]_{z^r}$ to denote the coefficient of $z^r$ in a power series.
	
	\begin{thm}[Janda, Pixton--Pandharipande, Pixton--Pandharipande--Zvonkine]
		For any $r$ such that $3r -g-1-n$ is a nonnegative even integer, the expression
		\[ \big[ \exp(-\{\log(A)\}_\kappa) \sum_{P \text{ \emph{partition of} } n} \prod_{S \in P} \{C_{\vert S\vert} \}_{\Delta_S}\big]_{z^r} \]
		vanishes in $\CH^r(C_g^n)$. 
	\end{thm}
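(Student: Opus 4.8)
The plan is to derive these relations by specializing Pixton's extended Faber--Zagier relations on $\MM_{g,n}$ --- known to hold in the Chow ring by Janda \cite{jandachow} and in cohomology by Pandharipande--Pixton--Zvonkine \cite{pandharipandepixtonzvonkine} --- following the reduction in \cite[Section 4]{jandachow}. Recall that for each admissible degree $r$, Pixton's relation on $\MM_{g,n}$ is an explicit $\Q$-linear combination of decorated stable graphs of genus $g$ with $n$ legs, where a vertex of genus $h$ contributes a $\kappa$- and $\psi$-decoration built from the hypergeometric series $A$ and $B$, and each edge contributes a further factor assembled from $A$ and $B$. The two moves are: restrict such a relation to the rational tails locus, and then push it forward to $C_g^n$.

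First I would restrict along the open immersion $M_{g,n}^{\rt}\hookrightarrow\MM_{g,n}$. The boundary stratum of a stable graph $\Gamma$ meets $M_{g,n}^{\rt}$ precisely when $\Gamma$ is of \emph{rational tails type}: one vertex $v_0$ of genus $g$, a forest of genus-$0$ vertices rooted at $v_0$, and all $n$ markings carried by the vertices of that forest. Every other stable graph has stratum disjoint from $M_{g,n}^{\rt}$, so its term restricts to zero; and in the surviving terms the only $\kappa$-decorations sit on $v_0$, producing the prefactor $\exp(-\{\log(A)\}_\kappa)$ common to all terms.

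Next I would push the restricted relation forward along the contraction $c\colon M_{g,n}^{\rt}\to C_g^n$, which is an iterated blow-up of $C_g^n$ (as recalled in Section \ref{rationaltails}), hence proper and birational, so $c_\ast$ carries relations to relations and preserves codimension. Under $c$, a connected genus-$0$ subtree of $\Gamma$ attached to $v_0$ at a single node and carrying exactly the markings in a subset $S\subseteq\{1,\dots,n\}$ is contracted onto the diagonal $\Delta_S$. For a fixed $S$ of size $k$, summing the $c_\ast$-images over all genus-$0$ tree shapes on $S$ and over all admissible $\psi$-decorations --- using the standard generating-function identities for $\psi$-classes and normal bundles along trees of $\mathbb{P}^1$'s --- collapses to the single series $\{C_k\}_{\Delta_S}$, with the recursion $C_{k+1}=(12z^2\tfrac{d}{dz}-4kz)C_k$ of \eqref{diffeq1} encoding the effect of adjoining one more marking to a rational bubble. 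Multiplicativity over the connected components of the rational forest converts the sum over graphs into $\sum_{P}\prod_{S\in P}$ over set partitions $P$ of $\{1,\dots,n\}$, and together with the prefactor this reassembles into exactly the displayed expression, which is therefore zero in $\CH^r(C_g^n)$; the condition that $3r-g-1-n$ be a nonnegative even integer is the image of Pixton's range-and-parity condition under this restriction.

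The hard part will be the resummation in the last step: one must verify that Pixton's edge factors, the genus-$0$ vertex contributions, the automorphism weights of the graphs, and the $\psi$-class generating functions combine to reproduce precisely the series $C_k$ of \eqref{diffeq1}, with the correct numerical constants --- in particular the $144$ and the $2^53^3$ appearing in \eqref{diffeq2}. This is a finite but delicate bookkeeping exercise, and is exactly what is carried out in \cite[Section 4]{jandachow}; alternatively one can confirm the recursion for $C_n$ directly by a torus-localization computation on moduli of stable quotients, in the spirit of \cite{pandharipandepixton}. The remaining ingredients --- that rational tails graphs are the only stable graphs meeting $M_{g,n}^{\rt}$, that $c_\ast$ preserves the fundamental class since $c$ is birational, and that $\psi_i$, $\kappa_d$ and $\Delta_S$ on $C_g^n$ are compatible under $c^\ast$ and $c_\ast$ with the corresponding classes on $M_{g,n}^{\rt}$ --- are routine.
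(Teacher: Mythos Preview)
The paper does not prove this theorem; it is quoted from the literature, with the specific form on $C_g^n$ attributed to \cite[Section 4]{jandachow}. There is therefore no ``paper's own proof'' to compare your proposal against.

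Your outline is a reasonable sketch of the reduction carried out in \cite{jandachow}: restrict Pixton's $\MM_{g,n}$ relations to the rational-tails locus and push forward along the blow-down $M_{g,n}^{\rt}\to C_g^n$. One minor inaccuracy: in Pixton's relations the genus-$0$ vertices \emph{do} carry $\kappa$-decorations (via the same $\exp(-\{\log A\}_\kappa)$ factor at each vertex), not only the genus-$g$ vertex $v_0$; these extra contributions, together with the edge factors and $\psi$-decorations on the rational trees, are precisely what gets repackaged into the series $C_k$ during the resummation over tree types and decorations. You correctly flag the resummation as the substantive step and defer it to \cite[Section 4]{jandachow}, so your proposal is effectively a pointer to the existing proof rather than an independent argument --- which is entirely appropriate here, since the paper itself does the same.
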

	
	\begin{defn}
		We denote  the above expression $\big[ \exp(-\{\log(A)\}_\kappa) \sum_{P} \prod_{S \in P} \{C_{\vert S\vert} \}_{\Delta_S}\big]_{z^r}$ by $\FZ_{g,n,r}$.
	\end{defn}

	\begin{lem}[Ionel]\label{ionellemma}All coefficients $[C_n]_{z^r}$, for $n=0$ and $r \geq 1$ or $n \geq 1$ and $r \geq n-1$, are strictly positive rational numbers, except the constant term of $C_1$ which is negative.
	\end{lem}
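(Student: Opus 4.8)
The plan is to establish positivity of the coefficients $[C_n]_{z^r}$ by induction on $n$, reducing everything to the base case $C_1 = B/A$ (equivalently, to $C_0 = \log A$). First I would treat $n=1$: the series $A(z) = \sum (6i)!/((2i)!(3i)!)\,z^i$ and $B(z) = \sum (6i)!/((2i)!(3i)!) \cdot \frac{6i+1}{6i-1}\,z^i$ have all positive coefficients except that $B$ has a sign coming from the factor $\frac{6i+1}{6i-1}$, which is negative only when $i=0$ (giving $-1$). The key point is to understand the quotient $C_1 = B/A$. Writing $C_1 = (B/A)$, one has $A \cdot C_1 = B$, so the coefficients of $C_1$ satisfy a recursion $a_0 c_r + a_1 c_{r-1} + \cdots + a_r c_0 = b_r$ where $a_i, b_i > 0$ for $i \geq 1$ and $b_0 = -1 = c_0$. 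A direct induction on $r$ then shows $c_r > 0$ for $r \geq 1$: one rearranges to $c_r = b_r - a_1 c_{r-1} - \cdots - a_r c_0$ and must check that the positive contribution $b_r + a_r(-c_0) = b_r + a_r$ from the two "$i=0$" terms dominates, i.e. that the combinatorial identity forces positivity. This is the heart of Ionel's original argument \cite{ionel}; I would either cite it directly or reproduce the estimate, which amounts to a hypergeometric/asymptotic comparison of the ratios $(6i)!/((2i)!(3i)!)$.

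Next, for the inductive step, I would use the differential recursion $C_{n+1} = (12 z^2 \frac{d}{dz} - 4nz)\,C_n$. If $C_n = \sum_{r \geq n-1} c_r^{(n)} z^r$ with $c_r^{(n)} > 0$ for all $r \geq n-1$, then
$$ C_{n+1} = \sum_r \big(12 r - 4n\big)\, c_r^{(n)}\, z^{r+1} = \sum_r \big(12(r-1) - 4n\big)\, c_{r-1}^{(n)}\, z^{r}, $$
so $[C_{n+1}]_{z^r} = (12(r-1) - 4n)\, c_{r-1}^{(n)}$. For this to be positive we need $12(r-1) - 4n > 0$, i.e. $r > 1 + n/3$; combined with the constraint $c_{r-1}^{(n)} \neq 0$ only for $r - 1 \geq n-1$, i.e. $r \geq n$, we need to check that the lowest relevant index in the claim for $C_{n+1}$, namely $r = n$ (since $C_{n+1}$ is a multiple of $z^n$), already satisfies $12(n-1) - 4n = 8n - 12 > 0$ for $n \geq 2$, and handle $n=1 \to n=2$ separately (where $C_2$ is a multiple of $z^1$ and $[C_2]_{z^1} = (12 \cdot 0 - 4)\,c_0^{(1)} = -4 \cdot(-1) = 4 > 0$, using that $c_0^{(1)} = -1$ is the unique negative coefficient). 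So the induction is driven entirely by the elementary observation that multiplication by $(12 z^2 \frac{d}{dz} - 4nz)$ multiplies the coefficient of $z^r$ by the positive scalar $12(r-1) - 4n$ once $r$ is large enough, and that the anomalous negative constant term of $C_1$ is exactly what makes $C_2$ come out positive at its bottom degree.

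The main obstacle is the base case $n=1$: proving $[B/A]_{z^r} > 0$ for all $r \geq 1$ is not formal, since it requires controlling the alternating-sign cancellations in the power series $1/A$ against the coefficients of $B$. I expect to lean on Ionel's analysis here — the relevant fact is an inequality between consecutive coefficients of $A$ and of $B$, provable by writing $(6i)!/((2i)!(3i)!)$ in terms of Pochhammer symbols and comparing growth rates, or by recognizing $A$ and $B$ as values of ${}_2F_1$-type series and using known positivity results. Everything after the base case is the routine induction sketched above, together with bookkeeping of which power of $z$ divides each $C_n$ (namely $z^{n-1}$ for $n \geq 1$, as already noted in the text, and $z^1$ for $C_0$).
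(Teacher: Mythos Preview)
Your inductive step for $n\geq 1$ via the recursion $[C_{n+1}]_{z^{r+1}}=(12r-4n)[C_n]_{z^r}$ is exactly what the paper does, including the observation that the single negative coefficient $[C_1]_{z^0}=-1$ combines with the negative factor at $(n,r)=(1,0)$ to make $[C_2]_{z^1}>0$. So from $n=1$ onward your argument matches the paper.

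Where you diverge is the base case. You attack $C_1=B/A$ head-on via the convolution $A\cdot C_1=B$, and correctly identify this as the hard part: showing $b_r+a_r>\sum_{i=1}^{r-1}a_ic_{r-i}$ is a genuine inequality, not formal. The paper sidesteps this entirely by taking $n=0$ as the base case: Ionel's Lemma~3.6 gives positivity of the coefficients of $C_0=\log A$ directly (these are her $c_{k,k}$), and then the relation \eqref{diffeq2}, namely $C_1=-1+144z+864\,z^2\tfrac{d}{dz}C_0$, makes the $n=1$ case immediate, since $z^2\tfrac{d}{dz}$ sends $z^k\mapsto kz^{k+1}$ and preserves positivity. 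This is both shorter and matches what Ionel actually proves. Your route would work if you supply or cite the required estimate, but it is the harder direction of the equivalence you note parenthetically.

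One small gap: the lemma statement includes the $n=0$ case, and you never explicitly cover it. You remark that $C_0$ and $C_1$ are ``equivalent'' base cases, which is true via \eqref{diffeq2}, but you should spell this out --- or better, start from $n=0$ as the paper does, which handles both the statement and the base case at once.
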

	
	\begin{proof}The case $n=0$ is \cite[Lemma 3.6]{ionel}, since the coefficients of $C_0$ are (up to rescaling) the numbers she denotes $c_{k,k}$.  The case $n=1$ follows from this by the differential equation \eqref{diffeq2}; in fact, it is even stated in Ionel's lemma, since the coefficients of $C_1$ are (up to rescaling) the numbers she calls $c_{k,k-1}$. The differential equation \eqref{diffeq1} says that
		$$ [C_{n+1}]_{z^{r+1}} = (12r - 4n) [C_n]_{z^{r}}$$
		for $n \geq 1$, and one checks that $12r - 4n$ is strictly positive in all cases of interest except $n=1$, $r=0$, where it is negative: consequently, all coefficients of $C_n$ for $n \geq 2$, $r \geq n-1$ are strictly positive, too.
	\end{proof}

	We may now prove Theorem \ref{fzconstraint}.
	
	\begin{proof}(of Theorem \ref{fzconstraint}) We know that the twisted commutative algebra $R_g$ is generated by the images of the classes $D_{n,r} \in S_g$. Consider some generator $D_{n,r}$ for which $3r-g-1 -n \geq 0$. If $3r-g-1 -n$ is even then one of the terms in the relation $\FZ_{g,n,r}$ equals
		$$ (-1)^{n-1}[C_n]_{z^r} \cdot D_{n,r},$$ and all other terms are products of generators with smaller $r$. By Lemma \ref{ionellemma}, $[C_n]_{z^r}$ is nonzero, and this relation can be used to express the class $D_{n,r}$ in terms of ``simpler'' generators.

	If $3r-g-1-n$ is odd, we may instead consider the FZ relation $\psi_{n+1}\cdot \FZ_{g,n+1,r}$, and push forward along the map forgetting the last marked point to get a codimension $r$ relation on $C_g^n$. When we push down a monomial in the kappa, diagonal and psi-classes from $C_g^{n+1}$ to $C_g^n$, we get a multiple of $D_{n,r}$ exactly when we push down $D_{n+1,r+1} = \Delta_{12\ldots n+1}\psi_1^{r-n+1}$ (which pushes forward to $D_{n,r}$) and when we push down $D_{n,r} \times D_{1,1} = \Delta_{12\ldots n}\psi_1^{r-n+1}\psi_{n+1}$, which pushes forward to $(2g-2)D_{n,r}$. Thus the resulting relation on $C_g^n$ will have as one of its terms
	$$ (-1)^n\big([C_{n+1}]_{z^{r}} - (2g-2) [C_n]_{z^r}[C_1]_{z^0}\big) \cdot D_{n,r},$$
	and all other terms are products of generators with smaller values of $r$. By Lemma \ref{ionellemma} the coefficients $[C_{n+1}]_{z^{r}}$ and $[C_n]_{z^r}$ are positive and the coefficient $[C_1]_{z^0}$ is negative, so the coefficient behind $D_{n,r}$ is nonzero and we may use this relation to eliminate the generator $D_{n,r}$.
	\end{proof}
%
%
%
%
%
%

\section{Low genus calculations}\label{lowgenuscalculations}

In this section of the paper we will completely calculate the groups $R^k(M_g,\VV_\llambda)$ for all $k$ and $\lambda$ when $g \leq 4$.

\begin{thm}\label{lowgenustheorem}Recall the twisted commutative algebra $R_g'' = \bigoplus_{\lambda} R^\bullet(M_g,\VV_\llambda) \otimes \sigma_{\lambda^T}^\ast$, defined for any $g \geq 2$. 
	\begin{enumerate}
		\item The twisted commutative algebra $R_2''$ is trivial. Equivalently, $ R^k(M_2,\VV_\llambda) = 0$ unless $k=0$, $\lambda =0$, for which $R^0(M_2,\VV_{\langle0\rangle}) = R^0(M_2) \cong \Q$.
		\item The twisted commutative algebra $R_3''$ is generated by $\kappa_1$ and the Gross--Schoen cycle. We have
		$$R^0(M_3,\VV_{\langle0\rangle}) \cong R^1(M_3,\VV_{\langle0\rangle}) \cong R^2(M_3,\VV_{\langle111\rangle}) \cong \Q,$$ and all other tautological groups of all other motives $\VV_\llambda$ on $M_3$ vanish. The group $R^1(M_3,\VV_{\langle0\rangle})$ is spanned by $\kappa_1$ and the group $R^2(M_3,\VV_{\langle111\rangle}) $ is spanned by the Gross--Schoen cycle.
		\item  The twisted commutative algebra $R_4''$ is generated by $\kappa_1$, the Gross--Schoen cycle, and the Faber--Pandharipande cycle.
		The complete list of motives $\VV_\llambda$ on $M_4$ with nontrivial tautological groups are
		\begin{gather*}
		R^0(M_4,\VV_{\langle0\rangle}) \cong R^1(M_4,\VV_{\langle0\rangle}) \cong R^2(M_4,\VV_{\langle0\rangle}) \cong \Q, \\
		R^2(M_4,\VV_{\langle 111\rangle}) \cong R^3(M_4,\VV_{\langle 111\rangle}) \cong \Q, \\
		R^2(M_4,\VV_{\langle 11\rangle}) \cong \Q, \\
		R^4(M_4,\VV_{\langle 2211\rangle}) \cong \Q.
		\end{gather*} 
		The group $R^k(M_4,\VV_{\langle 0 \rangle})$ is spanned by $\kappa_1^k$. The group $R^2(M_4,\VV_{\langle 111\rangle})$ is spanned by the Gross--Schoen cycle, and $R^3(M_4,\VV_{\langle 111\rangle})$ by the product of $\kappa_1$ and the Gross--Schoen cycle. The group $R^2(M_4,\VV_{\langle 11\rangle})$ is spanned by the Faber--Pandharipande cycle. Finally, $R^4(M_4,\VV_{\langle 2211\rangle})$ is spanned by the cross product of two Gross--Schoen cycles; that is, the projection of $\Delta_{123}\Delta_{456}$ into the summand $\CH^4(M_4,\VV_{\langle 2,2,1,1\rangle}) \otimes \sigma_{4,2}$ gives a generator.
	\end{enumerate}
	In all cases, Poincar\'e duality holds, in the sense that
	$$ R^k(M_g,\VV_\llambda) \otimes R^{g-2+\vert \lambda \vert -k}(M_g,\VV_\llambda) \to R^{g-2}(M_g,\VV_{\langle0\rangle}) \cong \Q$$
	is a perfect pairing.
	 
\end{thm}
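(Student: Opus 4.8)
The plan is to combine the structural results already established with a finite computer calculation, organized genus by genus and bounded in scope by Theorem \ref{fzconstraint}. First, for each $g \in \{2,3,4\}$, note that Theorem \ref{fzconstraint} tells us that the twisted commutative algebra $R_g$ is generated by the classes $D_{n,r}$ with $3r - n < g+1$; passing to the quotient $R_g''$ and invoking Corollary \ref{irrelevantgenerators}, we find that $R_g''$ is generated by the images of $D_{n,r}$ with $n=0, r\ge 1$ or $n \ge 1, r \ge \max(n-1,2)$ \emph{and} $3r-n < g+1$. For $g=2$ this leaves no generators at all, so $R_2''$ is trivial in positive arity, proving (1). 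For $g=3$ the surviving generators are $D_{0,1} = \kappa_1$ and $D_{3,2} = \Delta_{123}\psi_1$, i.e. the Gross--Schoen cycle; for $g=4$ one gets in addition $D_{2,2} = \Delta_{12}\psi_1$, the Faber--Pandharipande cycle. (One checks the inequality $3r-n<g+1$ rules out all other candidates; this is a short finite check.)

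Next, for $g=3$ and $g=4$ one must determine all relations among the (few) generators and thereby pin down each group $R^k(M_g,\VV_\llambda)$. The strategy is: list all monomials in the generators that can contribute to a given $\VV_\llambda$ in a given degree --- since the generators have bounded arity and the socle of $R^\bullet(C_g^n)$ sits in degree $g-2+n$ by Looijenga's theorem, only finitely many monomials are relevant for each $(\lambda,k)$ --- and then compute the span of their images in $\CH^\bullet(M_g,\VV^{\langle \bullet\rangle})$. Concretely, one expresses each monomial as a tautological class on the appropriate $C_g^n$, applies the Nazarov projector $\pi_{\lambda,n}$ (Section \ref{nazarov1}) to land in the $\VV_\llambda$-component, and computes ranks of the resulting families of classes. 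All such computations reduce --- via multiplication into the socle and the $\lambda_g\lambda_{g-1}$-pairing, whose integrals are algorithmically computable \cite{faberalgorithms,johnsontautologicalSAGE} --- to linear algebra over $\Q$. This is the step that genuinely requires the computer. The upshot one expects is exactly the list in the theorem: e.g. for $g=3$ the only nonzero groups beyond $R^0(M_3)=\Q$ are $R^1(M_3) = \Q\{\kappa_1\}$ and $R^2(M_3,\VV_{\langle111\rangle}) = \Q\{\text{Gross--Schoen}\}$, with everything of arity $\ge 4$ vanishing because it would be a product of generators forcing too high a degree, or vanishing by direct computation.

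Finally, once the complete list of groups is in hand, Poincar\'e duality is checked directly: for each $\VV_\llambda$ appearing, the pairing $R^k(M_g,\VV_\llambda) \otimes R^{g-2+|\lambda|-k}(M_g,\VV_\llambda) \to R^{g-2}(M_g)\cong\Q$ of Theorem \ref{faberconj}(2) is a pairing between explicitly given one-dimensional (or zero-dimensional) spaces, so perfectness amounts to verifying that a single integral is nonzero in each case. For instance, for $\VV_{\langle111\rangle}$ on $M_3$ one pairs the Gross--Schoen cycle with itself (here $k = g-2+|\lambda|-k = 2$) and checks the result is a nonzero multiple of the generator of $R^2(M_3)$; for $\VV_{\langle2211\rangle}$ on $M_4$ one pairs the degree-$4$ class (the cross product of two Gross--Schoen cycles) with the canonical generator of $R^0(M_4,\VV_{\langle2211\rangle})$, and so on. Since all these are again computable integrals against $\lambda_g\lambda_{g-1}$, Poincar\'e duality follows, and hence by Theorem \ref{faberconj} all $R^\bullet(C_g^n)$ are Gorenstein in these genera.

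The main obstacle is the middle step: one must be confident that the finite list of monomials considered really does span each tautological group --- this is guaranteed in principle by Theorem \ref{fzconstraint} together with the socle bound, but in practice requires carefully enumerating the relevant monomials and performing the rank computations, using the explicit form of Nazarov's projectors to separate the irreducible summands $\VV_\llambda$ inside $\VV^{\otimes n}$. The subtlety is book-keeping: tracking the correct Tate twists, the conjugate-partition conventions, and the Koszul signs when moving between $\CH$ and the $\VV_\llambda$-decomposition, as flagged in the remarks following Theorem \ref{symplecticschurweyl}. Once the computation is set up correctly the verification of Poincar\'e duality is comparatively routine.
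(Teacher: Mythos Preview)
Your identification of the generators via Theorem~\ref{fzconstraint} and Corollary~\ref{irrelevantgenerators} is correct (up to a slip: $D_{3,2}=\Delta_{123}$, not $\Delta_{123}\psi_1$), and this matches the paper. The gap is in the next step. You propose to compute the dimension of each $R^k(M_g,\VV_\llambda)$ ``via multiplication into the socle and the $\lambda_g\lambda_{g-1}$-pairing'', but this computes only the rank of the pairing, i.e.\ the dimension of the \emph{Gorenstein quotient}. It gives no upper bound on the actual tautological group. For instance, in genus~$3$ you must show $\kappa_1\Delta_{123}\equiv 0$ and $\Delta_{123}\Delta_{456}\equiv 0$ in $R_3''$. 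The socle pairing tells you only that these classes pair to zero with everything in complementary degree---trivially true, since by your own generator count the complementary space is empty---but that is precisely the Poincar\'e duality statement you then claim to ``verify'' afterwards. The argument is circular: you assume perfectness of the pairing to compute the groups, and then check perfectness on the groups so computed.

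The paper avoids this by using FZ relations, reduced modulo~$\equiv$, to prove the required vanishings outright: e.g.\ $\FZ_{3,3,3}$ together with the diagonal pushforward of $\FZ_{3,2,2}$ yields $\kappa_1\Delta_{123}\equiv 0$; diagonal pushforwards of $\FZ_{3,5,3}$ give a full-rank linear system killing all arity-$6$ products $\Delta_S\Delta_T$; and analogously for the eight relations needed in genus~$4$. Only \emph{after} these upper bounds are established does the paper invoke the socle pairing, to show the surviving monomials are nonzero---thereby proving Poincar\'e duality rather than assuming it.

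One further slip: for $\VV_{\langle 2,2,1,1\rangle}$ on $M_4$ the pairing is $R^4\otimes R^{g-2+|\lambda|-4}=R^4\otimes R^4\to R^2(M_4)$, so the projected class $\Delta_{123}\Delta_{456}$ is paired with itself, not with anything in $R^0$.
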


The proof of this theorem occupies the rest of this section. In all genera, the strategy of the proof will be the same:
\begin{itemize}
\item Using Corollary \ref{irrelevantgenerators} and Theorem \ref{fzconstraint} we get a finite list of generators for the twisted commutative algebra $R_g''$. Thus we have reduced the problem to finding the complete set of relations between these generators.
\item We use the FZ relations to obtain relations between the generators. Since we are working in the twisted commutative algebra $R_g''$, it is enough to consider the FZ relations modulo the equivalence relation $\equiv$, which often dramatically simplifies the relations. In this way we find that all but a finite list of twisted tautological classes are zero, and we are done if we can prove nonvanishing of each of these. 
\item Using Nazarov's Theorem \ref{nazarov} and our Theorem \ref{decompositionwelldefined}, we can represent each of the remaining potentially nonzero twisted tautological classes by an {explicit} class in $R^\bullet(C_g^n)$, so that we reduce the problem to proving that a finite number of tautological classes (without twisted coefficients) in $C_g^n$ are nonzero. This is now done by a standard argument: we multiply with some other class in complementary degree to land in the top degree part of the tautological ring, and then push down to get an element in the top degree of the tautological ring of $M_g$, whose structure we understand completely. 
\end{itemize}

To formulate the calculations, it will be convenient to introduce the following notation: for $x, y \in \CH^k(C_g^n)$, we write $x \equiv y$ to denote that $x$ and $y$ have the same image in $\CH^k(M_g,\VV^{\langle n \rangle})$. Equivalently by Theorem \ref{kernel}, $x \equiv y$ if $x$ and $y$ are equivalent modulo the twisted commutative algebra-ideal generated by $1 \in \CH^0(C_g^1)$, $\psi_1 \in \CH^1(C_g^1)$ and $\Delta_{12} \in \CH^1(C_g^2)$.

\begin{rem}We caution the reader that the relation $\equiv$ does \emph{not} respect the multiplication in the rings $R^\bullet(C_g^n)$, and is \emph{not} preserved when pushing forward a relation along a diagonal inclusion. That is, if we are given an element $\mathsf R \in S_n^k$ such that $\mathsf R \equiv 0$ in $R^k(C_g^n)$, then it does not follow e.g. that $\psi_1 \cdot \mathsf R \equiv 0$ in $R^{k+1}(C_g^{n})$, nor that the pushforward of $\mathsf R$ to $R^{k+1}(C_g^{n+1})$ along a diagonal inclusion vanishes modulo $\equiv$. One must therefore be careful to first multiply or push forward and only afterward reduce modulo $\equiv$. \end{rem}

\subsection{Genus two}

By Corollary \ref{irrelevantgenerators} and Proposition \ref{fzconstraint}, \emph{all} of the generators $D_{n,r}$ go to zero in $R_2''$. So $R_2''$ is the free twisted commutative algebra on \emph{no} generators, i.e. it contains only the unit element in arity $0$. This proves the genus $2$ case of Theorem \ref{lowgenustheorem}. 

This result was previously obtained (in a different form) in \cite{tavakol2}. 

The analogous statement is also true in genus one: $R^k(M_{1,1},\VV_{\langle a \rangle}) \cong \Q$ for $k=a=0$, and vanishes otherwise; this reformulates a result from \cite{tavakol1}. This statement is not hard to prove in our framework, but we have chosen to simplify the exposition by only talking about tautological groups $R^\bullet(M_g,\VV_\llambda)$ on moduli spaces of unpointed curves. 

\subsection{Genus three} By Corollary \ref{irrelevantgenerators} and Proposition \ref{fzconstraint}, the twisted commutative algebra $R_3''$ is generated by the images of $D_{0,1}$ and $D_{3,0}$, i.e. $\kappa_1$ and the Gross-Schoen cycle. Thus $R_3''$ is completely determined if we can find the complete set of relations between these generators. We claim that the product of any two generators vanishes. We have three products we need to check are zero:

\begin{enumerate}
	\item The relation $\kappa_1^2 = 0$ in $R^2(M_3,\VV_{\langle 0 \rangle})$ is well known and is a very special case of Looijenga's theorem (see Section \ref{Gorsection}). 
	\item Modulo the equivalence relation $\equiv$, the relation $\FZ_{3,3,3}$ simplifies to
	$$ 18432 \Delta_{123}\psi_1  - 960 \Delta_{123}\kappa_1 \equiv 0.$$
	That is, this is the expression obtained from $\FZ_{3,3,3}$ by removing all terms which are divisible (in the twisted commutative algebra $R_3$) by $1 \in R^0(C_g^1)$, $\psi_1 \in R^1(C_g^1)$ and $\Delta_{12} \in R^1(C_g^2)$. Note that this is the relation we used to show that the generator $\Delta_{123}\psi_1$ can be expressed in terms of simpler generators in the proof of Theorem \ref{fzconstraint}.
	
	Now consider instead the pushforward of $\FZ_{3,2,2}$ along a diagonal inclusion $C_g^2 \hookrightarrow C_g^3$. Modulo $\equiv$, that relation simplifies to
	$$ -1152 \Delta_{123}\psi_1 + 240 \Delta_{123}\kappa_1 \equiv 0.$$
	It is now clear that we obtain $\Delta_{123}\kappa_1 \equiv 0$, so that the product of the Gross--Schoen cycle and $\kappa_1$ vanishes in $R_g''$.

	\item Observe first of all that modulo the relation $\equiv$, the only nonzero monomials in $S_6^4$ (see Definition \ref{s-defn}) are $\Delta_{123}\Delta_{456}$ and its $\sym_6$-conjugates.
	
	For distinct elements $i,j \in \{1,...,6\}$, consider the pushforward of the relation $\FZ_{3,5,3}$ along the corresponding diagonal inclusion. The observation just made, and the fact that $\FZ_{3,5,3}$ is $\sym_5$-invariant, implies that the resulting relation  takes the form
	$$ \sum_{\substack{S \sqcup T = \{1,\ldots,6\} \\ \vert S \vert = \vert T \vert = 3 \\ i,j \in S}} \Delta_S \Delta_T \equiv 0$$
	(up to a nonzero constant), as all other terms in the pushforward of $\FZ_{3,5,3}$ vanish modulo $\equiv$. We think of these relations as $\binom 6 2 = 15$ equations in $\frac 1 2 \binom 6 3 = 10$ unknowns $\Delta_S \Delta_T$. It is a simple matter of linear algebra to check that the matrix of equations has full rank, so that  $\Delta_S \Delta_T \equiv 0$ for all $S$, $T$.
\end{enumerate}

We should also verify that $\kappa_1$ and the Gross--Schoen cycle are both nonzero in genus three, and that Poincar\'e duality holds. Nonvanishing of $\kappa_1$ is well known. The square of the Gross--Schoen cycle, pushed down to $M_3$, equals $\frac 7 4 \kappa_1$ (as one can verify on a computer). This proves both nonvanishing of the Gross--Schoen cycle and Poincar\'e duality, since the pairing $R^2(M_3,\VV_{\langle 1,1,1\rangle}) \otimes R^2(M_3,\VV_{\langle 1,1,1\rangle}) \to R^1(M_3,\VV_{\langle 0 \rangle}) \cong \Q$ is exactly given by multiplying the two cycles and pushing the result down to $M_3$. This proves the genus $3$ case of Theorem \ref{lowgenustheorem}.

\subsection{Genus four} By Corollary \ref{irrelevantgenerators} and Proposition \ref{fzconstraint} we now have three generators for the twisted commutative algebra $R_4''$: $\kappa_1$, the Gross--Schoen cycle, and the Faber--Pandharipande cycle. 

We claim that $\kappa_1^2$ and the product of $\kappa_1$ and the Gross--Schoen cycle are both nonzero in $R_4''$. Moreover, let us consider the product of two Gross--Schoen cycles. If we consider the subspace of the twisted commutative algebra $S$ spanned by all possible products of two classes $D_{3,0}$, then this decomposes as a representation 
of $\sym_6$ as $\sigma_{4,2} \oplus \sigma_6 = \Ind_{(\sym_3)^2 \rtimes \sym_2}^{\sym_6} \mathbf 1$. We claim that the representation $\sigma_6$ goes to zero in $R_4''$, but that the representation $\sigma_{4,2}$ survives to $R_4''(6)$.

Finally, we also claim that all other products of generators for the twisted commutative algebra $R_4''$ vanish. More precisely, we need to check the following relations:
\begin{enumerate}
	\item $\kappa_1^3 \equiv 0$
	\item $\kappa_1^2 \Delta_{123} \equiv 0$
	\item $\kappa_1 \Delta_{12} \psi_1 \equiv 0$
	\item $\sum_{\substack{S \sqcup T = \{1,\ldots,6\}\\ \vert S \vert = \vert T \vert = 3 } }\Delta_{S}\Delta_{T}\equiv 0$
	\item $\Delta_{123}\Delta_{456}\Delta_{789}\equiv 0$
	\item $\Delta_{123}\Delta_{45}\psi_4\equiv 0$
	\item $\Delta_{123}\Delta_{456}\kappa_1\equiv 0$
	\item $\Delta_{12}\psi_1\Delta_{34}\psi_3\equiv 0$
\end{enumerate}

(1) It's well known that $\kappa_1^3=0$.

(2) Modulo $\equiv$ the only nonzero monomials in $S_3^4$ are $\Delta_{123}\psi_1^2$, $\Delta_{123}\psi_1\kappa_1$, $\Delta_{123}\kappa_1^2$, and the $\sym_3$-conjugates of $\psi_1^2 \Delta_{23}\psi_2$. The relation $\FZ_{4,1,2}$ reduces to $\psi_1^2 \equiv 0$, which also implies $\psi_1^2 \Delta_{23}\psi_2 \equiv 0$. This leaves us with three potentially nonzero monomials. But $\FZ_{4,3,4}$, the pushforward along a diagonal of $\FZ_{4,2,3}$, and the pushforward of $\FZ_{4,1,2}$, gives us three linearly independent linear relations between these monomials modulo $\equiv$.


(3) The only nonzero monomials in $S_2^3$ modulo $\equiv$ are $\Delta_{12}\psi_1^2$ and $\Delta_{12}\kappa_1\psi_1$. The relations $\FZ_{4,2,3}$ and the pushforward of the relation $\FZ_{4,1,2}$ along the diagonal give two distinct linear relations between these monomials modulo $\equiv$, so both are zero modulo $\equiv$. 

(4) The relation $\psi_7 \cdot \FZ_{4,7,4}$, pushed down along the forgetful map that forgets the $7$th marked point, reduces to this expression modulo $\equiv$. Alternatively, since the expression is $\sym_6$-invariant, its image in $R_4''$ lands in the summand $R^\bullet(M_4,\VV_{\langle 1,1,1,1,1,1\rangle}) \otimes \sigma_6^\vee$. But the motive $\VV_{\langle 1,1,1,1,1,1\rangle}$ is zero. 

(5) Modulo $\equiv$, the only nonzero monomials in $S_9^6$ are the $\sym_9$-conjugates of $\Delta_{123}\Delta_{456}\Delta_{789}$. For any four indices $i,j,k,l$ we may consider the pushforward of $\FZ_{4,7,4}$ along the $i,j$-th and $k,l$-th diagonal, to get a relation which up to a scalar must equal
$$ \sum_{\substack{S \sqcup T \sqcup U = \{1,\ldots,9\}\\ \vert S \vert = \vert T \vert = \vert U \vert = 3 \\ i,j \in S \,\, k,l \in T}}  \Delta_S \Delta_T \Delta_U \equiv 0.$$
This gives $\frac 1 2 \binom 9 {2,2,5} = 378$ relations between $\frac 1 {3!} \binom 9 {3,3,3} = 280$ unknowns, and one can check using a computer that the resulting matrix has full rank; in particular, $\Delta_{123}\Delta_{456}\Delta_{789}\equiv 0$.

(6) Modulo $\equiv$, there are exactly $11$ nonzero monomials in $S_5^4$: $\Delta_{12345}$, and the $\sym_5$-conjugates of $\Delta_{123}\Delta_{45}\psi_4$. The relation $\FZ_{4,5,4}$, and the $10$ different pushforwards of the relation $\FZ_{4,4,3}$ along a diagonal inclusion, give $11$ linear relations between these monomials modulo $\equiv$. The resulting $11 \times 11$ matrix is invertible and we conclude that $\Delta_{123}\Delta_{45}\psi_4 \equiv 0$. 

(7) The nonzero monomials in $S_6^5$ are $\Delta_{123456}$, $\Delta_{123}\Delta_{456}\psi_1$, $\Delta_{123}\Delta_{456}\kappa_1$, $\Delta_{12}\Delta_{3456}\psi_1$, and their $\sym_6$-conjugates. This implies that the relation $\FZ_{4,5,4}$, pushed forward along the $i,j$-th diagonal and multiplied with $\kappa_1$, must be equal to
$$ \sum_{\substack{S \sqcup T = \{1,\ldots,6\} \\ \vert S \vert = \vert T \vert = 3 \\ i,j \in S}} \Delta_S \Delta_T \kappa_1\equiv 0$$
up to a nonzero scalar, since all the monomials involved in this relation must involve $\kappa_1$ and have $i,j$ in the same diagonal block. But this implies $\Delta_{123}\Delta_{456}\kappa_1\equiv 0$ by the same calculation as for relation (3) in genus $3$. 

(8) The nonzero monomials in $S_4^4$ are $\Delta_{12}\psi_1 \Delta_{34}\psi_3$, $\psi_1^2 \Delta_{234}$, $\Delta_{1234}\psi_1$, $\Delta_{1234}\kappa_1$ and their $\sym_4$-conjugates. Since $\psi_1^2 \equiv 0$, as observed in (2) above, we will also have $\psi_1^2\Delta_{234}\equiv 0$. Moreover, the relation $\FZ_{4,4,3}$ simplifies to $\Delta_{1234}\equiv 0$, so that also $\Delta_{1234}\kappa_1 \equiv 0$. This leaves only four potentially nonzero monomials. The relations given by $\psi_1 \cdot \FZ_{4,4,3}$, $\Delta_{12}\cdot \FZ_{4,4,3}$, $\Delta_{13}\cdot \FZ_{4,4,3}$ and $\Delta_{1,4}\cdot \FZ_{4,4,3}$ give four linearly independent relations between these monomials, and we conclude that they all vanish modulo $\equiv$.

We should also prove that all these cycles are nonzero and that Poincar\'e duality holds. We use that the relation $3\kappa_1^2  = -32\kappa_2$ holds in $R^2(M_4) = \Q\{\kappa_1^2\}$. The Gross--Schoen cycle squared, pushed down to $M_4$, equals $\frac 3 2 \kappa_1$. This shows both that the Gross--Schoen cycle is nonzero and that its product with $\kappa_1$ is nonzero. The Faber--Pandharipande cycle squared, pushed down to $M_4$, equals $\frac {19}{96}\kappa_1^2 -\frac 43 \kappa_2$, which is then also nonzero. The projection of $\Delta_{123}\Delta_{456}$ onto $R^4(M_4,\VV_{\langle 2,2,1,1\rangle}) \otimes \sigma_{4,2}$, squared and pushed down to $M_4$, equals $\frac{19877}{29160}\kappa_1^2  -\frac{25}{729}\kappa_2$, which is then also nonzero.  

This settles the genus $4$ case, and hence concludes the proof of Theorem \ref{lowgenustheorem}.

%
%
\subsection{Genus five}\label{genus5}
Let us briefly comment on the situation in genus $5$. The generators for the twisted commutative algebra $R_5''$ are $\kappa_1$, $\psi_1^2$, $\Delta_{12}\psi_1$, $\Delta_{123}$, and $\Delta_{1234}$. For $n \leq 7$ we find that the algebra $R^\bullet(C_5^n)$ is Gorenstein, and we can compute the groups $R^\bullet(M_5,\VV_\llambda)$ for $\vert \lambda \vert \leq 7$ by methods like those used in lower genera. One finds the following table of results, in which the classes in the right hand column project onto generators for the tautological groups listed in the left hand column. 
\begin{align*}
&R^0(M_5) \cong R^1(M_5) \cong R^2(M_5) \cong R^3(M_5) \cong \Q & \\
& R^2(M_5,\VV_{\langle 1\rangle}) \cong \Q  & \psi_1^2\\
&R^2(M_5,\VV_{\langle 1,1\rangle}) \cong R^3(M_5,\VV_{\langle 1,1\rangle}) \cong \Q & \Delta_{12}\psi_1, \Delta_{12}\psi_1 \kappa_1\\
&R^2(M_5,\VV_{\langle 1,1,1\rangle}) \cong R^3(M_5,\VV_{\langle 1,1,1\rangle}) \cong R^4(M_5,\VV_{\langle 1,1,1\rangle})\cong\Q& \Delta_{123}, \Delta_{123}\kappa_1, \Delta_{123}\kappa_1^2\\
&R^3(M_5,\VV_{\langle 1,1,1,1\rangle}) \cong R^4(M_5,\VV_{\langle 1,1,1,1\rangle}) \cong\Q & \Delta_{1234}, \Delta_{1234}\kappa_1\\
&R^4(M_5,\VV_{\langle 1,1,1,1,1\rangle}) \cong \Q & \Delta_{123}\Delta_{45}\psi_4 \\
&R^4(M_5,\VV_{\langle 2,1,1,1\rangle}) \cong \Q & \Delta_{123}\Delta_{45}\psi_4  \\
&R^4(M_5,\VV_{\langle 2,2,1\rangle}) \cong \Q  & \Delta_{123}\Delta_{45}\psi_4  \\
&R^4(M_5,\VV_{\langle 2,2,1,1\rangle}) \cong \Q & \Delta_{123}\Delta_{456}   \\
&R^5(M_5,\VV_{\langle 2,2,1,1\rangle}) \cong \Q & \Delta_{123}\Delta_{456}\kappa_1   \\
&R^5(M_5,\VV_{\langle 2,2,1,1,1\rangle}) \cong \Q & \Delta_{123}\Delta_{4567} \\
&R^5(M_5,\VV_{\langle 2,2,2,1\rangle}) \cong \Q & \Delta_{123}\Delta_{4567} \\
\end{align*}
For $n=8$ the Faber conjecture predicts the vanishing results $\Delta_{1234}\Delta_{5678} \equiv \Delta_{123}\Delta_{456}\Delta_{78}\psi_7 \equiv 0$. Assuming that the FZ relations are all relations between tautological classes, one finds that $\Delta_{1234}\Delta_{5678}$ and $ \Delta_{123}\Delta_{456}\Delta_{78}\psi_7$ should both have nonzero image in $R_5''(8)$, and we expect that
$$ R^6(M_5,\VV_{\langle 2,2,2,2 \rangle }) \cong R^6(M_5,\VV_{\langle 3,2,2,1\rangle}) \cong \Q.$$
Either of these nonvanishings would imply that $R^\bullet(C_5^8)$ is not Gorenstein. Proving them seems like a hard problem; nevertheless, we consider this to be progress in trying to find a counterexample to the Faber conjecture. Trying to prove that a specific cohomology group (or cohomology class) does not vanish is a far more appealing problem than, say, trying to prove that the rank of $R^6(C_5^8)$ is greater than $35166$. Moreover, our approach relates the Faber conjecture to actively studied questions about \emph{modified diagonals}, see e.g.\ \cite{ogradydiagonals,voisinmodifieddiagonals,moonenyin}.

%
%

\section{Relation to work of Looijenga}\label{looijenga-section}

\subsection{The theorems of Harer and Madsen--Weiss}
Let $M_{g,\vec 1}$ denote the moduli space parametrizing smooth genus $g$ curves equipped with a marked point and a nonzero tangent vector at the marking. The (analytifications of the) spaces $M_g$ and $M_{g,\vec 1}$ are both $K(\pi,1)$ spaces in the orbifold sense, meaning in particular that their cohomology is given by the group cohomology of their fundamental groups. Whereas the (orbifold) fundamental group of $M_g$ is the mapping class group of a closed genus $g$ surface, the fundamental group of $M_{g,\vec 1}$ is the mapping class group of a genus $g$ surface with a parametrized boundary component. As such, there is a ``stabilization'' map
$$ H_k(M_{g,\vec 1},\Z) \to H_{k}(M_{g+1,\vec 1},\Z)$$
which on the level of fundamental groups is given by gluing a torus with two boundary components onto the boundary of the genus $g$ surface. See also \cite[Section 4]{hainlooijenga} for how to define these stabilization maps algebro-geometrically.

The celebrated stability theorem of Harer \cite{harerstability} asserts that the stabilization map is an isomorphism for $g \gg k$.  

\begin{thm}[Harer] $H_k(M_{g,\vec 1},\Z) \to H_{k}(M_{g+1,\vec 1},\Z)$ is an isomorphism for $k \leq \frac 2 3 (g - 1)$. \end{thm}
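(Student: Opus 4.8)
The plan is to deduce this from the standard homological-stability machinery, applied to the action of the mapping class group on a highly connected simplicial complex, following the argument of Harer. Since $M_{g,\vec 1}$ is a $K(\pi,1)$ in the orbifold sense, with fundamental group the mapping class group $\Gamma_{g,\vec 1}$ of a compact oriented genus $g$ surface $\Sigma_{g,1}$ with one parametrized boundary circle, it suffices to prove the corresponding isomorphism in group homology $H_k(\Gamma_{g,\vec 1};\Z)\to H_k(\Gamma_{g+1,\vec 1};\Z)$. The stabilization map is induced by the inclusion $\Sigma_{g,1}\hookrightarrow\Sigma_{g+1,1}$ gluing a one-holed torus onto the boundary, which exhibits $\Gamma_{g,\vec 1}$ as essentially the stabilizer of a ``handle'' inside $\Gamma_{g+1,\vec 1}$.

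First I would fix a semi-simplicial set $X_\bullet$ acted on by $\Gamma_{g,\vec 1}$, whose $p$-simplices are isotopy classes of suitable systems of $p+1$ disjoint embedded arcs based near the marked point, or (in the modern formulation) tethered one-holed tori, arranged so that: (i) $\Gamma_{g,\vec 1}$ acts transitively on the set of $p$-simplices; (ii) the stabilizer of a $p$-simplex is, up to a finite permutation group, the mapping class group of the surface obtained by cutting $\Sigma_{g,1}$ along the chosen pieces, namely $\Gamma_{g-p-1,\vec 1}$ (with a parallel induction handling the surfaces with extra punctures that occur along the way); and (iii) the face maps are compatible with the stabilization maps. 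The crucial geometric input is the connectivity theorem: $|X_\bullet|$ is approximately $\tfrac{g-1}{2}$-connected. This is the hard part, and is proved by Harer's explicit surgery argument: a simplicial sphere mapping into $|X_\bullet|$ is deformed, arc by arc, into the star of a fixed reference simplex (a cone, hence contractible), the combinatorics of eliminating intersection points among arcs being handled by an induction on the total intersection number. The sharpest connectivity exponents, responsible for the precise slope $\tfrac23$, come from later refinements of this argument by Ivanov, Boldsen, and Randal-Williams and Wahl; I would simply invoke these.

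Next I would run the standard spectral sequence of the action. For $|X_\bullet|$ being $N$-connected, the equivariant homology spectral sequence takes the form $E^1_{p,q}=H_q(\mathrm{Stab}_p)$, where $\mathrm{Stab}_p$ is the stabilizer of a $p$-simplex (a single orbit, by (i)), and it converges to $H_{p+q}$ of a point in total degrees $\le N$. Using (ii) to identify the stabilizers and (iii) to identify the $d^1$-differentials as alternating sums of stabilization maps, one proves by induction on $g$ — with inductive hypothesis that all the relevant stabilization maps are isomorphisms in degrees $\le\tfrac23(g-1)$ and epimorphisms in the next degree, carried out simultaneously for the surfaces with extra punctures — that the spectral sequence can converge to the homology of a point only if $H_k(\Gamma_{g,\vec 1})\to H_k(\Gamma_{g+1,\vec 1})$ is an isomorphism in the asserted range. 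The slope $\tfrac23$ emerges from balancing the connectivity exponent against the one-step gap between ``isomorphism'' and ``surjectivity'' in the inductive bookkeeping.

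The main obstacle is the connectivity theorem for $|X_\bullet|$ in the sharp range; once that estimate is in hand, the remainder is the formal homological-stability bookkeeping. Accordingly I would isolate the surgery argument as a single lemma (or cite it outright) and let the rest follow mechanically from the spectral-sequence formalism — or, alternatively, invoke the abstract framework of Randal-Williams and Wahl, which reduces the entire theorem to exactly one connectivity input of this type.
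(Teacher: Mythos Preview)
The paper does not prove this theorem at all: it is stated with attribution to Harer and cited as a known result, with a remark that the sharp stable range $\tfrac{2}{3}(g-1)$ is due to later improvements by Boldsen and Wahl (references \cite{boldsen,wahlstability} in the paper). There is no ``paper's own proof'' to compare against.

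Your sketch is a reasonable outline of the standard homological-stability argument for mapping class groups, and the ingredients you list --- a highly connected arc or tethered-torus complex, identification of stabilizers with smaller mapping class groups, and the spectral-sequence bookkeeping (or the Randal-Williams--Wahl axiomatic framework) --- are exactly what goes into the modern proofs. So as an independent proof outline it is fine; it just bears no relation to anything the paper does, since the paper simply quotes the result.
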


If we are interested primarily in closed surfaces, we also have a stabilization result for the forgetful map $M_{g,\vec 1} \to M_g$ that forgets the marking and the tangent vector:

\begin{thm}[Harer] \label{harer2} The map $H_k(M_{g,\vec 1},\Z) \to H_{k}(M_g,\Z)$ is an isomorphism for $k \leq \frac 2 3 g$. \end{thm}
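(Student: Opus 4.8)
The plan is to deduce this from the first stability theorem above, together with its version with twisted coefficients, by a spectral-sequence comparison. First I would pass to mapping class groups: $M_g$ is, as an orbifold, a $K(\Gamma_g,1)$ for the mapping class group of a closed genus $g$ surface $\Sigma_g$, while $M_{g,\vec 1}$ is a $K(\Gamma_g^1,1)$ for the mapping class group of a genus $g$ surface with one parametrised boundary circle, and the forgetful map is induced by the homomorphism $\Gamma_g^1\to\Gamma_g$ that fills the boundary with a disc. Combining the Birman exact sequence $1\to\pi_1(\Sigma_g)\to\Gamma_{g,1}\to\Gamma_g\to 1$ with the central extension $1\to\Z\to\Gamma_g^1\to\Gamma_{g,1}\to 1$ generated by the boundary Dehn twist identifies the kernel of $\Gamma_g^1\to\Gamma_g$ with $\pi_1$ of the unit tangent bundle $\mathrm{UT}(\Sigma_g)$; concretely this means that the forgetful map factors as $M_{g,\vec 1}\xrightarrow{q} C_g=M_{g,1}\xrightarrow{\rho}M_g$, where $\rho$ is the universal curve and $q$ is the circle bundle with Euler class $\psi_1$ coming from the relative cotangent line.

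Next I would feed these two fibrations into spectral sequences. For $\rho$ the Leray spectral sequence degenerates by Deligne's theorem (recalled in Section~\ref{kunnethsection}), so $H^\bullet(C_g)$ is built out of $H^\bullet(M_g)$, $H^{\bullet-1}(M_g,\V)$ and $H^{\bullet-2}(M_g)$, with $\V$ the standard symplectic local system; for $q$ the Gysin sequence expresses $H^\bullet(M_{g,\vec 1})$ through the kernel and cokernel of $\cup\psi_1$ on $H^\bullet(C_g)$. Since $\psi_1$ restricts on each fibre of $\rho$ to a nonzero multiple of the point class, cupping with $\psi_1$ already carries the pulled-back summand $\rho^\ast H^\bullet(M_g)$ isomorphically onto the ``top'' Leray piece of $H^\bullet(C_g)$; the real assertion is that in the range $\bullet\le\tfrac 23 g$ multiplication by $\psi_1$ is injective on all of $H^\bullet(C_g)$, so that $H^\bullet(M_{g,\vec 1})\cong H^\bullet(C_g)/(\psi_1)\cong\rho^\ast H^\bullet(M_g)$. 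This injectivity is equivalent to the vanishing, in that range, of the part of the homology of $C_g$ that does not come from the base together with a point class, and it is exactly what is supplied by Harer's first stability theorem together with homological stability with coefficients in $\V$ (Ivanov). Working throughout with homology and the dual operator $\cap\psi_1$, and keeping track of the torsion in $H_\bullet(\mathrm{UT}(\Sigma_g))$, makes the argument integral, as in the statement.

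The step I expect to be the main obstacle is the one that pins down the slope of the stable range. One needs the complex of isotopy classes of embedded arcs on $\Sigma_g$ joining the boundary circle to itself --- on which $\Gamma_g^1$ acts with finitely many orbits, the stabilisers being mapping class groups of subsurfaces of strictly smaller complexity --- to be roughly $\tfrac 23 g$-connected, so that the equivariant homology spectral sequence of this action collapses in low degrees and runs the induction on genus. This connectivity estimate is the technical heart of \cite{harerstability} (subsequently revisited by Harer, Ivanov, Wahl and others); it is what I would quote rather than reprove, everything else above being bookkeeping with Gysin and Leray sequences.
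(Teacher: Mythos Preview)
The paper does not prove this theorem. It is stated as a known result attributed to Harer, with the remark immediately following it that ``the original bounds for the stable range of Harer have successively been improved by multiple people to obtain these results, see \cite{boldsen,wahlstability}.'' There is therefore no proof in the paper to compare your attempt against.

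As for your proposed argument itself, there are genuine difficulties. First, the theorem is stated with $\Z$-coefficients, but you invoke Deligne's degeneration theorem to split $H^\bullet(C_g)$ into Leray pieces; that splitting is only available rationally, and your one-line remark about ``keeping track of the torsion in $H_\bullet(\mathrm{UT}(\Sigma_g))$'' does not address this. Second, you reduce the statement to the injectivity of cup product with $\psi_1$ on $H^\bullet(C_g)$ in the relevant range, and claim this follows from Harer's first theorem together with Ivanov's twisted stability. But Ivanov's theorem, as quoted in the paper, gives no explicit stable range (only ``$g \gg k,|\lambda|$''), so you cannot extract the slope $\tfrac{2}{3}g$ from it without further input; moreover, the standard proofs of twisted stability build on the untwisted case, so there is a risk of circularity depending on exactly what you take as given. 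Third, your final paragraph describes the arc-complex connectivity argument, but that is the proof of the \emph{first} Harer theorem (genus stabilization), not of the forgetful-map statement in question; if you are content to cite that, you might as well cite \cite{boldsen,wahlstability} directly for the theorem itself, which is what the paper does.
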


The original bounds for the stable range of Harer have successively been improved by multiple people to obtain these results, see \cite{boldsen,wahlstability}. We note that to obtain stability with integer coefficients in Theorem \ref{harer2} it is crucial that $M_g$ is considered as a stack --- if we work with its coarse moduli space, the result is only valid with $\Q$-coefficients. 

It is a formidable problem to actually compute the stable homology of $M_g$. With $\Q$-coefficients, an answer was conjectured by Mumford and proven by Madsen--Weiss \cite{madsenweiss}:

\begin{thm}[Madsen--Weiss]
	The map $\Q[\kappa_1,\kappa_2,\kappa_3,\ldots] \to H^\bullet(M_g,\Q)$ is an isomorphism in the stable range, i.e. in degrees up to $\frac 2 3 (g-1)$. 
\end{thm}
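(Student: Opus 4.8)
The plan is to deduce the statement from two external deep inputs --- Harer's homological stability (Theorem \ref{harer2} above, together with his stabilization theorem stated just before it) and the Madsen--Weiss theorem \cite{madsenweiss} --- followed by a formal computation in rational homotopy theory. \emph{Step 1 (reduction to the stable mapping class group).} By Harer stability the maps $H^k(M_g,\Q) \to H^k(M_{g,\vec 1},\Q) \leftarrow H^k(M_{g+1,\vec 1},\Q)$ are isomorphisms for $k \leq \tfrac 2 3 (g-1)$, so in this range $H^k(M_g,\Q)$ is canonically the stable cohomology $H^k(\Gamma_\infty;\Q)$, where $\Gamma_\infty = \varinjlim_g \Gamma_{g,1}$ is the stable mapping class group of surfaces with one boundary component (cf.\ the fundamental group of $M_{g,\vec 1}$ discussed above). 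Since rational cohomology is unaffected by Quillen's plus construction and by passing to a single path component, $H^\bullet(\Gamma_\infty;\Q) \cong H^\bullet\big(\Omega^\infty_0 E;\Q\big)$ for any spectrum $E$ with $\Omega^\infty E \simeq \Z \times (B\Gamma_\infty)^+$.

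\emph{Step 2 (Madsen--Weiss).} The crucial input is the identification, due to Madsen and Weiss,
$$ \Z \times (B\Gamma_\infty)^+ \;\simeq\; \Omega^\infty\mathbf{MTSO}(2), $$
where $\mathbf{MTSO}(2)$ is the Thom spectrum of the additive inverse of the tautological oriented rank-$2$ bundle over $BSO(2) \simeq \mathbf{CP}^\infty$. One first shows, following Tillmann, that $\Z \times (B\Gamma_\infty)^+$ is the infinite loop space underlying a cobordism category of surfaces, and then determines the homotopy type of that category by a parametrized-surgery argument (in the streamlined form of Galatius--Madsen--Tillmann--Weiss).

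\emph{Step 3 (rational computation and identification of generators).} A connected infinite loop space is rationally a product of Eilenberg--MacLane spaces, so $H^\bullet(\Omega^\infty_0\mathbf{MTSO}(2);\Q)$ is the free graded-commutative $\Q$-algebra on $\bigoplus_{n>0}(\pi_n\mathbf{MTSO}(2)\otimes\Q)^\vee$. By the rational Hurewicz theorem for spectra and the Thom isomorphism, $\pi_n\mathbf{MTSO}(2)\otimes\Q \cong H_n(\mathbf{MTSO}(2);\Q) \cong H_{n+2}(\mathbf{CP}^\infty;\Q)$, which is $\Q$ for $n = 2i$ with $i \geq 1$ and $0$ in every other positive degree; hence $H^\bullet(\Omega^\infty_0\mathbf{MTSO}(2);\Q)$ is a polynomial ring on one generator $x_i$ in each degree $2i$. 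It then remains to check that $x_i$ pulls back, along the Pontryagin--Thom map $M_g \simeq B\mathrm{Diff}(\Sigma_g) \to \Omega^\infty\mathbf{MTSO}(2)$ associated to the universal curve $p\colon C_g \to M_g$, to the Morita--Mumford--Miller class $\kappa_i = p_\ast(e^{i+1})$, where $e=\psi$ is the Euler class of the vertical tangent bundle: this is immediate, since the tautological class on $\mathbf{MTSO}(2)$ corresponding to $x_i$ is by construction the fibre integral of the $(i+1)$st power of the universal Euler class. Combining Steps 1--3 gives the isomorphism $\Q[\kappa_1,\kappa_2,\ldots] \to H^\bullet(M_g,\Q)$ in degrees $\leq \tfrac 2 3(g-1)$.

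\emph{The main obstacle} is Step 2: the Madsen--Weiss theorem is itself a landmark result with a long and geometric proof, and it is the only non-formal ingredient --- once Harer stability and Madsen--Weiss are granted, Steps 1 and 3 reduce to standard facts about homological stability, rational infinite loop spaces, and the definition of the $\kappa$-classes.
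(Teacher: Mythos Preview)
The paper does not prove this statement at all: it is recalled as a background result and attributed to Madsen and Weiss via the citation \cite{madsenweiss}, with no argument given. So there is no ``paper's own proof'' to compare against.

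That said, your proposal is a reasonable high-level outline of how the result is actually obtained in the literature, and you correctly identify that Step~2 (the Madsen--Weiss homotopy equivalence $\Z \times (B\Gamma_\infty)^+ \simeq \Omega^\infty\mathbf{MTSO}(2)$) is the substantial and non-formal ingredient, with Steps~1 and~3 being standard consequences of Harer stability and the rational homotopy theory of infinite loop spaces. Since the paper simply quotes the theorem, your write-up goes well beyond what is expected here; for the purposes of this paper you could equally well just cite \cite{madsenweiss} and move on.
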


\begin{rem}If we formally denote the value of the stable cohomology by $H^\bullet(M_\infty,\Q)$, then the statement is that $H^\bullet(M_\infty,\Q)$ is a polynomial algebra in the $\kappa$ classes. Since the tautological ring of $M_g$ is defined as the algebra generated by the $\kappa$ classes, it therefore makes sense say that \emph{the tautological cohomology of $M_g$ is the image of the stable cohomology in the unstable cohomology.}
\end{rem}

\subsection{Twisted coefficients}
One can also ask whether homological stability holds with coefficients in a local system $\V_\llambda$. In this case, stabilization should be interpreted as appending an integer $\lambda_{g+1} = 0$ to the weight vector $\lambda_1 \geq \ldots \geq \lambda_g \geq 0$. The analogue of Harer stability holds in this case, too, by a theorem of Ivanov \cite{ivanovtwisted}:

\begin{thm}[Ivanov]
	The map $H^k(M_g,\V_\llambda) \to H^k(M_{g+1},\V_\llambda)$ is an isomorphism for $g \gg k, \vert \lambda\vert$. 
\end{thm}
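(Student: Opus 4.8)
The plan is to deduce this from Harer's stability theorem for constant coefficients by the standard machinery of homological stability with \emph{polynomial} twisted coefficients, via an induction on the polynomial degree of the coefficient system. One first reformulates everything in terms of mapping class groups: since $M_{g,\vec 1}$ is a $K(\Gamma_{g,1},1)$ in the orbifold sense --- with $\Gamma_{g,1}$ the mapping class group of a genus $g$ surface $\Sigma_{g,1}$ with one parametrized boundary component --- one has $H^\bullet(M_{g,\vec 1},\V_\llambda)\cong H^\bullet(\Gamma_{g,1};V_\llambda)$, where $V_\llambda$ is the $\Gamma_{g,1}$-module pulled back from the irreducible $\Sp(2g)$-representation of highest weight $\lambda$ along the symplectic representation $\Gamma_{g,1}\to\Sp(2g,\Z)$ on $H_1(\Sigma_{g,1})$. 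The stabilization map comes from gluing a genus one surface with two boundary circles onto $\partial\Sigma_{g,1}$; on $H_1$ this is the standard inclusion $\Sp(2g,\Z)\hookrightarrow\Sp(2g+2,\Z)$, matching the ``append a zero to the weight vector'' operation on the $\V_\llambda$ described just before the theorem.

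Next one checks that the modules $V_\llambda$ assemble into a coefficient system of finite polynomial degree $\leq\vert\lambda\vert$, in the sense relevant for twisted homological stability. The functor $g\mapsto H_1(\Sigma_{g,1})$ is a coefficient system of polynomial degree $1$ in the evident way, so its $\vert\lambda\vert$-fold tensor power has polynomial degree $\leq\vert\lambda\vert$; and, for $g\geq\vert\lambda\vert$, the representation $V_\llambda$ is a direct summand of $H_1(\Sigma_{g,1})^{\otimes\vert\lambda\vert}$, cut out by an idempotent of the symplectic Brauer algebra $\B^{(-2g)}(\vert\lambda\vert,\vert\lambda\vert)$ (Weyl's construction, Section~\ref{weylconstruction}, or Nazarov's explicit formula), the decomposition of $H_1^{\otimes\vert\lambda\vert}$ into $\Sp$-isotypic pieces being ``the same'' for all large $g$ and compatible with the stabilization maps. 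Since polynomial degree is inherited by retracts, $g\mapsto V_\llambda$ is a polynomial coefficient system of degree $\leq\vert\lambda\vert$.

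One then runs the stability spectral sequence: $\Gamma_{g,1}$ acts on a highly connected simplicial complex --- Harer's arc complex, or the complex of destabilizations in the Randal-Williams--Wahl formalism --- whose connectivity grows linearly in $g$. The associated spectral sequence computing $H_\bullet(\Gamma_{g,1};V_\llambda)$ has $E^1$-page built from groups $H_\bullet(\Gamma_{g-p,1};-)$ with coefficients in a ``shifted'' coefficient system of strictly smaller polynomial degree, so one inducts on that degree: the base case $\lambda=0$ is Harer's theorem with constant coefficients, and the inductive step compares the columns of the spectral sequence to show that the stabilization map on $H_k(\Gamma_{g,1};V_\llambda)$ is an isomorphism once $g$ exceeds an explicit linear function of $k$ and $\vert\lambda\vert$. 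Dualizing yields the statement for $H^k(M_{g,\vec 1},\V_\llambda)$, and the version for $M_g$ follows by feeding this into the Leray spectral sequences of the circle bundle $M_{g,\vec 1}\to M_{g,1}$ and of the universal curve $C_g\to M_g$ (the latter degenerating by Deligne's theorem), using that $R^\bullet f_\ast\V_\llambda$ is a sum of local systems $\V_\mu$ with $\vert\mu\vert\leq\vert\lambda\vert+1$.

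The hard part is the connectivity of the relevant simplicial complex: this is the one genuinely geometric input, and it is Harer's deep theorem, in one of its sharpened forms due to Ivanov, Boldsen, Wahl, and others. Granted it, together with the coefficient-system formalism, the remaining ingredients --- recognizing $V_\llambda$ as a polynomial coefficient system and running the spectral sequence induction --- are essentially formal bookkeeping.
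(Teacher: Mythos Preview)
The paper does not give its own proof of this statement: it is quoted as a known theorem of Ivanov and simply cited to \cite{ivanovtwisted}, with the remark immediately following the theorem that Ivanov's result is stated for a general class of ``coefficient systems of finite degree'' of which the $\V_\llambda$ are an instance. So there is no proof in the paper to compare your proposal against; what you have written is a reasonable outline of the standard argument (essentially Ivanov's approach, or in more modern language the Randal-Williams--Wahl machinery applied to mapping class groups), and as such it is broadly correct as an account of \emph{why} the theorem holds.

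One point in your sketch deserves more care. You argue that $g\mapsto V_\llambda$ is a polynomial coefficient system by cutting it out of $H_1^{\otimes\vert\lambda\vert}$ via a Brauer-algebra idempotent which is ``the same for all large $g$''. But the Brauer algebra $\B^{(-2g)}(n,n)$ depends on $g$ through the parameter $-2g$, so the Nazarov idempotents are not literally independent of $g$; you need an argument that the projections onto the isotypic summands are nonetheless compatible with the stabilization maps. A cleaner route is to prove stability first for the coefficient system $g\mapsto H_1(\Sigma_{g,1})^{\otimes n}$, which is manifestly polynomial of degree $n$ and to which Ivanov's theorem applies directly, and only afterwards extract the $\V_\llambda$-summand --- the point being that the stable cohomology of $\V^{\otimes n}$ decomposes the same way the unstable one does, so stability for $\V_\llambda$ follows. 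This is in the spirit of the paper's own remark that the $\V_\llambda$ are examples of Ivanov's finite-degree coefficient systems, rather than the primary objects to which his theorem is applied.
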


We should remark that Ivanov's statement was not specifically about the local systems $\V_\llambda$; his theorem is valid for a more general notion of coefficient system of finite degree which makes sense over an arbitrary base ring, and the local systems $\V_\llambda$ are an example of such.

 Ivanov's theorem did not actually calculate the stable cohomology with twisted coefficients. Rationally, the stable cohomology with coefficients in $\V_\llambda$ was calculated by Looijenga \cite{looijengastable}, in a paper that strongly influenced our way of thinking on these subjects. 

The first step in Looijenga's calculation of $H^\bullet(M_\infty,\V_\llambda)$ is to compute the stable cohomology of the spaces $C_g^n$. His result can be reformulated in a rather appealing way in terms of twisted commutative algebras:

\begin{thm}[Looijenga] \label{looijenga1} The twisted commutative algebra $\bigoplus_{n \geq 0}H^\bullet(C_\infty^n,\Q)$ is free on $\sym_n$-invariant generators $D_{n,r}$ in arity $n$ and cohomological degree $2r$ for $n =0$, $r \geq 1$ and $n\geq 1,r \geq n-1$. In other words, the map $S_{n}^\bullet \to H^{2\bullet}(C_g^n)$ is an isomorphism in the stable range. 
\end{thm}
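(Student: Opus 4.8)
The plan is to deduce Theorem \ref{looijenga1} from the known structure of the stable cohomology of $C_g^n$, which by Harer stability and the Madsen--Weiss theorem is controlled by the stable cohomology of $M_g$ together with the fiberwise K\"unneth formula. First I would recall that, because the fibration $C_g^n \to M_g$ is smooth and proper, the Leray--Serre spectral sequence degenerates (Deligne), so $H^\bullet(C_g^n,\Q) \cong \bigoplus_{p+q} H^p(M_g, R^q f_\ast \Q)$, and the sheaves $R^q f_\ast\Q$ are built by the K\"unneth formula out of $\Q$ (in degrees $0$ and $2$ on each factor) and $\V = R^1 p_\ast\Q$ (in degree $1$). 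Passing to the stable range and using Madsen--Weiss, the $\lambda = 0$ part contributes exactly the polynomial algebra $\Q[\kappa_1,\kappa_2,\ldots]$; the point is then to understand the contributions of the local systems $\V_\llambda$ with $|\lambda| > 0$, i.e.\ the stable cohomology $H^\bullet(M_\infty,\V_\llambda)$.

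The cleanest route is to avoid computing $H^\bullet(M_\infty,\V_\llambda)$ directly and instead exploit the map $S^\bullet_n \to H^{2\bullet}(C_g^n)$ together with a \emph{counting argument}. On the one hand, Proposition \ref{generatorsprop} presents $\bigoplus_n S^\bullet_n$ as the free twisted commutative algebra on the classes $D_{n,r}$; since these classes are honest tautological (hence stable) cohomology classes, the map $\bigoplus_n S^\bullet_n \to \bigoplus_n H^{2\bullet}(C_\infty^n,\Q)$ is a well-defined homomorphism of twisted commutative algebras, and it is visibly surjective in the stable range because $H^\bullet(C_g^n)$ is generated by $\psi$-, $\kappa$- and diagonal classes (this last fact is itself a consequence of Madsen--Weiss applied to all the $C_g^n$, or can be cited from \cite[Proposition 2.1]{looijengastable}). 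So the content is \emph{injectivity} in the stable range. For this I would compare Poincar\'e series: the free twisted commutative algebra on the generators $D_{n,r}$ has a completely explicit bigraded (arity, degree) Hilbert series, and on the other side one computes the stable Poincar\'e series of $\bigoplus_n H^\bullet(C_\infty^n)$ from the degenerate Leray spectral sequence, using Madsen--Weiss for the base and the (stable) K\"unneth decomposition of $Rf_\ast\Q$ into the local systems $\V_\llambda$ with known stable cohomology $H^\bullet(M_\infty,\V_\llambda)$ — which is where Looijenga's computation of $H^\bullet(M_\infty,\V_\llambda)$ enters. Matching the two series forces the surjection to be an isomorphism.

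An alternative, more structural argument avoids quoting the full answer for $H^\bullet(M_\infty,\V_\llambda)$: observe that the free twisted commutative algebra $\bigoplus_n S^\bullet_n$, by Proposition \ref{generatorsprop}, is itself built from a free commutative algebra on the $\kappa_r$ (arity $0$) tensored with a free twisted commutative algebra on the "connected" generators $D_{n,r}$ with $n \geq 1$. Under the map to cohomology, the arity-$0$ part maps isomorphically onto $H^\bullet(M_\infty,\Q) = \Q[\kappa_r]$ by Madsen--Weiss, and one then argues that the remaining generators map to a system of algebraically independent classes in the stable cohomology of the $C_\infty^n$ — this reduces, via the K\"unneth/Leray decomposition, to showing that the appropriate multiplication maps of the stable twisted coefficient cohomology are free over $\Q[\kappa_r]$ on the images of the $D_{n,r}$, $n \geq 1$. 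Either way, the substance is the same.

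The main obstacle I expect is establishing injectivity: surjectivity is soft (everything in sight is a stable tautological class and Madsen--Weiss handles the base), but ruling out relations among the $D_{n,r}$ in stable cohomology genuinely requires knowing the size of $H^\bullet(M_\infty,\V_\llambda)$ — equivalently, a Poincar\'e-series identity that can only come from Looijenga's stable computation (or an independent homological-stability-plus-scanning input). A secondary technical point is bookkeeping the Koszul signs and the conjugate-partition twist (the $\sigma_{\lambda^T}$ versus $\sigma_\lambda$ issue flagged in Section \ref{schurweylsection}) when translating between the motivic/Chow normalization of $\VV_\llambda$ and the cohomological local system $\V_\llambda$ placed in degree $|\lambda|$; this affects how the generators $D_{n,r}$ sit in arity $n$ and must be tracked carefully so that the "free twisted commutative algebra" statement comes out with the correct symmetric-group actions. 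Neither point is conceptually hard once Looijenga's stable cohomology computation is in hand, but the Poincar\'e-series matching is the step that does the real work.
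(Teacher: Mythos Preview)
Your proposal and the paper agree on the essential content: Theorem~\ref{looijenga1} is not proved independently in the paper at all --- it is stated as Looijenga's theorem, with the remark that Looijenga's original statement was that the stable cohomology of $C_g^n$ is a free module over $H^\bullet(M_\infty,\Q)$ on explicit generators, and that plugging in Madsen--Weiss for $H^\bullet(M_\infty,\Q)$ yields the formulation given. There is no separate argument.

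Where your proposal differs is in packaging. You frame the result as ``surjectivity plus injectivity'' of $S_n^\bullet \to H^{2\bullet}(C_\infty^n)$, with injectivity established by a Poincar\'e-series comparison that, as you yourself note, can only be carried out by invoking Looijenga's computation of $H^\bullet(M_\infty,\V_\llambda)$. This is circular in exactly the way you flag: the Poincar\'e-series identity you need \emph{is} Looijenga's theorem, and the surjectivity you call ``soft'' (that stable cohomology of $C_g^n$ is tautological) is likewise part of what Looijenga proves. So your two-step argument does not supply any content beyond ``cite \cite{looijengastable} and reformulate'', which is precisely what the paper does in one sentence. The paper's treatment is more honest about this: it does not pretend to give a derivation, only a restatement in the language of twisted commutative algebras (using Proposition~\ref{generatorsprop} to identify $S$ with the free twisted commutative algebra on the $D_{n,r}$). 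Your alternative ``structural'' route has the same issue --- showing the $D_{n,r}$ for $n\geq 1$ are algebraically independent over $\Q[\kappa_r]$ in stable cohomology is again exactly Looijenga's free-module statement.
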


We note that this theorem contains in particular the Madsen--Weiss theorem, by restricting to the case $n=0$ (in which case the generators $D_{0,r}$ are kappa classes), even though Looijenga's paper predates the Madsen--Weiss theorem. Thus Looijenga's theorem was rather that the stable cohomology of $M_g$ with twisted coefficients is a free module over the stable cohomology with constant coefficients with explicitly given generators; plugging in the Madsen--Weiss theorem gives the above result.

To compute $H^\bullet(M_\infty,\V_\llambda)$ from Theorem \ref{looijenga1}, one notes that there is a surjection of twisted commutative algebras $\bigoplus_{n \geq 0} H^\bullet(C_\infty^n,\Q) \to \bigoplus_{n \geq 0} \bigoplus_{\vert \lambda \vert = n} H^{\bullet-n}(M_g,\V_\llambda) \otimes \sigma_{\lambda^T}$, whose kernel is  the ideal generated by the classes $D_{1,0}$, $D_{1,1}$ and $D_{2,1}$. Thus one finds:

\begin{thm}[Looijenga] The twisted commutative algebra $\bigoplus_{\lambda}H^{\bullet-\vert\lambda\vert}(M_\infty,\V_\llambda)\otimes \sigma_{\lambda^T}$ is the free twisted commutative algebra on $\sym_n$-invariant generators $D_{n,r}$ in arity $n$ and cohomological degree $2r$ for $n =0$, $r \geq 1$ and $n\geq 1,r \geq \max(2,n-1)$.
\end{thm}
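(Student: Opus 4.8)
The plan is to deduce this final theorem from the previous one (Looijenga's computation of the twisted commutative algebra $\bigoplus_{n\ge 0}H^\bullet(C_\infty^n,\Q)$ as a free twisted commutative algebra on the generators $D_{n,r}$) by exactly the same mechanism that was used in Section \ref{tca-section} to pass from $R_g$ to $R_g''$. The key observation is that the chain of surjections of twisted commutative algebras constructed in Remark \ref{cohomologytca} and Proposition \ref{tca}, namely
\[ \bigoplus_{n\ge 0}H^\bullet(C_g^n,\Q)\longrightarrow \bigoplus_{n\ge 0}H^{\bullet-n}(M_g,\V^{\otimes n})\longrightarrow \bigoplus_\lambda H^{\bullet-|\lambda|}(M_g,\V_\llambda)\otimes\sigma_{\lambda^T}^\vee, \]
makes sense stably, and that passing to the stable range $g\gg n,|\lambda|$ is harmless: Harer stability (for $C_g^n$, which follows from the stated versions for $M_{g,\vec 1}$ together with the fact that $C_g^n$ is an iterated fibration in curves over $M_g$) and Ivanov's twisted stability theorem guarantee that the stable values $H^\bullet(C_\infty^n,\Q)$ and $H^\bullet(M_\infty,\V_\llambda)$ are well defined, and the whole diagram of twisted commutative algebras stabilizes.

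First I would observe that the kernel of the composite surjection $\bigoplus_n H^\bullet(C_g^n,\Q)\to \bigoplus_\lambda H^{\bullet-|\lambda|}(M_g,\V_\llambda)\otimes\sigma_{\lambda^T}^\vee$ is, stably, the twisted-commutative-algebra ideal generated by $D_{1,0}=1\in H^0(C_g^1)$, $D_{1,1}=\psi_1\in H^2(C_g^1)$, and $D_{2,1}=\Delta_{12}\in H^2(C_g^2)$. This is the cohomological analogue of Proposition \ref{kernel}: the first map kills exactly the image of the projectors $\pi_{i_1}\times\cdots\times\pi_{i_n}$ with some $i_j\ne 1$, which is the ideal generated by $1$ and $\psi_1$ (the same computation with $\pi_0$ and $\pi_2$ as in the proof of Proposition \ref{kernel}), and the second map kills exactly the image of the Brauer insertion maps, i.e.\ the ideal generated by $\pi_1^{\times 2}\Delta_{12}$, which modulo the previous relations is the same as the ideal generated by $\Delta_{12}$ itself. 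Since Looijenga's Theorem \ref{looijenga1} identifies $\bigoplus_n H^\bullet(C_\infty^n,\Q)$ with the free twisted commutative algebra on $\{D_{n,r}\}$, the quotient by this ideal is the free twisted commutative algebra on the remaining generators. Quotienting a free twisted commutative algebra on a set of generators by the ideal generated by a subset of those generators yields the free twisted commutative algebra on the complement; hence the quotient is free on the $D_{n,r}$ with $n=0,r\ge 1$ or $n\ge 1$ and $r\ge\max(2,n-1)$, which is exactly the asserted description. The degree bookkeeping (arity $n$, cohomological degree $2r$, twist by $\sigma_{\lambda^T}$) is inherited verbatim from Looijenga's theorem and the definitions of the maps.

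The main obstacle, and the point requiring genuine care rather than formal nonsense, is verifying that the kernel identification of Proposition \ref{kernel} really does carry over to the cohomological and stable setting — in particular that $D_{1,0}$, $D_{1,1}$, $D_{2,1}$ generate the kernel and nothing more is needed. Concretely one must check that the projectors $\pi_0,\pi_1,\pi_2$ and the Brauer correspondences act on stable cohomology compatibly with the stabilization maps (so that the decomposition of $\CH$ or $H^\bullet$ into $\VV_\llambda$-isotypic pieces is itself stable), and that the kernel of $\V^{\otimes n}\to\V^{\langle n\rangle}$ is spanned by the symplectic-form insertion maps — but the latter is just Weyl's construction from Section \ref{weylconstruction}, and the former is a consequence of the algebro-geometric construction of the stabilization maps referenced via \cite{hainlooijenga}, under which all the tautological correspondences are manifestly compatible. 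Once this compatibility is in hand, the argument is a purely formal consequence of freeness, exactly parallel to Corollary \ref{irrelevantgenerators}, and I would present it in two or three lines. A final remark worth including is that the only reason the generators get replaced by $r\ge\max(2,n-1)$ rather than $r\ge n-1$ is precisely the vanishing of $D_{1,0},D_{1,1},D_{2,1}$, which geometrically encode the relations $1\leftrightarrow\mathbf 1$, $\psi_1\leftrightarrow\L$, $\Delta_{12}\leftrightarrow$ the trivial summand of $\V\otimes\V$.
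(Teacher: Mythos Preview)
Your proposal is correct and follows essentially the same approach as the paper: deduce the theorem from Looijenga's computation of $\bigoplus_n H^\bullet(C_\infty^n,\Q)$ as a free twisted commutative algebra by passing through the surjection to $\bigoplus_\lambda H^{\bullet-|\lambda|}(M_\infty,\V_\llambda)\otimes\sigma_{\lambda^T}$, identify the kernel as the ideal generated by $D_{1,0}$, $D_{1,1}$, $D_{2,1}$ (via the cohomological analogue of Proposition~\ref{kernel}), and conclude freeness on the remaining generators. The paper states this in one sentence immediately before the theorem; your additional care about compatibility with stabilization is reasonable but the paper treats it as routine.
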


By decomposing this free twisted commutative algebra into irreducible representations of $\sym_n$, one finds a calculation of the stable cohomology $H^\bullet(M_\infty,\V_\llambda)$ for any $\lambda$. Looijenga does not state his result in these terms: he defines a certain algebra $B_n^\bullet$ which he decomposes into irreducible representations of $\sym_n$, and this algebra (tensored with the polynomial ring in the kappa classes) is the arity $n$ component of the free twisted commutative algebra in the previous result.

The conclusion is in any case the following. For constant coefficents, the stable cohomology of $M_g$ is a \emph{free polynomial algebra} on the $\kappa$-classes. The image of the stable cohomology inside the unstable cohomology can be defined to be the \emph{tautological cohomology of $M_g$}. If we consider instead the stable cohomology with all possible twisted coefficients, i.e.\ the direct sum $\bigoplus_\lambda H^{\bullet-\vert \lambda \vert}(M_g,\V_\llambda) \otimes \sigma_{\lambda^T}$, then this is a \emph{free twisted commutative algebra}, and the image of the stable cohomology inside the unstable cohomology is now exactly what we defined to be the \emph{tautological cohomology of $M_g$ with twisted coefficients}.

\section{The ``primary approximation'' to the cohomology of the moduli space}\label{hain-section}

Prior to this paper, Hain \cite{hainnormalfunctions} proposed a definition of tautological cohomology groups $\RH^\bullet(M_g,\V_\llambda)$ of $M_g$ with coefficients in a symplectic representation, which is a priori different from ours. In this section we will show that the two definitions coincide. In case $\lambda = 0$, this gives a new proof of a theorem of Kawazumi and Morita \cite{kawazumimorita}. We note that Hain asked in loc.\ cit.\ whether his construction could be lifted to the level of Chow groups; our constructions provide such a lifting. 

Let $\mathcal O(\Sp({2g}))$ be the algebraic coordinate ring of the symplectic group over $\Q$. By the Peter--Weyl theorem, there is an isomorphism of $\Sp(2g) \times \Sp(2g)$-bimodules
$$ \mathcal O(\Sp(2g)) \cong \bigoplus_\lambda V_\llambda \otimes V_\llambda^\vee, $$
where the sum runs over all irreducible representations of the symplectic group. We consider $V_\llambda$ to have a left action and $V_\llambda^\vee$ with a right action. Using the left action of $\Sp(2g)$ on $\mathcal O(\Sp(2g))$, we may consider it as defining a local system of algebras  $\O(\Sp(2g))$ on $M_g$. Taking its cohomology, we get that
$$ \mathsf T_g \stackrel{\mathrm{def}}= H^\bullet(M_g,\O(\Sp(2g))) = \bigoplus_{\lambda} H^\bullet(M_g,\V_\llambda) \otimes V_\llambda^\vee$$
is in a natural way a commutative $\Q$-algebra. See \cite[Section 9.5]{hainlooijenga} or \cite{hainnormalfunctions}.

\begin{rem}A perhaps more down to earth way to understand this multiplication is as follows. Suppose that we have $V_\llambda \otimes V_{\langle \mu \rangle} \supset V_{\langle \nu \rangle}$. Then we get a multiplication map 
	$$ H^\bullet(M_g,\V_\llambda) \otimes H^\bullet(M_g,\V_{\langle \mu \rangle}) \to H^\bullet(M_g,\V_{\langle \nu \rangle})$$
	which however depends nontrivially on the choice of an intertwiner $V_\llambda \otimes V_{\langle \mu \rangle} \to V_{\langle \nu \rangle}$. What is instead completely well defined is the map
	$$  \Hom_{\Sp(2g)}(V_\llambda \otimes V_{\langle\mu\rangle},V_{\langle\nu\rangle}) \otimes H^\bullet(M_g,\V_\llambda) \otimes H^\bullet(M_g,\V_{\langle \mu \rangle}) \to H^\bullet(M_g,\V_{\langle \nu \rangle}),$$
	which (using the canonical identification $\Hom(M,N) = N\otimes M^\vee$) can be thought of equivalently as an $\Sp(2g)$-equivariant map
	\[ H^\bullet(M_g,\V_\llambda) \otimes V_\llambda^\vee \otimes H^\bullet(M_g,\V_{\langle \mu \rangle}) \otimes V_{\langle \mu \rangle}^\vee \to H^\bullet(M_g,\V_{\langle \nu \rangle}) \otimes V_{\langle \nu \rangle}^\vee.\qedhere\]\end{rem}



Let us now consider the Gross--Schoen cycle as a class $ \alpha \in H^1(M_g,\V_{\langle 1,1,1\rangle})$. We have a vector subspace $\alpha \otimes V_{\langle 1,1,1\rangle}^\vee \subset \mathsf T_g$, and therefore by the universal property of a polynomial algebra a morphism of graded commutative rings
$$ \wedge^\bullet V_{\langle 1,1,1\rangle}^\vee \to \mathsf T_g.$$
There is an inclusion $V_{\langle 2,2\rangle}^\vee \subset \wedge^2 V_{\langle 1,1,1\rangle}^\vee$.  
Since $\alpha$ is tautological, every class in the image of this homomorphism is tautological; it follows from this that the summand $V_{\langle 2,2\rangle}^\vee \subset \wedge^2 V_{\langle 1,1,1\rangle}^\vee$ lies in the kernel, since one can compute from our results (or rather the work of Looijenga) that $\RH^2(M_g,\V_{\langle 2,2\rangle})=0$. We denote the algebra $\wedge^\bullet V_{\langle 1,1,1\rangle}^\ast/(V_{\langle 2,2\rangle}^\vee) $ by $\mathsf A_g$. It follows that there exist an $\Sp(2g)$-equivariant ring homomorphism
$$ \varphi \colon \mathsf A_g \to \mathsf T_g.$$

\begin{defn}
	The \emph{H-tautological ring} is the subring $\mathsf R_g \subset \mathsf T_g$ given as the image of $\varphi$. By decomposing the $H$-tautological ring into irreducible summands for its natural action of $\Sp(2g)$ we get a subspace of $H$-tautological classes inside $H^\bullet(M_g,\V_\llambda)$ for any partition $\lambda$.
\end{defn}

\begin{rem}When $g=2$ the local system $\V_{\langle 1,1,1\rangle}$ vanishes (and so does the Gross--Schoen cycle), and the $H$-tautological ring consists only of the unit element in $H^0(M_2,\V_{\langle 0 \rangle})$.
\end{rem}

\begin{rem}The definition above may seem very ad hoc --- why should the Gross--Schoen cycle play a more distinguished role than any other tautological class? A more ``invariant'' definition is that the $H$-tautological ring is the subring of $H^\bullet(M_g,\O(\Sp(2g)))$ generated by all normal functions over $M_g$ \cite{haintorelligroupsandgeometry}. 
\end{rem}

\begin{rem}It is a striking fact that unlike the usual tautological ring of $M_g$ or $C_g^n$, the $H$-tautological ring is generated by a single algebraic cycle class. 
\end{rem}

Restricting to symplectic invariants, we get a map
$$ \varphi^{\Sp(2g)} \colon \mathsf A_g^{\Sp(2g)} \to \mathsf T_g^{\Sp(2g)} = H^\bullet(M_g,\Q).$$
This morphism is exactly what Morita calls the \emph{primary approximation} to the cohomology ring of $M_g$. Morita originally described it in rather different terms \cite{moritaprimaryapproximation}; this re-interpretation is due to Hain. A theorem of Kawazumi and Morita \cite{kawazumimorita} asserts that the image of $\varphi^{\Sp(2g)}$ is the tautological cohomology ring of $M_g$. We will prove a more general result below.
 
\begin{rem} The above map can also be understood in terms of relative Malcev completion \cite{haininfinitesimal}. Hain constructs a Lie algebra $\u_g$ of mixed Hodge structures with $\Sp(2g)$-action (the Lie algebra of the pro-unipotent radical of the relative completion of the mapping class group) and an $\Sp(2g)$-equivariant map $H^\bullet(\u_g) \to H^\bullet(M_g,\O(\Sp(2g)))$. The results of \cite{haininfinitesimal} (and subsequent improvements) show that the weights of $\u_g$ are negative and that $\mathrm{Gr}^W_{-1}\u_g \cong V_{\langle 1,1,1\rangle}^\vee$, $\mathrm{Gr}^W_{-2}\u_g \cong V_{\langle 2,2\rangle}^\vee$. It follows that the algebra $\mathsf A_g$ is the pure part  $\bigoplus_k W_kH^k(\u_g)$ of the Chevalley--Eilenberg cohomology of $\u_g$, so the $H$-tautological ring can also be defined as the image of the lowest weight part of the cohomology of $\u_g$.
\end{rem}

\begin{lem}\label{h-tautological-lemma}Let $n,m$ be integers with $n \geq 0$, $m \geq 0$ and $n+2m-2 > 0$. Construct a $(3\cdot (n+2m-2),n)$-Brauer diagram as follows: for $i=1,\ldots,n+2m-3$, draw a horizontal strand connecting the $(3i)$th node on the top row to the $(3i+1)$st. Of the remaining $n+2m$ nodes on the top row, pick $n$ of them arbitrarily and connect them to the nodes along the bottom row, and connect the remaining $2m$ nodes arbitrarily to each other by $m$ horizontal strands. Consider the resulting map
	$$ H^{n+2m-2}(M_g,\V^{\otimes 3(n+2m-2) }) \to H^{n+2m-2}(M_g,\V^{\otimes n}).$$
	The image of $\pi_1^{\times 3}(\Delta_{123})^{\times (n+2m-2)}$ under this map is the class $\pi_1^{\times n}(\Delta_{12\ldots n} \psi_1^m)$ if $n > 0$, and $\kappa_{m-1}$ if $n = 0$.
\end{lem}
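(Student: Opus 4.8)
The plan is to unwind the Brauer-diagram operation into elementary operations on the fibered powers $C_g^\bullet$ using the dictionary of Section~\ref{explicitbraueraction}, and then to reduce the statement to two small geometric computations organized by an induction on $N:=n+2m-2$, the number of Gross--Schoen cycles involved. By Section~\ref{explicitbraueraction}, a top-row cap connecting nodes $i$ and $j$ acts on classes in $\CH^\bullet(M_g,\VV^{\otimes\bullet})$ by including into $\CH^\bullet(C_g^\bullet)$, restricting to the diagonal $\Delta_{ij}$, and pushing forward along the forgetful map that drops the identified marked point; the $n$ strands running to the bottom row act as the identity; and the $\VV^{\otimes n}$-component of the result is automatically what one lands in. Since caps on disjoint pairs of nodes commute, I would perform the $N-1$ internal caps first and the $m$ ``extra'' caps afterwards. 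Thus one is computing a sequence of restrictions-to-diagonals and forgetful pushforwards applied to $\gamma^{\times N}=\bigl(\pi_1^{\times 3}\bigr)^{\times N}(\Delta_{123}\times\cdots\times\Delta_{123})\in\CH^{2N}(C_g^{3N})$, where $\gamma=\pi_1^{\times 3}\Delta_{123}$.

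Step A (fusing the chain). The internal caps connect the $N$ triples head-to-tail, so I claim that performing them turns $\gamma^{\times N}$ into $\pi_1^{\times(n+2m)}(\Delta_{1\ldots(n+2m)})$. The geometric input is that $\Delta_{1\ldots a}\times\Delta_{a+1\ldots b}$ meets $\Delta_{a,a+1}$ transversally inside $C_g^b$ --- the dimension count $\dim(\Delta_{1\ldots a}\times\Delta_{a+1\ldots b})+\dim\Delta_{a,a+1}=\dim C_g^b+\dim\Delta_{1\ldots b}$ is immediate from the fact that both factors are copies of the universal curve over $M_g$ --- so the restriction produces $\Delta_{1\ldots b}$ with \emph{no} excess class, and the ensuing forgetful pushforward is generically one-to-one on this diagonal and returns $\Delta_{1\ldots(b-1)}$. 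Iterating (an easy induction on $N$) collapses the chain to a single small diagonal on the $3N-2(N-1)=n+2m$ remaining points.

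Step B (producing the $\psi$-power and the $\kappa$-class). Each of the $m$ extra caps identifies two of these remaining points, and now the restriction of $\Delta_{1\ldots p}$ to $\Delta_{p-1,p}$ \emph{does} carry an excess term, controlled by the self-intersection relation $\Delta_{ij}^2=-\Delta_{ij}\psi_i$ from \eqref{basicrelations}; combined with the forgetful pushforward this replaces each extra cap by multiplication by $\psi_1$ together with dropping a point. After the $m$ extra caps, when $n\geq 1$ one arrives at $\pi_1^{\times n}(\Delta_{1\ldots n}\psi_1^m)$, and when $n=0$ the final cap instead pushes $\psi_1^m$ all the way to $M_g$, which by definition of the $\kappa$-classes is $\kappa_{m-1}=p_\ast\psi_1^m$. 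An induction on $m$, with Step A as the base $m=0$, packages this cleanly.

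The main obstacle will not be the geometry, which is elementary, but the bookkeeping of Künneth projectors and signs. The geometric operation ``restrict to $\Delta_{ij}$, then forget'' only agrees with the Brauer cap on the $\VV^{\otimes\bullet}$-summand of $h(C_g^\bullet/M_g)$; a priori the other summands (factors landing in $h^0\cong\mathbf 1$ or $h^2\cong\L$) could contribute to the $\VV^{\otimes\bullet}$-part of the output, so at each step I would need either to carry the explicit projectors $\pi_0,\pi_1,\pi_2$ of Section~\ref{mainsection} through the computation, or to check directly that such contributions vanish or cancel. Intertwined with this is a sign subtlety: because the realization of $\VV$ sits in odd degree, the translation between the Brauer cap (a symmetric pairing on the suspension $sV$) and the naive restriction-pushforward carries Koszul signs, and these must be reconciled with the signs coming from the excess terms $-\Delta_{ij}\psi_i$ so that the final answer is exactly the class named in the statement, with no extra sign. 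I expect verifying these two points --- the harmlessness of the non-$\VV$ Künneth components and the sign count --- to be the only genuinely delicate part of the argument.
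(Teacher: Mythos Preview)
Your proposal is correct and follows essentially the same route as the paper's proof: both translate the Brauer caps via the dictionary of Section~\ref{explicitbraueraction} into ``restrict to a diagonal, then push forward along the forgetful map,'' apply this to the product of small diagonals $\Delta_{123}\Delta_{456}\cdots$, and observe that the result is $\Delta_{12\ldots n}\psi_1^m$ (resp.\ $\kappa_{m-1}$), to which one then applies $\pi_1^{\times n}$. The paper's argument is a one-paragraph sketch that does not dwell on the projector or sign bookkeeping you flag --- your caution there is reasonable, but note that the paper's framework already asserts (in Section~\ref{explicitbraueraction}) that the output of each cap automatically lands in the $\VV^{\otimes\bullet}$-summand, and for the sole application (Theorem~\ref{h-tautological}) only nonvanishing of the image matters, so any residual sign is harmless.
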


	
\begin{proof}
	This is an easy consequence of the discussion in Section \ref{explicitbraueraction}. Namely, to compute the image of $\pi_1^{\times 3}(\Delta_{123})^{\times (n-2+2m)}$ we start with the cycle $\Delta_{123}\Delta_{456}\Delta_{789}\ldots$, restrict to a suitable diagonal locus --- specifically, the diagonals are specified by the horizontal strands in the Brauer diagram --- and then project away from the markings corresponding to these diagonals. This gives $\Delta_{12\ldots n} \psi_1^m$ if $n > 0$, and $\kappa_{m-1}$ if $n = 0$,  after which we should apply $\pi_1^{\times n}$, which gives the result.\end{proof}

\begin{thm}\label{h-tautological}
	The space of $H$-tautological classes inside $H^\bullet(M_g,\O(\Sp(2g)))$ coincides with the space  $\RH^\bullet(M_g,\O(\Sp(2g))) = \bigoplus_\lambda \RH^\bullet(M_g,\V_\llambda) \otimes V_\lambda^\vee$  of tautological classes in our sense. 
\end{thm}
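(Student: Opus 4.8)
The theorem identifies two $\Sp(2g)$-stable subspaces of $H^\bullet(M_g,\O(\Sp(2g)))$: the $H$-tautological ring $T$, generated by the Gross--Schoen class $\RH^1(M_g,\V_{\langle1,1,1\rangle})\otimes V_{\langle1,1,1\rangle}^\vee$, and the space $\mathcal T:=\bigoplus_\lambda\RH^\bullet(M_g,\V_\llambda)\otimes V_\llambda^\vee$ of tautological classes in our sense. (For $g=2$ both reduce to $\Q$ in degree $0$ by Theorem \ref{lowgenustheorem}, so assume $g\geq3$.) The plan is to prove the two inclusions $T\subseteq\mathcal T$ and $\mathcal T\subseteq T$, the common input being a comparison of the Peter--Weyl multiplication on $H^\bullet(M_g,\O(\Sp(2g)))$ with the twisted commutative algebra structures of Sections \ref{mainsection}--\ref{FZsection}.

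Concretely, I would record two statements. First: the cup product in $H^\bullet(M_g,\O(\Sp(2g)))$ of a class in $H^\bullet(M_g,\V_\llambda)\otimes V_\llambda^\vee$ and one in $H^\bullet(M_g,\V_\mu)\otimes V_\mu^\vee$ is, on each $V_\nu^\vee$-isotypic summand, obtained from a cross product $H^\bullet(M_g,\V_\llambda)\otimes H^\bullet(M_g,\V_\mu)\to H^\bullet(M_g,\V_\llambda\otimes\V_\mu)$ followed by the map induced by an $\Sp(2g)$-equivariant projection $\V_\llambda\otimes\V_\mu\to\V_\nu$, and that \emph{such operations preserve tautologicality}: indeed the cross product on the spaces $C_g^n$ preserves tautological classes (Proposition \ref{tca}), the projectors $\pi_1$ are tautological correspondences, and by symplectic Schur--Weyl duality (Theorem \ref{symplecticschurweyl}) any $\Sp(2g)$-map between summands of tensor powers of $\V$ lies in the image of the Brauer algebra, whose action on $\V^{\otimes n}$ is implemented by tautological correspondences (Section \ref{explicitbraueraction}). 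Second: under precisely these operations --- Peter--Weyl multiplication together with projection onto isotypic summands --- the tautological classes $\bigoplus_\lambda\RH^\bullet(M_g,\V_\llambda)\otimes V_\llambda^\vee$ are generated by the classes $D_{n,r}$. This is a translation, via Remark \ref{cohomologytca}, of the fact that $R_g''$ is generated as a twisted commutative algebra by the $D_{n,r}$ (Corollary \ref{irrelevantgenerators}), since the twisted commutative algebra product on $R_g''$ is itself a cross product followed by projection to the primitive part, hence one of the operations above.

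Granting this, $T\subseteq\mathcal T$ follows because $\mathcal T$ is $\Sp(2g)$-stable and, by the first statement, closed under Peter--Weyl multiplication, so it is a subring containing the Gross--Schoen summand and hence the subring $T$ that this summand generates. For $\mathcal T\subseteq T$, note that $T$ is also $\Sp(2g)$-stable, hence closed under projection to isotypic summands as well as under multiplication; by the second statement it therefore suffices to show that each generator $D_{n,r}$ --- with $n=0,r\geq1$ or $n\geq1,r\geq\max(2,n-1)$, by Corollary \ref{irrelevantgenerators} and Theorem \ref{fzconstraint} --- lies in $T$. This is exactly the content of Lemma \ref{h-tautological-lemma}: $D_{n,r}$ is the image of an external power $(\Delta_{123})^{\times m}$ of the Gross--Schoen cycle under a Brauer-diagram correspondence, which through the comparison above exhibits $D_{n,r}$ as the image, under an $\Sp(2g)$-intertwiner, of a Peter--Weyl product of copies of the Gross--Schoen class, hence as an element of $T$. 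Specialising to $\lambda=0$ recovers the theorem of Kawazumi and Morita.

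The main obstacle is the comparison itself --- matching the Peter--Weyl product on $\O(\Sp(2g))$ with the ``cross product, then project to the primitive part'' product on the twisted commutative algebras of the earlier sections. Conceptually it is only the assertion that every intertwiner of tensor powers of $\V$ is a Brauer operation and that Brauer operations and cross products act tautologically, both already available in the paper; but making it precise requires tracking the Koszul sign discrepancy between $\V$ and $\VV$ flagged repeatedly above, and the partition-conjugation that relates the symmetric-group multiplicity spaces $\sigma_{\lambda^T}$ appearing in $R_g''$ to the $V_\llambda$-multiplicity spaces in the Peter--Weyl decomposition. I would isolate this as a lemma and prove it by reducing everything to $\V$ and its tensor powers, where both products are visibly given by the same PROP of Brauer diagrams together with external products of cycles on the spaces $C_g^n$.
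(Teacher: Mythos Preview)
Your proposal is correct and follows essentially the same approach as the paper: both directions use that $\Sp(2g)$-intertwiners are realized by Brauer diagrams acting via tautological correspondences, so $\mathcal T$ is a subring containing the Gross--Schoen class (hence $T\subseteq\mathcal T$), while Lemma~\ref{h-tautological-lemma} exhibits each generator $D_{n,r}$ of $R_g''$ as a Brauer-image of a power of the Gross--Schoen cycle (hence $\mathcal T\subseteq T$). Your invocation of Theorem~\ref{fzconstraint} is unnecessary --- any generating set suffices, not a finite one --- and the ``obstacle'' paragraph about Koszul signs and partition conjugation overstates the difficulty: the paper dispatches the comparison in a sentence, since once everything is written in terms of Brauer diagrams acting on $\V^{\otimes n}$ the bookkeeping is automatic.
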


\begin{proof}We note first that $\RH^\bullet(M_g,\O(\Sp(2g)))$ is a subalgebra of $H^\bullet(M_g,\O(\Sp(2g))$. Indeed, consider the multiplication map
	$$  \Hom_{\Sp(2g)}(V_\llambda \otimes V_{\langle\mu\rangle},V_{\langle\nu\rangle}) \otimes H^\bullet(M_g,\V_\llambda) \otimes H^\bullet(M_g,\V_{\langle \mu \rangle}) \to H^\bullet(M_g,\V_{\langle \nu \rangle}).$$
	Every element of $\Hom_{\Sp(2g)}(V_\llambda \otimes V_{\langle\mu\rangle},V_{\langle\nu\rangle})$ is given by Brauer diagrams. It follows that if we realize the cohomologies of the different local systems as summands of the cohomologies of fibered powers $C_g^n$, then the multiplication $H^\bullet(M_g,\V_\llambda) \otimes H^\bullet(M_g,\V_{\langle \mu \rangle}) \to H^\bullet(M_g,\V_{\langle \nu \rangle})$ is induced by an algebraic correspondence given by tautological cycles,, for any choice of intertwiner $V_\llambda \otimes V_{\langle\mu\rangle} \to V_{\langle\nu\rangle}$. This means in particular that the cup product maps tautological classes to tautological classes.
	
	In particular, this means that every $H$-tautological class is a tautological class in our sense: since the $H$-tautological ring is generated by the Gross--Schoen cycle, it must be contained in the subalgebra of all tautological classes. 
	
	Conversely we need to prove that every tautological class in ours sense can be written as a product of Gross--Schoen cycles. It is enough to prove this for the generators of the twisted commutative algebra $R_g''$, i.e. the images of the classes $\Delta_{12\ldots n}\psi_1^m$ and the $\kappa$-classes. If we consider the Brauer diagram  of Lemma \ref{h-tautological-lemma} as an element of $\Hom_{\Sp(2g)}((\V_{\langle 1,1,1\rangle})^{\otimes (n-2+2m)},\V^{\langle n\rangle}),$ then the image of this Brauer diagram and $n-2+2m$ copies of the Gross--Schoen cycle under the cup product map
	$$ \Hom_{\Sp(2g)}((\V_{\langle 1,1,1\rangle})^{\otimes (n-2+2m)},\V^{\langle n\rangle}) \otimes H^1(M_g,(\V_{\langle 1,1,1\rangle})^{\otimes (n-2+2m)} \to H^{n-2+2m}(M_g,\V^{\langle n\rangle})$$
	equals the image of $\Delta_{12\ldots n}\psi_1^m$ under the projection $H^{2(n-1+m)}(C_g^n,\Q) \to H^{n-2+2m}(M_g,\V^{\langle n\rangle})$, as Lemma \ref{h-tautological-lemma} shows. The result follows.
\end{proof}

\begin{cor}[Kawazumi--Morita]
	The image of $(\wedge^\bullet V_{\langle 1,1,1\rangle}/V_{\langle 2,2\rangle})^{\Sp(2g)} \to H^\bullet(M_g,\Q)$ is the tautological cohomology ring of $M_g$.
\end{cor}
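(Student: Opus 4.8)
The plan is to deduce the corollary from Theorem \ref{h-tautological} by a formal argument: taking $\Sp(2g)$-invariants throughout. First I would record the behaviour of invariants under the Peter--Weyl decomposition. Since
$$ H^\bullet(M_g,\O(\Sp(2g))) = \bigoplus_\lambda H^\bullet(M_g,\V_\llambda)\otimes V_\llambda^\vee, $$
with $\Sp(2g)$ acting through the residual right regular action on the factors $V_\llambda^\vee$, and since $V_\llambda^\vee$ has a nonzero space of $\Sp(2g)$-invariants only for $\lambda = 0$ (where $V_{\langle 0\rangle}^\vee = \mathbf 1$), taking $\Sp(2g)$-invariants yields a canonical identification of graded rings
$$ H^\bullet(M_g,\O(\Sp(2g)))^{\Sp(2g)} = H^\bullet(M_g,\Q), $$
under which the invariant part of the subring $\RH^\bullet(M_g,\O(\Sp(2g))) = \bigoplus_\lambda \RH^\bullet(M_g,\V_\llambda)\otimes V_\llambda^\vee$ is precisely $\RH^\bullet(M_g,\Q)$, the tautological cohomology ring of $M_g$.

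Next I would recall from the discussion preceding Lemma \ref{h-tautological-lemma} that the defining $\Sp(2g)$-equivariant ring homomorphism $\wedge^\bullet V_{\langle 1,1,1\rangle}^\vee \to H^\bullet(M_g,\O(\Sp(2g)))$, whose image is by definition the $H$-tautological ring, kills the subrepresentation $V_{\langle 2,2\rangle}^\vee \subset \wedge^2 V_{\langle 1,1,1\rangle}^\vee$ (because $\RH^2(M_g,\V_{\langle 2,2\rangle}) = 0$), hence kills the ideal it generates, and so factors through an equivariant homomorphism $\wedge^\bullet V_{\langle 1,1,1\rangle}^\vee/V_{\langle 2,2\rangle}^\vee \to H^\bullet(M_g,\O(\Sp(2g)))$. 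Now apply the functor of $\Sp(2g)$-invariants; since $\Sp(2g)$ is reductive and we work in characteristic zero, this functor is exact, so "image of the invariants $=$ invariants of the image". Concretely, the resulting ring homomorphism $(\wedge^\bullet V_{\langle 1,1,1\rangle}^\vee/V_{\langle 2,2\rangle}^\vee)^{\Sp(2g)} \to H^\bullet(M_g,\Q)$ has image equal to the $\Sp(2g)$-invariant part of the $H$-tautological ring. By Theorem \ref{h-tautological} the $H$-tautological ring equals $\RH^\bullet(M_g,\O(\Sp(2g)))$, so by the first paragraph this invariant part is exactly $\RH^\bullet(M_g,\Q)$. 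Finally, since $V_{\langle 1,1,1\rangle}$ is symplectically self-dual, replacing $V_{\langle 1,1,1\rangle}^\vee$ by $V_{\langle 1,1,1\rangle}$ (and correspondingly $V_{\langle 2,2\rangle}^\vee$ by $V_{\langle 2,2\rangle}$) does not change the space of invariants, giving the statement in the form written.

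There is essentially no serious obstacle here beyond Theorem \ref{h-tautological} itself; the corollary is a formal specialization of it to the trivial isotype. The only points that merit a sentence of care are the exactness of the $\Sp(2g)$-invariants functor (which is what legitimizes passing from "image of the big equivariant map" to "image of its restriction to invariants") and the dualization bookkeeping in matching $\wedge^\bullet V_{\langle 1,1,1\rangle}$ with $\wedge^\bullet V_{\langle 1,1,1\rangle}^\vee$. If one prefers not to invoke the factoring through the $V_{\langle 2,2\rangle}$-quotient, one can run the identical argument with $\wedge^\bullet V_{\langle 1,1,1\rangle}^\vee$ in place of the quotient and observe a posteriori that the ideal generated by $V_{\langle 2,2\rangle}^\vee$ is contained in the kernel, so that the two maps have the same image.
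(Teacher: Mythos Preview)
Your proposal is correct and takes essentially the same approach as the paper: the paper's proof is the single sentence ``This is the case $\lambda = 0$ of the preceding theorem,'' and what you have written is precisely a careful unpacking of that sentence (taking $\Sp(2g)$-invariants picks out the $\lambda=0$ isotype, and exactness of invariants lets you commute image with invariants).
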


\begin{proof}This is the case $\lambda = 0$ of the preceding theorem.
\end{proof}

Morita \cite{moritacohomologicalstructure} has conjectured that the map $\varphi^{\Sp(2g)}$ is injective: that is, it defines an isomorphism between $\mathsf A_g^{\Sp(2g)}$ and the tautological ring $R^\bullet(M_g)$. Compared to other conjectural descriptions of the tautological ring, e.g.\ the conjecture that all relations are consequences of the FZ relations, this gives a remarkably quick and direct description of the tautological ring (even though it is not so easy to determine the structure of the algebra $\mathsf A_g^{\Sp(2g)}$). A natural generalization of Morita's conjecture is to ask whether $\varphi \colon \mathsf A_g \to \mathsf R_g$ is an isomorphism. A consequence of our results is that this is not the case, however:

\begin{prop}The map $\varphi$ is not injective when $g=4$. 
\end{prop}

\begin{proof}Using a computer algebra system, one can verify that the third exterior power $\wedge^3 V_{\langle 1,1,1\rangle}^\vee$ contains the irreducible representation $V_{\langle 3,2,2,2\rangle}^\vee$ as a summand. On the other hand the degree $3$ part of the ideal $(V_{\langle 2,2\rangle}^\vee)$ is a quotient of $V_{\langle 2,2\rangle}^\vee \otimes V_{\langle 1,1,1\rangle}^\vee$, which contains only representations of weight at most $7$. It follows that $\mathsf A_4$ has a nontrivial summand $V_{\langle 3,2,2,2\rangle}^\vee$ in degree $3$. But our calculations of the tautological groups with twisted coefficients in genus four show that $\RH^3(M_4,\V_{\langle 3,2,2,2\rangle})=0$, so this summand must be in the kernel of $\varphi$.
\end{proof}

As pointed out in the introduction of this section, Hain asked whether the construction of a tautological algebra inside $H^\bullet(M_g,\O(\Sp(2g)))$ could be lifted to the level of Chow groups, and our construction in this paper gives an affirmative answer to this question. However, there does not seem to be any sensible way to get the grading on the level of Chow groups, at least not without introducing fractional Tate twists. The source of the problem is that an intertwiner in $\Hom_{\Sp(2g)}(V_\llambda \otimes V_{\langle \mu \rangle},V_{\langle \nu \rangle})$ does not give rise to a morphism of Chow motives $\VV_\llambda \otimes \VV_{\langle \mu \rangle} \to \VV_{\langle \nu \rangle}$ unless $\vert \lambda \vert + \vert \mu \vert = \vert \nu \vert$; in general one only gets a morphism to a Tate twist of $\VV_{\langle \nu \rangle}$. One option is to work instead with Chow motives with respect to ungraded correspondences --- one can make sense of $\O(\Sp(2g))$ as a relative Chow motive over $M_g$ with respect to ungraded correspondences --- but the Chow groups of a motive with respect to ungraded correspondences only form a vector space, not a graded vector space, and so the grading needs to be put in ``by hand''. Alternatively, if we allow half-integer Tate twists, then we can replace $\VV_\llambda$ with $\VV_\llambda \otimes \L^{-\vert \lambda\vert/2}$ throughout, which will allow us to recover the cohomological grading (halved).

\subsection{The twisted commutative algebra and the Peter--Weyl theorem}We have now seen two a priori completely different ways to build an algebra by considering all local systems $\V_\llambda$ simultaneously: the commutative ring $\mathsf T_g = \bigoplus_\lambda H^\bullet(M_g,\V_\llambda) \otimes V_\llambda^\vee$ and the twisted commutative algebra $\bigoplus_\lambda H^\bullet(M_g,\V_\llambda) \otimes \sigma_\lambda^\vee$ (see Remark \ref{cohomologytca}). We now want to explain a connection 
between the two constructions.

Suppose that $A$ is a ring in the category of graded $\Sp(2g)$-representations. We associate to $A$ two twisted commutative algebras $L_A$ and $L_A^\circ$ given by 
$$ n \mapsto L_A(n) = (A \otimes V^{\otimes n})^{\Sp(2g)}$$
and
$$ n \mapsto L_A^\circ(n) = (A \otimes V^{\langle n \rangle})^{\Sp(2g)}.$$
(Recall that $V^{\langle n \rangle} = \bigoplus_{\lambda \vdash n} V_\llambda \otimes \sigma_\lambda^\vee$.) The multiplication in $L_A$ is given by
\begin{align*} L_A(n) \otimes L_A(m) & = (A \otimes V^{\otimes n})^{\Sp(2g)} \otimes (A \otimes V^{\otimes m})^{\Sp(2g)} \subseteq (A \otimes V^{\otimes n} \otimes A \otimes V^{\otimes m})^{\Sp(2g)} \\
& \stackrel{\mathrm{mult.}}\longrightarrow (A \otimes V^{\otimes n} \otimes V^{\otimes m})^{\Sp(2g)} = L_A(n+m),
\end{align*}
and similarly for $L_A^\circ$. 
We obtain two functors from the category of rings with action of $\Sp(2g)$ to the category of twisted commutative algebras. 

We will apply our functors to the rings $\mathsf A_g$, $\mathsf R_g$ and $\mathsf T_g$ defined above.\footnote{Strictly speaking these are rings in the category of representations of $\Sp(2g)^{\mathrm{op}}$, rather than $\Sp(2g)$: if we want to work with usual representations we should have $\wedge^\bullet V_{\langle 1,1,1\rangle}/(V_{\langle 2,2\rangle})$ rather than $\wedge^\bullet V_{\langle 1,1,1\rangle}^\vee/(V_{\langle 2,2\rangle}^\vee)$. We will ignore this detail.} We find for example that
\begin{align*} 
L_{\mathsf T_g}^\circ(n) & = \mathrm{Hom}_{\Sp(2g)}(H^\bullet(M_g,\O(\Sp(2g)),V^{\langle n\rangle}) \\
& \cong \bigoplus_\lambda \bigoplus_{\mu \vdash n} H^\bullet(M_g,\V_\lambda) \otimes \mathrm{Hom}_{\Sp(2g)}(V_\lambda,V_\mu) \otimes \sigma_\mu^\vee \\
& \cong\bigoplus_{\lambda \vdash n}  H^\bullet(M_g,\V_\llambda) \otimes \sigma_\lambda^\vee = H^\bullet(M_g,\V^{\langle n \rangle}).  
\end{align*}
In particular, $L_{\mathsf R_g}$ and $L_{\mathsf R_g}^\circ$ are the cohomological analogues of the twisted commutative algebras denoted $R_g'$ and $R_g''$ in Section \ref{tca-section}. So our twisted commutative algebras can be recovered functorially from Hain's tautological ring $\mathsf R_g$, which explains how the two constructions are related. 

A  slightly more refined version of the above construction is possible. Recall that a twisted commutative algebra can be defined as a lax symmetric monoidal functor from $\coprod_{n \geq 0} \sym_n$ to graded vector spaces. We define instead a \emph{twisted Brauer algebra} as a lax symmetric monoidal functor from $\mathfrak{Br}^{(-2g)}$ to graded vector spaces. Recall that the category $\mathfrak{Br}^{(-2g)}$ was defined in Section \ref{nazarov1}; it is the category whose objects are the natural numbers and morphisms $n \to m$ are $(n,m)$-Brauer diagrams, with symmetric monoidal structure given on objects by addition and on morphisms by disjoint union. There is an evident inclusion $\coprod_{n \geq 0} \sym_n \to \mathfrak{Br}^{(-2g)}$, by interpreting a bijection $[n] \to [n]$ as an $(n,n)$-Brauer diagram with only vertical strands, which defines a forgetful functor from twisted Brauer algebras to twisted commutative algebras. 

It is not hard to see that if $A$ is a ring with $\Sp(2g)$-action, then $L_A$ may in fact be considered as a twisted Brauer algebra.\footnote{One may also consider $L_A^\circ$ as a twisted Brauer algebra, but one does not gain anything in doing so: all maps $L_A^\circ(n) \to L_A^\circ(m)$ for $n \neq m$ given by $(n,m)$-Brauer diagrams are zero.} This has some advantages. For example, we noted that the ring $\mathsf R_g$ is generated by a single algebraic cycle class (more precisely, by a single copy of the representation $V_{\langle 1,1,1\rangle}^\vee$), whereas the twisted commutative tautological algebras $R_g$, $R_g'$ and $R_g''$ had large numbers of generators. If we consider $L_{\mathsf R_g}$ as a twisted \emph{Brauer} algebra rather than a twisted commutative algebra, it is in fact generated by a single element in arity $3$ corresponding to the Gross--Schoen cycle. This shows that by considering twisted Brauer algebras one retains slightly more information. 
%
%
%
%
%


\bibliographystyle{alpha}
\bibliography{../database}

\end{document}